\newtheorem{thm}{Theorem}[section]
\newtheorem{cor}[thm]{Corollary}
\newtheorem{prop}[thm]{Proposition}
\newtheorem{lemma}[thm]{Lemma}
\theoremstyle{remark}
\newtheorem{remark}[thm]{Remark}
\newtheorem{example}[thm]{Example}
\theoremstyle{definition}
\newtheorem{definition}[thm]{Definition}
\numberwithin{equation}{subsection}
\newcommand{\supp}{\operatorname{Supp}}
\newcommand{\Diff}{\operatorname{Diff}}
\newcommand{\R}{\mathbf{R}}
\newcommand{\Hom}{\operatorname{Hom}}
\newcommand{\End}{\operatorname{End}}
\newcommand{\Isom}{\operatorname{Isom}}
\newcommand{\isomoto}{\overset{\sim}{\to}}
\newcommand{\id}{{\mathtt{Id}}}
\newcommand{\shHom}{\underline{\operatorname{Hom}}}
\newcommand{\shEnd}{\underline{\operatorname{End}}}
\newcommand{\shAut}{{\underline{\operatorname{Aut}}}}
\newcommand{\shIsom}{{\underline{\operatorname{Isom}}}}
\newcommand{\pr}{{\mathtt{pr}}}
\newcommand{\DR}{\mathtt{DR}}
\newcommand{\vac}{{\mathbf{1}}}
\newcommand{\Ad}{\operatorname{Ad}}
\newcommand{\one}{{1\!\!1}}
\newcommand{\Triv}{\operatorname{Triv}}
\newcommand{\MC}{\operatorname{MC}}
\newcommand{\Def}{\operatorname{Def}}
\newcommand{\Mat}{{\mathtt{Mat}}}
\newcommand{\Tot}{{\mathtt{Tot}}}
\newcommand{\Der}{{\mathtt{Der}}}
\newcommand{\Alg}{\operatorname{ALG}}
\newcommand\real[1]{{\vert{#1}\vert}}
\newcommand{\Cat}{\mathbf{Cat}}
 \DeclareMathOperator{\cotr}{cotr}
\DeclareMathOperator{\ad}{\mathtt{ad}}
\newcommand{\AuxData}{\mathtt{AD}}
\DeclareMathOperator{\Sing}{Sing}
\newcommand{\liminv}{\mathop{\varprojlim}\limits}
\newcommand\s[1]{{(#1)}}
\newcommand{\stack}{\mathtt{st}}
\DeclareMathOperator{\AlgStack}{AlgStack}
\DeclareMathOperator{\Algd}{Algd}
\DeclareMathOperator{\CosMatAlg}{CMA}
\newcommand{\cma}{\mathtt{cma}}
\newcommand{\conv}{\mathtt{conv}}
\newcommand{\triv}{\mathtt{triv}}
\newcommand{\mat}{\mathtt{mat}}
\DeclareMathOperator{\Sh}{Sh} \DeclareMathOperator{\ShAb}{ShAb}
\DeclareMathOperator{\ArtAlg}{ArtAlg}
\renewcommand{\beth}{sd}
\renewcommand{\subsubsection}{\@startsection
{subsubsection}%
{2}%
{0mm}%
{-\baselineskip}%
{-0.5\baselineskip}%
{\normalfont\normalsize\bfseries }}%
\begin{document}

\title{Deformations of algebroid stacks}

\author[P.Bressler]{Paul Bressler}
\address{Max-Planck-Institut f\"{u}r Mathematik,
Vivatsgasse 7, 53111 Bonn, Germany} \email{bressler@gmail.com}

\author[A.Gorokhovsky]{Alexander Gorokhovsky}
\address{Department of Mathematics, UCB 395,
University of Colorado, Boulder, CO~80309-0395, USA}
\email{Alexander.Gorokhovsky@colorado.edu}

\author[R.Nest]{Ryszard Nest}
\address{Department of Mathematics,
Copenhagen University, Universitetsparken 5, 2100 Copenhagen,
Denmark}
 \email{rnest@math.ku.dk}

\author[B.Tsygan]{Boris Tsygan}
\address{Department of
Mathematics, Northwestern University, Evanston, IL 60208-2730, USA}
\email{tsygan@math.northwestern.edu}

\begin{abstract}

In this paper we consider deformations of an algebroid stack on an \'etale groupoid.  We construct a differential
graded Lie algebra (DGLA) which controls this deformation theory.  In the case when the algebroid is a twisted form of
functions we show that this DGLA is quasiisomorphic to the twist  of the DGLA of Hochschild cochains on the algebra of
functions on the groupoid  by the characteristic class   of the corresponding gerbe.
\end{abstract}

\thanks{A. Gorokhovsky was partially supported by NSF grant DMS-0400342. B. Tsygan
was partially supported by NSF grant DMS-0605030}
\maketitle \vspace{-0.5cm} \tableofcontents

\section{Introduction}

The two main results of the paper are the following.
\begin{enumerate}
\item We classify deformations of an algebroid stack on an \'{e}tale groupoid by Maurer-Cartan elements of a
    differential graded Lie algebra (DGLA) canonically associated to the algebroid stack, see Theorem \ref{mt1}.

\item In the particular case, let the algebroid stack be a twisted form of the structure sheaf (i.e. is associated
    to a gerbe on the groupoid). We construct a quasiisomorphism of the DGLA alluded to above with the DGLA of
Hochschild cochains on the algebra of functions on the groupoid, twisted by the class of the gerbe), see Theorems
    \ref{equiv of 2-gpds} and \ref{mt3}
\end{enumerate}
In the case when the \'etale groupoid is a manifold these results were established in \cite{bgnt1}.

Recall that a deformation of an algebraic structure, say, over ${\mathbb C}$ is a structure over ${\mathbb C}[[\hbar]]$
whose reduction modulo $\hbar$ is the original one. Two deformations are said to be isomorphic if there is an
isomorphism of the two structures over ${\mathbb C}[[\hbar]]$ that is identity modulo $\hbar$. The algebra ${\mathbb
C}[[\hbar]]$ may be replaced by any commutative (pro)artinian algebra ${\mathfrak a}$ with the maximal ideal
${\mathfrak m}$ such that ${\mathfrak a}/{\mathfrak m}\isomoto {\mathbb C}.$

It has been discovered in \cite{GM}, \cite{S}, \cite{SS} that deformations of many types of objects are controlled by a
differential graded Lie algebra, or a DGLA, in the following sense. Let ${\mathfrak g}$ be a DGLA. A Maurer-Cartan
element over ${\mathfrak a}$ is an element $\gamma \in {\mathfrak g}^1\otimes {\mathfrak m}$ satisfying
$$d\gamma + {\frac{1}{2}}[\gamma,\gamma]=0.$$
One can define the notion of equivalence of two Maurer-Cartan elements (essentially, as a gauge equivalence over the
group ${\rm{exp}}({\mathfrak g}^0\otimes {\mathfrak m})$). The set of isomorphism classes of deformations over
${\mathfrak a}$ is in a bijection with the set of equivalence classes of Maurer-Cartan elements of ${\mathfrak g}$ over
${\mathfrak a}.$ This is true for such objects as flat connections in a bundle on a manifold (not surprisingly), but
also for associative and Lie algebras, complex structures on a manifold, etc. For an associative algebra, the DGLA
controlling its deformations is the Hochschild cochain complex shifted by one, $C^\bullet(A,A)[1]$ (cf. \cite{Ge}).

Now let us pass to sheaves of algebras. It turns out that their deformations are still controlled by DGLAs. Two new
points appear:  there is a technical issue of defining this DGLA and the most natural DGLA of this sort actually
controls deformations of a sheaf within a bigger class of objects, not as a sheaf of algebras but as an algebroid
stack.

For a sheaf ${\mathcal A}$  of algebras on a space $X$, one can define the DGLA which is, essentially, the complex of
cochains of $X$ with coefficients in the (sheafification of the presheaf) $C^{\bullet}({\mathcal A},{\mathcal A})[1]$,
in the sense which we now describe (compare \cite{hi1, hi2}).

If by cochains with coefficients in a sheaf of DGLAs one means \v{C}ech cochains then it is not clear how to define on
them a DGLA structure. Indeed, one can multiply \v{C}ech cochains, but the product is no longer commutative. Therefore
there is no natural bracket on Lie algebra valued \v{C}ech cochains, for the same reason as there is no bracket on a
tensor product of a Lie algebra and an associative algebra. The problem is resolved if one replaces \v{C}ech cochains
by another type of cochains that have a (skew)commutative product. This is possible only in characteristic zero (which
is well within the scope of our work). In fact there are several ways of doing this: De Rham-Sullivan forms on a
simplicial set; jets with the canonical connection on a smooth manifold (real, complex, or algebraic); and Dolbeault
forms on a complex manifold. The first method works for any space and for any sheaf (the simplicial set is the nerve of
an open cover; one has to pass to a limit over the covers to get the right answer). In order be able to write a complex
of jets, or a Dolbeault complex, with coefficients in a sheaf, one has to somewhat restrict the class of sheaves. The
sheaf of Hochschild complexes is within this restricted class for a lot of naturally arising sheaves of algebras.

The DGLA of cochains with coefficients in the Hochschild complex controls deformations of ${\mathcal A}$ as an
algebroid stack. An algebroid stack is a natural generalization of a sheaf of algebras. It is a sheaf of categories
with some additional properties; a sheaf of algebras gives rise to a stack which is the sheaf of categories of modules
over this sheaves of algebras. In more down to earth terms, an algebroid stack can be described by a descent datum,
i.e. a collection of locally defined sheaves of algebras glued together with a twist by a two-cocycle (cf. below). The
role of algebroid stacks in deformation theory was first emphasized in \cite{Ka}, \cite{ko1}.

For a complex manifold with a (holomorphic) symplectic structure the canonical deformation quantization is an algebroid
stack. The first obstruction for this algebroid stack to be (equivalent to) a sheaf of algebras is the first
Rozansky-Witten invariant \cite{bgnt1}.

In light of this, it is very natural to study deformation theory of algebroid stacks. In \cite{bgnt1} we showed that it
is still controlled by a DGLA. This DGLA is the complex of De Rham-Sullivan forms on the first barycentric subdivision
of the nerve of an open cover with coefficients in a sheaf of Hochschild complexes of the sheaf of twisted matrix
algebras constructed from a descent datum. To get the right answer one has to pass to a limit over all the covers.

An important special case of an algebroid stack is a gerbe, or a twisted form of the structure sheaf on a manifold.
Gerbes on $X$ are classified by the second cohomology group $H^2(X,{\mathcal O}^*_X).$ For a class $c$ in this group,
one can pass to its image $\partial c$ in $H^3 (X, 2\pi i{\mathbb Z})$ or to the projection ${\widetilde{c}}$ from
$H^2(X,{\mathcal O}^*_X )= H^2(X,{\mathcal O}_X/2\pi i{\mathbb Z})$ to $H^2(X,{\mathcal O}_X/{\mathbb C})$. One can
define also $d{\rm{log}}c\in H^2(X, \Omega^{1,{\rm{closed}}})$. In \cite{bgnt1}, we proved that the DGLA controlling
deformations of a gerbe as an algebroid stack is equivalent to the similar DGLA for the trivial gerbe (i.e. cochains
with values in the Hochschild complex) twisted by the class ${\widetilde{c}}$. In a forthcoming work, we prove the
formality theorem, namely that the latter DGLA is equivalent to the DGLA of multivector fields twisted by $\partial c$
(in the real case) or by $d{\rm{log}}c$ (complex analytic case).

In this paper, we generalize these results from manifolds to \'{e}tale groupoids. The DGLA whose Maurer-Cartan elements
classify deformations of an algebroid stack is constructed as follows. From an algebroid stack on Hausdorff \'etale
groupoid one passes to a cosimplicial matrix algebra on the nerve of the groupoid. If the groupoid is non-Hausdorff one
has to replace the groupoid by its embedding category, cf. \cite{ieke95}.

For a cosimplicial matrix algebra we form the Hochschild cochain complex which happens to be a cosimplicial sheaf of
DGLAs not on the nerve itself, but on its first subdivision. From this we pass to a cosimplicial DGLA and to its
totalization which is an ordinary DGLA.

For a gerbe on an \'etale groupoid this DGLA can be replaced by another of a more familiar geometric nature, leading to
the Theorems \ref{equiv of 2-gpds} and \ref{mt3}.

Let us say a few more words about motivations behind this work. Twisted modules over algebroid stack deformations of
the structure sheaf, or DQ modules, are being extensively studied in \cite{KS1}, \cite{KS2}. This study, together with
the direction of \cite{bgnt1}, \cite{bgnt2} and the present work, includes or should eventually include the Hochschild
homology and cohomology theory, the cyclic homology, characteristic classes, Riemann-Roch theorems. The context of a
deformation of a gerbe on an \'etale groupoid provides a natural generality for all this.  Similarly, the groupoid with
a symplectic structure seems to be a natural context and for the Rozansky-Witten model of 3-dimensional topological
quantum field theory. Note that this theory is naturally related do deformation quantization of the sheaf of ${\mathcal
O}$-modules as a symmetric monoidal category \cite{KRS}.

As another example, a Riemann-Roch theorem for deformation quantizations of \'etale groupoids should imply a (higher)
index theorem for Fourier elliptic operators given by kernels whose wave front is the graph of a characteristic
foliation, in the same way as its partial case for symplectic manifolds implies the index theorem for elliptic pairs
\cite{BNT}.

Let us describe this situation in more detail. Let $\Sigma$ be a coisotropic submanifold of a symplectic manifold $M$.
The holonomy groupoid of the characteristic foliation on $\Sigma$ is an \'etale groupoid with a symplectic structure.
The canonical deformation quantization of this \'etale groupoid is an algebroid stack, similarly to the case of
deformations of complex symplectic manifolds that was discussed above. The Rozhansky-Witten class can be defined in
this situation as well.

The canonical deformation quantization of the symplectic \'etale groupoid associated to a coisotropic submanifold
naturally arises in the study of the following question motivated by problems in microlocal analysis. For a coisotropic
submanifold $\Sigma$ of a symplectic manifold $M$ consider the graph $\Lambda$ of the characteristic foliation which is
a Lagrangian submanifold of the product $M\times M^{op}$. When $M = T^*X$ for a manifold $X$, and $\Sigma$ is conic,
the Lagrangian $\Lambda$ is conic as well, hence determines a class of Fourier integral operators given by kernels
whose wave fronts are contained in $\Lambda$, compare \cite{gs79}. These operators form an algebra under composition
since the composition $\Lambda\circ\Lambda$ coincides with $\Lambda$. An asymptotic version of the operator product
gives rise to a deformation of the foliation  algebra (compare \cite{bg}) which we will discuss in a subsequent work.
The canonical deformation quantization of the holonomy groupoid is, in a suitable sense, Morita equivalent to this
algebra.

This paper is organized as follows. In the section \ref{preliminaries} we give overview of the preliminaries from the
category theory and the theory of (co)simplicial spaces. We also describe I.~Moerdijk's construction of embedding
category and stacks on \'etale categories.

In the section \ref{Deformations and DGLA} we review relation between the deformation and differential graded Lie
algebras as well as discuss in this context deformations of matrix Azumaya algebras.

In the section \ref{Deformations of cosimplicial matrix Azumaya algebras} we introduce the notion of cosimplicial
matrix algebra and construct a DGLA governing deformations of matrix algebras (cf. Theorem \ref{thm:mapgroup equiv}).
We then specialize to the case of cosimplicial matrix Azumaya algebras. In this case using the differential geometry of
the infinite jet bundle we are able to show that the deformation theory of a cosimplicial matrix Azumaya algebra
$\mathcal{A}$ is controlled by DGLA of jets of Hochschild cochains twisted by the cohomology class of the gerbe
associated with $\mathcal{A}$.

Finally in the section \ref{Applications to etale groupoids} we apply the results of the previous section to study the
deformation theory of stacks on \'etale groupoids. We also use these methods to study the deformation theory of a
twisted convolution algebra of \'etale  groupoid.
\section{Preliminaries}\label{preliminaries}

\subsection{Categorical preliminaries}

\subsubsection{The simplicial category}
For $n=0,1,2,\ldots$ we denote by $[n]$ the category with objects
$0,\ldots,n$ freely generated by the graph
\[
0\to 1\to\cdots\to n \ .
\]
For $0\leq i\leq j\leq n$ we denote by $(ij)$ the unique arrow $i\to
j$ in $[n]$. We denote by $\Delta$ the full subcategory of $\Cat$
with objects the categories $[n]$ for $n = 0,1,2,\ldots$.

For a category $\mathcal{C}$ we refer to a functor $\lambda \colon
[n] \to \mathcal{C}$ as \emph{ a(n $n$-)simplex in $\mathcal{C}$}.
For a morphism $f \colon  [m] \to [n]$ in $\Delta$ and a simplex
$\lambda \colon  [n] \to \mathcal{C}$ we denote by $f^*(\lambda)$
the simplex $\lambda\circ f$.

Suppose that $f \colon  [m] \to [n]$ is a morphism in $\Delta$ and
$\lambda \colon  [n] \to \mathcal{C}$ is a simplex. Let
$\mu=f^*(\lambda)$ for short. The morphism $(0n)$ in $[n]$ factors
uniquely into the composition $0\to f(0)\to f(m)\to n$ which, under
$\lambda$, gives the factorization of $\lambda(0n)\colon
\lambda(0)\to\lambda(n)$ in $\mathcal{C}$ into the composition
\begin{equation}\label{factorization}
\lambda(0)\xrightarrow{a} \mu(0)
\xrightarrow{b}\mu(m)\xrightarrow{c} \lambda(n)\ ,
\end{equation}
where $b=\mu((0m))$.

For $0\leq i\leq n+1$ we denote by $\partial_i = \partial^n_i \colon
[n]\to[n+1]$ the $i^{\text{th}}$ face map, i.e. the unique injective
map whose image does not contain the object $i\in[n+1]$.

For $0\leq i\leq n-1$ we denote by $s_i=s^n_i\colon [n]\to[n-1]$ the
$i^{\text{th}}$ degeneracy map, i.e. the unique surjective map such
that $s_i(i)=s_i(i+1)$.

\subsubsection{Subdivision}\label{subsubsection: subdivision}
For $\lambda \colon  [n] \to \Delta$ define $\lambda_k$ by
$\lambda(k) = [\lambda_k]$, $k=0, 1,\ldots, n$. Let $\beth^\lambda
\colon  [n] \to \lambda(n)$ be a morphism in $\Delta$ defined by
\begin{equation}
\beth^\lambda(k)=\lambda(kn)(\lambda_k) .
\end{equation}
For a morphism $f \colon  [m] \to [n]$ in $\Delta$ let
$\beth(f)^\lambda = c$ in the notations of \eqref{factorization}.
Then, the diagram
\[
\begin{CD}
[m] @>{f}>> [n] \\
@V{\beth^{f^*(\lambda)}}VV @VV{\beth^\lambda}V \\
f^*(\lambda)(m) @>{\beth(f)^\lambda}>> \lambda(n)
\end{CD}
\]
is commutative.

\subsubsection{(Co)simplicial objets}
A simplicial object in a category $\mathcal C$ is a functor
$\Delta^{op}\to\mathcal{C}$. For a simplicial object $X$ we denote
$X([n])$ by $X_n$.

A cosimplicial object in a category $\mathcal C$ is a functor
$\Delta\to\mathcal{C}$. For a cosimplicial object $Y$ we denote
$Y([n])$ by $Y^n$.

Simplicial (respectively, cosimplicial) objects in $\mathcal C$ form
a category in the standard way.

\subsubsection{Nerve}
For $n=0,1,2,\ldots$ let $N_n\mathcal{C} \colon
=\Hom([n],\mathcal{C})$. The assignment $n\mapsto N_n\mathcal{C}$,
$([m]\xrightarrow{f}[n])\mapsto N\mathcal{C}(f)\colon
=(\lambda\mapsto\lambda\circ f)$ defines the simplicial set
$N\mathcal{C}$ called \emph{the nerve of $\mathcal C$}.

The effect of the face map $\partial^n_i$ (respectively, the
degeneracy map $s^n_i$) will be denoted by $d^i = d_n^i$
(respectively, $\varsigma_i = \varsigma^n_i$).

\subsubsection{Subdivision of (co)simplicial objects}\label{geometric realization}
Assume that coproducts in $\mathcal C$ are represented. Let
$X\in\mathcal{C}^{\Delta^{op}}$.

For $\lambda\colon [n]\to\Delta$ let $\real{X}_\lambda \colon
=X_{\lambda(n)}$, $\real{X}_n\colon =\coprod_{\lambda\colon
[n]\to\Delta}\real{X}_\lambda$.

For a morphism $f \colon  [m] \to [n]$ in $\Delta$ let $\real{X}(f)
\colon \real{X}_n\to\real{X}_m$ denote the map whose restriction to
$\real{X}_\lambda$ is the map $X(\beth(f)^\lambda)$.

The assignment $[n]\mapsto \real{X}_n$, $f\mapsto \real{X}(f)$
defines the simplicial object $\real{X}$ called \emph{the
subdivision} of $X$.

Let $\beth(X)_n \colon  \real{X}_n \to X_n$ denote the map whose
restriction to $\real{X}_\lambda$ is the map $X(\beth^\lambda)$. The
assignment $[n]\mapsto \beth(X)_n$ defines the canonical morphism of
simplicial objects
\[
\beth(X) \colon  \real{X} \to X .
\]

Suppose that $\mathcal{C}$ has products. For
$V\in\mathcal{C}^\Delta$, $\lambda \colon  [n] \to \Delta$ let
$\real{V}^{\lambda} =V^{\lambda(n)}$, $\real{V}^n =
\prod_{[n]\xrightarrow{\lambda}\Delta} \real{V}^{\lambda}$. For a
morphism $f \colon  [m] \to [n]$ in $\Delta$ let $\real{V}(f) \colon
\real{V}_m \to \real{V}_n$ denote the map  such that
$\pr_\lambda\circ\real{V}(f) =
V(\beth(f)^\lambda)\circ\pr_{f^*(\lambda)}$. The assignment
$[n]\mapsto \real{V}_n$, $f\mapsto \real{V}(f)$ defines the
cosimplicial object $\real{V}$ called \emph{the subdivision} of $V$.

Let $\beth(V)^n \colon  V^n \to \real{V}^n$ denote the map such that
$\pr_\lambda\circ\beth(V)^n = V(\beth^\lambda)$. The assignment
$[n]\mapsto \beth(V)_n$ defines the canonical morphism of simplicial
objects
\[
\beth(V) \colon  V \to \real{V} .
\]

\subsubsection{Totalization of cosimplicial vector spaces} Next, we recall the
definition of the functor $\Tot$ which assigns to a cosimplicial
vector space a complex of vector spaces.

For $n= 0, 1, 2, \ldots$ let $\Omega_n$ denote the polynomial de
Rham complex of the $n$-dimensional simplex. In other words,
$\Omega_n$ is the DGCA generated by $t_0,\ldots,t_n$ of degree zero
and $dt_0,\ldots,dt_n$ of degree one subject to the relations
$t_0+\dots+t_n=1$ and $dt_0+\dots+dt_n=0$; the differential
$d_\Delta$ is determined by the Leibniz rule and $t_i\mapsto dt_i$.
The assignment $\Delta\ni[n]\mapsto\Omega_n$ extends in a natural
way to a simplicial DGCA.

Suppose that $V$ is a cosimplicial vector space. For each morphism
$f \colon  [p]\to [q]$ in $\Delta$ we have the morphisms $V(f)
\colon  V^p\to V^q$ and $\Omega(f) \colon  \Omega_q\to \Omega_p$.
Let $\Tot(V)^k\subset \prod_n \Omega^k_n\otimes V^n$  denote the
subspace which consists of those $a = (a_n)$ which satisfy the
conditions $(\id\otimes V(f))(a_p)= (\Omega(f)\otimes\id)(a_q)$ for
all $f \colon  [p]\to [q]$. The de Rham differential $d_\Delta
\colon  \Omega^k \to \Omega^{k+1}$ induces the differential in
$\Tot(V)$. It is clear that the assignment $V \mapsto \Tot(V)$ is a
functor on the category of cosimplicial vectors spaces with values
in the category of complexes of vector spaces.

%
%
%

\subsubsection{Cohomology of cosimplicial vector spaces}
For a cosimplicial vector space $V$ we denote by $C(V)$
(respectively, $N(V)$), the associated complex (respectively,
normalized complex). These are given by $C^i(V) = N^i(V) = 0$ for $i
< 0$ and, otherwise, by
\[
C^n(V) = V^n, \ \ \ \ N^n(V) = \bigcap_i\ker(s_i \colon  V^n\to
V^{n-1})
\]
In either case the differential $\partial = \partial_V$ is given by
$\partial^n = \sum_i (-1)^i\partial^n_i$. The natural inclusion
$N(V)\hookrightarrow C(V)$ is a quasiisomorphism (see e.g.
\cite{ez}). Let $H^\bullet(V) \colon = H^\bullet(N(V)) =
H^\bullet(C(V))$.

We will also need the following result, see e.g.\cite{seg}:
\begin{lemma}\label{lemma: subdiv quism}
Suppose that $V$ is a cosimplicial vector space. The map
$H^\bullet(\beth(V)) \colon  H^\bullet(V) \to H^\bullet(\real{V})$
is an isomorphism.
\end{lemma}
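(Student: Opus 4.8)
The plan is to exhibit a simplicial homotopy (or an explicit chain homotopy between the associated cochain complexes) realizing $\beth(V)$ as a homotopy equivalence. Concretely, I would work with the normalized complexes $N(V)$ and $N(\real{V})$ and produce a map $r\colon N(\real{V})\to N(V)$ together with homotopies $rs\simeq\id$ and $sr\simeq\id$, where $s$ denotes the cochain map induced by $\beth(V)$. The geometric source of $r$ is the classical observation that the subdivision functor on simplicial sets (Segal's barycentric subdivision) is naturally homotopy equivalent to the identity; dualizing to the cosimplicial setting and composing with a cosimplicial vector space $V$ turns this into a natural homotopy equivalence at the level of complexes.

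First I would recall the combinatorics of $\real{V}$: by construction $\real{V}^n=\prod_{\lambda\colon[n]\to\Delta}V^{\lambda(n)}$, and the coface and codegeneracy operators are assembled from the maps $V(\beth(f)^\lambda)$ via the factorization \eqref{factorization}. The key structural point is that the indexing category of nondegenerate simplices $\lambda$ is cofiltered/contractible in the appropriate sense, so that the ``last vertex'' map $\lambda\mapsto\lambda(n)=[\lambda_n]$ — which on the level of \eqref{factorization} is exactly the morphism $\beth^\lambda$ — is a deformation retraction of the subdivision onto the original diagram. I would make this precise by writing down, for each $n$, the splitting $V^n\to\real{V}^n$ of $\beth(V)^n$ given by projecting to the component indexed by the degenerate simplex $\lambda=[n]\xrightarrow{\mathrm{id}}[n]\xrightarrow{\text{const}}\Delta$ (or the relevant last-vertex simplex), check it is compatible with cofaces and codegeneracies up to the homotopy below, and hence descends to the normalized complexes.

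The homotopy itself I would build from the simplicial structure on the subdivision: there is a cosimplicial map $\real{V}\to\real{V}$ which on components interpolates between the identity and the composite $\beth(V)\circ\beth(X)^{\text{section}}$, induced by the natural transformations between the ``first vertex'' and ``last vertex'' inclusions $[0]\rightrightarrows[n]$. Applying the Tot or $N$ functor and using that both are additive and homotopy-invariant converts this cosimplicial homotopy into an honest chain homotopy, giving $H^\bullet(\beth(V))$ an inverse on cohomology. Alternatively — and this may be the cleanest write-up — I would invoke the Eilenberg–Zilber/Dold–Kan machinery: the subdivision $\real{V}$ is the cosimplicial replacement of $V$ along the last-vertex functor $\Delta\to\Delta$, and this functor is homotopy cofinal, so $H^\bullet(V)\xrightarrow{\sim}H^\bullet(\real{V})$ follows from cofinality, exactly as in Segal's original argument \cite{seg}.

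The main obstacle I anticipate is purely bookkeeping: verifying that the section $V^n\to\real{V}^n$ is genuinely a morphism of cosimplicial vector spaces (not merely a collection of linear maps), which requires tracking how $\beth(f)^\lambda=c$ from \eqref{factorization} behaves under the last-vertex projection, and in particular checking the normalization — that degenerate components do not contribute. Once the cosimplicial compatibility is nailed down, the homotopy-invariance of $N(-)$ finishes the argument with no further computation, so I would concentrate essentially all the effort on that compatibility check and on identifying the correct contracting homotopy of the subdivision diagram.
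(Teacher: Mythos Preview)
The paper does not prove this lemma; it simply quotes it as a known fact with a reference to Segal \cite{seg}. So there is no ``paper's own proof'' to compare against --- any correct argument you supply goes strictly beyond what the paper does. Your overall strategy (last-vertex map, subdivision is homotopy equivalent to the identity, cofinality) is the standard one and is essentially what Segal's argument gives; invoking it would be perfectly adequate here.

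That said, two concrete points in your write-up need repair. First, the direction of your proposed splitting is stated backwards: $\beth(V)^n$ already goes $V^n\to\real{V}^n$, so the retraction you want is a map $\real{V}^n\to V^n$, namely projection onto a distinguished component. Second, and more substantively, the component you name is wrong. For the \emph{constant} simplex $\lambda\colon[n]\to\Delta$ at $[n]$ one has $\lambda_k=n$ for all $k$, hence $\beth^\lambda(k)=\lambda(kn)(\lambda_k)=n$ for all $k$; thus $\beth^\lambda$ is the constant map to the terminal vertex, not the identity, and $\pr_\lambda\circ\beth(V)^n=V(\beth^\lambda)$ is \emph{not} $\id_{V^n}$. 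The simplex you actually want is the tautological one, $\lambda(i)=[i]$ with $\lambda(ij)$ the canonical inclusion $[i]\hookrightarrow[j]$; for this choice $\beth^\lambda=\id_{[n]}$ and the projection really is a retraction of $\beth(V)^n$. As you correctly anticipate, this projection is not a strict cosimplicial map (because $f^*$ of the tautological $n$-simplex is not the tautological $m$-simplex), so the homotopy is genuinely needed; but with the correct component identified the bookkeeping goes through.
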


\subsubsection{}
There is a natural quasiisomorphism
\[
\int_V \colon  \Tot(V) \to N(V)
\]
(see e.g. \cite{dupont}) such that the composition
\[
H^0(V) \xrightarrow{1\otimes\id} \Tot(V) \xrightarrow{\int_V} N(V)
\]
coincides with the inclusion $H^0(V)\hookrightarrow N(V)$.

Suppose that $(V, d)$ is a cosimplicial complex of vector spaces.
Then, for $i\in\mathbb{Z}$, we have the cosimplicial vector space
$V^{\bullet,i} \colon  [n] \mapsto (V^n)^i$. Applying $\Tot(\ )$,
$N(\ )$ and $C(\ )$ componentwise we obtain double complexes whose
associated total complexes will be denoted, respectively, by
$\Tot(V)$, $N(V)$ and $C(V)$. The maps
\[
H^0(V^{\bullet,i}) \to \Tot(V^{\bullet,i})
\]
give rise to the map of complexes
\begin{equation}\label{ker to Tot cxs}
\ker(V^0 \rightrightarrows V^1) \to \Tot(V)
\end{equation}

\begin{lemma}\label{lemma: ker to Tot cxs}
Suppose that $V$ is a cosimplicial complex of vector spaces such
that
\begin{enumerate}
\item there exists $M\in\mathbb{Z}$ such that, for all $i < M$, $n = 0,1,\ldots$, $V^{n,i} = 0$ (i.e. the complexes
    $V^n$ are bounded below uniformly in $n$)
\item for all $i\in\mathbb{Z}$ and $j\neq 0$, $H^j(V^{\bullet,i}) = 0$.
\end{enumerate}
Then, the map \eqref{ker to Tot cxs} is a quasiisomorphism.
\end{lemma}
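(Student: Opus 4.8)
The plan is to compute the cohomology of $\Tot(V)$ by a spectral sequence and to identify it, via hypothesis (2), with the cohomology of the complex $K := \ker(V^0 \rightrightarrows V^1)$ equipped with the differential induced by $d$; hypothesis (1) will be used only to guarantee that everything in sight converges.

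First I would observe that $K$ is genuinely a subcomplex of $(V^0,d)$: since $V$ is a cosimplicial complex, the two coface maps $V^0\rightrightarrows V^1$ commute with $d$, so their equalizer $K$, which in internal degree $i$ is $K^i = \ker(V^{0,i}\rightrightarrows V^{1,i}) = H^0(V^{\bullet,i})$, is $d$-stable; and the map \eqref{ker to Tot cxs} is, in each internal degree $i$, the map $v\mapsto 1\otimes v$ appearing in the triangle recalled above, assembled over $i$ into a morphism of complexes $K\to\Tot(V)$.

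Next I would regard $\Tot(V)$ as the total complex of the bicomplex $B^{a,i} := \Tot(V^{\bullet,i})^a$, with the de Rham differential $d_\Delta$ in the $a$-direction and the differential induced by $d$ in the $i$-direction. By hypothesis (1) we have $B^{a,i}=0$ for $i<M$, and since $a\geq 0$ there are, in each total degree $k$, only finitely many nonzero $B^{a,i}$ (those with $M\leq i\leq k$); hence the filtration of $\Tot(V)$ by internal degree is bounded in each total degree, and the associated spectral sequence converges. Its $E_1$-term is $E_1^{i,a} = H^a(\Tot(V^{\bullet,i}),d_\Delta)$; applying the quasiisomorphism $\int_{V^{\bullet,i}}\colon\Tot(V^{\bullet,i})\to N(V^{\bullet,i})$ for each fixed $i$ (no compatibility with $d$ is needed at this point) identifies this with $H^a(V^{\bullet,i})$, which by hypothesis (2) is $0$ for $a\neq 0$ and equals $H^0(V^{\bullet,i}) = K^i$ for $a=0$. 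So $E_1$ is concentrated in the column $a=0$, where it is the complex $K^\bullet$ with differential $d_1$ induced by $d$ --- that is, exactly the differential of $K$. Therefore the spectral sequence degenerates at $E_2$ and $H^\bullet(\Tot(V))\cong H^\bullet(K,d)$.

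Finally I would check that this isomorphism is the one induced by \eqref{ker to Tot cxs}. Filtering $K$ by internal degree as well, the map $K\to\Tot(V)$ is filtration-preserving, and on $E_1$-pages it is the map $K^i\to H^0(\Tot(V^{\bullet,i}),d_\Delta)$; by the triangle recalled above, composing with the isomorphism $H^0(\int_{V^{\bullet,i}})$ turns this into the identity of $K^i = H^0(V^{\bullet,i})$, so it is an isomorphism on $E_1$. Since both spectral sequences are bounded in each total degree, an isomorphism on $E_1$ forces a quasiisomorphism on total complexes, which is the assertion. I expect the only real point requiring care to be the convergence and bookkeeping of the double complex and its filtration, and that is precisely what hypothesis (1) is there to supply.
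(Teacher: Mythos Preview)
Your argument is correct and is essentially the same as the paper's: both exploit hypothesis~(2) column-by-column (fixed internal degree $i$) together with the uniform bound~(1) to pass to total complexes. The only cosmetic difference is that the paper first composes with the quasiisomorphism $\int_V\colon \Tot(V)\to N(V)$ and then argues directly that $K^i = H^0(V^{\bullet,i}) \hookrightarrow N(V^{\bullet,i})$ is a quasiisomorphism for each $i$ (immediate from~(2), since $N(V^{\bullet,i})$ computes $H^\bullet(V^{\bullet,i})$), whereas you stay inside $\Tot(V)$ and invoke $\int_{V^{\bullet,i}}$ degree-wise to identify the $E_1$-page. The paper's route is marginally slicker because it avoids having to name the spectral sequence at all, but the content is identical.
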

\begin{proof}
It suffices to show that the composition
\[
\ker(V^0 \rightrightarrows V^1) \to \Tot(V) \xrightarrow{\int_V}
N(V)
\]
is a quasiisomorphism. Note that, for $i\in\mathbb{Z}$, the
acyclicity assumption implies that the composition
\[
\ker(V^{0,i} \rightrightarrows V^{1,i}) = H^0(V^{\bullet,i}) \to
\Tot(V^{\bullet,i}) \to N(V^{\bullet,i}))
\]
is a quasiisomorphism. Since the second map is a quasiisomorphism so
is the first one. Since, by assumption, $V$ is uniformly bounded
below the claim follows.
\end{proof}

\subsection{Cosimplicial sheaves and stacks}
We will denote by $\Sh_R(X)$ the category of sheaves of $R$-modules
on $X$.

\subsubsection{Cosimplicial sheaves} Suppose that
$X$ is a simplicial space.

A cosimplicial sheaf of vector spaces (or, more generally, a
cosimplicial complex of sheaves) $\mathcal F$ on $X$ is given by the
following data:
\begin{enumerate}
\item for each $p=0,1,2,\ldots$ a sheaf ${\mathcal F}^p$ on $X_p$ and
\item for each morphism $f\colon [p]\to[q]$ in $\Delta$ a morphism $f_*\colon  X(f)^{-1}{\mathcal F}^p\to{\mathcal
    F}^q$.
\end{enumerate}
These are subject to the condition: for each pair of composable
arrows $[p]\stackrel{f}{\to}[g]\stackrel{g}{\to}[r]$ the diagram map
$(g\circ f)_*\colon  X(g\circ f)^{-1}\mathcal{F}^p\to \mathcal{F}^r$
is equal to the composition $X(g\circ f)^{-1}\mathcal{F}^p\cong
X(g)^{-1}X(f)^{-1}\stackrel{X(g)^{-1}f_*}{\longrightarrow}
X(g)^{-1}\mathcal{F}^q\stackrel{g_*}{\to}\mathcal{F}^r$.

\begin{definition}\label{def special sheaf}
A cosimplicial sheaf $\mathcal{F}$ is \emph{special} if the
structure morphisms $f_*\colon  X(f)^{-1}{\mathcal F}^p\to{\mathcal
F}^q$ are isomorphisms for all $f$.
\end{definition}

\subsubsection{Cohomology of cosimplicial sheaves}
For a cosimplicial sheaf of vector spaces $\mathcal{F}$ on $X$ let
$\Gamma(X;\mathcal{F})^n = \Gamma(X_n;\mathcal{F}^n)$. For a
morphism $f\colon  [p] \to [q]$ in $\Delta$ let $f_* =
\Gamma(X;\mathcal{F})(f) \colon \Gamma(X;\mathcal{F})^p \to
\Gamma(X;\mathcal{F})^q$ denote the composition
\[
\Gamma(X_p;\mathcal{F}^p) \xrightarrow{X(f)^*}
\Gamma(X_q;X(f)^{-1}\mathcal{F}^p) \xrightarrow{\Gamma(X_q;f_*)}
\Gamma(X_q;\mathcal{F}^q)
\]
The assignments $[p]\mapsto\Gamma(X;\mathcal{F})^p$, $f\mapsto f_*$
define a cosimplicial vector space denoted $\Gamma(X;\mathcal{F})$.

The functor $\mathcal{F} \mapsto H^0(\Gamma(X;\mathcal{F}))$ from
the (abelian) category of cosimplicial sheaves of vector spaces on
$X$ to the category of vector spaces is left exact. Let
$\mathbf{R}\Gamma(X;\mathcal{F})) =
\mathbf{R}H^0(\Gamma(X;\mathcal{F})$, $H^i(X;\mathcal{F}) =
\mathbf{R}^i\Gamma(X;\mathcal{F}))$.

Assume that ${\mathcal F}$ satisfies $H^i(X_j;{\mathcal F}^j)=0$ for
$i\neq 0$. For complexes of sheaves the assumption on ${\mathcal
F}^j$ is that the canonical morphism in the derived category
$\Gamma(X_j;{\mathcal F}^j)\to\R\Gamma(X_j;{\mathcal F}^j)$ is an
isomorphism. Under the acyclicity assumption on $\mathcal{F}$ we
have
\[
H^j(X;\mathcal{F})=H^j(\Gamma(X;\mathcal{F}))
\]

\subsubsection{Sheaves on the subdivision}\label{sheaves on
subdivision} For a cosimplicial sheaf $\mathcal{F}$ on $X$ let
\[
\real{\mathcal{F}} \colon = \beth(X)^{-1}\mathcal{F} \ .
\]

Thus, $\real{\mathcal{F}}$ is a cosimplicial sheaf on $\real{X}$ and
\[
\Gamma(\real{X};\real{\mathcal{F}})^n =
\Gamma(\real{X}_n;\real{\mathcal{F}}_n) = \prod_{\lambda \colon
[n]\to\Delta}
\Gamma(X_{\lambda(n)};\mathcal{\mathcal{F}}^{\lambda(n)}) =
\real{\Gamma(X;\mathcal{F})}_n
\]
i.e. the canonical isomorphism of cosimplicial vector spaces
\[
\Gamma(\real{X};\real{\mathcal{F}}) = \real{\Gamma(X;\mathcal{F})}
\]

\begin{lemma}
The map $\beth(X)^* \colon  H^\bullet(X;\mathcal{F}) \to
H^\bullet(\real{X};\real{\mathcal{F}})$ is an isomorphism.
\end{lemma}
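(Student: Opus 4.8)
The plan is to deduce this from Lemma~\ref{lemma: subdiv quism} on subdivisions of cosimplicial vector spaces, treating first the case when $\mathcal{F}$ is acyclic and then reducing the general case to it. Assume first that $\mathcal{F}$ satisfies the acyclicity hypothesis, i.e.\ $H^i(X_j;\mathcal{F}^j)=0$ for all $j$ and all $i\neq 0$. The key point is that $\real{\mathcal{F}}$ then satisfies the analogous hypothesis on $\real{X}$: since $\real{X}_n=\coprod_{\lambda\colon[n]\to\Delta}X_{\lambda(n)}$ is a disjoint union, restriction identifies $\Sh_R(\real{X}_n)$ with the product of the categories $\Sh_R(X_{\lambda(n)})$, and, products being exact, $H^i(\real{X}_n;\mathcal{G})=\prod_{\lambda}H^i(X_{\lambda(n)};\mathcal{G}|_{X_{\lambda(n)}})$ for any sheaf $\mathcal{G}$ on $\real{X}_n$; applying this to $\mathcal{G}=\real{\mathcal{F}}^n$ and invoking the computation preceding the lemma gives $H^i(\real{X}_n;\real{\mathcal{F}}^n)=\prod_{\lambda}H^i(X_{\lambda(n)};\mathcal{F}^{\lambda(n)})=0$ for $i\neq 0$. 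Hence, by the formula recalled above for acyclic cosimplicial sheaves, $H^j(X;\mathcal{F})=H^j(\Gamma(X;\mathcal{F}))$ and, via the canonical isomorphism $\Gamma(\real{X};\real{\mathcal{F}})=\real{\Gamma(X;\mathcal{F})}$ of cosimplicial vector spaces, $H^j(\real{X};\real{\mathcal{F}})=H^j(\real{\Gamma(X;\mathcal{F})})$. One then checks, by unwinding the definitions of the comparison morphisms $\beth^\lambda\colon[n]\to\lambda(n)$ and $\beth(f)^\lambda$ and of the structure maps of $\real{\mathcal{F}}=\beth(X)^{-1}\mathcal{F}$, that under this isomorphism the map on global sections underlying $\beth(X)^*$ is precisely the canonical morphism $\beth(V)\colon V\to\real{V}$ for the cosimplicial vector space $V=\Gamma(X;\mathcal{F})$. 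Lemma~\ref{lemma: subdiv quism} then shows $\beth(X)^*$ is an isomorphism on cohomology.

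For arbitrary $\mathcal{F}$, choose a resolution $\mathcal{F}\to\mathcal{I}^\bullet$ by acyclic cosimplicial sheaves of $R$-modules on $X$ (for instance an injective resolution), so that $\R\Gamma(X;\mathcal{F})$ is represented by the total complex of the first-quadrant double complex $\Gamma(X;\mathcal{I}^\bullet)$. Since $\beth(X)^{-1}$ is exact, $\real{\mathcal{F}}\to\real{\mathcal{I}^\bullet}$ is a resolution of $\real{\mathcal{F}}$ on $\real{X}$ by sheaves that are acyclic by the previous paragraph, so $\R\Gamma(\real{X};\real{\mathcal{F}})$ is represented by the total complex of $\Gamma(\real{X};\real{\mathcal{I}^\bullet})=\real{\Gamma(X;\mathcal{I}^\bullet)}$. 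The map $\beth(X)^*$ is then induced by the morphism of first-quadrant double complexes $\Gamma(X;\mathcal{I}^\bullet)\to\real{\Gamma(X;\mathcal{I}^\bullet)}$, which in each resolution degree $k$ is the subdivision map of the cosimplicial vector space $\Gamma(X;\mathcal{I}^k)$ and hence a quasiisomorphism by Lemma~\ref{lemma: subdiv quism}; a standard spectral sequence comparison therefore promotes it to a quasiisomorphism of total complexes, and the lemma follows.

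I expect the only real obstacle to be the bookkeeping in the acyclic case, namely the identification of the global-sections map underlying $\beth(X)^*$ with the abstract subdivision morphism $\beth(V)$: this forces one to track simultaneously the factorization \eqref{factorization}, the maps $\beth^\lambda$ and $\beth(f)^\lambda$ entering the cosimplicial structure of the subdivision, and the way the structure maps of $\mathcal{F}$ transport through $\beth(X)^{-1}$. The one other point that needs care, the inheritance of the acyclicity hypothesis by $\real{\mathcal{F}}$, is clean precisely because $\real{X}_n$ is a genuine coproduct of the spaces $X_{\lambda(n)}$, so that cohomology on it splits as a product.
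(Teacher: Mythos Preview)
Your proposal is correct and follows the same route as the paper, which simply writes ``Follows from Lemma~\ref{lemma: subdiv quism}'' and leaves all the details you supply (acyclicity of $\real{\mathcal{F}}$, identification of $\beth(X)^*$ on global sections with $\beth(V)$, and the passage to general $\mathcal{F}$ via a resolution) to the reader. Your elaboration is exactly the unpacking that one-line citation requires.
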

\begin{proof}
Follows from Lemma \ref{lemma: subdiv quism}
\end{proof}

\subsubsection{}\label{realization for special sheaves}
We proceed with the notations introduced above. Suppose that
$\mathcal{F}$ is a special cosimplicial sheaf on $X$. Then,
$\real{\mathcal{F}}$ admits an equivalent description better suited
for our purposes.

Let $\real{\mathcal{F}}^{\prime n}$ denote the sheaf on $\real{X}_n$
whose restriction to $\real{X}_\lambda$ is given by
$\real{\mathcal{F}}^\prime_\lambda \colon =
X(\lambda(0n))^{-1}\mathcal{F}^{\lambda(0)}$. For a morphism of
simplices $f \colon  \mu\to\lambda$ the corresponding (component of
the) structure map $f^\prime_*$ is defined as the unique map making
the diagram
\[
\begin{CD}
X(c)^{-1}X(\mu((0m)))^{-1}\mathcal{F}^{\mu(0)} @>{f^\prime_*}>>
X(\lambda(0n))^{-1}\mathcal{F}^{\lambda(0)} \\
@V{X(c)^{-1}\mu((0m))_*}VV @VV{\lambda(0n)_*}V \\
 X(c)^{-1}\mathcal{F}^{\mu(m)} @>{c_*}>>
\mathcal{F}^{\lambda(n)}
\end{CD}
\]
commutative. Note that $f^\prime_*$ exists and is unique since the
vertical maps are isomorphisms as $\mathcal{F}$ is special.

It is clear that $\real{F}^\prime$ is a cosimplicial sheaf on
$\real{X}$; moreover there is a canonical isomorphism
$\real{F}^\prime\to\real{F}$ whose restriction to $\real{X}_\lambda$
is given by the structure map $\lambda(0n)_*$.

\subsubsection{Stacks}
We refer the reader to \cite{SGA1} and \cite{Vist} for basic
definitions. We will use the notion of fibered category
interchangeably with that of a pseudo-functor. A \emph{prestack}
$\mathcal C$ on a space $Y$ is a category fibered over the category
of open subsets of $Y$, equivalently, a pseudo-functor $U\mapsto
\mathcal{C}(U)$, satisfying the following additional requirement.
For an open subset $U$ of $Y$ and two objects $A,B\in\mathcal{C}(U)$
we have the presheaf $\shHom_\mathcal{C}(A,B)$ on $U$ defined by
$U\supseteq V\mapsto \Hom_{\mathcal{C}(V)}(A\vert_V, B\vert_B)$. The
fibered category $\mathcal{C}$ is a prestack if for any $U$,
$A,B\in\mathcal{C}(U)$, the presheaf $\shHom_\mathcal{C}(A,B)$ is a
sheaf. A prestack is a \emph{stack} if, in addition, it satisfies
the condition of effective descent for objects. For a prestack
$\mathcal{C}$ we denote the associated stack by
$\widetilde{\mathcal{C}}$.

A stack in groupoids $\mathcal{G}$ is a gerbe if it is locally nonempty and locally connected, i.e. it satisfies
\begin{enumerate}
\item any point $y\in Y$ has a neighborhood $U$ such that $\mathcal{G}(U)$ is nonempty;

\item for any $U\subseteq Y$, $y\in U$, $A,B\in\mathcal{G}(U)$ there exists a neighborhood $V\subseteq U$ of $y$
    and an isomorphism $A\vert_V \cong B\vert_V$.
\end{enumerate}

Let $\mathcal{A}$ be a sheaf of abelian groups on $Y$. An $\mathcal{A}$-gerbe is a gerbe $\mathcal{G}$  with the
following additional data: an isomorphism $\lambda_A \colon \shEnd(A) \to \mathcal{A}|_U$ for every open $U \subset Y$
and $A\in \mathcal{G}(U)$. These isomorphisms are required to satisfy the following compatibility condition. Note that
if $B\in \mathcal{G}(U)$ and $B\cong A$ then there exists an isomorphism $\lambda_{AB} \colon \shEnd(B) \to \shEnd(A)$
(canonical since $\mathcal{A}$ is abelian). Then the identity $\lambda_B=\lambda_A \circ \lambda_{AB}$ should hold.

\subsubsection{Cosimplicial stacks}
Suppose that $X$ is an \'etale simplicial space.

A cosimplicial stack $\mathcal{C}$ on $X$ is given by the following
data:
\begin{enumerate}
\item for each $[p] \in \Delta$ as stack $\mathcal{C}^p$ on $X_p$ ;

\item for each morphism $f \colon [p] \to [q]$ in $\Delta$ a $1$-morphism of stacks $\mathcal{C}_f \colon
    X(f)^{-1}\mathcal{C}^p \to \mathcal{C}^q$ ;

\item for any pair of morphisms $[p] \xrightarrow{f} [q] \xrightarrow{g} [r]$ a $2$-morphism $\mathcal{C}_{f,g}
    \colon \mathcal{C}_g \circ X(g)^*(\mathcal{C}_f) \to \mathcal{C}_{g\circ f}$
\end{enumerate}
These are subject to the associativity condition: for a triple of
composable arrows $[p]\stackrel{f}{\longrightarrow}[q]
\stackrel{g}{\longrightarrow}[r]\stackrel{h}{\longrightarrow}[s]$
the equality of $2$-morphisms
\[
\mathcal{C}_{g\circ f, h}\circ
(X(h)^{-1}\mathcal{C}_{f,g}\otimes\id_{\mathcal{C}_h}) =
\mathcal{C}_{f, h\circ g}\circ(\id_{X(h\circ
g)^{-1}\mathcal{C}_f}\otimes\mathcal{C}_{g,h})
\]
holds. Here and below we use $\otimes$ to denote the horizontal
composition of $2$-morphisms.

Suppose that $\mathcal{C}$ and $\mathcal{D}$ are cosimplicial stacks
on $X$. A $1$-morphism $\phi \colon \mathcal{C} \to \mathcal{D}$ is
given by the following data:
\begin{enumerate}
\item for each $[p] \in \Delta$ a $1$-morphism $\phi^p \colon \mathcal{C}^p \to \mathcal{D}^p$

\item for each morphism $f \colon [p] \to [q]$ in $\Delta$ a $2$-isomorphism $\phi_f \colon
    \phi^q\circ\mathcal{C}_f \to \mathcal{D}_f\circ X(f)^*(\phi^p)$
\end{enumerate}
which, for every pair of morphisms $[p] \xrightarrow{f} [q]
\xrightarrow{g} [r]$ satisfy
\begin{multline*}
(\mathcal{D}_{f,g} \otimes \id_{X(gf)^*(\phi^p)})
\circ(X(g)^*(\phi_f)\otimes \id_{\mathcal{D}_g})\circ
(\id_{X(g)^*(\mathcal{C}_f)} \otimes \phi_g) \\= \phi_{g\circ f}
\circ (\id_{\phi^r}\otimes \mathcal{C}_{f,g})
\end{multline*}

Suppose that $\phi$ and $\psi$ are $1$-morphisms of cosimplicial
stacks $\mathcal{C} \to \mathcal{D}$. A $2$-morphism $b \colon  \phi
\to \psi$ is given by a collection of $2$-morphisms $b^p \colon
\phi^p \to \psi^p$, $p = 0, 1, 2, \ldots$, which satisfy
\[
\psi_f \circ (b^q\otimes\id_{\mathcal{C}_f}) =
(\id_{\mathcal{D}_f}\otimes X(f)^*(b^p)) \circ \phi_f
\]
for all $f \colon [p] \to [q]$ in $\Delta$.

\subsubsection{Cosimplicial gerbes}\label{subsubsection: cosimp gerbe}
Suppose that $A$ is an abelian cosimplicial sheaf on $X$. A
cosimplicial $A$-gerbe $\mathcal{G}$ on $X$ is a cosimplicial stack
on $X$ such that
\begin{enumerate}
\item for each $[p]\in\Delta$, $\mathcal{G}^p$ is a $A^p$-gerbe on $X_p$.

\item For each morphism $f\colon [p]\to[q]$ in $\Delta$ the $1$-morphism
\[\mathcal{G}_f\colon
    X(f)^{-1}\mathcal{G}^p \to\mathcal{G}^q
    \]
of stacks compatible with the map $A_f \colon  X(f)^{-1}A^p \to
A^q$.
\end{enumerate}

\subsection{Sheaves and stacks on \'etale categories}\label{subsection: dedushka}

\subsubsection{\'Etale categories}
In what follows ($C^\infty$-)manifolds are not assumed to be
Hausdorff. Note, however, that, by the very definition a manifold is
a locally Hausdorff space. An \emph{\'etale} map of manifolds is a
local diffeomorphism.

An \emph{\'etale category} $G$ is a category in manifolds with the manifold of objects $N_0G$, the manifold of
morphisms $N_1G$ and \'etale structure maps. Forgetting the manifold structures on the sets of objects and morphisms
one obtains the underlying category.

If the underlying category of an \'etale category $G$ is a groupoid
and the inversion map $N_1G \to N_1G$ is $C^\infty$ one says that
$G$ is an \emph{\'etale groupoid}.

We will identify a manifold $X$ with the category with the space of
objects $X$ and only the identity morphisms. In particular, for any
\'etale category $G$ there is a canonical embedding $N_0G \to G$
which is identity on objects.

\subsubsection{Notation}\label{notational scheme}
We will make extensive use of the following notational scheme.
Suppose that $X$ is a simplicial space (such as the nerve of a
topological category) and $f \colon  [p] \to [q]$ is a morphism in
$\Delta$. The latter induced the map $X(f) \colon  X_q \to X_p$ of
spaces. Note that $f$ is determined by the number $q$ and the
sequence $\vec{f} = (f(0),\ldots,f(p-1))$. For an object $A$
associated to (or, rather, ``on'') $X_p$ (such as a function, a
sheaf, a stack) for which the inverse image under $X(f)$ is define
we will denote the resulting object on $X_q$ by $A^\s{q}_{\vec{f}}$.

\subsubsection{Sheaves on \'etale categories}
Suppose that $G$ is an \'etale category. A sheaf (of sets) on $G$ is a pair $\underline{F} = (F, F_{01})$, where
\begin{itemize}
\item $F$ is a sheaf on the space of objects $N_0G$ and

\item $F_{01} \colon F^\s{1}_1 \to F^\s{1}_0$ is an isomorphisms of sheaves on the space of morphisms $N_1G$
\end{itemize}
which is multiplicative, i.e. satisfies the ``cocycle'' condition
\[
F^\s{2}_{02} = F^\s{2}_{01}\circ F^\s{2}_{12} ,
\]
(on $N_2G$) and unit preserving, i.e.
\[
F^\s{0}_{00} = \id_F .
\]
We denote by $\Sh(G)$ the category of sheaves on $G$.

A morphism $f \colon  \underline{F} \to \underline{F}'$ of sheaves
on $\mathcal{E}$ is a morphism of sheaves $f \colon F \to F'$ on
$N_0\mathcal{E}$ which satisfies the ``equivariance'' condition
\[
f^\s{1}_0 \circ F_{01} = F_{01} \circ f^\s{1}_ .
\]

A morphism of \'etale categories $\phi \colon  G \to G'$ induces the
functor of inverse image (or pull-back)
\[
\phi^{-1} \colon  \Sh(G') \to \Sh(G) .
\]
For $\underline{F} = (F,F_{01}) \in \Sh(G')$, the sheaf on $N_0G$
underlying $\phi^{-1}\underline{F}$ is given by $(N_0\phi)^{-1}F$.
The image of $F_{01}$ under the map
\begin{multline*}
(N_1\phi)^* \colon  \Hom(F^\s{1}_1, F^\s{1}_0) \\
\to
\Hom((N_1\phi)^{-1}F^\s{1}_1, (N_1\phi)^{-1}F^\s{1}_0) \\
\cong \Hom(((N_0\phi)^{-1}F)^\s{1}_1, ((N_0\phi)^{-1}F)^\s{1}_0)
\end{multline*}

The category $\Sh(G)$ has a final object which we will denote by
$*$. For $\underline{F}\in\Sh(G)$ let $\Gamma(G;\underline{F})
\colon = \Hom_{\Sh(G)}(*, \underline{F})$. The set
$\Gamma(G;\underline{F})$ is easily identified with the subset of
$G$-invariant sections of $\Gamma(N_0G;F)$.

A sheaf $\underline{F}$ on $G$ gives rise to a cosimplicial sheaf on
$NG$ which we denote $\underline{F}_\Delta$. The latter is defined
as follows. For $n = 0, 1, 2, \ldots$ let $\underline{F}_\Delta^n =
F^\s{n}_0 \in \Sh(N_nG)$. For $f \colon  [p] \to [q]$ in $\Delta$
the corresponding structure map $f_*$ is defined as
\[
NG(f)^{-1}\underline{F}_\Delta^p = F^\s{q}_{f(0)}
\xrightarrow{F^\s{q}_{0 f(0)}} F^\s{q}_0 = \underline{F}_\Delta^q .
\]

\subsubsection{Stacks on \'etale categories}\label{subsubsection: stacks on et cat}
\begin{definition}
A stack on $G$ is a triple $\underline{\mathcal{C}} =
(\mathcal{C},\mathcal{C}_{01},\mathcal{C}_{012})$ which consists of
\begin{enumerate}
\item a stack $\mathcal{C}$ on $N_0G$
\item an \emph{equivalence} $\mathcal{C}_{01} \colon \mathcal{C}^\s{1}_1 \to \mathcal{C}^\s{1}_0$
\item an \emph{isomorphism} $\mathcal{C}_{012} \colon \mathcal{C}^\s{2}_{01}\circ\mathcal{C}^\s{2}_{12} \to
    \mathcal{C}^\s{2}_{02}$ of $1$-morphisms $\mathcal{C}^\s{2}_2 \to \mathcal{C}^\s{2}_0$.
\end{enumerate}
which satisfy
\begin{itemize}
\item $\mathcal{C}^\s{3}_{023}\circ(\mathcal{C}^\s{3}_{012}\otimes\id) =
    \mathcal{C}^\s{3}_{013}\circ(\id\otimes\mathcal{C}^\s{3}_{123})$
\item $\mathcal{C}^\s{0}_{00} = \id$
\end{itemize}
\end{definition}

Suppose that $\underline{\mathcal{C}}$ and $\underline{\mathcal{D}}$
are stacks on $G$.

\begin{definition}
A $1$-morphism $\underline{\phi} \colon \underline{\mathcal{C}} \to
\underline{\mathcal{D}}$ is a pair $\underline{\phi} = (\phi_0,
\phi_{01})$ which consists of
\begin{enumerate}
\item a $1$-morphism $\phi_0 \colon \mathcal{C} \to \mathcal{D}$ of stacks on $N_0G$
\item a $2$-isomorphism $\phi_{01} \colon \phi^\s{1}_0\circ\mathcal{C}_{01} \to \mathcal{D}_{01}\circ\phi^\s{1}_1$
    of $1$-morphisms $\mathcal{C}^\s{1}_1 \to \mathcal{D}^\s{1}_0$
\end{enumerate}
which satisfy
\begin{itemize}
\item $(\mathcal{D}_{012} \otimes \id_{\phi^\s{2}_2}) \circ (\id_{\mathcal{D}^\s{2}_{01}} \otimes
    \phi^\s{2}_{12})\circ(\phi^\s{2}_{01}\otimes \id_{\mathcal{C}^\s{2}_{12}})=\phi^\s{2}_{02} \circ
    (\id_{\phi^\s{2}_0}\otimes \mathcal{C}_{012}) $
\item $\phi^\s{0}_{00} = \id$
\end{itemize}
\end{definition}

Suppose that $\underline{\phi}$ and $\underline{\psi}$ are
$1$-morphisms $\underline{\mathcal{C}} \to \underline{\mathcal{D}}$.

\begin{definition}
A $2$-morphism $b \colon  \underline{\phi} \to \underline{\psi}$ is
a $2$-morphism $b \colon  \phi_0 \to \psi_0$ which satisfies
$\psi_{01}\circ (b^\s{1}_0\otimes\id_{\mathcal{C}_{01}}) =
(\id_{\mathcal{D}_{01}}\otimes b^\s{1}_1)\circ\phi_{01}$
\end{definition}

Suppose that $\phi ; G \to G'$ is a morphism of \'etale categories
and $\underline{\mathcal{C}} = (\mathcal{C}, \mathcal{C}_{01},
\mathcal{C}_{012})$ is a stack on $G'$. The inverse image
$\phi^{-1}\underline{\mathcal{C}}$ is the stack on $G$ given by the
triple $(\mathcal{D}, \mathcal{D}_{01}, \mathcal{D}_{012})$ with
$\mathcal{D} = (N_0\phi)^{-1}\mathcal{C}$, $\mathcal{D}_{01}$ equal
to the image of $\mathcal{C}_{01}$ under the map
\begin{multline*}
(N_1\phi)^* \colon  \Hom(\mathcal{C}^\s{1}_1, \mathcal{C}^\s{1}_0) \\
\to
\Hom((N_1\phi)^{-1}\mathcal{C}^\s{1}_1, (N_1\phi)^{-1}\mathcal{C}^\s{1}_0) \\
\cong \Hom(\mathcal{D}^\s{1}_1, \mathcal{D}^\s{1}_0)
\end{multline*}
and $\mathcal{D}_{012}$ induced by $\mathcal{C}_{012}$ in a similar
fashion.

A stack $\underline{\mathcal{C}}$ on $G$ gives rise to a
cosimplicial stack on $NG$ which we denote
$\underline{\mathcal{C}}_\Delta$. The latter is defined as follows.
For $n = 0, 1, 2, \ldots$ let $\underline{\mathcal{C}}_\Delta^n =
\mathcal{C}^\s{n}_0$. For $f \colon  [p] \to [q]$ in $\Delta$ the
corresponding structure $1$-morphism
$\underline{\mathcal{C}}_{\Delta f}$ is defined as
\[
NG(f)^{-1}\underline{\mathcal{C}}_\Delta^p =
\mathcal{C}^\s{p}_{f(0)} \xrightarrow{\mathcal{C}^\s{q}_{0 f(0)}}
\mathcal{C}^\s{q}_0 = \underline{\mathcal{C}}_\Delta^q .
\]
For a pair of morphisms $[p] \xrightarrow{f} [q] \xrightarrow{g}
[r]$ let $\underline{\mathcal{C}}_{\Delta f,g} = \mathcal{C}_{0 g(0)
g(f(0))}$.

\subsubsection{Gerbes on \'etale categories}
Suppose that $G$ is an \'etale category, $\underline{A}$ is an
abelian sheaf on $G$.

An $\underline{A}$-gerbe $\underline{\mathcal{G}}$ on $G$ is a stack
on $G$ such that
\begin{enumerate}
\item $\mathcal{G}$ is an $A^0$-gerbe on $N_0G$ ;

\item the $1$-morphism $\mathcal{C}_{01}$ is compatible with the morphism $A_{01}$.
\end{enumerate}

If $\underline{\mathcal{G}}$ is an $\underline{A}$-gerbe $G$, then
$\underline{\mathcal{G}}_\Delta$ is a cosimplicial
$\underline{A}_\Delta$-gerbe on $NG$.

\subsubsection{The category of embeddings}\label{cat of emb}
Below we recall the basics of the ``category of $G$-embeddings"
associated with an \'etale category $G$ and a basis of the topology
on the space of objects of $G$, which was introduced by I.~Moerdijk
in \cite{ieke95}.

For an \'etale category $G$ and a basis $\mathbb{B}$ for the
topology on $N_0G$ we denote by $\mathcal{E}_\mathbb{B}(G)$ or,
simply by $\mathcal{E}$, the following category.

The space of objects is given by $N_0\mathcal{E} =
\coprod_{U\in\mathbb{B}} U$, the disjoint union of the elements of
$\mathbb{B}$. Thus, the space of morphism decomposes as
\[
N_1\mathcal{E} = \coprod_{(U,V)\in\mathbb{B}\times\mathbb{B}}
(N_1\mathcal{E})_{(U,V)} ,
\]
where $(N_1\mathcal{E})_{(U,V)}$ is defined by the pull-back square
\[
\begin{CD}
(N_1\mathcal{E})_{(U,V)} @>>> N_1\mathcal{E} \\
@VVV @VV{(d_0^1, d_0^0)}V \\
U \times V @>>> N_0\mathcal{E} \times N_0\mathcal{E} .
\end{CD}
\]
(the bottom arrow being the inclusion). Now,
$(N_1\mathcal{E})_{(U,V)} \subset (N_1G)_{(U,V)}$ (the latter
defined in the same manner as the former replacing
$\mathcal{E}_\mathbb{B}(G)$ by $G$) is given by
$(N_1\mathcal{E})_{(U,V)} = \coprod_{\sigma} \sigma(U)$, where
$\sigma \colon  U \to (N_1G)_{(U,V)}$ is a section of the ``source''
projection $d_0^1 \colon  (N_1G)_{(U,V)} \to U$ such that the
composition $U \xrightarrow{\sigma} \sigma(U) \xrightarrow{d_0^0} V$
is an embedding.

With the structure (source, target, composition, ``identity'') maps
induced from those of $G$, $\mathcal{E}_\mathbb{B}(G)$ is an \'etale
category equipped with the canonical functor
\[
\lambda_\mathbb{B}(G) \colon  \mathcal{E}_\mathbb{B}(G) \to G .
\]
Note that the maps $N_i\lambda_\mathbb{B}(G) \colon
N_i\mathcal{E}_\mathbb{B}(G) \to N_iG$ are \'etale surjections.

The canonical map $i \colon N_0G \to G$ induces the map of the
respective embedding categories $\mathcal{E}_\mathbb{B}(i)\colon
\mathcal{E}_\mathbb{B}(N_0G) \to \mathcal{E}_\mathbb{B}(G)$ and the
diagram
\begin{equation}\label{comm diag of Es}
\begin{CD}
\mathcal{E}_\mathbb{B}(N_0G) @>{\mathcal{E}_\mathbb{B}(i)}>> \mathcal{E}_\mathbb{B}(G) \\
@V{\lambda_\mathbb{B}(N_0G)}VV @VV{\lambda_\mathbb{B}(G)}V \\
N_0G @>{i}>> G
\end{CD}
\end{equation}
is commutative.

%
%

\subsubsection{}\label{subsubsection: adjoint top space}
First consider the particular case when $G = X$ is a space and
$\mathbb{B}$ is a basis for the topology on $X$. For an open subset
$V\subseteq X$ let $\mathbb{B}\cap V \subseteq \mathbb{B} = \left\{
U\in\mathbb{B} \vert U\subseteq V \right\}$. There is an obvious
embedding $\mathcal{E}_{\mathbb{B}\cap V}(V) \to
\mathcal{E}_\mathbb{B}(X)$.

For $\underline{F} = (F,F_{01}) \in \Sh(\mathcal{E}_\mathbb{B}(X))$,
$V\subseteq X$ let $\lambda_!\underline{F}$ denote the presheaf on
$X$ defined by
\[
V \mapsto \Gamma(\mathcal{E}_{\mathbb{B}\cap V}(V) ; \underline{F})
= \varprojlim_{U\in\mathbb{B}\cap V} F(U) ,
\]
where $\mathbb{B}\cap V$ is partially ordered by inclusion. The
presheaf $\lambda_!\underline{F}$ is in fact a sheaf. It is
characterized by the following property:
$\lambda_!\underline{F}\vert_U = F\vert_U$ for any $U\in\mathbb{B}$.

Let $\lambda^{-1}\lambda_!\underline{F} = (H,H_{01})$. For $V$ an
open subset of $N_0\mathcal{E}_\mathbb{B}(X)$ we have
\[
H(V) = (\lambda_!\underline{F})(V) = \varprojlim_{U\in\mathbb{B}\cap
V} F(U) = F(V)
\]
naturally in $V$. We leave it to the reader to check that this
extends to an isomorphism $\lambda^{-1}\lambda_!\underline{F} =
\underline{F}$ natural in $\underline{F}$, i.e. to an isomorphism of
functors $\lambda^{-1}\lambda_! = \id$.

On the other hand, for $H\in\Sh(X)$, $V$ an open subset of $X$, put
$\lambda^{-1}H = (F, F_{01})$; we have
\[
(\lambda_!\lambda^{-1}H)(V) = \varprojlim_{U\in\mathbb{B}\cap V}
F(U) = \varprojlim_{U\in\mathbb{B}\cap V} H(U) = H(V)
\]
naturally in $V$. We leave it to the reader to check that this
extends to an isomorphism $H = \lambda_!\lambda^{-1}H$ natural in
$H$, i.e. to an isomorphism of functors $\id =
\lambda_!\lambda^{-1}$.

\subsubsection{}\label{subsubsection: general case}
We now consider the general case. To simplify notations we put
$\mathcal{E} \colon = \mathcal{E}_\mathbb{B}(G)$, $\mathcal{E}'
\colon = \mathcal{E}_\mathbb{B}(N_0G)$, $\lambda \colon =
\lambda_\mathbb{B}(G)$, $\lambda' \colon =
\lambda_\mathbb{B}(N_0G)$. Let $\underline{F} = (F, F_{01}) \in
\Sh(\mathcal{E})$ and let $\underline{F}' = (F', F'_{01}) \colon =
\mathcal{E}_\mathbb{B}(i)^{-1}\underline{F}$.

Applying the construction of \ref{subsubsection: adjoint top space}
to $\underline{F}'$ we obtain the sheaf $\lambda'_!\underline{F}'$
on $N_0G$. Note that $(N_0\lambda)^{-1}\lambda'_!\underline{F}' =
F$. The properties of the map $N_1\lambda$ imply that the pull-back
map
\begin{multline*}
(N_1\lambda)^*\colon \Hom((\lambda'_!\underline{F}')^\s{1}_1, (\lambda'_!\underline{F}')^\s{1}_0)  \\
=
\Gamma(N_1G; \shHom((\lambda'_!\underline{F}')^\s{1}_1, (\lambda'_!\underline{F}')^\s{1}_0) \\
\to \Gamma(N_1\mathcal{E};
(N_1\lambda)^{-1}\shHom((\lambda'_!\underline{F}')^\s{1}_1,
(\lambda'_!\underline{F}')^\s{1}_0)
\end{multline*}
is injective. Combining the latter with the canonical isomorphisms
\begin{eqnarray*}
(N_1\lambda)^{-1}\shHom((\lambda'_!\underline{F}')^\s{1}_1,
(\lambda'_!\underline{F}')^\s{1}_0)
 & = & \shHom((N_1\lambda)^{-1}(\lambda'_!\underline{F}')^\s{1}_1, (N_1\lambda)^{-1}(\lambda'_!\underline{F}')^\s{1}_0) \\
 & = & \shHom(((N_0\lambda)^{-1}\lambda'_!\underline{F}')^\s{1}_1, ((N_0\lambda)^{-1}\lambda'_!\underline{F}')^\s{1}_0) \\
 & = & \shHom(F^\s{1}_1, F^\s{1}_0)
\end{eqnarray*}
we obtain the injective map
\begin{equation}\label{injection of Homs}
\Hom((\lambda'_!\underline{F}')^\s{1}_1,
(\lambda'_!\underline{F}')^\s{1}_0) \to \Hom(F^\s{1}_1, F^\s{1}_0) .
\end{equation}
We leave it to the reader to verify that the map $F_{01}$ lies in
the image of \eqref{injection of Homs}; let
$(\lambda'_!\underline{F}')_{01}$ denote the corresponding element
of $\Hom(t^{-1}\lambda'_!\underline{F}',
s^{-1}\lambda'_!\underline{F}'))$. The pair
$(\lambda'_!\underline{F}', (\lambda'_!\underline{F}')_{01})$ is
easily seen to determine a sheaf on $G$ henceforth denoted
$\lambda_!\underline{F}$. The assignment $\underline{F} \mapsto
\lambda_!\underline{F}$ extends to a functor, denoted
\[
\lambda_! \colon \Sh(\mathcal{E}) \to \Sh(G) .
\]
quasi-inverse to the inverse image functor $\lambda^{-1}$. Hence,
$\lambda^{-1}(*) = *$ and, for $\underline{F} \in \Sh(G)$ the map
$\Gamma(G;\underline{F}) \to \Gamma(\mathcal{E};
\lambda^{-1}\underline{F})$ is an isomorphism. Similarly,
$\lambda_!(*) = *$ and, for $\underline{H} \in \Sh(\mathcal{E})$ the
map $\Gamma(\mathcal{E};\underline{H}) \to \Gamma(G;
\lambda_!\underline{H})$ is an isomorphism.

\subsubsection{}\label{subsubsection: equiv for ab sh}
The functors $\lambda^{-1}$ and $\lambda_!$ restrict to mutually
quasi-inverse exact equivalences of abelian categories
\[
\lambda^{-1}\colon \ShAb(G) \rightleftarrows \ShAb(\mathcal{E})
\colon  \lambda_!
\]
The morphism $\lambda^*\colon\R\Gamma(G;\underline{F}) \to
\R\Gamma(\mathcal{E};\lambda^{-1}\underline{F})$ is an isomorphism
in the derived category.

Suppose that $\underline{F}\in\ShAb(G)$ is $\mathbb{B}$-acyclic,
i.e., for any $U\in\mathbb{B}$, $i\neq 0$, $H^i(U; F) = 0$. Then,
the composition $C(\Gamma(N\mathcal{E};\lambda^{-1}\underline{F}))
\to \R\Gamma(\mathcal{E};\lambda^{-1}\underline{F})\cong
\R\Gamma(G;\underline{F})$ is an isomorphism in the derived
category.

\subsubsection{}\label{subsubsection: inv im stack equiv}
Suppose given $G$, $\mathbb{B}$ as in \ref{cat of emb}; we proceed
in the notations introduced in \ref{subsubsection: general case}.
The functor of inverse image under $\lambda$ establishes an
equivalence of (2-)categories of stack on $G$ and those on
$\mathcal{E}$. Below we sketch the construction of the quasi-inverse
along the lines of \ref{subsubsection: adjoint top space} and
\ref{subsubsection: general case}.

First consider the case $G=X$ a space. Let $\mathcal{C}$ be a stack
on $\mathcal{E}$. For an open subset $V \subseteq X$ let
\[
\lambda_!\mathcal{C}(V) \colon = \varprojlim_{U\in\mathbb{B}\cap V}
\mathcal{C}(U) ,
\]
where the latter is described in \cite{KS}, Definition 19.1.6.
Briefly, an object of $\lambda_!\mathcal{C}(V)$ is a pair
$(A,\varphi)$  which consists of a function $A\colon \mathbb{B}\cap
V\ni U\mapsto A_U\in\mathcal{C}(U)$ and a function $\varphi \colon
(U \subseteq U') \mapsto (\varphi_{UU'} \colon  A_U'\vert_U
\xrightarrow{\sim} A_U)$; the latter is required to satisfy a kind
of a cocycle condition with respect to compositions of inclusions of
basic open sets. For $(A,\varphi)$, $(A',\varphi')$ as above the
assignment $\mathbb{B}\cap V\ni U \mapsto
\Hom_\mathcal{C}(A_U,A'_U)$ extends to a presheaf on $\mathbb{B}$.
By definition,
\[
\Hom_{\lambda_!\mathcal{C}(V)}((A,\varphi),(A',\varphi')) =
\varprojlim_{U\in\mathbb{B}\cap V}\Hom_\mathcal{C}(A_U,A'_U) .
\]
The assignment $V \mapsto \lambda_!\mathcal{C}(V)$ extends to a
stack on $X$ denoted $\lambda_!\mathcal{C}$. We have natural
equivalences
\[
\lambda^{-1}\lambda_!\underline{\mathcal{D}} \cong
\underline{\mathcal{D}},\ \ \lambda_!\lambda^{-1}\mathcal{C} \cong
\mathcal{C} .
\]

Continuing with the general case, let $\underline{\mathcal{C}}$ be a
stack on $\mathcal{E}$. The stack $\lambda_!\underline{\mathcal{C}}$
on $G$ is given by the triple $(\mathcal{D}, \mathcal{D}_{01},
\mathcal{D}_{012})$ with $\mathcal{D} =
\lambda'_!\mathcal{E}_\mathbb{B}(i)^{-1}\mathcal{C}$. The morphisms
$\mathcal{D}_{01}$ and $\mathcal{D}_{012}$ are induced,
respectively, by $\mathcal{C}_{01}$ and $\mathcal{C}_{012}$. We omit
the details.


\subsection{Jet bundle}
Let $X$ be a smooth manifold. Let  $\pr_i\colon X\times X\to X$, $i = 1,2$ denote  the projection on the
$i^{\text{th}}$ factor and let $\Delta_X \colon  X\to X\times X$ denote the diagonal embedding.

Let
\[
\mathcal{I}_{X} \colon = \ker(\Delta_X^*) .
\]
The sheaf $\mathcal{I}_{X}$ plays the role of the defining ideal of the ``diagonal embedding $X \to X\times X$'': there
is a short exact sequence of sheaves on $X\times X$
\[
0 \to \mathcal{I}_{X} \to \mathcal{O}_{X\times X} \to
(\Delta_X)_*\mathcal{O}_{X} \to 0 .
\]

For a locally-free ${\mathcal O}_{X }$-module of finite rank
${\mathcal E}$ let
\begin{eqnarray*}
\mathcal{J}_{X }^k({\mathcal E}) & \colon = &
(\pr_1)_*\left({\mathcal O}_{X\times X}/{\mathcal
I}_{X}^{k+1}\otimes_{\pr_2^{-1}{\mathcal
O}_{X}}\pr_2^{-1}{\mathcal E}\right) \ , \\
\mathcal{J}^k_{X}& \colon = & \mathcal{J}_{X }^k(\mathcal{O}_{X }) \
.
\end{eqnarray*}
It is clear from the above definition that $\mathcal{J}^k_{X }$ is,
in a natural way, a commutative algebra and $\mathcal {J}_{X
}^k({\mathcal E})$ is a $\mathcal {J}^k_{X}$-module.

Let
\[
\vac^{(k)} \colon  \mathcal{O}_{X }\to \mathcal{J}^k_{X }
\]
denote the composition
\[
\mathcal{O}_{X } \xrightarrow{\pr_1^*} (\pr_1)_*\mathcal{O}_{X\times
X  } \to \mathcal{J}^k_{X }
\]
In what follows, unless stated explicitly otherwise, we regard
$\mathcal{J}_{X }^k({\mathcal E})$ as a $\mathcal {O}_{X}$-module
via the map $\vac^{(k)}$.

Let
\[
j^k\colon  \mathcal{E} \to \mathcal{J}_{X }^k(\mathcal{E})
\]
denote the composition
\[
\mathcal{E} \xrightarrow{e\mapsto 1\otimes e}
(\pr_1)_*\mathcal{O}_{X\times X  } \otimes_\mathbb{C} \mathcal{E}
\to \mathcal{J}_{X }^k(\mathcal{E})
\]
The map $j^k$ is not $\mathcal{O}_{X }$-linear unless $k=0$.

For $0\leq k\leq l$ the inclusion ${\mathcal I}_{X
}^{l+1}\to{\mathcal I}_{X}^{k+1}$ induces the surjective map
$\pi_{l,k}\colon {\mathcal J}^l_{X }({\mathcal E}) \to {\mathcal
J}^k_{X }({\mathcal E})$. The sheaves ${\mathcal J}^k_{X }({\mathcal
E})$, $k=0,1,\ldots$ together with the maps $\pi_{l,k}$, $k\leq l$
form an inverse system. Let ${\mathcal J}_{X }({\mathcal E}) =
{\mathcal J}^\infty_{X }({\mathcal E})\colon =
\underset{\longleftarrow}{\lim}{\mathcal J}^k_{X }({\mathcal E})$.
Thus, ${\mathcal J}_{X }({\mathcal E})$ carries a natural topology.

The maps $\vac^{(k)}$ (respectively, $j^k$), $k=0,1,2,\ldots$ are
compatible with the projections $\pi_{l,k}$, i.e.
$\pi_{l,k}\circ\vac^{(l)} = \vac^{(k)}$ (respectively,
$\pi_{l,k}\circ j^l = j^k$). Let $\vac \colon =
\underset{\longleftarrow}{\lim} \vac^{(k)}$,
 $j^\infty\colon =\underset{\longleftarrow}{\lim}j^k$.

Let
\begin{multline*}
d_1 \colon  {\mathcal O}_{{X\times X} }\otimes_{\pr_2^{-1}{\mathcal O}_{X}}\pr_2^{-1}{\mathcal E} \to \\
\to \pr_1^{-1}\Omega^1_{X}\otimes_{\pr_1^{-1}{\mathcal
O}_{X}}{\mathcal O}_{{X\times X} }\otimes_{\pr_2^{-1}{\mathcal
O}_{X}}\pr_2^{-1}{\mathcal E}
\end{multline*}
denote the exterior derivative along the first factor. It satisfies
\begin{multline*}
d_1({\mathcal I}_{X}^{k+1}\otimes_{\pr_2^{-1}{\mathcal
O}_{X}}\pr_2^{-1}{\mathcal E})\subset
\\
\pr_1^{-1}\Omega^1_X\otimes_{\pr_1^{-1}{\mathcal O}_{X}}{\mathcal
I}_{X}^k\otimes_{\pr_2^{-1}{\mathcal O}_{X}}\pr_2^{-1}{\mathcal E}
\end{multline*}
for each $k$ and, therefore, induces the map
\[
d_1^{(k)} \colon  {\mathcal J}^k({\mathcal
E})\to\Omega^1_{X}\otimes_{{\mathcal O}_{X}}{\mathcal
J}^{k-1}({\mathcal E})
\]
The maps $d_1^{(k)}$ for different values of $k$ are compatible with
the maps $\pi_{l,k}$ giving rise to the \emph{canonical flat
connection}
\[
\nabla^{can}_{\mathcal E} \colon  {\mathcal J}_{X}({\mathcal
E})\to\Omega^1_{X}\otimes_{{\mathcal O}_{X}}{\mathcal
J}_{X}({\mathcal E}) \ .
\]

We will also use the following notations:
\begin{eqnarray*}
\mathcal{J}_{X} & \colon = &
\mathcal{J}_{X}(\mathcal{O}_{X }) \\
\overline{\mathcal{J}}_{X} & \colon = & \mathcal{J}_{X}/\vac(\mathcal{O}_X) \\
\mathcal{J}_{X,0} & \colon = & \ker(\mathcal{J}_{X}(\mathcal{O}_{X
}) \to \mathcal{O}_X)
\end{eqnarray*}

The canonical flat connection extends to the flat connection
\[
\nabla^{can}_\mathcal{E} \colon  \mathcal{J}_{X}(\mathcal{E}) \to
\Omega^1_X\otimes_{\mathcal{O}_X} \mathcal{J}_{X}(\mathcal{E}).
\]

Here and below by abuse of notation we write $(.)\otimes_{{\mathcal O}_{X}}{\mathcal J}_{X}({\mathcal E})$ for
$\underset{\longleftarrow}{\lim}(.)\otimes_{{\mathcal O}_{X}}{\mathcal J}^k({\mathcal E})$.

\subsubsection{De Rham complexes}\label{De Rham cxs}
Suppose that $\mathcal{F}$ is an $\mathcal{O}_X$-module and $\nabla
\colon  \mathcal{F} \to
\Omega^1_X\otimes_{\mathcal{O}_X}\mathcal{F}$ is a flat connection.
The flat connection $\nabla$ extends uniquely to a differential
$\nabla$ on $\Omega^\bullet_X\otimes_{\mathcal{O}_X}\mathcal{F}$
subject to the Leibniz rule with respect to the
$\Omega^\bullet_X$-module structure. We will make use of the
following notation :
\[
(\Omega^i_X\otimes_{\mathcal{O}_X}\mathcal{F})^{cl} \colon =
\ker(\Omega^i_X\otimes_{\mathcal{O}_X}\mathcal{F}
\xrightarrow{\nabla}
\Omega^{i+1}_X\otimes_{\mathcal{O}_X}\mathcal{F})
\]

Suppose that $(\mathcal{F}^\bullet, d)$ is a complex of
$\mathcal{O}_X$-modules with a flat connection $\nabla =
(\nabla^i)_{i\in\mathbb{Z}}$, i.e. for each $i\in\mathbb{Z}$,
$\nabla^i$ is a flat connection on $\mathcal{F}^i$ and $[d,\nabla] =
0$. Then,
$(\Omega^\bullet_X\otimes_{\mathcal{O}_X}\mathcal{F}^\bullet,
\nabla, \id\otimes d)$ is a double complex. We denote by
$\DR(\mathcal{F})$ the associated simple complex.


\subsection{Characteristic classes of cosimplicial $\mathcal{O}^\times$-gerbes}
\label{subsection: gerbes} In this section we consider an \'etale
simplicial manifold $X$, i.e. a simplicial manifold $X \colon
[p]\mapsto X_p$ such that
\begin{multline}\label{condition: etale}
\textrm{for each morphism $f\colon [p]\to[q]$ in $\Delta$,} \\
\textrm{the induced map $X(f)\colon X_q\to X_p$ is \emph{\'etale}}
\end{multline}
As a consequence, the collection of sheaves $\mathcal{O}_{X_p}$
(respectively, $\mathcal{J}_{X_p}$, etc.) form a \emph{special} (see
Definition \ref{def special sheaf}) cosimplicial sheaf on $X$ which
will be denoted $\mathcal{O}_X$ (respectively, $\mathcal{J}_X$,
etc.)

The goal of this section is to associate to a cosimplicial
$\mathcal{O}^\times$-gerbe $\mathcal S$ on $X$ a cohomology class
$[\mathcal{S}]\in
H^2(\real{X};\DR(\overline{\mathcal{J}}_{\real{X}}))$, where
$\real{X}$ is the subdivision of $X$ (see \ref{geometric
realization}).

\subsubsection{Gerbes on manifolds}\label{subsubsection: char class}
Suppose that $Y$ is a manifold. We begin by sketching a construction
which associated to an $\mathcal{O}_Y^\times$-gerbe $\mathcal S$ on
$Y$ a characteristic class in
$H^2(Y;\DR(\mathcal{J}_Y/\mathcal{O}_Y))$ which lifts the more
familiar de Rham characteristic class $[\mathcal{S}]_{dR}\in
H^3_{dR}(Y)$.

The map $\vac \colon  \mathcal{O}_Y \to \mathcal{J}_Y$ of sheaves of
rings induces the map of sheaves of abelian groups $\vac \colon
\mathcal{O}^\times_Y \to \mathcal{J}^\times_Y$. Let
$\overline{(\,.\,)} \colon \mathcal{J}_Y^{(\times)}\to
\mathcal{J}_Y^{(\times)}/\mathcal{O}_Y^{(\times)}$ denote the
projection.

Suppose that $\mathcal{S}$ is an $\mathcal{O}^\times_Y$-gerbe. The
composition
\[
\mathcal{O}^\times_Y \xrightarrow{\vac} \mathcal{J}^\times_Y
\xrightarrow{\nabla^{can}\log} (\Omega^1_Y\otimes
\mathcal{J}_Y)^{cl}
\]
gives rise to the $\Omega^1_Y\otimes \mathcal{J}_Y$-gerbe
$\nabla^{can}\log\vac\mathcal{S}$. Let
\[
\eth \colon  (\Omega^1_Y\otimes\mathcal{J}_Y)^{cl}[1] \to
\nabla^{can}\log\vac\mathcal{S}
\]
be a trivialization of the latter.

Since $\nabla^{can}\circ\nabla^{can}=0$, the $(\Omega^2_Y\otimes
\mathcal{J}_Y)^{cl}$-gerbe
$\nabla^{can}\nabla^{can}\log\vac\mathcal{S}$ is canonically
trivialized. Therefore, the trivialization $\nabla^{can}\eth$ of
$\nabla^{can}\log\vac\mathcal{S}$ induced by $\eth$ may (and will)
be considered as a $(\Omega^2_Y\otimes\mathcal{J}_Y)^{cl}$-torsor.
Let
\[
B\colon  (\Omega^2_Y\otimes\mathcal{J}_Y)^{cl} \to \nabla^{can}\eth
\]
be a trivialization of the $\nabla^{can}\eth$.

Since $\nabla^{can}\log\circ\vac = \vac\circ d\log$, it follows that
the $(\Omega^1_Y\otimes\overline{\mathcal{J}}_Y)^{cl}$-gerbe
$\overline{\nabla^{can}\log\vac\mathcal{S}}$ is canonically
trivialized. Therefore, its trivialization $\overline{\eth}$ may
(and will) be considered as a
$(\Omega^1_Y\otimes\overline{\mathcal{J}}_Y)^{cl}$-torsor. Moreover,
since $\nabla^{can}\nabla^{can}=0$ the
$(\Omega^2_Y\otimes\overline{\mathcal{J}}_{Y})^{cl}$-torsor
$\nabla^{can}\overline{\eth}\cong \overline{\nabla^{can}\eth}$ is
canonically trivialized, the trivialization induced by $B$ is a
section $\overline{B}$ of
$(\Omega^2_Y\otimes\overline{\mathcal{J}}_Y)^{cl}$, i.e.
$\overline{B}$ is a cocycle in
$\Gamma(X;\DR(\overline{\mathcal{J}}_Y))$.

One can show that the class of $\overline{B}$ in
$H^2(X;\DR(\overline{\mathcal{J}}_Y))$
\begin{enumerate}
\item depends only on $\mathcal{S}$ and not on the auxiliary choices of $\eth$ and $B$ made
\item coincides with the image of the class of $\mathcal{S}$ under the map $H^2(Y;\mathcal{O}^\times_Y)\to
    H^2(X;\DR(\overline{\mathcal{J}}_{Y}))$ induced by the composition
\[
\mathcal{O}_Y^\times \xrightarrow{\overline{(\, . \,)}}
\mathcal{O}_Y^\times/\mathbb{C}^\times \xrightarrow{\log}
\mathcal{O}_Y/\mathbb{C} \xrightarrow{j^\infty}
\DR(\overline{\mathcal{J}}_Y)
\]
\end{enumerate}

On the other hand, $H = \nabla^{can}B$ is a trivialization of the
canonically trivialized (by $\nabla^{can}\circ\nabla^{can}=0$)
$(\Omega^3_Y\otimes\mathcal{J}_Y)^{cl}$-torsor
$\nabla^{can}\nabla^{can}\eth$, hence a section of
$(\Omega^3_Y\otimes\mathcal{J}_Y)^{cl}$ which, clearly, satisfies
$\overline{H}=0$, i.e. is a closed $3$-form. Moreover, as is clear
from the construction, the class of $H$ in $H^3_{dR}(Y)$ is the
image of the class of $\overline{B}$ under the boundary map
$H^2(X;\DR(\overline{\mathcal{J}}_Y))\to H^3_{dR}(Y)$.

Below we present a generalization of the above construction to
\'etale simplicial manifolds.

\subsubsection{}\label{dlog-gerbe}
Suppose that $\mathcal{G}$ is a cosimplicial
$(\Omega^1_X\otimes\mathcal{J}_X)^{cl}$-gerbe on $X$. An example of
such is $\nabla^{can}\log\vac(\mathcal{S})$, where $\mathcal{S}$ is
an $\mathcal{O}_X^\times$-gerbe.

Consider the $(\Omega^2_X\otimes\mathcal{J}_X)^{cl}$-gerbe
$\nabla^{can}\mathcal{G}$. Since the composition
\[
(\Omega^1_{X_p}\otimes\mathcal{J}_{X_p})^{cl}\hookrightarrow
\Omega^1_{X_p}\otimes\mathcal{J}_{X_p}\xrightarrow{\nabla^{can}}
(\Omega^2_{X_p}\otimes\mathcal{J}_{X_p})^{cl}
\]
is equal to zero, $\nabla^{can}\mathcal{G}^p$ is canonically
trivialized for all $p=0,1,2,\ldots$. Therefore, for a morphism
$f\colon [p]\to[q]$, $\nabla^{can}\mathcal{G}_f$, being a morphism
of trivialized gerbes, may (and will) be regarded as a
$(\Omega^2_{X_q}\otimes\mathcal{J}_{X_q})^{cl}$-torsor.

Assume given, for each $p=0,1,2,\ldots$
\begin{itemize}
\item[(\texttt{i})] a choice of trivialization
\[
\eth^p \colon  (\Omega^1_{X_p}\otimes\mathcal{J}_{X_p})^{cl}
\to\mathcal{G}^p \ ;
\]
it induces the trivialization
\[
\nabla^{can}\eth^p \colon
(\Omega^2_{X_p}\otimes\mathcal{J}_{X_p})^{cl}[1]
\to\nabla^{can}\mathcal{G}^p \ .
\]
Since $\nabla^{can}\mathcal{G}^p$ is canonically trivialized,
$\nabla^{can}\eth^p$ may (and will) be regarded as a
$(\Omega^2_{X_p}\otimes\mathcal{J}_{X_p})^{cl}$-torsor.

\item[(\texttt{i\!i})] a choice of trivialization
\[
B^p \colon  (\Omega^2_{X_p}\otimes\mathcal{J}_{X_p})^{cl}
\to\nabla^{can}\eth^p \ .
\]
\end{itemize}

For a morphism $f\colon [p]\to[q]$ in $\Delta$ the trivialization
$\eth^p$ induces the trivialization
\[
X(f)^{-1}\eth^p \colon (\Omega^1_{X_q}\otimes\mathcal{J}_{X_q})^{cl}
= X(f)^{-1}(\Omega^1_{X_p}\otimes\mathcal{J}_{X_p})^{cl} \to
X(f)^{-1}\mathcal{G}^p \ ;
\]
thus, $\mathcal{G}_f$ is a morphism of trivialized gerbes, i.e. a
$(\Omega^1_{X_q}\otimes\mathcal{J}_{X_q})^{cl}$-torsor.

Assume given for each morphism $f\colon [p]\to[q]$
\begin{itemize}
\item[(\texttt{i\!i\!i})] a choice of trivialization $\beta_f \colon
    (\Omega^1_{X_q}\otimes\mathcal{J}_{X_q})^{cl}\to\mathcal{G}_f$.
\end{itemize}

For a pair of composable arrows
$[p]\stackrel{f}{\to}[q]\stackrel{g}{\to}[r]$ the discrepancy
\[
\mathcal{G}_{f,g} \colon = (\beta_g + X(g)^{-1}\beta_f) -
\beta_{g\circ f}
\]
is global section of
$(\Omega^1_{X_r}\otimes\mathcal{J}_{X_r})^{cl}$. Since the map
$\mathcal{J}_{X_r,0} \xrightarrow{\nabla^{can}}
(\Omega^1_{X_r}\otimes\mathcal{J}_{X_r})^{cl}$ is an isomorphism
there is a unique section $\beta_{f,g}$ of $\mathcal{J}_{X_r,0}$
such that
\begin{equation}\label{definition of beta in gerbes}
\nabla^{can}\beta_{f,g} = \mathcal{G}_{f,g} \ .
\end{equation}

\begin{lemma}\label{lemma:beta is a cocycle}
For any triple of composable arrows
$[p]\stackrel{f}{\to}[q]\stackrel{g}{\to}[r]\stackrel{h}{\to}[s]$
the relation $h_*\beta_{f,g}=\beta_{g,h}-\beta_{gf,h}+\beta_{f,hg}$
holds.
\end{lemma}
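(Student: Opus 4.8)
The plan is to verify the cocycle relation by direct computation, tracking how the discrepancies $\mathcal{G}_{f,g}$ behave under the associativity constraint for the cosimplicial gerbe $\mathcal{G}$, and then transporting the identity across the isomorphism $\nabla^{can} \colon \mathcal{J}_{X_s,0} \isomoto (\Omega^1_{X_s}\otimes\mathcal{J}_{X_s})^{cl}$. First I would fix the triple $[p]\xrightarrow{f}[q]\xrightarrow{g}[r]\xrightarrow{h}[s]$ and recall that the $\beta$'s trivialize the torsors $\mathcal{G}_f$ obtained from the $1$-morphisms of the cosimplicial gerbe. The key input is the associativity $2$-isomorphism $\mathcal{G}_{f,g}$ (in the sense of the defining data of a cosimplicial stack), which gives, after pulling everything back to $X_s$, a relation among the four trivializations $\beta_{g\circ f, h}$, $\beta_{f, h\circ g}$, $h_*\beta_{f,g}$ (i.e. $X(h)^{-1}\beta_{f,g}$), and the triple-composition trivialization. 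Because all the relevant gerbes $\nabla^{can}\mathcal{G}^p$ are canonically trivialized (the composite $(\Omega^1\otimes\mathcal{J})^{cl}\to\Omega^1\otimes\mathcal{J}\xrightarrow{\nabla^{can}}(\Omega^2\otimes\mathcal{J})^{cl}$ vanishes), the torsors in play are all rigidified and their trivializations add and subtract as sections of $(\Omega^1_{X_s}\otimes\mathcal{J}_{X_s})^{cl}$.

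Concretely, I would expand both $\mathcal{G}_{g\circ f,h} = (\beta_h + X(h)^{-1}\beta_{g\circ f}) - \beta_{h\circ g\circ f}$ and $\mathcal{G}_{f,h\circ g} = (\beta_{h\circ g} + X(h\circ g)^{-1}\beta_f) - \beta_{h\circ g\circ f}$, and similarly substitute $X(h)^{-1}\beta_{g\circ f} = X(h)^{-1}(\beta_g + X(g)^{-1}\beta_f - \mathcal{G}_{f,g})$ and $\beta_{h\circ g} = \beta_h + X(h)^{-1}\beta_g - \mathcal{G}_{g,h}$. Plugging these into the alternating sum $\mathcal{G}_{g\circ f,h} - \mathcal{G}_{f, h\circ g}$, all the genuinely ``choice-dependent'' terms $\beta_h$, $X(h)^{-1}\beta_g$, $X(h\circ g)^{-1}\beta_f$, $\beta_{h\circ g\circ f}$ cancel in pairs (using $X(h\circ g)^{-1} = X(g)^{-1}\circ X(h)^{-1}$ on the pulled-back sections), leaving exactly $X(h)^{-1}\mathcal{G}_{f,g} + \mathcal{G}_{g,h} - \mathcal{G}_{g\circ f,h} - \mathcal{G}_{f, h\circ g} = 0$, which rearranges to $X(h)^{-1}\mathcal{G}_{f,g} = \mathcal{G}_{g\circ f, h} - \mathcal{G}_{g,h} + \mathcal{G}_{f, h\circ g}$ — wait, I must be careful with the signs dictated by the definition $\mathcal{G}_{f,g} = (\beta_g + X(g)^{-1}\beta_f) - \beta_{g\circ f}$; carrying the bookkeeping through yields $h_*\mathcal{G}_{f,g} = \mathcal{G}_{g,h} - \mathcal{G}_{g\circ f, h} + \mathcal{G}_{f, h\circ g}$ as sections of $(\Omega^1_{X_s}\otimes\mathcal{J}_{X_s})^{cl}$. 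Applying $\nabla^{can}$ to the defining equation $\nabla^{can}\beta_{f,g} = \mathcal{G}_{f,g}$, commuting $\nabla^{can}$ with the \'etale pullback $X(h)^{-1}$ (legitimate since the canonical connection is compatible with the special-sheaf structure maps), gives $\nabla^{can}(h_*\beta_{f,g}) = h_*\mathcal{G}_{f,g} = \mathcal{G}_{g,h} - \mathcal{G}_{g\circ f,h} + \mathcal{G}_{f,h\circ g} = \nabla^{can}(\beta_{g,h} - \beta_{g\circ f,h} + \beta_{f,h\circ g})$.

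Finally, since $\nabla^{can} \colon \mathcal{J}_{X_s,0} \to (\Omega^1_{X_s}\otimes\mathcal{J}_{X_s})^{cl}$ is an isomorphism (this is what let us define $\beta_{f,g}$ in the first place, and it holds componentwise at each finite jet level and passes to the limit), the equality of $\nabla^{can}$-images forces $h_*\beta_{f,g} = \beta_{g,h} - \beta_{g\circ f,h} + \beta_{f,h\circ g}$, which is precisely the asserted relation (with $gf$ and $hg$ abbreviating the composites). The main obstacle I anticipate is not conceptual but notational: keeping the pullback operators $X(f)^{-1}$, the abbreviated subscripts $\vec{f}$, and the signs in $\mathcal{G}_{f,g} = (\beta_g + X(g)^{-1}\beta_f) - \beta_{g\circ f}$ all consistent so that the four-fold cancellation is transparent, and making sure that the associativity constraint of the cosimplicial gerbe is invoked in exactly the right form — in particular that the canonical trivializations of $\nabla^{can}\mathcal{G}^p$ are used compatibly, so that ``$\mathcal{G}_{f,g}$ as a $2$-morphism'' and ``$\mathcal{G}_{f,g}$ as a global $1$-form'' refer to the same object and the associativity pentagon for $\mathcal{G}$ degenerates to the additive three-term identity above.
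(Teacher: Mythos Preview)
Your proposal is correct and follows essentially the same approach as the paper: establish the identity $h_*\mathcal{G}_{f,g}=\mathcal{G}_{g,h}-\mathcal{G}_{gf,h}+\mathcal{G}_{f,hg}$ by direct expansion of the definition $\mathcal{G}_{f,g}=(\beta_g+X(g)^{-1}\beta_f)-\beta_{g\circ f}$, translate this via $\nabla^{can}\beta_{f,g}=\mathcal{G}_{f,g}$ into $\nabla^{can}$ applied to the desired relation, and conclude using that $\nabla^{can}\colon\mathcal{J}_{X_s,0}\to(\Omega^1_{X_s}\otimes\mathcal{J}_{X_s})^{cl}$ is an isomorphism. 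The paper's proof merely asserts the first step as ``a direct calculation'' while you spell out the cancellations; otherwise the arguments are identical.
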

\begin{proof}
A direct calculation shows that
\[
h_*\mathcal{G}_{f,g}=\mathcal{G}_{g,h}-\mathcal{G}_{gf,h}+\mathcal{G}_{f,hg}
\ .
\]
Therefore (\eqref{definition of beta in gerbes}),
\[
\nabla^{can}(h_*\beta_{f,g} -
(\beta_{g,h}-\beta_{gf,h}+\beta_{f,hg})) = 0 \ .
\]
The map $\mathcal{J}_{X_r,0} \xrightarrow{\nabla^{can}}
(\Omega^1_{X_r}\otimes\mathcal{J}_{X_r})^{cl}$ is an isomorphism,
hence the claim follows.
\end{proof}

\subsubsection{}
We proceed in the notations introduced above. Let
\[
\pr_X \colon \Delta^q\times X_p\to X_p
\]
denote the projection. Let
\begin{equation}\label{pull-back of forms}
\pr_X^\dagger \colon
\pr_X^{-1}(\Omega^1_{X_p}\otimes_{\mathcal{O}_{X_p}}\mathcal{J}_{X_p})
\to
\Omega^1_{X_p\times\Delta^q}\otimes_{\pr_X^{-1}\mathcal{O}_{X_p}}\pr_X^{-1}\mathcal{J}_{X_p}
\end{equation}
denote the canonical map. For a
$(\Omega^1_{X_p}\otimes_{\mathcal{O}_{X_p}}\mathcal{J}_{X_p})$-gerbe
$\mathcal{G}$, let
\[
\pr_X^*\mathcal{G} = (\pr_X^\dagger)_*(\pr_X^{-1}\mathcal{G})
\]
Let
\[
\widetilde{\nabla}^{can} = \pr_X^*\nabla^{can} = d_\Delta\otimes\id
+ \id\otimes\nabla^{can} \ .
\]

Since $\sum_{i=0}^q t_i=1$ the composition
\begin{multline*}
\Omega^1_{X_p\times\Delta^q}\otimes_{\pr_X^{-1}\mathcal{O}_{X_p}}
\pr_X^{-1}\mathcal{J}_{X_p}\to \\
(\Omega^1_{X_p\times\Delta^q}\otimes_{\pr_X^{-1}\mathcal{O}_{X_p}}
\pr_X^{-1}\mathcal{J}_{X_p})^{\times(p+1)} \to \\
\Omega^1_{X_p\times\Delta^q}\otimes_{\pr_X^{-1}\mathcal{O}_{X_p}}\pr_X^{-1}\mathcal{J}_{X_p}
\end{multline*}
of the diagonal map with
$(\alpha_0,\ldots,\alpha_p)\mapsto\sum_{i=0}^q t_i\cdot\alpha_i$ is
the identity map, it follows that the composition
\begin{multline}\label{pull-back diag sum}
\pr_X^{-1}(\Omega^1_{X_p}\otimes_{\mathcal{O}_{X_p}}\mathcal{J}_{X_p}) \xrightarrow{\pr_X^\dagger} \\
\Omega^1_{X_p\times\Delta^q}\otimes_{\pr_X^{-1}\mathcal{O}_{X_p}}
\pr_X^{-1}\mathcal{J}_{X_p}\to
(\Omega^1_{X_p\times\Delta^q}\otimes_{\pr_X^{-1}\mathcal{O}_{X_p}}
\pr_X^{-1}\mathcal{J}_{X_p})^{\times(p+1)} \\ \to
\Omega^1_{X_p\times\Delta^q}\otimes_{\pr_X^{-1}\mathcal{O}_{X_p}}\pr_X^{-1}\mathcal{J}_{X_p}
\end{multline}
is equal to the map $\pr_X^\dagger$. Since, by definition, the
$(\Omega^1_{X_p\times\Delta^q}\otimes_{\pr_X^{-1}\mathcal{O}_{X_p}}\pr_X^{-1}\mathcal{J}_{X_p})$-gerbe
$\sum_{i=0}^p t_i\cdot\pr_X^*\mathcal{G}^p$ is obtained from
$\pr_X^{-1}\mathcal{G}^p$ via the ``change of structure group''
along the composition \eqref{pull-back diag sum}, it is canonically
equivalent to $\pr_X^*\mathcal{G}^p$.

Consider a simplex $\lambda\colon [n]\to\Delta$. Let
\[
\eth^\lambda \colon = \sum_{i=0}^n t_i\cdot\pr_X^*
\mathcal{G}_{\lambda(in)}(X(\lambda(in))^{-1}\eth^{\lambda(i)});
\]
thus $\eth^\lambda$ is a trivialization of
$\pr_X^*\mathcal{G}^{\lambda(n)}$. Since
$\lambda(in)_*\eth^{\lambda(i)}=\eth^{\lambda(n)} -
\mathcal{G}_{\lambda(in)}$
\[
\eth^\lambda = \pr_X^*\eth^{\lambda(n)} - \sum_{i=0}^n
t_i\cdot\pr_X^*\mathcal{G}_{\lambda(in)}
\]
Therefore,
\begin{multline*}
\widetilde{\nabla}^{can}\eth^\lambda = \sum_{i=0}^n
t_i\cdot\pr_X^*\lambda(in)_* \nabla^{can}\eth^{\lambda(i)} -
\sum_{i=0}^n dt_i\wedge\pr_X^*\mathcal{G}_{\lambda(in)} .
\end{multline*}
Let $B^\lambda$ denote the trivialization of
$\widetilde{\nabla}^{can}\eth^\lambda$ given by
\begin{multline}\label{B-lambda}
B^\lambda = \sum_{i=0}^n t_i\cdot\pr_X^*\lambda(in)_*B^{\lambda(i)}
- \sum_{i=0}^n dt_i\wedge\pr_X^*\beta_{\lambda(in)} -
\\
\widetilde{\nabla}^{can}\left(\sum_{0\leq i<j\leq
n}(t_idt_j-t_jdt_i)\wedge\pr_X^*\beta_{\lambda;ij}\right)
\end{multline}
where $\beta_{\lambda;ij} = \beta_{\lambda((ij)),\lambda((jn))}$.

\subsubsection{} Suppose that $\mu \colon  [m]\to\Delta$ is another simplex
and a morphism $\phi\colon [m]\to[n]$ such that
$\mu=\lambda\circ\phi$, i.e. $\phi$ is a morphism of simplices
$\mu\to\lambda$. Let $f \colon  \mu(m)\to\lambda(n)$ denote the map
$\lambda(\phi(m)\to n)$. The map $f$ induces the maps
\[
\begin{CD}
\Delta^m\times X_{\mu(m)} @<{\id\times X(f)}<< \Delta^m\times
X_{\lambda(n)} @>{f\times\id}>> \Delta^n\times X_{\lambda(n)}
\end{CD}
\]

\begin{prop}\label{prop: comb compat}
In the notations introduced above
\begin{enumerate}
\item $(\id\times X(f))^*\eth^\mu = (\phi\times\id)^*\eth^\lambda$
\item $(\id\times X(f))^*B^\mu = (\phi\times\id)^*B^\lambda$
\end{enumerate}
\end{prop}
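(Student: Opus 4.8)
The plan is to verify both identities by expanding the defining formulas for $\eth^{(\cdot)}$ and $B^{(\cdot)}$ and reducing everything to the compatibility of the cocycle data $(\eth^p, B^p, \beta_f, \beta_{f,g})$ under pullback along morphisms in $\Delta$. The key observation is that all the ingredients are built from inverse-image functors applied to the fixed cosimplicial gerbe $\mathcal{G}$ and to the chosen trivializations; pullback along the \'etale maps $X(f)$ is (strictly) functorial, so the whole question is combinatorial bookkeeping of simplex indices. Throughout I will use the notational scheme of \ref{notational scheme} and the factorization \eqref{factorization} together with the commutativity of the subdivision square in \ref{subsubsection: subdivision}.

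For part (1), I would start from the definition
\[
\eth^\lambda = \sum_{i=0}^n t_i\cdot\pr_X^*\mathcal{G}_{\lambda(in)}\bigl(X(\lambda(in))^{-1}\eth^{\lambda(i)}\bigr)
\]
and apply $(\phi\times\id)^*$. The pullback $(\phi\times\id)^*$ acts on the barycentric coordinates by $t_i \mapsto \sum_{k\in\phi^{-1}(i)} t_k$ and commutes with $\pr_X^*$ and with $X(\cdot)^{-1}$. Using $\mu = \lambda\circ\phi$, so that $\mu(j) = \lambda(\phi(j))$ and $\mu(jm) = \lambda(\phi(j)\,\phi(m))$, one rewrites $(\id\times X(f))^*\eth^\mu$ as a sum over $j=0,\dots,m$ and then regroups the sum over $i=0,\dots,n$ according to the fibers of $\phi$; the cocycle relation $\lambda(in)_*\eth^{\lambda(i)} = \eth^{\lambda(n)} - \mathcal{G}_{\lambda(in)}$ (equivalently the multiplicativity of the $\mathcal{G}_f$ via the $2$-morphisms $\mathcal{C}_{f,g}$, absorbed here additively) converts the "extra" terms coming from $\phi(j)\neq n$ into exactly the terms of $(\phi\times\id)^*\eth^\lambda$. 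The identity $(\id\times X(f))\circ(\text{incl}) = (f\times\id)\circ(\text{incl})$ at the level of the maps in the displayed diagram just before the proposition is what makes the two pullbacks land in the same trivialization of $\pr_X^*\mathcal{G}^{\lambda(n)} = \pr_X^*\mathcal{G}^{\mu(m)}$ (note $\lambda(n)=\mu(m)$ since $\phi(m)=m$... more precisely $f$ is an isomorphism when $\phi(m)=m$, and in general one uses the factorization).

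For part (2), I would expand $(\id\times X(f))^*B^\mu$ using \eqref{B-lambda} term by term. The first sum $\sum_j t_j\,\pr_X^*\mu(jm)_*B^{\mu(j)}$ regroups, as in part (1), against the fibers of $\phi$; the new cross terms produced by splitting $\mu(jm)_*$ through $\lambda(\phi(j)\,n)$ are governed by the relation $\mu((jm))_*B^{\mu(j)} = $ (pullback of) $B^{\lambda(\cdot)}$-terms plus $\beta$-corrections, and precisely these $\beta$-corrections are what the $dt$-terms $\sum dt_i\wedge\pr_X^*\beta_{\cdot}$ and the $\widetilde{\nabla}^{can}$-exact term $\sum(t_idt_j - t_jdt_i)\wedge\pr_X^*\beta_{\cdot;ij}$ are designed to cancel. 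The cocycle property of $\beta_{f,g}$ from Lemma \ref{lemma:beta is a cocycle} is exactly the input needed to see that the double-sum $\beta$-corrections telescope correctly; the discrepancy $\mathcal{G}_{f,g}$ and its $\nabla^{can}$-primitive $\beta_{f,g}$ enter through $\widetilde{\nabla}^{can}$ applied to the third term in \eqref{B-lambda}, where $\widetilde{\nabla}^{can} = d_\Delta\otimes\id + \id\otimes\nabla^{can}$ produces both a $dt\wedge dt$ piece and a $\nabla^{can}\beta_{f,g} = \mathcal{G}_{f,g}$ piece.

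The main obstacle I anticipate is the $\beta_{f,g}$-bookkeeping in part (2): one must match, on the nose, the terms coming from regrouping $\sum_j t_j\,\mu(jm)_*B^{\mu(j)}$ (which produce $\beta_{\lambda(\cdot),\lambda(\cdot)}$-type discrepancies via the failure of strict functoriality of the $B^p$ under composition) against the $\widetilde{\nabla}^{can}$-exact correction term, and this matching is exactly where Lemma \ref{lemma:beta is a cocycle} must be invoked in the correct combinatorial arrangement. The $t\,dt$ versus $t\,dt$-antisymmetrized structure in \eqref{B-lambda} is rigid, so any index mismatch is fatal; I would handle it by first treating the two extreme cases — $\phi$ injective (a "face" of simplices) and $\phi$ surjective (a "degeneracy") — since every $\phi$ factors as such, and for injective/surjective $\phi$ the fiber-regrouping is transparent. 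The pullback compatibilities $\pr_X^*$, $X(\cdot)^{-1}$, and $\widetilde{\nabla}^{can}$ commuting with $(\phi\times\id)^*$ and $(\id\times X(f))^*$ are routine and I would state them once and use them freely.
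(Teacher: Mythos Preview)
Your approach to part (1) is essentially the paper's: both sides are expanded using the definition of $\eth^{(\cdot)}$, the pullback rule $\phi^*(t_j)=\sum_{\phi(i)=j}t_i$, and the functoriality $f_*\circ\mu((im))_*=\lambda((\phi(i)\,n))_*$ coming from $\mu=\lambda\circ\phi$. The paper's computation is in fact slightly simpler than you indicate: there are no ``extra'' terms to convert via the cocycle relation---the regrouping over fibers of $\phi$ already matches the two sums term by term. Your parenthetical ``note $\lambda(n)=\mu(m)$ since $\phi(m)=m$'' is not right (in general $\phi(m)\neq n$ and $f=\lambda((\phi(m)\,n))$ is the nontrivial structure map), but this does not affect the argument.

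For part (2) you take a genuinely different route. You propose a direct term-by-term expansion of \eqref{B-lambda}, using the cocycle identity of Lemma~\ref{lemma:beta is a cocycle} to telescope the $\beta_{f,g}$-corrections, and you suggest reducing the bookkeeping by factoring $\phi$ as (surjection)$\circ$(injection). The paper instead invokes Lemma~\ref{lemma: restriction of forms on simplex}, which says that a form in $\Omega^{\le 1}_n$ with at most linear coefficients is determined by its restrictions to edges $[1]\hookrightarrow[n]$. Since both sides of the claimed identity lie in this subspace (after subtracting the matched first sums $\sum t_i\,\pr_X^*\lambda(in)_*B^{\lambda(i)}$), it suffices to check the case $\phi\colon[1]\to[n]$, which is a short explicit computation using $dt_0=-dt_1$. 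Your approach should work, but it requires tracking all cross terms arising from $\widetilde\nabla^{can}$ applied to the antisymmetrized $(t_idt_j-t_jdt_i)$ sum under a general $\phi$, which is exactly the ``rigid'' bookkeeping you flag as the main obstacle; the paper's reduction-to-edges trick sidesteps this entirely at the cost of the auxiliary injectivity lemma.
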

\begin{proof}
For $\phi\colon [m]\to[n]$ the induced map $\phi^* \colon
\Omega^\bullet_{\Delta^n}\to\Omega^\bullet_{\Delta^m}$ is given by
$\phi^*(t_j) = \sum_{\phi(i)=j}t_i$.

\begin{multline*}
(\id\times X(f))^*\eth^\mu = \\ (\id\times X(f))^*\sum_{i=0}^m t_i\cdot\pr_{\mu(m)}^*\mu((im))_*\eth^{\mu(i)} = \\
\sum_{i=0}^m
t_i\cdot\pr_{\lambda(n)}^*f_*\mu((im))_*\eth^{\mu(i)} = \\
\sum_{j=0}^n\sum_{\phi(i)=j}t_i\cdot
\pr_{\lambda(n)}^*\lambda((jn))_*\eth^{\lambda(j)} =
(\phi\times\id)^*\eth^\lambda
\end{multline*}

Therefore, $(\id\times X(f))^*\widetilde{\nabla}^{can}\eth^\mu =
(\phi\times\id)^*\widetilde{\nabla}^{can}\eth^\lambda$.

By Lemma \ref{lemma: restriction of forms on simplex}, it suffices
to verify the second claim for $\phi\colon [1]\to[n]$. Let
$k=\lambda(\phi(0))$, $l=\lambda(\phi(1))$, $p=\lambda(n)$. With
these notations $f=(lp)$ and the left hand side reduces to
\[
t_0\cdot\pr_p^*(kp)_*B^k + t_1\cdot\pr_p^*(lp)_*B^l +
dt_0\wedge\pr_p^*(lp)_*\beta_{(kl)}
\]
while the right hand side reads
\begin{multline*}
t_0\cdot\pr_p^*(kp)_*B^k + t_1\cdot\pr_p^*(lp)_*B^l + \\
dt_0\wedge\pr_p^*\beta_{(kp)} + dt_1\wedge\pr_p^*\beta_{(lp)} + \\
(t_1dt_0-t_0dt_1)\wedge\pr_p^*\alpha_{(kl),(lp)}
\end{multline*}
Using $t_1dt_0-t_0dt_1 = dt_0 = -dt_1$ and the definition of
$\alpha_{(kl),(lp)}$ one sees that the two expressions are indeed
equal.
\end{proof}

\begin{lemma}\label{lemma: restriction of forms on simplex}
The map
\[
\Omega^{\leq 1}_n \to \prod_{\Hom_\Delta([1],[n])} \Omega_1
\]
induced by the maps $\Delta((ij))^* \colon  \Omega^{\leq 1}_n \to
\Omega_1$, $i\leq j$, is injective on the subspace of form with
coefficients of degree at most one.
\end{lemma}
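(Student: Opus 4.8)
The claim is that the linear map $\Omega^{\leq 1}_n \to \prod_{\Hom_\Delta([1],[n])} \Omega_1$ assembled from the pullbacks $\Delta((ij))^*$ is injective on the subspace of forms whose coefficients have polynomial degree at most one. I would prove this by a direct computation in coordinates, exploiting the explicit presentation of the simplicial de Rham algebra $\Omega_n$ recalled in the totalization subsection: $\Omega_n$ is generated by $t_0,\dots,t_n$ in degree $0$ and $dt_0,\dots,dt_n$ in degree $1$, subject to $\sum t_i = 1$ and $\sum dt_i = 0$. A general element of $\Omega^{\leq 1}_n$ with coefficients of degree at most one has the form $\omega = c + \sum_i a_i t_i + \sum_i b_i\, dt_i$ (using the relation $\sum t_i = 1$ to absorb one monomial, and $\sum dt_i = 0$ to normalize the one-form part), with scalars $c, a_i, b_i$; so the space in question is finite dimensional and I just need to check that $\omega \mapsto (\Delta((ij))^*\omega)_{i<j}$ has trivial kernel.

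\medskip
\noindent\textbf{Key steps.} First I would record the explicit form of $\Delta((ij))^* \colon \Omega_n \to \Omega_1$: the morphism $(ij)\colon [1]\to[n]$ sends $0\mapsto i$, $1\mapsto j$, so on coordinates $\Delta((ij))^*$ sends $t_i \mapsto s_0$ (the first barycentric coordinate on $\Delta^1$), $t_j \mapsto s_1$, and $t_k \mapsto 0$ for $k\notin\{i,j\}$ — equivalently, using $\phi^*(t_l) = \sum_{\phi(i)=l} t_i$ as already noted in the proof of Proposition~\ref{prop: comb compat}. Then $\Delta((ij))^*\omega = c + a_i s_0 + a_j s_1 + b_i\, ds_0 + b_j\, ds_1$. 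Since on $\Delta^1$ we have $s_0 + s_1 = 1$ and $ds_0 = -ds_1$, reading off the coefficient of $s_0$ (namely $a_i - a_j$, after writing $s_1 = 1-s_0$) and the coefficient of $ds_0$ (namely $b_i - b_j$) shows that $\Delta((ij))^*\omega = 0$ forces $a_i = a_j$, $b_i = b_j$, and $c + a_j = 0$, for every pair $i<j$. Second, letting all pairs range: $a_i = a_j$ for all $i,j$ means all $a_i$ are equal to a common value $a$, and likewise all $b_i$ equal a common value $b$; then $\sum_i b_i\, dt_i = b\sum_i dt_i = 0$, and $\sum_i a_i t_i = a\sum_i t_i = a$, so $\omega = c + a$ is a constant, while $c + a = 0$ gives $\omega = 0$. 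Hence the kernel is trivial and the map is injective on that subspace.

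\medskip
\noindent\textbf{Where the work lies.} There is no real obstacle here; the only thing requiring care is bookkeeping with the two relations $\sum t_i = 1$, $\sum dt_i = 0$ so as not to overcount the dimension of the source or to misidentify which linear combinations of the $a_i, b_i$ are actually detected by the $\Delta((ij))^*$. One should also be slightly careful about what "coefficients of degree at most one" excludes: it rules out genuinely quadratic coefficients like $t_i t_j$ or $t_i\, dt_j$, which is exactly what makes the argument finite and forces the coefficient-matching above to be linear; for higher-degree coefficients the statement would fail (e.g. $t_0 t_1 \cdots t_n$ vanishes under every $\Delta((ij))^*$ once $n\geq 2$). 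So the substantive content is just the observation that a degree-$\le 1$ form is determined by its restrictions to all edges, which the coordinate computation makes transparent.
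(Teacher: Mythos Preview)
Your overall strategy (direct coordinate computation) is the right one, and indeed the paper simply says ``Left to the reader,'' so there is no alternative proof to compare with. However, there is a genuine gap in your identification of the subspace.

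You write the general element as $\omega = c + \sum_i a_i t_i + \sum_i b_i\, dt_i$ and explicitly claim that terms like $t_i\, dt_j$ are excluded as ``quadratic.'' This is a misreading: in the $1$-form $t_i\, dt_j$ the \emph{coefficient} is $t_i$, which has polynomial degree one, so such terms belong to the subspace in question. (This matters for the applications: both $B^\lambda$ in Proposition~\ref{prop: comb compat} and $F^\lambda$ in Lemma~\ref{lemma:Phi is delta F} contain contributions of the shape $(t_i\,dt_j - t_j\,dt_i)$.) The correct general element is
\[
\omega \;=\; c + \sum_i a_i t_i \;+\; \sum_j b_j\, dt_j \;+\; \sum_{i,j} c_{ij}\, t_i\, dt_j,
\]
and your argument as written does not address the $c_{ij}$.

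The gap is easily repaired by extending your computation. For $k<l$ one finds
\[
\Delta((kl))^*\Bigl(\sum_j b_j\,dt_j + \sum_{i,j} c_{ij} t_i\,dt_j\Bigr)
= \bigl[(b_l-b_k+c_{ll}-c_{lk}) + (c_{kl}-c_{kk}-c_{ll}+c_{lk})\,s_0\bigr]\,ds_1 .
\]
Vanishing for all $k<l$ forces $c_{ij} = c_{ii} + b_i - b_j$ for all $i,j$; substituting back and using $\sum_i t_i = 1$, $\sum_j dt_j = 0$ gives
\[
\sum_j b_j\,dt_j + \sum_{i}(c_{ii}+b_i)\,t_i\sum_j dt_j - \sum_j b_j\,dt_j\sum_i t_i = 0,
\]
so the degree-one part of $\omega$ vanishes. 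Your handling of the degree-zero part is fine. With this addition the proof is complete.
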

\begin{proof}
Left to the reader.
\end{proof}

\subsubsection{}\label{1-forms-gerbe}
Suppose that $\mathcal{F}$ is a cosimplicial $\Omega^{1,cl}_X$-gerbe
on $X$.

The composition
\[
\mathcal{O}_X \xrightarrow{\vac} \mathcal{J}_X
\xrightarrow{\nabla^{can}} \Omega^1_X\otimes\mathcal{J}_X
\]
is a derivation, hence factors canonically through the map denoted
\[
\vac \colon  \Omega^1_X \to \Omega^1_X\otimes\mathcal{J}_X
\]
which maps closed forms to closed forms.

Put $\mathcal{G} = \vac(\mathcal{F})$ in \ref{dlog-gerbe}. Since the
composition
\[
\Omega^{1,cl}_{X_p} \to
(\Omega^1_{X_p}\otimes\mathcal{J}_{X_p})^{cl} \to
(\Omega^1_{X_p}\otimes\overline{\mathcal{J}}_{X_p})^{cl}
\]
is equal to the zero map, the
$(\Omega^1_{X_p}\otimes\overline{\mathcal{J}}_{X_p})^{cl}$-gerbe
$\overline{\mathcal{G}^p}$ is canonically trivialized for each $p$.
Therefore, $\overline{\eth^p}$ may (and will) be regarded as a
$(\Omega^1_{X_p}\otimes\overline{\mathcal{J}}_{X_p})^{cl}$-torsor.
The
$(\Omega^2_{X_p}\otimes\overline{\mathcal{J}}_{X_p})^{cl}$-torsor
$\nabla^{can}\overline{\eth^p}$ is canonically trivialized,
therefore, $\overline{B^p}$ may (and will) be regarded as a section
of $(\Omega^2_{X_p}\otimes\overline{\mathcal{J}}_{X_p})^{cl}$.

Since, for a morphism $f\colon [p]\to[q]$, $\mathcal{G}_f =
\vac(\mathcal{F}_f)$, it follows that $\overline{\mathcal{G}_f}$ is
canonically trivialized, hence $\overline{\beta_f}$ may (and will)
be regarded as a section of
$(\Omega^1_{X_p}\otimes\overline{\mathcal{J}}_{X_p})^{cl}$.

It follows that, for a simplex $\lambda\colon [n]\to\Delta$, the
formula \eqref{B-lambda} gives rise to a section
$\overline{B^\lambda}$ of $\Omega^2_{\Delta^n\times
X_{\lambda(n)}}\otimes
\pr_X^*\overline{\mathcal{J}}_{X_{\lambda(n)}}$ which clearly
satisfies $\widetilde{\nabla}^{can}\overline{B^\lambda} = 0$.

\begin{prop}\label{prop:class `1-forms-gerbe}
In the notations of \ref{1-forms-gerbe}
\begin{enumerate}
\item the assignment $\overline{B}\colon \lambda\mapsto\overline{B}^\lambda$ defines a cycle in the complex
    $\Tot(\Gamma(\real{X};\DR(\overline{\mathcal{J}}_{\real{X}}))$

\item The class of $\overline{B}$ in $H^2(\Tot(\Gamma(\real{X};\DR(\overline{\mathcal{J}}_{\real{X}})))$ coincides
    with the image of the class $[\mathcal{F}] \in H^2(X;\Omega^{1,cl}_X)$ of the gerbe $\mathcal{F}$ under the
    composition
\[
H^2(X;\Omega^{1,cl}_X) \to H^2(\real{X};\real{\Omega^{1,cl}_X}) \to
H^2(\Tot(\Gamma(\real{X};\DR(\overline{\mathcal{J}}_{\real{X}})))
\]
where the first map is the canonical isomorphism (see \ref{sheaves
on subdivision}) and the second map is induced by the map
$\Omega^{1,cl}_X \cong \mathcal{O}_X/\mathbb{C}
\xrightarrow{j^\infty} \overline{\mathcal{J}}_X$. In particular, the
class of $\overline{B}$ does not depend on the choices made.
\end{enumerate}
\end{prop}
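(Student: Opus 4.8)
The first assertion is, in effect, already in hand. With the conventions of the paper the differential of $\Tot(\Gamma(\real{X};\DR(\overline{\mathcal{J}}_{\real{X}})))$ acts, on the component indexed by a simplex $\lambda\colon[n]\to\Delta$, as $\widetilde{\nabla}^{can}$ --- the cosimplicial structure of $\Gamma(\real{X};-)$ having been absorbed, through the very definition of $\Tot$, into the polynomial de Rham forms on the simplices --- and it was noted in \ref{1-forms-gerbe} that $\widetilde{\nabla}^{can}\overline{B^\lambda}=0$ for every $\lambda$. Hence $\overline{B}$ is annihilated by the differential, and all that remains is to verify that $\overline{B}=(\overline{B^\lambda})_\lambda$ lies in the subspace $\Tot(\dots)\subset\prod_n\Omega_n\otimes(\dots)$ carved out by the conditions $(\id\otimes V(f))(\overline{B}_p)=(\Omega(f)\otimes\id)(\overline{B}_q)$. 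Writing out the structure maps of $\real{\Gamma(X;\DR(\overline{\mathcal{J}}))}$ (pull-backs along the maps $X(\beth(f)^\lambda)=X(c)$, in the notation of \eqref{factorization}), this condition is exactly Proposition \ref{prop: comb compat}(2) reduced modulo $\vac(\mathcal{O}_X)$. This proves (1).

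For (2) the plan splits in two. First, I would exhibit the chosen data $\{\eth^p,B^p,\beta_f\}$ as a \v{C}ech--de Rham representative of $[\mathcal{F}]$ pushed to $\overline{\mathcal{J}}$-coefficients: the section $\overline{B^0}$, together with the cochains assembled from the $\overline{\beta_f}$'s and from the $\overline{\beta_{f,g}}$'s, form a cocycle of total degree $2$ in the (de Rham)$\times$(cosimplicial) double complex $N(\Gamma(X;\DR(\overline{\mathcal{J}}_X)))$. Its four cocycle relations are, in order, $\nabla^{can}\overline{B^0}=0$ (already in \ref{1-forms-gerbe}); the identity expressing the cosimplicial differential of $\overline{B^0}$ through $\nabla^{can}$ of the $\overline{\beta_f}$-cochain, which comes from the fact that $B^p$ trivializes $\nabla^{can}\eth^p$ while $\beta_f$ trivializes $\mathcal{G}_f$; the identity relating the cosimplicial differential of the $\overline{\beta_f}$-cochain to $\nabla^{can}$ of the $\overline{\beta_{f,g}}$-cochain, which is the reduction of \eqref{definition of beta in gerbes} together with $\mathcal{G}_{f,g}=(\beta_g+X(g)^{-1}\beta_f)-\beta_{g\circ f}$; and the vanishing of the cosimplicial differential of the $\overline{\beta_{f,g}}$-cochain, which is precisely Lemma \ref{lemma:beta is a cocycle}. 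Since the sheaves of jets are soft, the terms of $\DR(\overline{\mathcal{J}}_X)$ are soft, so this cocycle represents a class in $H^2(X;\DR(\overline{\mathcal{J}}_X))$; by the argument of \ref{subsubsection: char class}(2) --- change of band for the characteristic class of a gerbe --- this class is the image of $[\mathcal{F}]\in H^2(X;\Omega^{1,cl}_X)$ under $\Omega^{1,cl}_X\cong\mathcal{O}_X/\mathbb{C}\xrightarrow{j^\infty}\overline{\mathcal{J}}_X$.

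Second, I would identify $[\overline{B}]$ --- transported through the quasiisomorphism $\int$ from $\Tot$ to the normalized complex and through the subdivision isomorphism (Lemma \ref{lemma: subdiv quism} and \ref{sheaves on subdivision}) --- with the class just produced. The key is to read the right-hand side of \eqref{B-lambda} as the value on the simplex $\lambda$ of the standard chain-level map from the subdivided (de Rham)$\times$(cosimplicial) complex into $\Tot$ which is a quasiinverse to $\int$: the weights $t_i$ at the vertices, the forms $dt_i$ paired with the $\beta_f$'s, and the forms $t_idt_j-t_jdt_i$ paired with the $\beta_{\lambda;ij}$'s are exactly the Whitney/Dupont simplicial forms implementing this comparison. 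It then follows that $\int\overline{B}$ is cohomologous to the image under subdivision of the \v{C}ech--de Rham cocycle above, so the two classes agree, and in particular $[\overline{B}]$ does not depend on the choices of $\eth^p$, $B^p$, $\beta_f$. I expect the main obstacle to be the bookkeeping in this last step: matching every sign in \eqref{B-lambda} against those of the comparison map and of the cosimplicial differential, and checking that the $\widetilde{\nabla}^{can}$-exact correction term in \eqref{B-lambda} is precisely what promotes a cocycle-up-to-coboundary to a genuine element of $\Tot$ --- which is the reason Proposition \ref{prop: comb compat} and the cocycle property of the $\beta_{f,g}$'s (Lemma \ref{lemma:beta is a cocycle}) are the load-bearing inputs.
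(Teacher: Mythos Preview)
The paper does not supply a proof of this proposition; it is stated and then the exposition moves on to \ref{subsubsection: char class trivialized gerbe}. So there is nothing to compare against directly, and the question is simply whether your argument stands on its own.

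Part (1) is fine. You have correctly identified the two ingredients: closedness under $\widetilde{\nabla}^{can}$ is recorded in \ref{1-forms-gerbe}, and membership in $\Tot$ (the simplicial compatibility) is the reduction modulo $\vac(\mathcal{O}_X)$ of Proposition \ref{prop: comb compat}(2). That is a complete proof.

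Part (2) is a plan rather than a proof, and you are honest about that. The strategy is the right one and is essentially forced: the triple $(\overline{B^p},\overline{\beta_f},\overline{\beta_{f,g}})$ is a total-degree-$2$ cocycle in the \v{C}ech--de Rham bicomplex $N(\Gamma(X;\DR(\overline{\mathcal{J}}_X)))$, and the formula \eqref{B-lambda} is visibly the Dupont/Whitney map applied to that cocycle (the $t_i$, $dt_i$, and $t_idt_j-t_jdt_i$ are the elementary Whitney forms in degrees $0$, $1$, $2$), up to the $\widetilde{\nabla}^{can}$-exact correction. Two points you should tighten if you want this to be a proof rather than a sketch. First, the four cocycle identities you list need to be checked as written; in particular the one linking $\partial\overline{B^\bullet}$ to $\nabla^{can}\overline{\beta_\bullet}$ requires unwinding what ``$B^p$ trivializes $\nabla^{can}\eth^p$'' means after reduction mod $\vac$, and this is where the hypothesis $\mathcal{G}=\vac(\mathcal{F})$ is used (so that $\overline{\mathcal{G}^p}$ and $\overline{\mathcal{G}_f}$ are canonically trivial). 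Second, the identification of the resulting class with the image of $[\mathcal{F}]$ under $\Omega^{1,cl}\cong\mathcal{O}/\mathbb{C}\xrightarrow{j^\infty}\overline{\mathcal{J}}$ is not literally ``the argument of \ref{subsubsection: char class}(2)'', since that subsection treats a single manifold rather than a cosimplicial one; you need the cosimplicial version, which amounts to observing that $(\overline{\beta_{f,g}})$ alone, viewed in $H^2(X;\overline{\mathcal{J}}_{X,0})\cong H^2(X;\mathcal{O}_X/\mathbb{C})$, already represents the image of $[\mathcal{F}]$, and that the other two components merely implement the passage from \v{C}ech to de Rham resolution. None of this is hard, but it is where the content lies; the rest is, as you say, bookkeeping.
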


\subsubsection{}\label{subsubsection: char class trivialized gerbe}
In the rest of the section we will assume that $\mathcal{S}$ is a
cosimplicial $\mathcal{O}_X^\times$-gerbe on $X$ such that the
gerbes ${\mathcal S}^p$ are trivial for all $p$, i.e. ${\mathcal
S}^p = \mathcal{O}_X^\times[1]$. Our present goal is to obtain
simplified expressions for $B$ and $\overline{B}$ in this case.

Since a morphism of trivial $\mathcal{O}^\times$-gerbes is an
$\mathcal{O}^\times$-torsor (equivalently, a locally free
$\mathcal{O}$-module of rank one), a cosimplicial gerbe
$\mathcal{S}$ as above is given by the following collection of data:
\begin{enumerate}
\item for each morphism $f\colon [p]\to[q]$ in $\Delta$ a line bundle ${\mathcal S}_f$ on $X_q$,

\item for each pair of morphisms $[p]\stackrel{f}{\longrightarrow}[q] \stackrel{g}{\longrightarrow}[r]$ an
    isomorphism ${\mathcal S}_{f,g}\colon  {\mathcal S}_g\otimes X(g)^*({\mathcal S}_f)\to{\mathcal S}_{g\circ f}$
    of line bundles on $X_r$
\end{enumerate}
These are subject to the associativity condition: for a triple of
composable arrows $[p]\stackrel{f}{\longrightarrow}[q]
\stackrel{g}{\longrightarrow}[r]\stackrel{h}{\longrightarrow}[s]$
\[
\mathcal{S}_{g\circ f, h}\otimes X(h)^{-1}\mathcal{S}_{f,g} =
\mathcal{S}_{f, h\circ g}\otimes\mathcal{S}_{g,h}
\]

In order to calculate the characteristic class of $\mathcal{S}$ we
will follow the method (and notations) of \ref{dlog-gerbe} and
\ref{1-forms-gerbe} with $\mathcal{F} = d\log(\mathcal{S})$ and the
following choices:
\begin{enumerate}
\item $\eth^p$ is the canonical isomorphism $\Omega^1_{X_p}\otimes\mathcal{J}_{X_p}[1] =
    \nabla^{can}\log\vac(\mathcal{O}^\times_{X_p})[1]$, i.e. is given by the trivial torsor
    $\Omega^1_{X_p}\otimes\mathcal{J}_{X_p} = \nabla^{can}\log\vac(\mathcal{O}^\times_{X_p})$;

\item $B^p$ is the canonical isomorphism $\Omega^2_{X_p}\otimes\mathcal{J}_{X_p} =
    \nabla^{can}(\Omega^1_{X_p}\otimes\mathcal{J}_{X_p})$.
\end{enumerate}
Then, $\mathcal{G}_f = \nabla^{can}\log\vac(\mathcal{S}_f) = \vac
d\log(\mathcal{S}_f)$ and $B_f$ is equal to the canonical
trivialization of $\nabla^{can}\mathcal{G}_f =
\nabla^{can}\nabla^{can}\log\vac({\mathcal S}_f)$ (stemming from
$\nabla^{can}\circ\nabla^{can}=0$).

For each $f \colon  [p]\to [q]$ we choose
\begin{enumerate}
\item[(3)] a $\mathcal{J}_{X_q}$-linear isomorphism $\sigma_f \colon \mathcal{S}_f\otimes\mathcal{J}_{X_q}\to
    \mathcal{J}_{X_q}(\mathcal{S}_f)$ which reduced to the identity modulo $\mathcal{J}_{X_q,0}$

\item[(4)] a trivialization of $d\log(\mathcal{S}_f)$, i.e. a connection $\nabla_f$ on the line bundle
    $\mathcal{S}_f$
\end{enumerate}
Let $\beta_f =
\sigma_f^{-1}\circ\nabla^{can}_{\mathcal{S}_f}\circ\sigma_f$; thus,
$\beta_f$ is a trivialization of $\mathcal{G}_f$. The choice of
$\nabla_f$ determines another trivialization of $\mathcal{G}_f$,
namely $\vac(\nabla_f) = \nabla_f\otimes\id +
\id\otimes\nabla^{can}_{\mathcal{O}}$.

Let $F_f \colon = \beta_f -
\vac(\nabla_f)\in\Gamma(X_q;\Omega^1_{X_q}\otimes\mathcal{J}_{X_q})$.
Flatness of $\nabla^{can}_{\mathcal{S}_f}$ implies that $F_f$
satisfies $\nabla^{can}_{\mathcal{O}}F_f + \vac(c(\nabla_f)) = 0$
which implies that $\nabla^{can}\overline F_f = 0$.

For a pair of composable arrows
$[p]\stackrel{f}{\to}[q]\stackrel{g}{\to}[r]$ we have
\begin{equation}\label{equation on beta in gerbes}
\nabla^{can}\overline\beta_{f,g} = \overline F_g + X(g)^*\overline
F_f - \overline F_{g\circ f} \ .
\end{equation}

With these notations, for $\lambda \colon  [n]\to\Delta$, we have:
\begin{multline*}
B^\lambda = -\sum_i dt_i\wedge(\vac(\nabla_{\lambda(in)}) +
F_{\lambda(in)}) - \\
\widetilde\nabla^{can}\left(\sum_{0\leq i<j\leq n} (t_idt_j -
t_jdt_i)\wedge\pr^*_X\beta_{\lambda;ij}\right)
\end{multline*}
and
\begin{multline}\label{B for gerbes}
\overline{B}^\lambda = -\sum_i dt_i\wedge\overline F_{\lambda(in)} - \\
\widetilde\nabla^{can}\left(\sum_{0\leq i<j\leq n} (t_idt_j -
t_jdt_i)\wedge\pr^*_X\overline\beta_{\lambda;ij}\right) \ .
\end{multline}

\begin{prop}
In the notations of \ref{subsubsection: char class trivialized
gerbe}
\begin{enumerate}
\item the assignment $\overline{B}\colon \lambda\mapsto\overline{B}^\lambda$ defines a cycle in the complex
    $\Tot(\Gamma(\real{X};\DR(\overline{\mathcal{J}}_{\real{X}}))$

\item The class of $\overline{B}$ in $H^2(\Tot(\Gamma(\real{X};\DR(\overline{\mathcal{J}}_{\real{X}})))$ coincides
    with the image of the class $[\mathcal{S}] \in H^2(X;\mathcal{O}^\times_X)$ of the gerbe $\mathcal{S}$ under
    the composition
\[
H^2(X;\mathcal{O}^\times_X) \to
H^2(\real{X};\real{\mathcal{O}^\times_X}) \to
H^2(\Tot(\Gamma(\real{X};\DR(\overline{\mathcal{J}}_{\real{X}})))
\]
where the first map is the canonical isomorphism (see \ref{sheaves
on subdivision}) and the second map is induced by the map
$\mathcal{O}^\times_X \to \mathcal{O}^\times_X/\mathbb{C}^\times
\xrightarrow{\log} \mathcal{O}_X/\mathbb{C} \xrightarrow{j^\infty}
\overline{\mathcal{J}}_X$. In particular, the class of
$\overline{B}$ does not depend on the choices made.
\end{enumerate}
\end{prop}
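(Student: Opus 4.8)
The plan is to recognise this statement as the specialisation of Proposition \ref{prop:class `1-forms-gerbe} to the cosimplicial $\Omega^{1,cl}_X$-gerbe $\mathcal{F}=d\log(\mathcal{S})$ equipped with the choices (1)--(4) fixed in \ref{subsubsection: char class trivialized gerbe}; the bulk of the work then consists of matching the general construction of \ref{dlog-gerbe}--\ref{1-forms-gerbe} with the explicit formulas in use here. First I would check that, with these choices, $\mathcal{G}_f=\nabla^{can}\log\vac(\mathcal{S}_f)=\vac\,d\log(\mathcal{S}_f)$, that its trivialisation $\beta_f=\sigma_f^{-1}\circ\nabla^{can}_{\mathcal{S}_f}\circ\sigma_f$ differs from the trivialisation $\vac(\nabla_f)$ induced by the connection $\nabla_f$ by the $\Omega^1_{X_q}\otimes\mathcal{J}_{X_q}$-valued term $F_f$, and that \eqref{definition of beta in gerbes} reduces modulo $\vac(\mathcal{O}_X)$ to \eqref{equation on beta in gerbes} by means of the isomorphism $\nabla^{can}\colon\mathcal{J}_{X_r,0}\xrightarrow{\sim}(\Omega^1_{X_r}\otimes\mathcal{J}_{X_r})^{cl}$. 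Substituting $\eth^p$, $B^p$, $\beta_f$ and $F_f$ into \eqref{B-lambda} and reducing modulo $\vac(\mathcal{O}_X)$ --- a reduction which annihilates the $\vac(\nabla_{\lambda(in)})$ contributions --- then produces exactly the formula \eqref{B for gerbes} for $\overline{B}^\lambda$.

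With \eqref{B for gerbes} in hand, part (1) is a direct consequence of Proposition \ref{prop:class `1-forms-gerbe}(1): the family $\lambda\mapsto\overline{B}^\lambda$ inherits the compatibility across morphisms of simplices recorded in Proposition \ref{prop: comb compat} (applied to the present choices and pushed to $\overline{\mathcal{J}}$), hence defines an element of total degree $2$ in $\Tot(\Gamma(\real{X};\DR(\overline{\mathcal{J}}_{\real{X}})))$, and $\widetilde{\nabla}^{can}\overline{B}^\lambda=0$ for every $\lambda$, so it is a cycle. One can also verify $\widetilde{\nabla}^{can}\overline{B}^\lambda=0$ directly from \eqref{B for gerbes}, using $\nabla^{can}\overline{F}_f=0$, the cocycle identity \eqref{equation on beta in gerbes} for the sections $\overline{\beta}_{f,g}$ (itself Lemma \ref{lemma:beta is a cocycle} reduced modulo $\vac(\mathcal{O}_X)$), and $\widetilde{\nabla}^{can}\circ\widetilde{\nabla}^{can}=0$.

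For part (2), Proposition \ref{prop:class `1-forms-gerbe}(2) already identifies the class of $\overline{B}$ with the image of $[\mathcal{F}]=[d\log(\mathcal{S})]\in H^2(X;\Omega^{1,cl}_X)$ under the composition of the canonical isomorphism $H^2(X;\Omega^{1,cl}_X)\cong H^2(\real{X};\real{\Omega^{1,cl}_X})$ of \ref{sheaves on subdivision} with the map to $H^2(\Tot(\Gamma(\real{X};\DR(\overline{\mathcal{J}}_{\real{X}}))))$ induced by $\Omega^{1,cl}_X\cong\mathcal{O}_X/\mathbb{C}\xrightarrow{j^\infty}\overline{\mathcal{J}}_X$. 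It then remains to note that $[d\log(\mathcal{S})]$ is the image of $[\mathcal{S}]\in H^2(X;\mathcal{O}^\times_X)$ under the morphism of cosimplicial sheaves $d\log\colon\mathcal{O}^\times_X\to\Omega^{1,cl}_X$ --- immediate, since $\mathcal{F}$ was defined as $d\log(\mathcal{S})$ and the characteristic class of a gerbe is functorial in the coefficient sheaf --- and that the composite $\mathcal{O}^\times_X\xrightarrow{d\log}\Omega^{1,cl}_X\cong\mathcal{O}_X/\mathbb{C}\xrightarrow{j^\infty}\overline{\mathcal{J}}_X$ coincides with $\mathcal{O}^\times_X\to\mathcal{O}^\times_X/\mathbb{C}^\times\xrightarrow{\log}\mathcal{O}_X/\mathbb{C}\xrightarrow{j^\infty}\overline{\mathcal{J}}_X$. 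Chasing these through $H^2$ by functoriality yields the asserted identification; in particular the independence of the choices of $\sigma_f$ and $\nabla_f$ is part of this conclusion.

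The step I expect to be the main obstacle is the first one: carrying out the reduction of the general formula \eqref{B-lambda} to \eqref{B for gerbes} while keeping careful track of signs, of the distinction between gerbes, their trivialising torsors, and the $1$-forms realising the latter, and of the two competing trivialisations $\beta_f$ and $\vac(\nabla_f)$ of $\mathcal{G}_f$. Once \eqref{B for gerbes} and \eqref{equation on beta in gerbes} are established, both assertions of the Proposition are formal consequences of \ref{1-forms-gerbe} and Proposition \ref{prop:class `1-forms-gerbe}.
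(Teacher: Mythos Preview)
Your proposal is correct and follows exactly the route the paper intends: the paper does not supply a proof for this proposition, but the text of \ref{subsubsection: char class trivialized gerbe} explicitly says it is ``following the method (and notations) of \ref{dlog-gerbe} and \ref{1-forms-gerbe} with $\mathcal{F}=d\log(\mathcal{S})$,'' and then derives the formula \eqref{B for gerbes} from \eqref{B-lambda} under the specific choices (1)--(4). Your reduction of the statement to Proposition~\ref{prop:class `1-forms-gerbe} together with the functoriality of the gerbe class under $d\log\colon\mathcal{O}^\times_X\to\Omega^{1,cl}_X$ and the identification $\Omega^{1,cl}_X\cong\mathcal{O}_X/\mathbb{C}$ is precisely the argument the exposition is set up to invoke.
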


\section{Deformations and DGLA} \label{Deformations and DGLA}

\subsection{Deligne $2$-groupoid}

\subsubsection{Deligne $2$-groupoid}
In this subsection we review the construction of Deligne
$2$-groupoid of a nilpotent differential graded algebra (DGLA). We
follow \cite{G, G1} and the references therein.

Suppose that $\mathfrak{g}$ is a nilpotent DGLA such that
$\mathfrak{g}^i = 0$ for $i< -1$.

A Maurer-Cartan element of $\mathfrak{g}$  is an element $\gamma \in
\mathfrak{g}^1$ satisfying
\begin{equation} \label{eq:MC}
d\gamma + \frac{1}{2}[\gamma,\gamma]=0.
\end{equation}
We denote by $\MC^2(\mathfrak{g})_0$ the set of Maurer-Cartan
elements of $\mathfrak{g}$.

 The unipotent group $\exp
\mathfrak{g}^0$ acts on the set of Maurer-Cartan elements of
$\mathfrak{g}$ by gauge equivalences. This action is given by the
formula
\begin{equation*}
(\exp X) \cdot \gamma= \gamma- \sum_{i=0}^{\infty} \frac{(\ad
X)^i}{(i+1)!}(dX +[\gamma, X])
\end{equation*}
If $\exp X$  is a gauge equivalence between two Maurer-Cartan
elements $\gamma_1 $ and $\gamma_2= (\exp X)\cdot \gamma_1$ then
\begin{equation} \label{eq:MC equivalence}
d+\ad \gamma_2=  \Ad \exp X\,(d+\ad \gamma_1).
\end{equation}
We denote by $\MC^2(\mathfrak{g})_1(\gamma_1, \gamma_2)$ the set of
gauge equivalences between $\gamma_1$, $\gamma_2$. The composition
\begin{equation*}
\MC^2(\mathfrak{g})_1(\gamma_2,
\gamma_3)\times\MC^2(\mathfrak{g})_1(\gamma_1,
\gamma_2)\to\MC^2(\mathfrak{g})_1(\gamma_1, \gamma_3)
\end{equation*}
is given by  the product in the group $\exp \mathfrak{g}^0$.

If $\gamma \in \MC^2(\mathfrak{g})_0$  we can define a Lie bracket
$[\cdot, \cdot]_{\gamma}$ on $\mathfrak{g}^{-1}$ by
\begin{equation}\label{eq:mu-bracket}
[a,\,b]_{\gamma}=[a,\, d b+[\gamma, \,b]].
\end{equation}
With this bracket $\mathfrak{g}^{-1}$ becomes a nilpotent Lie
algebra. We denote by $\exp_{\gamma} \mathfrak{g}^{-1}$ the
corresponding unipotent group, and by $\exp_{\gamma}$ the
corresponding exponential map $\mathfrak{g}^{-1} \to \exp_{\gamma}
\mathfrak{g}^{-1}$. If $\gamma_1$, $\gamma_2$ are two Maurer-Cartan
elements, then the group $\exp_{\gamma} \mathfrak{g}^{-1}$ acts on
$\MC^2(\mathfrak{g})_1(\gamma_1, \gamma_2)$. Let $\exp_{\gamma} t
\in \exp_{\gamma} \mathfrak{g}^{-1}$ and let $\exp X \in
\MC^2(\mathfrak{g})_1(\gamma_1, \gamma_2)$. Then
\begin{equation*}
(\exp_{\gamma} t) \cdot (\exp X) = {\operatorname
{exp}}(dt+[\gamma,t])\, {\operatorname {exp}}X \in \exp
\mathfrak{g}^0
\end{equation*}
Such an element $\exp_{\gamma} t$ is called a $2$-morphism between
$\exp X$ and $(\exp t) \cdot (\exp X)$. We denote by
$\MC^2(\mathfrak{g})_2(\exp X,\exp Y)$ the set of $2$-morphisms
between $\exp X$ and $\exp Y$. This set is endowed with a vertical
composition given by the product in the group $\exp_{\gamma}
\mathfrak{g}^{-1}$.

 Let $\gamma_1$, $\gamma_2$, $\gamma_3 \in
\MC^2(\mathfrak{g})_0$. Let $\exp X_{12}$, $\exp Y_{12}\in
\MC^2(\mathfrak{g})_1(\gamma_1, \gamma_2)$ and $\exp X_{23}$, $\exp
Y_{23}\in \MC^2(\mathfrak{g})_1(\gamma_2, \gamma_3)$. Then one
defines the horizontal composition
\begin{multline*}
\otimes\colon  \MC^2(\mathfrak{g})_2(\exp X_{23}, \exp Y_{23})
\times
\MC^2(\mathfrak{g})_2(\exp X_{12}, \exp Y_{12}) \to\\
\MC^2(\mathfrak{g})_2(\exp X_{23}\exp X_{12}, \exp X_{23}\exp
Y_{12})
\end{multline*}
as follows. Let $\exp_{\gamma_2} t_{12} \in
\MC^2(\mathfrak{g})_2(\exp X_{12}, \exp Y_{12})$ and let
$\exp_{\gamma_3} t_{23} \in \MC^2(\mathfrak{g})_2(\exp X_{23}, \exp
Y_{23})$. Then
\begin{equation*}
\exp_{\gamma_{3}} t_{23} \otimes \exp_{\gamma_{2}} t_{12}
=\exp_{\gamma_{3}} t_{23}\exp_{\gamma_3}( e^{\ad X_{23}}(t_{12}) )
\end{equation*}

To summarize, the data described above forms a $2$-groupoid which we
denote by $\MC^2(\mathfrak{g})$ as follows:
\begin{enumerate}
\item the set of objects is $\MC^2(\mathfrak{g})_0$

\item the groupoid of morphisms $\MC^2(\mathfrak{g})(\gamma_1,\gamma_2)$, $\gamma_i\in\MC^2(\mathfrak{g})_0$
    consists of:
\begin{itemize}
\item objects i.e. $1$-morphisms in $\MC^2(\mathfrak{g})$ are given by
    $\MC^2(\mathfrak{g})_1(\gamma_1,\gamma_2) $  -- the gauge transformations between $\gamma_1$ and
    $\gamma_2$.
\item morphisms between $\exp X$, $\exp Y \in \MC^2(\mathfrak{g})_1(\gamma_1,\gamma_2)$ are given by
    $\MC^2(\mathfrak{g})_2(\exp X,\exp Y)$.
\end{itemize}
\end{enumerate}

 A morphism of nilpotent DGLA $ \phi \colon
\mathfrak{g} \to \mathfrak{h}$ induces a functor $\phi\colon
\MC^2(\mathfrak{g}) \to \MC^2(\mathfrak{g})$.

We have the following important result (\cite{GM}, \cite{G} and
references therein).
\begin{thm}\label{thm: quism invariance of mc}
Suppose that $\phi \colon  \mathfrak{g}\to\mathfrak{h}$ is a
quasi-isomorphism of DGLA and  let $\mathfrak{m}$ be a nilpotent
commutative ring. Then the induced map $\phi \colon
\MC^2(\mathfrak{g}\otimes \mathfrak{m})\to\MC^2(\mathfrak{h}\otimes
\mathfrak{m})$ is an equivalence of $2$-groupoids.
\end{thm}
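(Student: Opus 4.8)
The plan is to prove the assertion by induction on the order of nilpotence of $\mathfrak{m}$, that is, on the least $N$ with $\mathfrak{m}^N=0$, reducing the general case to that of a central square-zero extension. Two facts are used throughout: since we work over a field of characteristic zero, $\otimes\mathfrak{m}$ is exact, so $H^\bullet(\mathfrak{g}\otimes\mathfrak{m})=H^\bullet(\mathfrak{g})\otimes\mathfrak{m}$ and, in particular, $\phi\otimes\id_{\mathfrak{m}}\colon\mathfrak{g}\otimes\mathfrak{m}\to\mathfrak{h}\otimes\mathfrak{m}$ is again a quasi-isomorphism.

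First I would treat the case $\mathfrak{m}^2=0$. Then every bracket in $\mathfrak{g}\otimes\mathfrak{m}$ vanishes and all the structure of $\MC^2$ becomes linear: the objects of $\MC^2(\mathfrak{g}\otimes\mathfrak{m})$ are the cocycles of $(\mathfrak{g}^1\otimes\mathfrak{m},d)$; for two of them the $1$-morphisms $\gamma_1\to\gamma_2$ are the $X\in\mathfrak{g}^0\otimes\mathfrak{m}$ with $\gamma_1-dX=\gamma_2$; and the $2$-morphisms $\exp X\to\exp Y$ are the $t\in\mathfrak{g}^{-1}\otimes\mathfrak{m}$ with $dt=Y-X$ (here $\mathfrak{g}^{-2}=0$ is used). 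Hence the set of isomorphism classes of objects is $H^1(\mathfrak{g}\otimes\mathfrak{m})$; for fixed $\gamma_1,\gamma_2$ the groupoid of morphisms is empty unless these have the same class there, in which case its set of connected components is a torsor under $H^0(\mathfrak{g}\otimes\mathfrak{m})$; and the set of $2$-morphisms between two $1$-morphisms, when nonempty, is a torsor under $H^{-1}(\mathfrak{g}\otimes\mathfrak{m})$. Since $\phi\otimes\id_{\mathfrak{m}}$ induces isomorphisms on $H^{-1}$, $H^0$ and $H^1$ — which in particular identifies the nonemptiness conditions above — it is immediate that $\phi$ is bijective on isomorphism classes of objects and induces an equivalence of morphism groupoids for every pair of objects, i.e. an equivalence of $2$-groupoids. (The cases $N\le 1$ are trivial.)

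For the inductive step let $N\ge 3$, assume the theorem for nilpotent rings of order $<N$, and set $J=\mathfrak{m}^{N-1}$, $\bar{\mathfrak{m}}=\mathfrak{m}/J$, so that $J^2=0=J\cdot\mathfrak{m}$ and $\bar{\mathfrak{m}}^{N-1}=0$. For every DGLA $\mathfrak{g}$ as in the statement we have a short exact sequence $0\to\mathfrak{g}\otimes J\to\mathfrak{g}\otimes\mathfrak{m}\to\mathfrak{g}\otimes\bar{\mathfrak{m}}\to 0$ in which $\mathfrak{g}\otimes J$ is a square-zero central ideal, since $[\mathfrak{g}\otimes\mathfrak{m},\mathfrak{g}\otimes J]\subseteq\mathfrak{g}\otimes(J\cdot\mathfrak{m})=0$. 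Now I would run the standard obstruction theory for the Deligne $2$-groupoid of such an extension (cf. \cite{GM} for the analogous $1$-groupoid statement and \cite{G, G1}): a given object $\bar\gamma$ of $\MC^2(\mathfrak{g}\otimes\bar{\mathfrak{m}})$ lifts to $\MC^2(\mathfrak{g}\otimes\mathfrak{m})$ precisely when an obstruction $o(\bar\gamma)\in H^2(\mathfrak{g})\otimes J$ vanishes, and then the set of lifts up to equivalence is a torsor under $H^1(\mathfrak{g})\otimes J$; the same pattern with $(H^2,H^1)$ replaced by $(H^1,H^0)$ and then by $(H^0,H^{-1})$ controls the lifting of $1$-morphisms and of $2$-morphisms. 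One then verifies the three conditions for $\phi\colon\MC^2(\mathfrak{g}\otimes\mathfrak{m})\to\MC^2(\mathfrak{h}\otimes\mathfrak{m})$ to be an equivalence of $2$-groupoids directly: in each case one first solves the problem modulo $J$ using the inductive hypothesis for $\bar{\mathfrak{m}}$, then corrects the solution inside the fibre using the square-zero analysis above, together with the fact that all the obstruction and classification data are natural in the DGLA and that $H^i(\phi)\otimes\id_J$ is an isomorphism for $i=-1,0,1,2$. This is a ``five lemma'' carried out separately on the three levels of objects-up-to-equivalence, $1$-morphisms-up-to-$2$-morphism, and $2$-morphisms, and it completes the induction.

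The square-zero case is elementary linear algebra. The step I expect to be the main obstacle is the inductive one, and within it the genuinely delicate point is to set up the obstruction theory of the Deligne $2$-groupoid of a central square-zero extension uniformly on all three levels, and to check that the obstruction and classification classes are natural in the DGLA and compatible with the equivalences furnished by the inductive hypothesis; once that is in place, everything reduces to the fact that $H^i(\phi)$ is an isomorphism for $i=-1,0,1,2$ and stays one after tensoring with the vector space $J$.
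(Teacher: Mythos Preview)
The paper does not give its own proof of this theorem: it is stated as a known result and attributed to \cite{GM}, \cite{G} and references therein, with no argument supplied. So there is nothing to compare your proposal against in the paper itself.

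Your sketch is the standard route to this result and is essentially correct. The induction on the nilpotence order of $\mathfrak{m}$, with the base case $\mathfrak{m}^2=0$ handled by reading off $\pi_i$ of the Deligne $2$-groupoid as $H^{1-i}(\mathfrak{g})\otimes\mathfrak{m}$, and the inductive step via the central square-zero extension $0\to\mathfrak{g}\otimes J\to\mathfrak{g}\otimes\mathfrak{m}\to\mathfrak{g}\otimes\bar{\mathfrak{m}}\to 0$ together with the obstruction/classification calculus at each of the three levels, is exactly the argument one finds (in varying generality) in the cited sources. Two small points worth tightening if you write this out in full: first, the assertion $H^\bullet(\mathfrak{g}\otimes\mathfrak{m})\cong H^\bullet(\mathfrak{g})\otimes\mathfrak{m}$ uses that $\mathfrak{m}$ is a $k$-vector space (true here since we are over a field), so say that explicitly; second, in the inductive step you should also check essential surjectivity on objects carefully, since the equivalence over $\bar{\mathfrak{m}}$ only gives you a $1$-isomorphism $\phi(\bar\gamma)\cong\bar\gamma'$ rather than equality, and you need to transport the obstruction class along it before comparing --- this is routine but is where naturality of the obstruction classes is actually used.
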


Suppose now that  $\mathfrak{G}$: $[n]\to \mathfrak{G}^n$ is a
cosimplicial DGLA. We assume that each $\mathfrak{G}^n$ is a
nilpotent DGLA. We denote its component of degree $i$ by
$\mathfrak{G}^{n, i}$ and assume that $\mathfrak{G}^{n, i}=0$ for
$i<-1$. Then the  morphism of complexes
\begin{equation}\label{ker to Tot dgla}
\ker(\mathfrak{G}^0\rightrightarrows\mathfrak{G}^1) \to
\Tot(\mathfrak{G})
\end{equation}
(cf. \eqref{ker to Tot cxs}) is a morphism of DGLA.

\begin{prop}\label{prop: MC2 ker is MC2 tot}
Assume that $\mathfrak{G}$ satisfies the following condition:
\begin{equation}\label{acyclicity condition}
\text{for all $i\in\mathbb{Z}$, $H^p(\mathfrak{G}^{\bullet,i})=0$
for $p\neq 0$}.
\end{equation}
Then the morphism of $2$-groupoids
\[
\MC^2(\ker(\mathfrak{G}^0\rightrightarrows\mathfrak{G}^1)) \to
\MC^2(\Tot(\mathfrak{G}))
\]
induced by \eqref{ker to Tot dgla} is an equivalence.
\end{prop}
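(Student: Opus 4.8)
The plan is to reduce the statement to the already-available machinery: Theorem~\ref{thm: quism invariance of mc} (quasi-isomorphism invariance of $\MC^2$) together with Lemma~\ref{lemma: ker to Tot cxs}. The morphism of DGLA in \eqref{ker to Tot dgla} underlies the map of complexes \eqref{ker to Tot cxs}, and the hypothesis \eqref{acyclicity condition} is exactly the acyclicity hypothesis (2) of Lemma~\ref{lemma: ker to Tot cxs}; the uniform-boundedness hypothesis (1) of that lemma holds because $\mathfrak{G}^{n,i}=0$ for $i<-1$, uniformly in $n$. Hence Lemma~\ref{lemma: ker to Tot cxs} applies and tells us that \eqref{ker to Tot dgla} is a quasi-isomorphism of DGLA. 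By Theorem~\ref{thm: quism invariance of mc} (applied with the nilpotent ring being, say, a maximal ideal $\mathfrak m$, or after tensoring, and in the degenerate case $\mathfrak m = \mathbb C$ if one wishes; in the present situation each $\mathfrak{G}^n$ is already assumed nilpotent, so we may take $\mathfrak m$ trivial) the induced functor on Deligne $2$-groupoids is an equivalence. This is the desired conclusion.

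First I would verify that \eqref{ker to Tot dgla} is genuinely a map of DGLA and not merely of complexes: the bracket on $\ker(\mathfrak{G}^0\rightrightarrows\mathfrak{G}^1)$ is the restriction of the bracket on $\mathfrak{G}^0$, while the bracket on $\Tot(\mathfrak{G})^k\subset\prod_n\Omega_n^k\otimes\mathfrak{G}^n$ is the evident one combining the commutative product on $\Omega_\bullet$ with the bracket on each $\mathfrak{G}^n$; the inclusion sends $\gamma\in\ker$ to the constant family $(1\otimes\gamma)_n$, and since $1\cdot 1 = 1$ in $\Omega_n$ this is a DGLA map. I would then check that each $\Tot(\mathfrak{G})$ is nilpotent — this follows since nilpotency is inherited by $\Omega_n\otimes\mathfrak{G}^n$ (the $\Omega_n$ factor is a commutative ring, so the lower central series is controlled by that of $\mathfrak{G}^n$) and passes to the subspace $\Tot(\mathfrak{G})$ and to the associated graded; a uniform nilpotency degree is needed and is supplied by assuming, as is implicit, that the $\mathfrak{G}^n$ have a common nilpotency bound. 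Likewise $\ker(\mathfrak{G}^0\rightrightarrows\mathfrak{G}^1)$ is nilpotent as a sub-DGLA of $\mathfrak{G}^0$.

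The main obstacle is a bookkeeping one rather than a conceptual one: one must be slightly careful that Theorem~\ref{thm: quism invariance of mc} is being applied in a setting where both DGLA are nilpotent with $\mathfrak g^i = 0$ for $i<-1$, which is exactly the standing hypothesis for $\MC^2$ recalled at the start of the subsection. Here $\mathfrak{G}^{n,i}=0$ for $i<-1$, hence the same holds for $\Tot(\mathfrak{G})$ (the de Rham factor $\Omega_n$ is concentrated in nonnegative degrees) and for $\ker(\mathfrak{G}^0\rightrightarrows\mathfrak{G}^1)$; so the hypotheses of Theorem~\ref{thm: quism invariance of mc} are met and there is nothing further to do. I would therefore present the proof as: (i) cite Lemma~\ref{lemma: ker to Tot cxs} to get the quasi-isomorphism of underlying complexes, noting it is a DGLA morphism; (ii) cite Theorem~\ref{thm: quism invariance of mc} to conclude the equivalence of $2$-groupoids.
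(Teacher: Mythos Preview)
Your proposal is correct and follows exactly the paper's approach: the paper's proof is the single sentence ``Follows from Lemma~\ref{lemma: ker to Tot cxs} and Theorem~\ref{thm: quism invariance of mc}.'' Your additional checks (that \eqref{ker to Tot dgla} is a DGLA map, that the degree bound $\mathfrak{G}^{n,i}=0$ for $i<-1$ supplies hypothesis~(1) of Lemma~\ref{lemma: ker to Tot cxs}, and the nilpotency discussion) are reasonable elaborations of what the paper leaves implicit.
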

\begin{proof}
Follows from Lemma \ref{lemma: ker to Tot cxs} and Theorem \ref{thm:
quism invariance of mc}
\end{proof}

\subsection{Deformations and the Deligne $2$-groupoid}\label{ddgla}

\subsubsection{Pseudo-tensor categories}\label{subsubsection:pseudo-tensor cats}
In order to give a unified treatment of the deformation complex we
will employ the formalism of pseudo-tensor categories. We refer the
reader to \cite{BD} for details.

Let $X$ be a topological space. The category $\Sh_k(X)$ has a
canonical structure of a pseudo-tensor $\Sh_k(X)$-category. In
particular, for any finite set $I$, an $I$-family of $\{L_i\}_{i\in
I}$ of sheaves and a sheaf $L$ we have the \emph{sheaf}
\[
P^{\Sh_k(X)}_I(\{L_i\},L) = \shHom_k(\otimes_{i\in I}L_i,L)
\]
of $I$-operations.

In what follows we consider not necessarily full pseudo-tensor
$\Sh_k(X)$-subcategories $\Psi$ of $\Sh_k(X)$. Given such a category
$\Psi$, with the notations as above, we have the subsheaf
$P^\Psi_I(\{L_i\},L)$ of $P^{\Sh_k(X)}_I(\{L_i\},L)$.

We shall always assume that the pseudo-tensor category $\Psi$ under
consideration satisfies the following additional assumptions:
\begin{enumerate}
\item For any object $L$ of $\Psi$ and any finite dimensional $k$-vector space $V$ the sheaf $L\otimes_kV$ is in
    $\Psi$;

\item for any $k$-linear map of finite dimensional vector spaces $f \colon  V \to W$ the map $\id\otimes f$ belongs
    to $\Gamma(X;P^\Psi_\mathbf{1}((L\otimes_kV)^\mathbf{1}, L\otimes_kW))$.
\end{enumerate}


\subsubsection{Examples of pseudo-tensor categories}\label{subsubsection:examples of pseudo-tensor cats}
\begin{enumerate}
\item[\texttt{DIFF}] Suppose that $X$ is a manifold. Let $\mathtt{DIFF}$ denote the following pseudo-tensor
    category. The objects of $\mathtt{DIFF}$ are locally free modules of finite rank over $\mathcal{O}_X$. In the
    notations introduced above, $P^\mathtt{DIFF}_I(\{L_i\},L)$ is defined to be the sheaf of multidifferential
    operators $\bigotimes_{i\in I}L_i \to L$ (the tensor product is over $\mathbb{C}$).

\item[\texttt{JET}] For $X$ as in the previous example, let $\mathtt{JET}$ denote the pseudo-tensor category whose
    objects are locally free modules of finite rank over $\mathcal{J}_X$. In the notations introduced above,
    $P^\mathtt{JET}_I(\{L_i\},L)$ is defined to be the sheaf of continuous $\mathcal{O}_X$-(multi)linear maps
    $\bigotimes_{i\in I}L_i \to L$ (the tensor product is over $\mathcal{O}_X$).

\item[\texttt{DEF}] For $\Psi$ as in \ref{subsubsection:pseudo-tensor cats} and an Artin $k$-algebra $R$ let
    $\widetilde{\Psi(R)}$ denote the following pseudo-tensor category. An object of $\widetilde{\Psi(R)}$ is an
    $R$-module in $\Psi$ (i.e. an object $M$ of $\Psi$ together with a morphism of $k$-algebras $R \to \Gamma(X;
    P^\Psi_\vac(M^\vac,M))$) which \emph{locally on $X$} is $R$-linearly isomorphic in $\Psi$ to an object of the
    form $L\otimes_k R$, where $L$ is an object in $\Psi$. In the notations introduced above, the sheaf
    $P^{\widetilde{\Psi(R)}}_I(\{M_i\},M)$ of $I$-operations is defined as the subsheaf of $R$-multilinear maps in
    $P^\Psi_I(\{M_i\},M)$.

Let $\Psi(R)$ denote the full subcategory of $\widetilde{\Psi(R)}$
whose objects are isomorphic to objects of the form $L\otimes_k R$,
$L$ in $\Psi$.

Note that a morphism $R \to S$ of Artin $k$-algebras induces the
functor $(\, .\,)\otimes_R S \colon \widetilde{\Psi(R)} \to
\widetilde{\Psi(S)}$ which restricts to the functor $(\,
.\,)\otimes_R S \colon  \Psi(R) \to \Psi(S)$. It is clear that the
assignment $R \mapsto \widetilde{\Psi(R)}$ (respectively, $R \mapsto
\Psi(R)$) defines a functor on $\ArtAlg_k$ and the inclusion
$\Psi(R)(\, .\, ) \to \widetilde{\Psi(\, .\, )}$ is a morphism
thereof.
\end{enumerate}

\subsubsection{Hochschild cochains}
For $n = 1, 2, \ldots$ we denote by $\mathbf{n}$ the set
$\{1,2,\ldots,n\}$. For an object $\mathcal{A}$ of $\Psi$ we denote
by $\mathcal{A}^\mathbf{n}$ the $\mathbf{n}$-collection of objects
of $\Psi$ each of which is equal to $\mathcal{A}$ and set
\[
C^n(\mathcal{A}) =
P^\Psi_\mathbf{n}(\mathcal{A}^\mathbf{n},\mathcal{A}) \ ,
\]
and $C^0(\mathcal{A}) \colon = \mathcal{A}$. The sheaf
$C^n(\mathcal{A})$ is called the sheaf of Hochschild cochains on
$\mathcal{A}$ of degree $n$.

The graded sheaf of vector spaces $\mathfrak{g}(\mathcal{A}) \colon
= C^{\bullet}(\mathcal{A})[1]$ has a canonical structure of a graded
Lie algebra under the Gerstenhaber bracket denoted by $[\ ,\ ]$
below. Namely, $C^{\bullet}(\mathcal{A})[1]$ is canonically
isomorphic to the (graded) Lie algebra of derivations of the free
associative co-algebra generated by $\mathcal{A}[1]$.

For an operation
$m\in\Gamma(X;P^\Psi_\mathbf{2}(\mathcal{A}^\mathbf{2},\mathcal{A}))
= \Gamma(X;C^2(\mathcal{A})) = \Gamma(X;\mathfrak{g}(\mathcal{A}))$
the associativity of $m$ is equivalent to the condition $[m,m] = 0$.

Suppose that $\mathcal A$ is an associative algebra with the product
$m$ as above. Let $\delta = [m, .]$. Thus, $\delta$ is a derivation
of the graded Lie algebra $\mathfrak{g}(\mathcal{A})$ of degree one.
The associativity of $m$ implies that $\delta\circ\delta = 0$, i.e.
$\delta$ defines a differential on $\mathfrak{g}(\mathcal{A})$
called the Hochschild differential.

The algebra is called \emph{unital} if it is endowed with a global
section $1\in\Gamma(X;\mathcal{A})$ with the usual properties with
respect to the product $m$. For a unital algebra the subsheaf of
\emph{normalized} cochains (of degree $n$)
$\overline{C}^n(\mathcal{A})$ of $C^n(\mathcal{A})$ is defined as
the subsheaf of Hochschild cochains which vanish whenever evaluated
on $1$ as one the arguments for $n>0$; by definition,
$\overline{C}^0(\mathcal{A}) = C^0(\mathcal{A})$.

The graded subsheaf $\overline{C}^{\bullet}(\mathcal{A})[1]$ is
closed under the Gerstenhaber bracket and the action of the
Hochschild differential, and the inclusion
\[
\overline{C}^{\bullet}(\mathcal{A})[1]\hookrightarrow
C^\bullet(\mathcal{A})[1]
\]
is a quasi-isomorphism of DGLA.

Suppose in addition that $R$ is a commutative Artin $k$-algebra with
the (nilpotent) maximal ideal $\mathfrak{m}_R$. Then,
$\Gamma(X;\mathfrak{g}(A)\otimes_k\mathfrak{m}_R)$ is a nilpotent
DGLA concentrated in degree greater than or equal to $-1$.
Therefore, the Deligne $2$-groupoid
$\MC^2(\mathfrak{g}(A)\otimes_k\mathfrak{m}_R)$ is defined.
Moreover, it is clear that the assignment
\[
R\mapsto
\MC^2(\Gamma(X;\mathfrak{g}(\mathcal{A})\otimes_k\mathfrak{m}_R)
\]
extends to a functor on the category of commutative Artin algebras.

\subsubsection{$\mathtt{DIFF}$ and $\mathtt{JET}$}\label{diff and jet}
Unless otherwise stated, from now on a locally free module over
$\mathcal{O}_X$ (respectively, $\mathcal{J}_X$) of finite rank is
understood as an object of the pseudo-tensor category
$\mathtt{DIFF}$ (respectively, $\mathtt{JET}$) defined in
\ref{subsubsection:examples of pseudo-tensor cats}.

Suppose that $\mathcal{A}$ is an $\mathcal{O}_X$-Azumaya algebra,
i.e. a sheaf of (central) $\mathcal{O}_X$-algebras locally
isomorphic to $\Mat_n(\mathcal{O}_X)$. The canonical flat connection
$\nabla^{can}_{\mathcal A}$ on $\mathcal{J}_X(\mathcal{A})$ induces
the flat connection, still denoted $\nabla^{can}_\mathcal{A}$, on
$C^n(\mathcal{J}_X(\mathcal{A}))$.

The flat connection $\nabla^{can}_\mathcal{A}$ acts by derivation of
the Gerstenhaber bracket which commute with the Hochschild
differential $\delta$. Hence, we have the DGLA
$\Omega^\bullet_X\otimes_{\mathcal{O}_X}
C^\bullet(\mathcal{J}_{X}(\mathcal{A}))[1]$ with the differential
$\nabla^{can}_\mathcal{A} + \delta$.

\begin{lemma}
The de Rham complex $\DR(C^n(\mathcal{J}_X(\mathcal{A}))) \colon =
(\Omega^\bullet_X \otimes_{\mathcal{O}_X}
C^n(\mathcal{J}_X(\mathcal{A})), \nabla^{can}_{\mathcal A})$
satisfies
\begin{enumerate}
\item $H^i\DR(C^n(\mathcal{J}_X(\mathcal{A}))) = 0$ for $i\neq 0$
\item The map $j^\infty \colon  C^n(\mathcal{A})\to C^n(\mathcal{J}_X(\mathcal{A}))$ is an isomorphism onto
    $H^0\DR(C^n(\mathcal{J}_X(\mathcal{A})))$.
\end{enumerate}
\end{lemma}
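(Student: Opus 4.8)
The plan is to reduce the statement to the well-known fact that the canonical flat connection on the infinite jet bundle of a locally free $\mathcal{O}_X$-module has a de Rham complex that is a resolution of the underlying module, and then transport this across the identification of Hochschild cochains on $\mathcal{J}_X(\mathcal{A})$ (in the pseudo-tensor category $\mathtt{JET}$) with the jets of Hochschild cochains on $\mathcal{A}$ (in the category $\mathtt{DIFF}$). The key observation is that $C^n(\mathcal{J}_X(\mathcal{A}))$, defined as continuous $\mathcal{O}_X$-multilinear maps $\mathcal{J}_X(\mathcal{A})^{\otimes n} \to \mathcal{J}_X(\mathcal{A})$, is itself naturally the infinite jet bundle $\mathcal{J}_X(C^n(\mathcal{A}))$ of the sheaf $C^n(\mathcal{A})$ of multidifferential operators on $\mathcal{A}$, and under this identification the connection $\nabla^{can}_{\mathcal{A}}$ induced on Hochschild cochains agrees with the canonical flat connection $\nabla^{can}$ on $\mathcal{J}_X(C^n(\mathcal{A}))$. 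Granting this, both assertions follow from the general statement about jet bundles.

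First I would establish the identification $C^n(\mathcal{J}_X(\mathcal{A})) \cong \mathcal{J}_X(C^n(\mathcal{A}))$ as sheaves with flat connection. Locally $\mathcal{A} \cong \Mat_m(\mathcal{O}_X)$, so $\mathcal{A}$ is a locally free $\mathcal{O}_X$-module of finite rank and $C^n(\mathcal{A})$ — the sheaf of multidifferential operators $\mathcal{A}^{\otimes n}\to\mathcal{A}$ over $\mathbb{C}$ — is a (countably generated, filtered by order) $\mathcal{O}_X$-module whose jets make sense. On the other side, $\mathcal{J}_X(\mathcal{A})$ is a locally free $\mathcal{J}_X$-module of finite rank (apply $\mathcal{J}_X(-)$ to $\mathcal{A}$), and a continuous $\mathcal{J}_X$-multilinear map $\mathcal{J}_X(\mathcal{A})^{\otimes n}\to\mathcal{J}_X(\mathcal{A})$ is, by the universal property of $j^\infty$ together with $\mathcal{J}_X$-linearity, the same as a continuous $\mathcal{O}_X$-multilinear map $\mathcal{A}^{\otimes n}\to \mathcal{J}_X(\mathcal{A})$, i.e. a section of $\mathcal{J}_X(C^n(\mathcal{A}))$. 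The compatibility of the connections is a direct check: $\nabla^{can}_{\mathcal{A}}$ on cochains is defined so that $j^\infty$ intertwines the zero connection on $C^n(\mathcal{A})$ with it (because $\nabla^{can}_{\mathcal{A}}$ on $\mathcal{J}_X(\mathcal{A})$ kills the image of $j^\infty$), and this is precisely the defining property of the canonical flat connection on $\mathcal{J}_X(C^n(\mathcal{A}))$.

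Having made this identification, I would invoke the standard fact (which can be taken as part of the theory of the jet bundle recalled in the ``Jet bundle'' subsection): for any $\mathcal{O}_X$-module $\mathcal{F}$ admitting jets, the de Rham complex $\DR(\mathcal{J}_X(\mathcal{F})) = (\Omega^\bullet_X\otimes_{\mathcal{O}_X}\mathcal{J}_X(\mathcal{F}), \nabla^{can})$ is concentrated in cohomological degree zero, where the cohomology is $\mathcal{F}$ embedded via $j^\infty$. In other words, $\mathcal{J}_X(\mathcal{F})\xrightarrow{\nabla^{can}}\Omega^1_X\otimes\mathcal{J}_X(\mathcal{F})\to\cdots$ is a resolution of $\mathcal{F}$ as a sheaf of $\mathbb{C}$-vector spaces (one reduces to the local model $\mathcal{F} = \mathcal{O}_X$, where $\mathcal{J}_X$ in local coordinates is $\mathcal{O}_X[[t_1,\dots,t_d]]$ with connection $\nabla^{can} = d - \sum_i dx_i\,\partial_{t_i}$; the homotopy operator contracting the ``$t$-directions'' is explicit, and horizontal sections are exactly the constants-in-$t$, i.e. $j^\infty\mathcal{O}_X$). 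Applying this with $\mathcal{F} = C^n(\mathcal{A})$ gives $H^i\DR(C^n(\mathcal{J}_X(\mathcal{A}))) = H^i\DR(\mathcal{J}_X(C^n(\mathcal{A}))) = 0$ for $i\neq 0$, and $j^\infty\colon C^n(\mathcal{A})\to C^n(\mathcal{J}_X(\mathcal{A}))$ is an isomorphism onto $H^0$, which are exactly assertions (1) and (2).

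**The main obstacle** I expect is the bookkeeping in the first step — making the identification $C^n(\mathcal{J}_X(\mathcal{A}))\cong\mathcal{J}_X(C^n(\mathcal{A}))$ precise at the level of the pro-finite-order filtrations and the topologies (continuity of multilinear maps, completed tensor products, commuting $\varprojlim$ with $\Hom$), and verifying that the two connections really coincide rather than merely being abstractly isomorphic. Once this is pinned down, the rest is the by-now-routine Poincaré-lemma-type argument for the jet de Rham complex, which the paper has effectively already committed to in the ``Jet bundle'' subsection; I would either cite it or supply the one-line local homotopy. The finite-rank/Azumaya hypothesis on $\mathcal{A}$ is what guarantees $C^n(\mathcal{A})$ is a reasonable $\mathcal{O}_X$-module (locally, multidifferential operators on a matrix algebra over $\mathcal{O}_X$), so it should be used precisely to legitimize passing to jets of $C^n(\mathcal{A})$.
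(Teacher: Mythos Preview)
Your approach is correct and is the standard argument. The paper itself states this lemma without proof, treating it as a well-known fact about jets; your outline supplies exactly the expected justification --- identify $C^n(\mathcal{J}_X(\mathcal{A}))$ with $\mathcal{J}_X(C^n(\mathcal{A}))$ compatibly with the canonical connections, then invoke the Poincar\'e lemma for the jet de Rham complex --- so there is nothing to compare.
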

\begin{cor}
The map
\[
j^\infty\colon C^\bullet(\mathcal{A})[1] \to
\Omega^\bullet_X\otimes_{{\mathcal O}_X}
C^\bullet(\mathcal{J}_X(\mathcal{A}))[1]
\]
is a quasi-isomorphism of DGLA.
\end{cor}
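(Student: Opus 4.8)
The plan is to deduce the statement directly from the preceding Lemma together with a standard spectral sequence comparison. First I would check that $j^\infty$ is indeed a morphism of DGLA, not merely a map of the underlying sheaves. The target carries the differential $\nabla^{can}_{\mathcal A} + \delta$ and the $\Omega^\bullet_X$-bilinear extension of the Gerstenhaber bracket. By part (2) of the Lemma the image of $j^\infty$ consists of $\nabla^{can}_{\mathcal A}$-horizontal cochains of de Rham degree $0$, so on that image the target differential restricts to $\delta$. The map $j^\infty\colon C^\bullet(\mathcal A)\to C^\bullet(\mathcal J_X(\mathcal A))$ sends a multidifferential operator to its $\infty$-jet, and this assignment is compatible with insertion of operations; hence it respects the Gerstenhaber bracket, and consequently intertwines $\delta=[m,-]$ on the source with $\delta=[j^\infty m,-]$ on the target. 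Thus $j^\infty$ is a morphism of DGLA, and it remains to see it is a quasi-isomorphism of the underlying complexes of sheaves.

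For this I would regard $\Omega^\bullet_X\otimes_{\mathcal O_X} C^\bullet(\mathcal J_X(\mathcal A))$ as the total complex of the double complex $K^{p,q}=\Omega^p_X\otimes_{\mathcal O_X}C^q(\mathcal J_X(\mathcal A))$ with horizontal differential $\nabla^{can}_{\mathcal A}$ and vertical differential $\delta$, and view $C^\bullet(\mathcal A)$ as the total complex of the double complex concentrated in the column $p=0$, so that $j^\infty$ becomes a morphism of double complexes. Filtering by Hochschild degree $q$, the associated spectral sequence has $E_0$-differential $\nabla^{can}_{\mathcal A}$; by part (1) of the Lemma its $E_1$-page is concentrated in de Rham degree $0$, where by part (2) it is identified via $j^\infty$ with $C^q(\mathcal A)$. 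The $d_1$-differential is the induced Hochschild differential $\delta$, so $E_2 = E_\infty = H^\bullet(C^\bullet(\mathcal A),\delta)$, and by construction this identification is induced by $j^\infty$. Since in each total degree only finitely many $K^{p,q}$ are nonzero (the de Rham index $p$ is bounded by $\dim X$ while $q\ge -1$), the filtration is finite in each degree and the spectral sequence converges; hence $j^\infty$ induces an isomorphism on all cohomology sheaves.

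I do not expect a serious obstacle here: the genuine content has been absorbed into the Lemma, whose proof rests on the fact that $\DR(C^n(\mathcal J_X(\mathcal A)))$ is a resolution of the sheaf of horizontal sections $C^n(\mathcal A)$. The only points requiring mild care are the verification that passing to $\infty$-jets of multidifferential operators is a Lie-algebra map for the Gerstenhaber bracket, and the bookkeeping ensuring convergence of the spectral sequence; both are routine given the boundedness of the de Rham complex. Alternatively, the last step can be phrased without spectral sequences: for each fixed Hochschild degree $n$ the Lemma says $C^n(\mathcal A)\to\DR(C^n(\mathcal J_X(\mathcal A)))$ is a quasi-isomorphism, and a levelwise quasi-isomorphism of complexes of complexes that are uniformly bounded below induces a quasi-isomorphism on totalizations.
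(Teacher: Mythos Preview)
Your argument is correct and is exactly the intended deduction: the paper states the result as an immediate corollary of the preceding Lemma with no further proof, and what you have written spells out the standard details (compatibility of $j^\infty$ with the Gerstenhaber bracket and $\delta$, followed by the spectral sequence or, equivalently, the observation that a levelwise quasi-isomorphism of uniformly bounded-below complexes induces a quasi-isomorphism on total complexes). There is nothing to add.
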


\subsubsection{Star products}\label{subsubsection: star products}
Suppose that $\mathcal{A}$ is an object of $\Psi$ with an
associative multiplication $m$ and the unit $1$ as above.

We denote by $\ArtAlg_k$ the category of (finitely generated,
commutative, unital) Artin $k$-algebras. Recall that an Artin
$k$-algebra is a local $k$-algebra $R$ with the maximal ideal
$\mathfrak{m}_R$ which is nilpotent, i.e. there exists a positive
integer $N$ such that $\mathfrak{m}_R^N = 0$; in particular, an
Artin $k$-algebra is a finite dimensional $k$-vector space. There is
a canonical isomorphism $R/\mathfrak{m}_R\cong k$. A morphism of
Artin algebras $\phi \colon  R \to S$ is a $k$-algebra homomorphism
which satisfies $\phi(\mathfrak{m}_R) \subseteq \mathfrak{m}_S$.

\begin{definition}\label{def:star product}
For $R\in\ArtAlg_k$ an \emph{$R$-star product} on $\mathcal{A}$ is
an $R$-bilinear operation
$m'\in\Gamma(X;P^\Psi_\mathbf{2}(\mathcal{A}^\mathbf{2},\mathcal{A}\otimes_kR))$
which is associative and whose image in
$\Gamma(X;P^\Psi_\mathbf{2}(\mathcal{A}^\mathbf{2},\mathcal{A}))$
(under composition with the canonical map $\mathcal{A}\otimes_kR \to
\mathcal{A}$) coincides with $m$.
\end{definition}

An $R$-star product $m'$ as in \ref{def:star product} determines an
$R$-bilinear operation $m'\in
P^\Psi_\mathbf{2}((\mathcal{A}\otimes_kR)^\mathbf{2},
\mathcal{A}\otimes_kR)$ which endows $\mathcal{A}\otimes_kR$ with a
structure of a unital associative $R$-algebra.

The $2$-category of $R$-star products on $\mathcal{A}$, denoted
$\Def(\mathcal{A})(R)$, is defined as follows:
\begin{itemize}
\item objects are the $R$-star products on $\mathcal{A}$,

\item a $1$-morphism $\phi \colon  m_1 \to m_2$ between the $R$-star products $m_i$ is an operation $\phi\in
    P^\Psi_\mathbf{1}(\mathcal{A}^\mathbf{1}, \mathcal{A}\otimes_kR)$ whose image in $\phi\in
    P^\Psi_\mathbf{1}(\mathcal{A}^\mathbf{1}, \mathcal{A})$ (under the composition with the canonical map
    $\mathcal{A}\otimes_kR \to \mathcal{A}$) is equal to the identity, and whose $R$-linear extension $\phi\in
    P^\Psi_\mathbf{1}(\mathcal{A}^\mathbf{1}, \mathcal{A}\otimes_kR)$ is a morphism of $R$-algebras.

\item a $2$-morphism $b \colon  \phi \to \psi$, where $\phi, \psi \colon  m_1 \to m_2$ are two $1$-morphisms, are
    elements $b \in 1\otimes 1 + \Gamma(X;\mathcal{A}\otimes_k \mathfrak{m}_R) \subset
    \Gamma(X;\mathcal{A}\otimes_k R)$ such that $m_2(\phi(a),b) = m_2(b,\psi(a))$ for all $a\in
    \mathcal{A}\otimes_k R$.
\end{itemize}
It follows easily from the above definition and the nilpotency of
$\mathfrak{m}_R$ that $\Def(\mathcal{A})(R)$ is a $2$-groupoid.

Note that $\Def(\mathcal{A})(R)$ is non-empty: it contains the
trivial deformation, i.e. the star product, still denoted $m$, which
is the $R$-bilinear extension of the product on $\mathcal{A}$.

It is clear that the assignment $R\mapsto \Def(\mathcal{A})(R)$
extends to a functor on $\ArtAlg_k$.

\subsubsection{Star products and the Deligne $2$-groupoid}
We continue in notations introduced above. In particular, we are
considering a sheaf of associative $k$-algebras $\mathcal A$ with
the product $m\in \Gamma(X; C^2(\mathcal{A}))$. The product $m$
determines the element, still denoted $m$ in
$\Gamma(X;\mathfrak{g}^1(\mathcal{A})\otimes_kR)$ for any
commutative Artin $k$-algebra $R$, hence, the Hochschild
differential $\delta \colon = [m,\ ]$ in
$\mathfrak{g}(\mathcal{A})\otimes_kR$.

Suppose that $m'$ is an $R$-star product on $\mathcal A$. Since
$\mu(m') \colon = m' - m = 0 \mod\mathfrak{m}_R$ we have
$\mu(m')\in\mathfrak{g}^1(\mathcal{A})\otimes_k\mathfrak{m}_R$.
Moreover, the associativity of $m'$ implies that $\mu(m')$ satisfies
the Maurer-Cartan equation, i.e. $\mu(m')\in\MC^2(\Gamma(X;
\mathfrak{g}(\mathcal{A})\otimes_k \mathfrak{m}_R))_0$.

It is easy to see that the assignment $m'\mapsto\mu(m')$ extends to
a functor
\begin{equation}\label{functor Def to MC}
\Def(\mathcal{A})(R)\to \MC^2(\Gamma(X;
\mathfrak{g}(\mathcal{A})\otimes_k \mathfrak{m}_R)) \ .
\end{equation}

The following proposition is well-known (cf. \cite{Ge, G, G1}).

\begin{prop}\label{Def equivalent to MC}
The functor \eqref{functor Def to MC} is an isomorphism of
$2$-groupoids.
\end{prop}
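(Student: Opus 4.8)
The plan is to produce an explicit strict inverse to the functor \eqref{functor Def to MC} and to check that, on each of the three levels of data, the definition of $\Def(\mathcal{A})(R)$ translates verbatim into the definition of $\MC^2(\Gamma(X;\mathfrak{g}(\mathcal{A})\otimes_k\mathfrak{m}_R))$. On objects this is immediate: an $R$-star product $m'$ is the same thing as an $R$-bilinear operation of the form $m+\gamma$ with $\gamma=\mu(m')\in\Gamma(X;\mathfrak{g}^1(\mathcal{A})\otimes_k\mathfrak{m}_R)$, and, writing associativity of $m'$ as $[m',m']=0$ in terms of the Gerstenhaber bracket and using $[m,m]=0$ together with $\delta=[m,\,\cdot\,]$, one has $[m+\gamma,m+\gamma]=2\delta\gamma+[\gamma,\gamma]$; hence $m'$ is associative precisely when $\gamma$ solves the Maurer--Cartan equation \eqref{eq:MC}. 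Thus $m'\mapsto\mu(m')$ is a bijection between the objects of $\Def(\mathcal{A})(R)$ and $\MC^2(\Gamma(X;\mathfrak{g}(\mathcal{A})\otimes_k\mathfrak{m}_R))_0$, with inverse $\gamma\mapsto m+\gamma$.

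For $1$-morphisms, recall that $\mathfrak{g}^0(\mathcal{A})=C^1(\mathcal{A})$ is the Lie algebra of degree-$0$ coderivations of the tensor coalgebra $T^c(\mathcal{A}[1])$, and $\exp(\mathfrak{g}^0(\mathcal{A})\otimes_k\mathfrak{m}_R)$ acts as the group of those coalgebra automorphisms which become the identity modulo $\mathfrak{m}_R$ (all relevant series being finite since $\mathfrak{m}_R$ is nilpotent). A $1$-morphism $\phi\colon m_1\to m_2$ is an operation $\phi=\id+D$ with $D\in\Gamma(X;\mathfrak{g}^0(\mathcal{A})\otimes_k\mathfrak{m}_R)$ whose $R$-linear extension is an algebra homomorphism; setting $X=\log\phi\in\Gamma(X;\mathfrak{g}^0(\mathcal{A})\otimes_k\mathfrak{m}_R)$, the associated coalgebra automorphism is exactly $\exp X$, and $\phi$ is an algebra homomorphism $(\mathcal{A}\otimes_kR,m_1)\to(\mathcal{A}\otimes_kR,m_2)$ if and only if $\exp X$ conjugates the codifferential $m_1$ into $m_2$, i.e. $m_2=e^{\ad X}(m_1)$ in $\mathfrak{g}(\mathcal{A})\otimes_kR$, equivalently $[m_2,\,\cdot\,]=\Ad(\exp X)\,[m_1,\,\cdot\,]$. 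Since $\delta+\ad\mu(m_i)=[m_i,\,\cdot\,]$, this is precisely \eqref{eq:MC equivalence} with $\gamma_i=\mu(m_i)$, i.e. the statement that $\exp X$ is a gauge equivalence $\mu(m_1)\to\mu(m_2)$; and composition of $1$-morphisms, being composition of operations, goes over to the product in $\exp(\mathfrak{g}^0(\mathcal{A})\otimes_k\mathfrak{m}_R)$.

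For $2$-morphisms, set $\gamma=\mu(m_2)$ and recall $\mathfrak{g}^{-1}(\mathcal{A})=C^0(\mathcal{A})=\mathcal{A}$ with the $\gamma$-twisted bracket \eqref{eq:mu-bracket}, for which $db+[\gamma,b]=[m_2,b]$. A $2$-morphism $b\colon\phi\to\psi$ is an invertible element $b\in 1\otimes 1+\Gamma(X;\mathcal{A}\otimes_k\mathfrak{m}_R)$ with $m_2(\phi(a),b)=m_2(b,\psi(a))$, i.e. conjugation by $b$ in $(\mathcal{A}\otimes_kR,m_2)$ intertwines $\phi$ and $\psi$; writing $b=\exp_\gamma t$ for the unique $t\in\Gamma(X;\mathfrak{g}^{-1}(\mathcal{A})\otimes_k\mathfrak{m}_R)$ and using that inner conjugation by $\exp_\gamma t$ is the automorphism $\exp(\delta t+[\gamma,t])$, this intertwining identity is exactly the defining relation of $\MC^2(\ldots)_2(\exp X,\exp Y)$. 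Vertical composition is the product in $\exp_\gamma\mathfrak{g}^{-1}(\mathcal{A})$ on both sides, and unwinding conjugation by $\exp X_{23}$ reproduces the horizontal composition formula $\exp_{\gamma_3}t_{23}\otimes\exp_{\gamma_2}t_{12}$. All these identifications are given by the same formulas for every $R$, so \eqref{functor Def to MC} is a strict isomorphism of $2$-groupoids, natural in $R\in\ArtAlg_k$.

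The one point that is not pure bookkeeping is the equivalence, in the $1$-morphism step, between ``$\phi$ is a morphism of associative $R$-algebras'' and ``$\exp X$ conjugates $m_1$ to $m_2$'' (equivalently \eqref{eq:MC equivalence}). The care required is that the coalgebra automorphism of $T^c(\mathcal{A}[1])$ extending a given linear map $\phi\colon\mathcal{A}[1]\to\mathcal{A}[1]$ corresponds to $\exp$ of the degree-$0$ coderivation with single component $\log\phi$, not $\phi-\id$; once one works with the group $\exp(\mathfrak{g}^0(\mathcal{A})\otimes_k\mathfrak{m}_R)$ and its conjugation action on $\mathfrak{g}(\mathcal{A})\otimes_kR$ (all series finite by nilpotency of $\mathfrak{m}_R$), this is Gerstenhaber's computation \cite{Ge}, whose $2$-groupoid enhancement is as in \cite{G, G1}. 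Everything else follows by direct comparison of definitions.
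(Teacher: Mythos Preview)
Your argument is correct and follows exactly the standard route indicated by the references \cite{Ge, G, G1}; the paper itself does not supply a proof of this proposition but simply records it as well-known with those citations, so there is no alternative approach to compare against. Your care in distinguishing the coalgebra automorphism extending $\phi$ from $\exp$ of the coderivation with component $\phi-\id$ (and taking $X=\log\phi$ instead) is the one genuinely delicate point, and you have it right.
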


\subsection{Matrix Azumaya algebras}

\subsubsection{Azumaya algebras}
Suppose that $K$ is a sheaf of commutative $\mathbb{C}$-algebras.

An \emph{Azumaya $K$-algebra on $X$} is a sheaf of central
$K$-algebras which is a twisted form of (i.e. locally isomorphic to)
$\Mat_n(K)$ for suitable $n$. In our applications the algebra $K$
will be either $\mathcal{O}_X$ of $\mathcal{J}_X$.

There is a central extension of Lie algebras
\begin{equation}\label{ses:ad}
0\to K \to \mathcal{A} \xrightarrow{\delta} \Der_K(\mathcal{A}) \to
0
\end{equation}
where the map $\delta$ is given by $\delta(a)\colon b\mapsto[a,b]$.

For an $\mathcal{O}_X$-Azumaya algebra we denote by
$\mathcal{C}(\mathcal{A})$ the
$\Omega^1_X\otimes_{\mathcal{O}_X}\Der_{\mathcal{O}_X}(\mathcal{A})$-torsor
of (locally defined) connections $\nabla$ on $\mathcal{A}$ which
satisfy the Leibniz rule $\nabla(ab)=\nabla(a)b+a\nabla(b)$.

For the sake of brevity we will refer to Azumaya
$\mathcal{O}_X$-algebras simply as Azumaya algebras (on $X$).

\subsubsection{Splittings}\label{gerbe of splittings}
Suppose that $\mathcal{A}$ is an Azumaya algebra.

\begin{definition}
A \emph{splitting} of $\mathcal A$ is a pair $({\mathcal E},\phi)$
consisting of a vector bundle $\mathcal{E}$ and an isomorphism
$\phi\colon  \mathcal{A}\to\shEnd_{{\mathcal O}_X}({\mathcal E})$.

A morphism $f\colon (\mathcal{E}_1,\phi_1)\to(\mathcal{E}_2,\phi_2)$
of splittings is an isomorphism $f\colon
\mathcal{E}_1\to\mathcal{E}_2$ such that $\Ad(f)\circ\phi_2 =
\phi_1$.
\end{definition}

Let $S({\mathcal A})$ denote the stack such that, for $U\subseteq
X$, $S({\mathcal A})(U)$ is the category of splittings of
$\mathcal{A}\vert_U$.

The sheaf of automorphisms of any object is canonically isomorphic
$\mathcal{O}^\times_X$ (the subgroup of central units). As is easy
to see, $S({\mathcal A})$ is an $\mathcal{O}^\times_X$-gerbe.

Suppose that ${\mathcal A}$ and ${\mathcal B}$ are Azumaya algebras
on $X$ and $F$ is an $\mathcal{O}_X$-linear equivalence of
respective categories of modules.

\begin{lemma}
If $({\mathcal E},\phi)$ is a splitting of ${\mathcal A}$, then the
${\mathcal B}$-module $F({\mathcal E},\phi)$ is a splitting of
${\mathcal B}$.
\end{lemma}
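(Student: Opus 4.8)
The plan is to reduce the statement to a local Morita-theoretic computation. Recall first that the datum of a splitting $(\mathcal{E},\phi)$ is equivalent to that of the $\mathcal{A}$-module $\mathcal{E}$ (with $\mathcal{A}$ acting through $\phi$), which is locally isomorphic to the standard column module of a matrix algebra; in particular $\mathcal{E}$ is a progenerator (compact projective generator) of the category of $\mathcal{A}$-modules, and the double centralizer property implicit in $\phi$ being an isomorphism onto $\shEnd_{\mathcal{O}_X}(\mathcal{E})$ gives $\shEnd_{\mathcal{A}}(\mathcal{E}) = \mathcal{O}_X$. Under this dictionary $F(\mathcal{E},\phi)$ is simply the $\mathcal{B}$-module $F(\mathcal{E})$, equipped with its canonical underlying $\mathcal{O}_X$-module structure via the centre $\mathcal{O}_X \hookrightarrow \mathcal{B}$; what has to be shown is that this $\mathcal{O}_X$-module is locally free of finite rank and that the action map $\phi' \colon \mathcal{B} \to \shEnd_{\mathcal{O}_X}(F(\mathcal{E}))$ is an isomorphism. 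Both assertions are local on $X$, and the $\mathcal{O}_X$-linearity of $F$ guarantees that the $\mathcal{O}_X$-structure appearing here is the expected one.

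To make "local" usable I would invoke the Eilenberg--Watts description: an $\mathcal{O}_X$-linear equivalence $F$ is isomorphic to $\mathcal{Q} \otimes_{\mathcal{A}} (-)$, where $\mathcal{Q} = F(\mathcal{A})$ is a $(\mathcal{B},\mathcal{A})$-bimodule which, $F$ being an equivalence, is invertible (there is an $(\mathcal{A},\mathcal{B})$-bimodule $\mathcal{P}$ with $\mathcal{P}\otimes_{\mathcal{B}}\mathcal{Q} \cong \mathcal{A}$ and $\mathcal{Q}\otimes_{\mathcal{A}}\mathcal{P}\cong\mathcal{B}$). Since $F(\mathcal{E}) \cong \mathcal{Q}\otimes_{\mathcal{A}}\mathcal{E}$ is formed by a tensor product, its restriction to an open $U$ is $\mathcal{Q}|_U \otimes_{\mathcal{A}|_U} \mathcal{E}|_U$, so one may pass to an open cover on which everything trivializes.

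On a sufficiently small $U$ I would fix an isomorphism $\mathcal{A}|_U \cong \Mat_n(\mathcal{O}_U)$ carrying $\mathcal{E}|_U$ to the standard column module $\mathcal{O}_U^n$ — possible because $\mathcal{E}$ is locally free and $\phi$ exhibits $\mathcal{A}$ as its sheaf of endomorphisms — together with an isomorphism $\mathcal{B}|_U \cong \Mat_m(\mathcal{O}_U)$. Pre- and post-composing $F|_U$ with the standard Morita equivalences between modules over $\Mat_k(\mathcal{O}_U)$ and modules over $\mathcal{O}_U$ reduces it to an $\mathcal{O}_U$-linear self-equivalence of the category of $\mathcal{O}_U$-modules, hence to $(-)\otimes_{\mathcal{O}_U}\mathcal{L}$ for an invertible sheaf $\mathcal{L}$; after shrinking $U$ so that $\mathcal{L} \cong \mathcal{O}_U$, the bimodule $\mathcal{Q}|_U$ carries the standard module of $\mathcal{A}|_U$ to the standard column module $\mathcal{O}_U^m$ of $\mathcal{B}|_U$. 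Thus $F(\mathcal{E})|_U$ is locally free of rank $m$ and the structure map becomes the identity $\Mat_m(\mathcal{O}_U) \to \shEnd_{\mathcal{O}_U}(\mathcal{O}_U^m) = \Mat_m(\mathcal{O}_U)$, in particular an isomorphism. As local freeness of finite rank and bijectivity of a map of sheaves are local, this shows $F(\mathcal{E},\phi)$ is a splitting of $\mathcal{B}$.

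I expect the main obstacle to be the clean packaging of the sheaf-theoretic Morita input: one must invoke (or sketch) that $\mathcal{O}_X$-linear equivalences of module categories over sheaves of algebras are given by invertible bimodules and that such bimodules restrict compatibly to opens, and then dispose of the unavoidable line-bundle ambiguity $\mathcal{L}$ by further localization. If one prefers to bypass bimodules, the alternative is purely categorical: $F$ sends compact projective generators to compact projective generators and induces an isomorphism $\shEnd_{\mathcal{B}}(F(\mathcal{E})) \cong \shEnd_{\mathcal{A}}(\mathcal{E}) = \mathcal{O}_X$ of $\mathcal{O}_X$-algebras (using $\mathcal{O}_X$-linearity of $F$), after which it remains to show that a compact projective generator of the category of $\mathcal{B}$-modules with endomorphism sheaf $\mathcal{O}_X$ is a splitting module — but proving this last fact again comes down to the same local computation over $\Mat_m(\mathcal{O}_U)$.
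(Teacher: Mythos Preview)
The paper states this lemma without proof, so there is nothing to compare against; your task was to supply the argument the authors omitted, and your Morita-theoretic approach does this correctly. The reduction to an invertible $(\mathcal{B},\mathcal{A})$-bimodule via Eilenberg--Watts, followed by local trivialization of both Azumaya algebras and of the residual line bundle $\mathcal{L}$, is the standard way to handle such statements and is sound.

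Two small remarks. First, the sheaf-theoretic Eilenberg--Watts step you flag as a potential obstacle is indeed the only place requiring care; you can sidestep it entirely by your alternative route: $F$ preserves compact projective generators and induces $\shEnd_{\mathcal{B}}(F(\mathcal{E}))\cong\shEnd_{\mathcal{A}}(\mathcal{E})=\mathcal{O}_X$, and then the local claim that a compact projective generator of $\Mat_m(\mathcal{O}_U)$-modules with endomorphisms $\mathcal{O}_U$ is the column module (up to a line bundle twist) is elementary. Second, your phrasing is tentative (``I would invoke'', ``I expect the main obstacle''); for a finished proof you should commit to one of the two routes and write it out declaratively. The content is fine.
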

\begin{cor}
The induced functor $S(F)\colon S({\mathcal A})\to S({\mathcal B})$
is an equivalence.
\end{cor}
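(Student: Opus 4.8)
The plan is to build a quasi-inverse for $S(F)$ directly out of a quasi-inverse for $F$, using the preceding Lemma. First I would check that $S(F)$ is a well-defined $1$-morphism of stacks. On objects it is exactly the assignment of the Lemma: a splitting $(\mathcal{E},\phi)$ of $\mathcal{A}\vert_U$ goes to the splitting of $\mathcal{B}\vert_U$ whose underlying module is $F(\mathcal{E})$ together with the canonical isomorphism $\mathcal{B}\vert_U\isomoto\shEnd_{\mathcal{O}_X}(F(\mathcal{E}))$ furnished by the Lemma. On morphisms, one unwinds the definition: a morphism $f\colon(\mathcal{E}_1,\phi_1)\to(\mathcal{E}_2,\phi_2)$ of splittings is an isomorphism of $\mathcal{O}_X$-modules with $\Ad(f)\circ\phi_2=\phi_1$, which is precisely to say that $f$ (equivalently, its inverse, depending on the convention for $\Ad$) is an isomorphism of $\mathcal{A}$-modules between $\mathcal{E}_1$ and $\mathcal{E}_2$ for the actions $\phi_1$, $\phi_2$; applying the functor $F$ produces an isomorphism of $\mathcal{B}$-modules $F(\mathcal{E}_1)\to F(\mathcal{E}_2)$, which is by the same token a morphism of the associated splittings. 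All of this is local in $U$ and uses only that $F$ is $\mathcal{O}_X$-linear, so $S(F)$ is a morphism of stacks; moreover it acts as the identity on the automorphism sheaf $\mathcal{O}_X^\times$ of any object (the central scalar automorphisms are preserved by $F$), hence it is a morphism of $\mathcal{O}_X^\times$-gerbes. The very same construction applied to a pair of composable $\mathcal{O}_X$-linear functors $H$, $H'$ of module categories yields a canonical identification $S(H\circ H')\cong S(H)\circ S(H')$, and $S(\id)=\id$, since both the underlying modules and the canonical $\shEnd$-identifications agree on the nose.

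Next, choose a quasi-inverse $G$ of $F$ between the categories of modules; being a quasi-inverse of an $\mathcal{O}_X$-linear equivalence it is again $\mathcal{O}_X$-linear, so the Lemma applies to $G$ as well and produces $S(G)\colon S(\mathcal{B})\to S(\mathcal{A})$. Fix natural isomorphisms $\eta\colon G\circ F\isomoto\id$ and $\varepsilon\colon F\circ G\isomoto\id$ of functors on modules. For a splitting $(\mathcal{E},\phi)$ of $\mathcal{A}\vert_U$ the component $\eta_{\mathcal{E}}\colon G(F(\mathcal{E}))\to\mathcal{E}$ is an isomorphism of $\mathcal{A}$-modules, hence, by the correspondence between module isomorphisms and morphisms of splittings recorded above, a morphism of splittings from $S(G)(S(F)(\mathcal{E},\phi))$ to $(\mathcal{E},\phi)$; naturality of $\eta$ shows that these components assemble into a $2$-isomorphism $S(G)\circ S(F)\cong S(G\circ F)\cong\id_{S(\mathcal{A})}$. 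Symmetrically, $\varepsilon$ gives $S(F)\circ S(G)\cong\id_{S(\mathcal{B})}$. Therefore $S(F)$ is an equivalence of stacks, with quasi-inverse $S(G)$, which is the assertion of the corollary.

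The only point requiring any care is the compatibility of the natural isomorphisms $\eta$ and $\varepsilon$ with the splitting data, i.e. the fact that an isomorphism of the underlying $\mathcal{A}$- (resp. $\mathcal{B}$-) modules of two splittings is automatically a morphism of those splittings with respect to the canonical $\shEnd$-identifications produced in the preceding Lemma. Once the construction of those identifications is spelled out this is immediate from functoriality and $\mathcal{O}_X$-linearity of $F$ and $G$, so no genuine obstacle arises; the entire argument is bookkeeping around the Lemma together with the observation that $S(-)$ is compatible with composition of $\mathcal{O}_X$-linear equivalences.
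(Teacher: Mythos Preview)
Your argument is correct: you construct an explicit quasi-inverse $S(G)$ from a quasi-inverse $G$ of $F$, and the natural isomorphisms $\eta$, $\varepsilon$ descend to $2$-isomorphisms witnessing the equivalence. The paper gives no proof at all for this corollary; it is stated as immediate from the preceding Lemma.

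That said, the context in the paper suggests a shorter route which you may wish to be aware of. Immediately before and after the corollary the paper records that $S(\mathcal{A})$ and $S(\mathcal{B})$ are $\mathcal{O}_X^\times$-gerbes and that $S(\,\cdot\,)$ lands in the $2$-category of $\mathcal{O}_X^\times$-gerbes. It is a general fact that any $1$-morphism of $\mathcal{A}$-gerbes (for a fixed abelian band $\mathcal{A}$) is automatically an equivalence: it is locally essentially surjective because both gerbes are locally nonempty and locally connected, and it is fully faithful because the induced map on automorphism sheaves is the identity on $\mathcal{A}$. So once one knows (from the Lemma) that $S(F)$ is a morphism of $\mathcal{O}_X^\times$-gerbes, the equivalence is automatic without ever invoking a quasi-inverse $G$. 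Your approach has the virtue of being self-contained and making the quasi-inverse explicit; the gerbe argument is quicker and is likely what the paper has in mind given the surrounding remarks.
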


In fact, it is clear that $S(.)$ extends to functor from the
$2$-category of Azumaya algebras on $X$ to the $2$-category of
$\mathcal{O}^\times_X$-gerbes.

\subsubsection{Matrix algebras}
Until further notice we work in a fixed pseudo-tensor subcategory
$\Psi$ of $\Sh_k(X)$ as in \ref{subsubsection:pseudo-tensor cats}.
In particular, all algebraic structures are understood to be given
by operations in $\Psi$.

Suppose that $K$ is a sheaf of commutative algebras on $X$.

\begin{definition}\label{def: matrix alg}
A \emph{matrix $K$-algebra} is a sheaf of associative $K$-algebras
$\mathcal{A}$ (on $X$) together with a decomposition $\mathcal{A} =
\sum_{i,j = 0}^p\mathcal{A}_{ij}$ into a \emph{direct} sum of
$K$-submodules which satisfies
\begin{enumerate}
\item for $0\leq i,j,k\leq p$ the product on $\mathcal{A}$ restricts to a map
    $\mathcal{A}_{ij}\otimes_K\mathcal{A}_{jk} \to \mathcal{A}_{ik}$ in
    $\Gamma(X;P^\Psi(\{\mathcal{A}_{ij},\mathcal{A}_{jk}\},\mathcal{A}_{ik}))$

\item for $0\leq i,j\leq p$ the left (respectively, right) action of $\mathcal{A}_{ii}$ (respectively,
    $\mathcal{A}_{jj}$) on $\mathcal{A}_{ij}$ given by the restriction of the product is unital.
\end{enumerate}
\end{definition}

Note that, for $0\leq i\leq p$, the composition $K \xrightarrow{1}
\mathcal{A} \to \mathcal{A}_{ii}$ (together with the restriction of
the product) endows the sheaf $\mathcal{A}_{ii}$ with a structure of
an associative algebra. The second condition in Definition \ref{def:
matrix alg} says that $\mathcal{A}_{ij}$ is a unital
$\mathcal{A}_{ii}$-module (respectively,
$\mathcal{A}_{jj}^{op}$-module).

For a matrix algebra as above we denote by
$\mathfrak{d}(\mathcal{A})$ the subalgebra of ``diagonal" matrices,
i.e. $\mathfrak{d}(\mathcal{A}) = \sum_{i=0}^p\mathcal{A}_{ii}$.

Suppose that $\mathcal{A} = \sum_{i,j=0}^p\mathcal{A}_{ij}$ and
$\mathcal{B} = \sum_{i,j=0}^p\mathcal{B}_{ij}$ are two matrix
$K$-algebras (of the same size).

\begin{definition}
A $1$-morphism $F \colon  \mathcal{A} \to \mathcal{B}$ of matrix
algebras is a morphism of sheaves of $K$-algebras in
$\Gamma(X;P^\Psi(\{\mathcal{A}\},\mathcal{B}))$ such that
$F(\mathcal{A}_{ij})\subseteq \mathcal{B}_{ij}$.
\end{definition}

Suppose that $F_1,\ F_2 \colon  \mathcal{A} \to \mathcal{B}$ are
$1$-morphisms of matrix algebras.

\begin{definition}
A $2$-morphism $b \colon  F_1 \to F_2$ is a section
$b\in\Gamma(X;\mathfrak{d}(B))$ such that $b\cdot F_1 = F_2\cdot b$.
\end{definition}

In what follows we will assume that a matrix algebra satisfies the
following condition:

for $0\leq i,j\leq p$ the sheaf $\mathcal{A}_{ij}$ is a locally free
module of rank one over $\mathcal{A}_{ii}$ and over
$\mathcal{A}_{jj}^{op}$.

\subsubsection{Combinatorial restriction for matrix algebras}
Suppose that $\mathcal{A}=\sum_{i,j=0}^q{\mathcal A}_{ij}$ is a
matrix $K$-algebra and $f\colon [p] \to [q]$ is a morphism in
$\Delta$.

\begin{definition}
The \emph{the combinatorial restriction $f^{\sharp}\mathcal{A}$ (of
$\mathcal{A}$ along $f$)} is the matrix algebra with the underlying
sheaf
\[
f^{\sharp}\mathcal{A} = \bigoplus_{i,j=0}^p(f^{\sharp}{\mathcal
A})_{ij},\ \ (f^{\sharp}{\mathcal A})_{ij}=\mathcal{A}_{f(i)f(j)}\ .
\]
The product on $f^{\sharp}\mathcal{A}$ is induced by the product on
$\mathcal{A}$.
\end{definition}

Suppose that $F \colon  \mathcal{A} \to \mathcal{B}$ is a
$1$-morphism of matrix algebras. The combinatorial restriction
$f^\sharp F \colon  f^{\sharp}\mathcal{A} \to f^{\sharp}\mathcal{B}$
is defined in an obvious manner. For a $2$-morphism $b \colon  F_1
\to F_2$ the combinatorial restriction $f^\sharp b\colon  f^\sharp
F_1 \to f^\sharp F_2$ is given by $(f^\sharp b)_{ii} =
b_{f(i)f(i)}$.

For $0\leq i\leq q$, $0\leq j\leq p$ let $({\mathcal A}_f)_{ij} =
{\mathcal A}_{if(j)}$. Let
\[
{\mathcal A}_f = \bigoplus_{i=0}^q\bigoplus_{j=0}^p ({\mathcal
A}_f)_{ij}
\]
The sheaf ${\mathcal A}_f$ is endowed with a structure of a
${\mathcal A}\otimes_K (f^\sharp{\mathcal A})^{op}$-module given by
\begin{equation}
(a b c)_{il}= \sum_{k=0}^q\sum_{j=0}^p   a_{ij}b_{j k}c_{kl}
\end{equation}
where $a=\sum_{i,j=0}^q a_{ij}\in{\mathcal A}$,
$b=\sum_{i=0}^q\sum_{j=0}^p b_{ij}\in{\mathcal A}_f$,
$c=\sum_{i,j=0}^p c_{ij}\in f^\sharp{\mathcal A}$ and $abc \in
\mathcal{A}_f$.

The $f^\sharp{\mathcal A}\otimes_K{\mathcal A}^{op}$-module
${\mathcal A}^{-1}_f$ is defined in a similar fashion, with
$({\mathcal A}^{-1}_f)_{ij}={\mathcal A}_{f(i)j}$, $0\leq i\leq p$,
$0\leq j\leq q$.

Let $\overrightarrow{\alpha}_f\colon  \mathcal{A}_f
\otimes_{f^{\sharp}\mathcal{A}} \mathcal{A}^{-1}_f \to \mathcal{A}$
be defined by
\begin{equation}
\alpha (b \otimes  c)_{ij} =\sum_{k=0}^p b_{ik} c_{kj}
\end{equation}
Where $b=\sum_{i=0}^q\sum_{j=0}^p b_{ij}\in\mathcal{A}_f$,
$c=\sum_{i=0}^p\sum_{j=0}^q c_{ij}\in{\mathcal A}^{-1}_f$. Similarly
one constructs an isomorphism $\overleftarrow{\alpha}_f \colon
\mathcal{A}_f^{-1} \otimes_{\mathcal{A}} \mathcal{A}_f \to
f^{\sharp}\mathcal{A}$ of $f^{\sharp}\mathcal{A}$ bimodules.

\begin{lemma} The bimodules ${\mathcal A}_f$ and ${\mathcal A}^{-1}_f$ together with the maps $\overrightarrow{\alpha}_f$ and $\overleftarrow{\alpha}_f$
implement a Morita equivalence between $f^{\sharp}\mathcal{A}$ and
$\mathcal{A}$.
\end{lemma}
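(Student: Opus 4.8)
The plan is to recognize the data $(\mathcal{A},\ f^{\sharp}\mathcal{A},\ \mathcal{A}_f,\ \mathcal{A}^{-1}_f,\ \overrightarrow{\alpha}_f,\ \overleftarrow{\alpha}_f)$ as a Morita context and to verify the three things this requires: that $\overrightarrow{\alpha}_f$ and $\overleftarrow{\alpha}_f$ are morphisms of bimodules, that they satisfy the two associativity (``pentagon'') constraints relating them, and that they are isomorphisms. By the standard characterization of Morita equivalence --- a Morita context whose two evaluation maps are surjective, \emph{a fortiori} one in which they are isomorphisms compatible with the constraints, is a Morita equivalence --- this is enough.

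First I would dispose of the routine identities. Taking the $\mathcal{A}\otimes_K(f^{\sharp}\mathcal{A})^{op}$- and $f^{\sharp}\mathcal{A}\otimes_K\mathcal{A}^{op}$-module structures on $\mathcal{A}_f$ and $\mathcal{A}^{-1}_f$ as given, a direct computation from the displayed formulas shows that $\overrightarrow{\alpha}_f$ is $f^{\sharp}\mathcal{A}$-balanced, hence factors through $\mathcal{A}_f \otimes_{f^{\sharp}\mathcal{A}}\mathcal{A}^{-1}_f$, and is a morphism of $\mathcal{A}$-bimodules; symmetrically $\overleftarrow{\alpha}_f$ is a morphism of $f^{\sharp}\mathcal{A}$-bimodules. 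The two constraints
\[
\overrightarrow{\alpha}_f \otimes \id_{\mathcal{A}_f} = \id_{\mathcal{A}_f}\otimes \overleftarrow{\alpha}_f \colon \mathcal{A}_f \otimes_{f^{\sharp}\mathcal{A}} \mathcal{A}^{-1}_f \otimes_{\mathcal{A}} \mathcal{A}_f \longrightarrow \mathcal{A}_f
\]
and the analogous equality of maps out of $\mathcal{A}^{-1}_f \otimes_{\mathcal{A}} \mathcal{A}_f \otimes_{f^{\sharp}\mathcal{A}} \mathcal{A}^{-1}_f$ are then immediate: in coordinates both sides of each equation compute a triple sum of the shape $\sum_{j,k} a_{ij}b_{jk}c_{kl}$, and they agree by the single associativity law of the matrix multiplication together with unitality of the $\mathcal{A}_{ii}$-actions. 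Since every operation involved is built out of the structure operations of the pseudo-tensor category $\Psi$, nothing further needs to be checked here.

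The substantive step --- and the only place where the standing hypothesis that each $\mathcal{A}_{ij}$ is locally free of rank one over $\mathcal{A}_{ii}$ and over $\mathcal{A}_{jj}^{op}$ is used --- is the bijectivity of $\overrightarrow{\alpha}_f$ and $\overleftarrow{\alpha}_f$. This is local on $X$, so I would restrict to an open set over which every $\mathcal{A}_{ij}$ is trivialized and choose a generator $e_{ij}$ of $\mathcal{A}_{ij}$ over $\mathcal{A}_{ii}$, normalized so that $e_{ij}e_{jk}=e_{ik}$ and $e_{ii}$ is the unit of $\mathcal{A}_{ii}$. With such a choice the picture is exactly the classical Morita equivalence between the ``matrix algebras'' $\mathcal{A}$ and $f^{\sharp}\mathcal{A}$ implemented by the rectangular bimodules obtained by relabelling rows and columns through $f\colon[p]\to[q]$, and surjectivity of both evaluation maps is visible directly (for instance $\overrightarrow{\alpha}_f\bigl(e_{if(k)}\otimes(a\,e_{f(k)j})\bigr)=a\,e_{ij}$ for any $a\in\mathcal{A}_{jj}$ and any $k$). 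Combined with the constraints already established, surjectivity forces bijectivity in the usual way; the resulting inverses are canonical, hence independent of the chosen trivialization and generators, and patch to global isomorphisms of bimodules, giving the asserted Morita equivalence.

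The only part demanding care --- rather than presenting any genuine difficulty --- is the bookkeeping of index ranges ($\mathcal{A}_f$ indexed by $0\le i\le q$, $0\le j\le p$; $\mathcal{A}^{-1}_f$ by $0\le i\le p$, $0\le j\le q$; the sums in $\overrightarrow{\alpha}_f$, $\overleftarrow{\alpha}_f$ and in the two constraints taken over the appropriate $[p]$ or $[q]$) together with the transport of labels through $f$. I expect this, and not any conceptual obstruction, to be the main thing to keep straight; once a local trivialization is fixed the whole argument collapses to the elementary identities for matrix units.
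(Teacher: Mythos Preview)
The paper states this lemma without proof, treating it as a routine verification; your proposal correctly supplies that verification, organizing it as a Morita context whose two evaluation maps are isomorphisms and checking bijectivity locally by means of the standing hypothesis that each $\mathcal{A}_{ij}$ is locally free of rank one over $\mathcal{A}_{ii}$ and over $\mathcal{A}_{jj}^{op}$. One small slip: in your surjectivity display the element $a\,e_{f(k)j}$ with $a\in\mathcal{A}_{jj}$ does not typecheck as written --- you want either $e_{f(k)j}\cdot a$ (right action of $\mathcal{A}_{jj}$) or, equivalently, $a'\cdot e_{if(k)}$ with $a'\in\mathcal{A}_{ii}$ on the other tensor factor --- but this is a cosmetic fix and the argument is sound.
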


\subsubsection{Matrix Azumaya algebras}
\begin{definition}\label{def: mat azum K-alg}
A \emph{matrix Azumaya $K$-algebra on $X$} is a matrix $K$-algebra
$\mathcal{A}=\sum_{i,j}\mathcal {A}_{ij}$ which satisfies the
additional condition $\mathcal{A}_{ii}= K$.
\end{definition}
A matrix Azumaya $K$-algebra is, in particular, an Azumaya
$K$-algebra. Let $\Der_K(\mathcal{A})^{loc}$ denote the sheaf of
$K$-linear derivations which preserve the decomposition. Note that
$\Der_K(\mathcal{A})^{loc}$ is a sheaf of \emph{abelian} Lie
algebras. The short exact sequence \eqref{ses:ad} restricts to the
short exact sequence
\[
0 \to K \to \mathfrak{d}(\mathcal{A}) \xrightarrow{\delta}
\Der_K({\mathcal A})^{loc} \to 0
\]

For an $\mathcal{O}_X$ matrix Azumaya algebra we denote by $\mathcal
{C}(\mathcal{A})^{loc}$ the subsheaf of $\mathcal{C}(\mathcal{A})$
whose sections are the connections which preserve the decomposition.
The sheaf ${\mathcal C}({\mathcal A})^{loc}$ is an
$\Omega^1_X\otimes_{\mathcal{O}_X}\Der_{{\mathcal O}_X}({\mathcal
A})^{loc}$-torsor.

If ${\mathcal A}$ is a matrix Azumaya algebra, then both
$\mathcal{A}\otimes_{\mathcal{O}_X} \mathcal{J}_X$ and
$\mathcal{J}_X(\mathcal{A})$ are matrix Azumaya
$\mathcal{J}_X$-algebras. Let
$\shIsom_0(\mathcal{A}\otimes_{\mathcal{O}_X}
\mathcal{J}_X,\mathcal{J}_X(\mathcal{A}))^{loc}$ denote the sheaf of
$\mathcal{J}_X$-matrix algebra isomorphisms
$\mathcal{A}\otimes_{\mathcal{O}_X}\mathcal{J}_X \to
\mathcal{J}_X(\mathcal{A})$ making the following diagram
commutative:
\begin{equation*}
\xymatrix{ & \mathcal{A}\otimes_{\mathcal{O}_X}\mathcal{J}_X \ar[rr]\ar[d]_{\id \otimes p} &&  \mathcal{J}_X(\mathcal{A})  \ar[d]^{p_{\mathcal{A}}}\\
           &\mathcal{A} \ar[rr]^{\id}&& \mathcal{A}
          }
\end{equation*}
where $p_{\mathcal{A}}$ is the canonical projection and $p \colon =
p_{\mathcal{O}_X}$.

Let $\shAut_0(\mathcal{A}\otimes_{\mathcal{O}_X}
\mathcal{J}_X)^{loc}$ denote the sheaf of $\mathcal{J}_X$-matrix
algebra automorphisms of $\mathcal{A}\otimes_{\mathcal{O}_X}
\mathcal{J}_X$ making the following diagram commutative:
\begin{equation*}
\xymatrix{ & \mathcal{A}\otimes_{\mathcal{O}_X}\mathcal{J}_X \ar[rr]\ar[d]_{\id \otimes p} &&  \mathcal{A}\otimes_{\mathcal{O}_X}\mathcal{J}_X \ar[d]^{\id\otimes p}\\
           &\mathcal{A} \ar[rr]^{\id}&& \mathcal{A}
          }
\end{equation*}
The sheaf $\shIsom_0(\mathcal{A}\otimes_{\mathcal{O}_X}
\mathcal{J}_X, \mathcal{J}_X(\mathcal{A}))^{loc}$ is a torsor under
$\shAut_0(\mathcal{A}\otimes_{\mathcal{O}_X} \mathcal{J}_X)^{loc}$
and the latter is soft.

Note that $\shAut_0(\mathcal{A}\otimes_{\mathcal{O}_X}
\mathcal{J}_X)^{loc}$ is a sheaf of pro-unipotent Abelian groups and
the map
\begin{equation}\label{exponential map}
\exp \colon  \Der_{\mathcal{O}_X}(\mathcal{A})^{loc}
\otimes_{\mathcal{O}_X} \mathcal{J}_{X,0}\to
\shAut_0(\mathcal{A}\otimes_{\mathcal{O}_X} \mathcal{J}_X)^{loc}
\end{equation}
is an isomoprhism of sheaves of groups.

\subsection{DGLA of local cochains}

\subsubsection{Local cochains on matrix algebras}\label{local cochains}
Suppose that ${\mathcal B} = \bigoplus_{i,j=0}^p{\mathcal B}_{ij}$
is a sheaf of matrix $k$-algebras. Under these circumstances one can
associate to ${\mathcal B}$ a DGLA of \emph{local cochains} defined
as follows.

Let $C^0(\mathcal{B})^{loc} = \mathfrak{d}(\mathcal{B})$. For $n\geq
1$ let $C^n({\mathcal B})^{loc}$ denote the subsheaf of
$C^n({\mathcal B})$ of multilinear maps $D$ such that for any
collection of $s_{i_kj_k}\in{\mathcal B}_{i_kj_k}$
\begin{enumerate}
\item $D(s_{i_1j_1}\otimes\cdots\otimes s_{i_nj_n}) = 0$ unless $j_k=i_{k+1}$ for all $k = 1,\dots ,n-1$

\item $D(s_{i_0i_1}\otimes s_{i_1i_2}\otimes\cdots\otimes s_{i_{n-1}i_n})\in{\mathcal B}_{i_0i_n}$
\end{enumerate}
For $I = (i_0,\ldots,i_n)\in [p]^{\times n+1}$ let
\begin{equation*}
C^I(\mathcal{B})^{loc} \colon = C^n({\mathcal B})^{loc} \bigcap
\shHom_k(\otimes_{j=0}^{n-1} \mathcal{B}_{i_j
i_{j+1}},\mathcal{B}_{i_0 i_n}) \ .
\end{equation*}
The restriction maps along the embeddings $\otimes_{j=0}^{n-1}
\mathcal{B}_{i_j i_{j+1}}\hookrightarrow\mathcal{B}^{\otimes n}$
induce an isomorphism $C^n(\mathcal{B})^{loc}\to\oplus_{I\in
[p]^{\times n+1}} C^I(\mathcal{B})^{loc}$.

The sheaf $C^\bullet({\mathcal B})^{loc}[1]$ is a subDGLA of
$C^\bullet({\mathcal B})[1]$ and the inclusion map is a
quasi-isomorphism.

\subsubsection{Combinatorial restriction of local cochains}
As in \ref{local cochains}, ${\mathcal B} =
\bigoplus_{i,j=0}^q{\mathcal B}_{ij}$ is a sheaf of matrix
$K$-algebras.

The DGLA $C^\bullet({\mathcal B})^{loc}[1]$ has additional variance
not exhibited by $C^\bullet({\mathcal B})[1]$. Namely, for $f\colon
[p] \to [q]$ there is a natural map of DGLA
\begin{equation}\label{combinatorial restiction of cochains}
f^\sharp \colon  C^\bullet({\mathcal B})^{loc}[1]\to
C^\bullet(f^\sharp{\mathcal B})^{loc}[1]
\end{equation}
defined as follows. Let $f^{ij}_\sharp\colon
(f^\sharp\mathcal{B})_{ij}\to\mathcal{B}_{f(i)f(j)}$ denote the
tautological isomorphism. For each collection $I =
(i_0,\ldots,i_n)\in [p]^{\times (n+1)}$ let
\begin{equation*}
f^I_\sharp \colon = \otimes_{j=0}^{n-1}f^{i_j i_{j+1}}_\sharp \colon
\otimes_{j=0}^{n-1} (f^\sharp\mathcal{B})_{i_j
i_{j+1}}\to\otimes_{i=0}^{n-1}\mathcal{B}_{f(i_j) f(i_{j+1})} \ .
\end{equation*}
Let $f^n_\sharp \colon = \oplus_{I\in [p]^{\times
(n+1)}}f^I_\sharp$. The map \eqref{combinatorial restiction of
cochains} is defined as restriction along $f^n_\sharp$.

\begin{lemma}
The map \eqref{combinatorial restiction of cochains} is a morphism
of DGLA
\begin{equation*}
f^\sharp \colon  C^\bullet(\mathcal{B})^{loc}[1]\to
C^\bullet(f^\sharp\mathcal{B})^{loc}[1] \ .
\end{equation*}
\end{lemma}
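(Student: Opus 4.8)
The plan is to check the two defining properties of a morphism of DGLA separately: compatibility with the Gerstenhaber bracket and compatibility with the Hochschild differential. Degree-preservation and $k$-linearity of $f^\sharp$ are immediate from the construction, and the map has already been produced as a morphism of graded sheaves, so nothing else is needed.

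First I would make the effect of $f^\sharp$ completely explicit in terms of the block decomposition $C^n(\mathcal{B})^{loc}=\bigoplus_{I}C^I(\mathcal{B})^{loc}$. For $I=(i_0,\ldots,i_n)\in[p]^{\times(n+1)}$ put $f_*I=(f(i_0),\ldots,f(i_n))\in[q]^{\times(n+1)}$. Then, by the definition of restriction along $f^n_\sharp$, the $I$-component of $f^\sharp D$ is the $f_*I$-component $D^{f_*I}$ of $D$ precomposed with $f^I_\sharp$ (and with the target $\mathcal{B}_{f(i_0)f(i_n)}$ reidentified with $(f^\sharp\mathcal{B})_{i_0i_n}$ via $f^{i_0i_n}_\sharp$). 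In words: under the tautological identifications $(f^\sharp\mathcal{B})_{ij}\xrightarrow{\sim}\mathcal{B}_{f(i)f(j)}$, the cochain $f^\sharp D$ is obtained from $D$ by pulling the block-indexing back along $f$. This makes sense whether or not $f$ is injective; non-injectivity merely causes one component of $D$ to be re-used in several components of $f^\sharp D$.

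For the bracket, recall that $[\,,\,]$ on $C^\bullet(\mathcal{B})[1]$ is assembled from the Gerstenhaber insertion operations $D\circ_i E$, and that the signs occurring in $D\circ E=\sum_i\pm\,D\circ_i E$ and in $[D,E]=D\circ E\pm E\circ D$ depend only on the arities of $D$ and $E$ and on the slot index $i$, not on any matrix indices. Since the multiplication on $f^\sharp\mathcal{B}$ is, by construction, induced from that of $\mathcal{B}$ via the $f^{ij}_\sharp$, a direct computation shows that the $I$-component of $(f^\sharp D)\circ_i(f^\sharp E)$ is the tautological transport of the corresponding component of $D\circ_i E$: the admissibility condition that consecutive block indices agree is preserved because $f$ is a function, and the identifications $f^{ij}_\sharp$ intertwine the evaluation and insertion in $f^\sharp\mathcal{B}$ with those in $\mathcal{B}$. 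Hence $f^\sharp(D\circ_i E)=(f^\sharp D)\circ_i(f^\sharp E)$, and summing with the index-independent signs gives $f^\sharp[D,E]=[f^\sharp D,f^\sharp E]$.

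For the differential, the Hochschild differential on local cochains is $\delta=[m,-]$ with $m$ the multiplication $2$-cochain of the matrix algebra. The product on $f^\sharp\mathcal{B}$ being induced from that of $\mathcal{B}$ means precisely that $f^\sharp m_{\mathcal{B}}=m_{f^\sharp\mathcal{B}}$ as local $2$-cochains; combined with the bracket compatibility just established, this yields $f^\sharp\circ\delta_{\mathcal{B}}=\delta_{f^\sharp\mathcal{B}}\circ f^\sharp$, which is the second compatibility and finishes the argument. The only genuinely fiddly point is the middle step — tracking which block indices make a Gerstenhaber insertion nonzero and checking that the tautological identifications $f^{ij}_\sharp$ commute with these insertions — and the one thing to be vigilant about there is precisely that $f$ need not be injective.
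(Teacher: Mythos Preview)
Your argument is correct and is the natural direct verification. The paper itself states this lemma without proof, so there is nothing to compare against; your write-up would serve as a complete proof of the asserted fact.
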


\subsubsection{Deformations of matrix algebras}
For a matrix algebra $\mathcal B$ on $X$ we denote by
$\Def(\mathcal{B})^{loc}(R)$ the subgroupoid of
$\Def(\mathcal{B})(R)$ with objects $R$-star products which respect
the decomposition given by $(\mathcal{B}\otimes_k R)_{ij} =
\mathcal{B}_{ij}\otimes_k R$ and $1$- and $2$-morphisms defined
accordingly. The composition
\[
\Def(\mathcal{B})^{loc}(R)\to \Def(\mathcal{B})(R) \to
\MC^2(\Gamma(X; C^\bullet(\mathcal{B})[1])\otimes_k \mathfrak{m}_R)
\]
takes values in $\MC^2(\Gamma(X;
C^\bullet(\mathcal{B})^{loc}[1])\otimes_k \mathfrak{m}_R)$ and
establishes an isomorphism of $2$-groupoids
$\Def(\mathcal{B})^{loc}(R)\cong \MC^2(\Gamma(X;
C^\bullet(\mathcal{B})^{loc}[1])\otimes_k \mathfrak{m}_R)$.

\section{Deformations of cosimplicial matrix Azumaya algebras}\label{Deformations of cosimplicial matrix Azumaya
algebras}

\subsection{Cosimplicial matrix algebras}\label{section:
cosimplicial matrix algebras}

Suppose that $X$ is a simplicial space. We assume given a
cosimplicial pseudo-tensor category $\Psi \colon  [p] \mapsto
\Psi^p$, $p = 0, 1, 2, \ldots$, where $\Psi^p$ is a pseudo-tensor
subcategory $\Sh_k(X_p)$ (see \ref{subsubsection:pseudo-tensor
cats}) so that for any morphism $f \colon  [p] \to [q]$ in $\Delta$
the corresponding functor $X(f)^{-1} \colon  \Sh_k(X_p) \to
\Sh_k(X_q)$ restricts to a functor $X(f)^{-1} \colon  \Psi^p \to
\Psi^q$. If $X$ is an \'etale simplicial manifold, then both
$\mathtt{DIFF}$ and $\mathtt{JET}$ are examples of such. In what
follows all algebraic structures are understood to be given by
operations in $\Psi$.

Suppose that $K$ is a special cosimplicial sheaf (see Definition
\ref{def special sheaf}) of commutative algebras on $X$.

\begin{definition}
A \emph{cosimplicial matrix $K$-algebra $\mathcal{A}$ on $X$} is
given by the following data:
\begin{enumerate}
\item  for each $p=0,1,2,\ldots$ a matrix $K^p$-algebra $\mathcal{A}^p = \sum_{i,j = 0}^p \mathcal{A}^p_{ij}$

\item for each morphism $f\colon [p]\to[q]$ in $\Delta$ an isomorphism of matrix $K^q$-algebras $f_* \colon
    X(f)^{-1}{\mathcal A}^p\to f^\sharp{\mathcal A}^q$.
\end{enumerate}
These are subject to the associativity condition: for any pair of
composable arrows $[p]\stackrel{f}{\to}[q]\stackrel{g}{\to}[r]$
\[
(g\circ f)_* = f^\sharp(g_*)\circ X(g)^{-1}(f_*)
\]
\end{definition}

\begin{remark}
As is clear from the above definition, a cosimplicial matrix algebra
is \emph{not} a cosimplicial sheaf of algebras in the usual sense.
\end{remark}

Suppose that $\mathcal{A}$ and $\mathcal{B}$ are two cosimplicial
matrix algebras on $X$. A \emph{$1$-morphism of cosimplicial matrix
algebras $F\colon  \mathcal{A}\to\mathcal{B}$} is given by the
collection of morphisms
\[
F^p\colon \mathcal{A}^p \to \mathcal{B}^p
\]
of matrix $K^p$-algebras subject to the compatibility condition: for
any morphism $f\colon [p]\to [q]$ in $\Delta$ the diagram
\[
\begin{CD}
 X(f)^{-1}{\mathcal A}^p
@>{f_*}>> f^\sharp{\mathcal A}^q \\
@V{X(f)^*F^p}VV @VV{f^{\sharp}F^q}V \\
X(f)^{-1}{\mathcal B}^p @>{f_*}>> f^\sharp{\mathcal B}^q
\end{CD}
\]
commutes. The composition of $1$-morphisms is given by the
composition of their respective components.

The identity $1$-morphism $\id_{\mathcal{A}} \colon  \mathcal{A}\to
\mathcal{A}$ is given by $\id_{\mathcal{A}}^p = \id$.

Suppose that $F_1, F_2 \colon  \mathcal{A}\to \mathcal{B}$ are two
$1$-morphisms. A \emph{$2$-morphism $b \colon F_1\to F_2$} is given
by a collection of $2$-morphisms  $b^p\colon F_1^p\to F_2^p  $
satisfying

\[
f_*(X(f)^*b^p)=f^{\sharp} b^q
\]
for any morphism $f\colon [p]\to [q]$ in $\Delta$. The compositions
of $2$-morphisms are again componentwise.

Let $\CosMatAlg^\Psi_K(X)$ denote the category of cosimplicial
matrix $K$-algebras with $1$- and $2$-morphisms defined as above. In
the case when $\Psi = \Sh_k(X)$, i.e. no restrictions are imposed,
we will simply write $\CosMatAlg_K(X)$.

\subsection{Deformations of cosimplicial matrix algebras}
\subsubsection{DGLA from cosimplicial matrix algebras}\label{DGLA from cosimplicial alg}
Suppose that $X$ is a simplicial space and $K$ is a special
cosimplicial sheaf of commutative $k$-algebras on $X$. To
$\mathcal{A} \in \CosMatAlg_K^\Psi(X)$ we associate a cosimplicial
sheaf of DGLA $\mathfrak{g}(\mathcal{A})$ on $\real{X}$ (see
\ref{geometric realization}).

Let $\mathfrak{g}^n(\mathcal{A})$ denote the sheaf on $\real{X}_n$
whose restriction to $\real{X}_\lambda$ is equal to
$X(\lambda(0n))^{-1} C^\bullet({\mathcal A}^{\lambda(0)})^{loc}[1]$.

For a morphism $f \colon  [m] \to [n]$ in $\Delta$ let
\[
f_* \colon  \real{X}(f)^{-1}\mathfrak{g}^m(\mathcal{A}) \to
\mathfrak{g}^n(\mathcal{A})
\]
denote the map whose restriction to $\real{X}_\lambda$ is equal to
the composition
\begin{multline*}
X(\Upsilon(f)^\lambda)^{-1}X(f^*(\lambda)(0m))^{-1} C^\bullet(\mathcal{A}^{f^*(\lambda)(0)})^{loc}[1] \cong \\
X(\Upsilon(f)^\lambda\circ
f^*(\lambda)(0m))^{-1}C^\bullet(\mathcal{A}^{f^*(\lambda)(0)})^{loc}[1]
\xrightarrow{f^\sharp} \\
X(\lambda(0n))^{-1}C^\bullet(f^\sharp\mathcal{A}^{\lambda(0)})^{loc}[1] \cong \\
X(\lambda(0n))^{-1}C^\bullet({\mathcal A}^{\lambda(0)})^{loc}[1]
\end{multline*}
in the notations of \ref{subsubsection: subdivision}.

We leave it to the reader to check that the assignment
$[n]\mapsto\mathfrak{g}^n(\mathcal{A})$, $f \mapsto f_*$ is a
cosimplicial sheaf of DGLA on $\real{X}$.

We will denote by $\mathfrak{d}(\mathcal{A})$ the cosimplicial
subDGLA $[n]\mapsto
X(\lambda(0n))^{-1}\mathfrak{d}(\mathcal{A}^{\lambda(0)})$.

For each $i = 0, 1, \ldots$ we have the cosimplicial vector space of
degree $i$ local cochains $\Gamma(\real{X};
\mathfrak{g}^{\bullet,i}(\mathcal{A}))$, $[n] \mapsto
\Gamma(\real{X}_n; \mathfrak{g}^{n,i}(\mathcal{A}))$. The following
theorem was proved in \cite{bgnt3} in the special case when $X$ is
the nerve of an open cover of a manifold. The proof of Theorem 5.2
in \cite{bgnt3} extends verbatim to give the following result.

\begin{thm}\label{theorem: acyclicity}
For each $i,j\in\mathbb{Z}$, $j\neq 0$, $H^j(\Gamma(\real{X};
\mathfrak{g}^{\bullet,i}(\mathcal{A}))) = 0$.
\end{thm}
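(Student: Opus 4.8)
The plan is to reduce the statement to a local, purely combinatorial computation over a single simplex and then assemble the local answers. Fix $i,j\in\mathbb{Z}$ with $j\neq 0$. By definition $\Gamma(\real{X};\mathfrak{g}^{\bullet,i}(\mathcal{A}))$ is the cosimplicial vector space $[n]\mapsto\prod_{\lambda\colon[n]\to\Delta}\Gamma(X_{\lambda(n)};X(\lambda(0n))^{-1}C^{i+1}(\mathcal{A}^{\lambda(0)})^{loc})$. First I would observe that the local cochain sheaf $C^{i+1}(\mathcal{B})^{loc}$ on a matrix $K$-algebra $\mathcal{B}$ of size $p$ decomposes, by the isomorphism recalled in \ref{local cochains}, as $\bigoplus_{I\in[p]^{\times(i+2)}}C^I(\mathcal{B})^{loc}$, and that under combinatorial restriction $f^\sharp$ each summand $C^I(f^\sharp\mathcal{B})^{loc}$ is carried \emph{isomorphically} onto $C^{f\circ I}(\mathcal{B})^{loc}$, where $f\circ I$ is the composite multi-index. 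This is precisely the extra variance that the passage to the subdivision is designed to exploit: once one sorts the cochains by their multi-index, the cosimplicial structure maps permute the pieces according to the combinatorics of $\Delta$ rather than mixing them.

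Next I would recognize the resulting cosimplicial vector space as one built out of a functor on the category of simplices. Concretely, for each fixed ``shape'' of multi-index one gets a summand of $\Gamma(\real{X};\mathfrak{g}^{\bullet,i}(\mathcal{A}))$ of the form $[n]\mapsto\prod_{\lambda\colon[n]\to\Delta}(\text{something depending only on }\lambda(0)\text{ and the multi-index})$, and the structure maps are exactly those of the cosimplicial replacement computing the (co)homology of the constant-along-fibres data over $N\Delta$. The key input is then the contractibility of the relevant indexing category — equivalently, that $N\Delta$ (the nerve of the simplex category, appearing through the subdivision construction of \ref{geometric realization}) has the cohomology of a point — so that the only surviving cohomology is in degree $0$. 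I would phrase this cleanly by invoking Lemma \ref{lemma: subdiv quism}: the subdivision does not change cohomology, and after untwisting by the multi-index bookkeeping the cosimplicial vector space in question becomes $\real{V}$ for a \emph{constant} cosimplicial vector space $V$ (constant because the value $X(\lambda(0n))^{-1}C^I(\mathcal{A}^{\lambda(0)})^{loc}$, as $\lambda$ ranges over a connected component of the simplex category, is identified with a single vector space via the structure isomorphisms — here is where one uses that $K$, hence $\mathcal{A}^\bullet$, is \emph{special}). A constant cosimplicial vector space has $H^j=0$ for $j\neq 0$, and then Lemma \ref{lemma: subdiv quism} gives the same for $\real{V}$, yielding the claim.

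The step I expect to be the main obstacle is making the ``untwisting by the multi-index'' rigorous: one must check that the permutation action of $f\colon[m]\to[n]$ on the index set $[p]^{\times(i+2)}$ interacts correctly with the \emph{two} maps appearing in the structure map of $\mathfrak{g}^\bullet(\mathcal{A})$ in \ref{DGLA from cosimplicial alg} — namely the subdivision map $X(\Upsilon(f)^\lambda)^{-1}$ and the combinatorial restriction $f^\sharp$ — and that after this reorganization the cosimplicial structure is genuinely that of the subdivision of a constant object. This is a bookkeeping-heavy but conceptually transparent diagram chase; the honest statement in the paper is that the argument of Theorem~5.2 of \cite{bgnt3} ``extends verbatim'', so in practice I would carefully match the present cosimplicial-sheaf-on-$\real{X}$ setup to the open-cover-nerve setup of that reference, verify that every use of specialness of $K$ and of combinatorial restriction goes through unchanged, and then quote the acyclicity conclusion. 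I would also note the mild finiteness point — the product over $\lambda\colon[n]\to\Delta$ is infinite, so one should confirm that cohomology commutes with this product, which it does since each factor is a complex of vector spaces and we are computing with the normalized complex $N(\ )$, for which the relevant quasi-isomorphisms are degreewise.
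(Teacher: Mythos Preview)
The paper does not give a self-contained proof: the sentence preceding the theorem says only that the argument of Theorem~5.2 in \cite{bgnt3} extends verbatim to the present generality. Your final paragraph correctly identifies this and proposes exactly what the paper does --- match the setup to \cite{bgnt3} and quote the conclusion.

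Your sketch of the content of that argument has the right architecture (split $C^{i+1}(\mathcal{A}^{\lambda(0)})^{loc}$ by multi-index $I$, track how combinatorial restriction permutes the pieces, reduce to a contractibility statement for an indexing category), but one step is miscast. You claim that after untwisting one obtains $\real{V}$ for a \emph{constant} cosimplicial $V$, and justify this by ``$K$, hence $\mathcal{A}^\bullet$, is special''. The paper explicitly warns against this inference in the Remark following the definition of cosimplicial matrix algebra: $\mathcal{A}$ is \emph{not} a cosimplicial sheaf in the ordinary sense, and specialness of $K$ does not propagate to $\mathcal{A}$. More concretely, the vector spaces $\Gamma(X_{\lambda(n)}; X(\lambda(0n))^{-1}C^I(\mathcal{A}^{\lambda(0)})^{loc})$ depend on $\lambda(n)$ through the space over which one takes sections, so they are not literally constant along the simplex category. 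The actual mechanism in \cite{bgnt3} is that after the multi-index reorganization each summand is the cosimplicial vector space attached to a diagram over a category with a terminal object, so its higher cohomology vanishes by contractibility rather than by constancy. If you intend to write the proof out rather than cite it, this is the step to formulate precisely; the rest of your outline is sound.
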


Let
\begin{equation}\label{def complex cosimp matrix alg}
\mathfrak{G}(\mathcal{A}) =
\Tot(\Gamma(\real{X};\mathfrak{g}(\mathcal{A}))) .
\end{equation}
As will be shown in Theorem \ref{thm:mapgroup equiv}, the DGLA
$\mathfrak{G}(\mathcal{A})$ plays the role of a deformation complex
of $\mathcal A$.

\subsubsection{The deformation functor}
Suppose that $X$ is a simplicial space, $\Psi$ is as in
\ref{section: cosimplicial matrix algebras} subject to the
additional condition as in \ref{subsubsection: star products}. For
$\mathcal{A} \in \CosMatAlg^\Psi_k(X)$  we define the deformation
functor $\Def(\mathcal{A})$ on $\ArtAlg_k$ (see \ref{subsubsection:
star products} for Artin algebras).

\begin{definition}\label{definition:DefCosMatAlg}
An \emph{$R$-deformation $\mathbf{B}_R$ of $\mathcal A$} is a
cosimplicial matrix $R$-algebra structure on $\mathcal{A}_R \colon =
\mathcal{A}\otimes_k R$ with the following properties:
\begin{enumerate}
\item $\mathbf{B}_R\in\CosMatAlg^\Psi_R(X)$

\item for all $f \colon  [p]\to[q]$ the structure map $f^{\mathbf{B}}_* \colon  X(f)^{-1}\mathbf{B}^p\to
    f^\sharp\mathbf{B}^q$ is equal to the map $f^{\mathcal{A}}_*\otimes\id_R \colon
    X(f)^{-1}\mathcal{A}^p\otimes_k R \to f^\sharp\mathcal{A}^q\otimes_k R$.
\item the identification $\mathbf{B}_R\otimes_R k \cong \mathcal{A}$ is compatible with the respective cosimplicial
    matrix algebra structures.
\end{enumerate}
\end{definition}

A $1$-morphism of $R$-deformations of $\mathcal A$ is a $1$-morphism
of cosimplicial matrix $R$-algebras which reduces to the identity
$1$-morphism $\id_{\mathcal{A}}$ modulo the maximal ideal.

A $2$-morphism between $1$-morphisms of deformations is a
$2$-morphism which reduces to the identity endomorphism of
$\id_{\mathcal{A}}$ modulo the maximal.

We denote by $\Def(\mathcal{A})(R)$ the $2$-category with objects
  $R$-deformations of $\mathcal A$, $1$-morphisms and
$2$-morphisms as above. It is clear that the assignment
$R\mapsto\Def(\mathcal{A})(R)$ is natural in $R$.

\subsubsection{}
Suppose given an $R$-deformation $\mathbf B$ of $\mathcal A$  as
above. Then, for each $p=0,1,2,...$ we have the Maurer-Cartan (MC)
element $\gamma^p\in\Gamma(X_p;C^\bullet(\mathcal{A}^p)^{loc}[1])
\otimes_k \mathfrak{m}_R$ which corresponds to the $R$-deformation
$\mathbf{B}^p$ of the matrix algebra $\mathcal{A}^p$ on $X_p$. This
collection defines a MC element  $\gamma \in \Gamma (\real{X}_0,
\mathfrak{g}^0(\mathcal{A}))$. It is clear from the definition that
$\gamma \in \ker(\Gamma (\real{X}_0,
\mathfrak{g}^0(\mathcal{A}))\rightrightarrows \Gamma (\real{X}_1,
\mathfrak{g}^1(\mathcal{A})))\otimes_k \mathfrak{m}_R$. This
correspondence induces a bijection between the objects of
$\Def(X;\mathcal{A})(R)$ and those of $\MC^2(\ker(\Gamma
(\real{X}_0, \mathfrak{g}^0(\mathcal{A}))\rightrightarrows \Gamma
(\real{X}_1, \mathfrak{g}^1(\mathcal{A})))\otimes_k
\mathfrak{m}_R)$, which can clearly be extended to an isomorphism of
$2$-groupoids. Recall that we have a morphism of DGLA
\begin{equation}
\ker(\Gamma (\real{X}_0,
\mathfrak{g}^0(\mathcal{A}))\rightrightarrows \Gamma (\real{X}_1,
\mathfrak{g}^1(\mathcal{A}))) \to \mathfrak{G}(\mathcal{A})
\end{equation}
 where
$\mathfrak{G}(\mathcal{A}) =
\Tot(\Gamma(\real{X};\mathfrak{g}(\mathcal{A})))$.

 This morphism
induces a morphism of the corresponding Deligne $2$-groupoids.
Therefore we obtain a morphism of $2$-groupoids
\begin{equation}\label{mapgroup}
\Def(\mathcal{A})(R) \to \MC^2(\mathfrak{G}(\mathcal{A})\otimes_k
\mathfrak{m}_R)
\end{equation}
\begin{thm}\label{thm:mapgroup equiv}
The map \eqref{mapgroup} is an equivalence of $2$-groupoids.
\end{thm}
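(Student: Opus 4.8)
The plan is to factor the map \eqref{mapgroup} through the intermediate $2$-groupoid $\MC^2(\ker(\Gamma(\real{X}_0,\mathfrak{g}^0(\mathcal{A}))\rightrightarrows\Gamma(\real{X}_1,\mathfrak{g}^1(\mathcal{A})))\otimes_k\mathfrak{m}_R)$. By the discussion immediately preceding the theorem, the functor $\Def(\mathcal{A})(R)\to\MC^2(\ker(\dots)\otimes_k\mathfrak{m}_R)$ sending an $R$-deformation $\mathbf{B}$ to the collection of Maurer--Cartan elements $\gamma^p$ (using that each $\gamma^p$ is itself produced from $\Def(\mathcal{B}^p)^{loc}(R)\cong\MC^2(\Gamma(X_p;C^\bullet(\mathcal{A}^p)^{loc}[1])\otimes_k\mathfrak{m}_R)$, see \ref{local cochains} and the end of \S\ref{Deformations and DGLA}) is an isomorphism of $2$-groupoids; this is essentially a matter of unwinding Definition \ref{definition:DefCosMatAlg} and comparing it termwise with the definition of a cosimplicial matrix algebra structure. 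So the first step is just to record carefully that this first functor is an isomorphism.

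The second step is to show that the map $\MC^2(\ker(\dots)\otimes_k\mathfrak{m}_R)\to\MC^2(\mathfrak{G}(\mathcal{A})\otimes_k\mathfrak{m}_R)$ induced by the DGLA morphism \eqref{ker to Tot dgla} is an equivalence. For this I invoke Proposition \ref{prop: MC2 ker is MC2 tot}: it says precisely that, for a cosimplicial DGLA $\mathfrak{G}^\bullet$ with each $\mathfrak{G}^{n}$ nilpotent and concentrated in degrees $\geq -1$ and satisfying the acyclicity condition \eqref{acyclicity condition}, the natural map $\MC^2(\ker(\mathfrak{G}^0\rightrightarrows\mathfrak{G}^1))\to\MC^2(\Tot(\mathfrak{G}))$ is an equivalence. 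I apply this to $\mathfrak{G}^\bullet = \Gamma(\real{X};\mathfrak{g}^\bullet(\mathcal{A}))\otimes_k\mathfrak{m}_R$. The degree bound $\mathfrak{G}^{n,i}=0$ for $i<-1$ holds because $C^\bullet(\mathcal{A}^p)^{loc}[1]$ is concentrated in degrees $\geq -1$, and tensoring with the nilpotent ideal $\mathfrak{m}_R$ makes everything nilpotent, while $\Gamma$ and $\Tot$ and $\otimes_k\mathfrak{m}_R$ all commute appropriately so that $\Tot(\Gamma(\real{X};\mathfrak{g}(\mathcal{A}))\otimes_k\mathfrak{m}_R) = \mathfrak{G}(\mathcal{A})\otimes_k\mathfrak{m}_R$.

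The crux is verifying the acyclicity hypothesis \eqref{acyclicity condition}, namely that $H^j(\Gamma(\real{X};\mathfrak{g}^{\bullet,i}(\mathcal{A}))\otimes_k\mathfrak{m}_R)=0$ for $j\neq 0$ and all $i$. Since $\mathfrak{m}_R$ is a finite-dimensional $k$-vector space, it is a finite direct sum of copies of $k$ as a vector space, so tensoring commutes with taking cohomology of the cosimplicial vector space, and it suffices to know $H^j(\Gamma(\real{X};\mathfrak{g}^{\bullet,i}(\mathcal{A})))=0$ for $j\neq 0$. This is exactly the content of Theorem \ref{theorem: acyclicity}, cited from \cite{bgnt3}. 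Thus the composite $\Def(\mathcal{A})(R)\to\MC^2(\ker(\dots)\otimes_k\mathfrak{m}_R)\to\MC^2(\mathfrak{G}(\mathcal{A})\otimes_k\mathfrak{m}_R)$ is a composite of two equivalences, hence an equivalence, and this composite is \eqref{mapgroup} by construction. The main obstacle is conceptual bookkeeping rather than hard analysis: one must check that \eqref{mapgroup} really equals this composite — i.e. that the Maurer--Cartan element attached to a deformation $\mathbf{B}$ via the DGLA $\mathfrak{G}(\mathcal{A})$ is the image under \eqref{ker to Tot dgla} of the collection $(\gamma^p)$ — and that the identifications of $2$-groupoids at the $\ker$ stage are compatible with $1$- and $2$-morphisms, which requires unwinding the cosimplicial compatibility conditions in Definition \ref{definition:DefCosMatAlg} against the structure maps $f_*$ of $\mathfrak{g}(\mathcal{A})$ defined in \ref{DGLA from cosimplicial alg}.
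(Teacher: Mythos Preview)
Your proposal is correct and follows exactly the paper's approach: factor \eqref{mapgroup} through $\MC^2(\ker(\cdots)\otimes_k\mathfrak{m}_R)$, use the discussion preceding the theorem to identify the first factor as an isomorphism, and then apply Proposition~\ref{prop: MC2 ker is MC2 tot} with its acyclicity hypothesis supplied by Theorem~\ref{theorem: acyclicity}. The paper's proof records only the last two steps in a single sentence; your version spells out the intermediate bookkeeping (nilpotency, degree bounds, commutation of $\otimes_k\mathfrak{m}_R$ with cosimplicial cohomology) that the paper leaves implicit.
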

\begin{proof}
Theorem \ref{theorem: acyclicity} shows that the condition
\eqref{acyclicity condition} of the Proposition \ref{prop: MC2 ker
is MC2 tot} is satisfied. The statement of the Theorem then follows
from the conclusion of the Proposition \ref{prop: MC2 ker is MC2
tot}.
\end{proof}




\subsection{Deformations of cosimplicial matrix Azumaya algebras}
Suppose that $X$ is a \emph{Hausdorff} \'etale simplicial manifold.

\subsubsection{Cosimplicial matrix Azumaya algebras}
\begin{definition}
A \emph{cosimplicial matrix Azumaya $K$-algebra} $\mathcal{A}$ on
$X$ is a cosimplicial matrix $K$-algebra on $X$ (see \ref{section:
cosimplicial matrix algebras}) such that for every $p = 0, 1, 2,
\ldots$ the matrix algebra $\mathcal{A}^p$ is a matrix Azumaya
$K$-algebra on $X_p$ (see Definition \ref{def: mat azum K-alg}).
\end{definition}

Suppose that $\mathcal{A}$ is a cosimplicial matrix
$\mathcal{O}_X$-Azumaya algebra. Recall that, by convention we treat
such as objects of $\mathtt{DIFF}$ (see \ref{subsubsection:examples
of pseudo-tensor cats}).

Then, $\mathcal{J}_X(\mathcal{A})$ is a cosimplicial matrix
$\mathcal{J}_X$-Azumaya algebra, which we view as an object of
$\mathtt{JET}$, (see \ref{subsubsection:examples of pseudo-tensor
cats}) equipped with the canonical flat connection
$\nabla^{can}_{\mathcal{A}}$. Therefore, we have the cosimplicial
DGLA with flat connection $\mathfrak{g}(\mathcal{J}_X(\mathcal{A}))$
on $\real{X}$, hence the cosimplicial DGLA
$\DR(\mathfrak{g}(\mathcal{J}_X(\mathcal{A}))$. Let
\[
\mathfrak{G}_\DR(\mathcal{J}_X(\mathcal{A})) =
\Tot(\Gamma(\real{X};\DR(\mathfrak{g}(\mathcal{J}_X(\mathcal{A})))))
\]

The inclusion of the subsheaf of horizontal sections is a
quasi-isomorphism of DGLA $\mathfrak{g}(\mathcal{A})\to
\DR(\mathfrak{g}(\mathcal{J}_X(\mathcal{A})))$ which induces the
quasi-isomorphism of DGLA
\begin{equation}\label{incl}
\mathfrak{G}(\mathcal{A})\to
\mathfrak{G}_\DR(\mathcal{J}_X(\mathcal{A}))
\end{equation}

\subsubsection{Cosimplicial splitting}\label{cosimplicial splitting}
Suppose that $\mathcal{A}$ is a cosimplicial matrix
$\mathcal{O}_X$-Azumaya algebra.

According to \ref{gerbe of splittings}, for each $p=0,1,2,\ldots$ we
have the gerbe $S(\mathcal{A}^p)$ on $X_p$. Moreover, for each
morphism $f\colon [p]\to[q]$ in $\Delta$ we have the morphism
$f_*\colon X(f)^{-1} S(\mathcal{A}^p)\to S(\mathcal{A}^q)$ defined
as the composition
\[
X(f)^{-1} S(\mathcal{A}^p)\cong S(X(f)^{-1}\mathcal{A}^p)\cong
S(f^\sharp\mathcal{A}^q)
\stackrel{S(\mathcal{A}^q_f)}{\longrightarrow} S(\mathcal{A}^q)
\]

It is clear that the assignment $[p]\mapsto S(\mathcal{A}^p)$,
$f\mapsto f_*$ is a cosimplicial gerbe on $X$.

Let ${\mathcal E}^p\colon =\oplus_{j=0}^n \mathcal{A}^p_{j0}$. There
is a natural isomorphism $\mathcal{A}^p\cong\shEnd({\mathcal E}^p)$,
the action given by the isomorphism $\mathcal{A}_{ij}\otimes
\mathcal{A}_{j0}\to \mathcal{A}_{i0}$. In other words,
$\mathcal{E}^p$ is a splitting of the Azumaya algebra
$\mathcal{A}^p$, i.e. a morphism $\mathcal{E}^p \colon
\mathcal{O}^\times_{X_p}[1]\to S(\mathcal{A}^p)$.

We are going to extend the assignment $p\mapsto
\mathcal{O}^\times_{X_p}[1]$ to a cosimplicial gerbe
$\mathcal{S}_{\mathcal{A}}$ on $X$ so that $\mathcal{E}$ is a
morphism of cosimplicial gerbes $\mathcal{S}_{\mathcal{A}}\to
S(\mathcal{A})$. To this end, for $f \colon  [p]\to[q]$ let
$(\mathcal{S}_{\mathcal{A}})_{f} = \mathcal{A}^q_{0f(0)}$. For a
pair of composable arrows
$[p]\stackrel{f}{\to}[q]\stackrel{g}{\to}[r]$ let
$(\mathcal{S}_{\mathcal{A}})_{f,g}$ be defined by the isomorphism $
X(g)^{-1}( \mathcal{A}^q_{0f(0)})\otimes \mathcal{A}^r_{0g(0)}\cong
\mathcal{A}^r_{0g(f(0))}$.

Since $f_*\mathcal{A}^p_{i0}\cong\mathcal{A}^q_{f(i)f(0)}
\cong\mathcal{A}^q_{f(i)0}\otimes\mathcal{A}^q_{0f(0)}$ for every
$i$ we obtain a canonical chain of isomorphisms
\begin{equation}\label{2morph}
\mathcal{A}^q_f\otimes_{f^\sharp\mathcal{A}^q} f_*\mathcal{E}^p
\cong \bigoplus_{i=0}^q \mathcal{A}^q_{i f(0)} \cong
\bigoplus_{i=0}^q \mathcal{A}^q_{i 0}\otimes \mathcal{A}^q_{0 f(0)}
\cong {\mathcal E}^q \otimes \mathcal{A}^q_{0f(0)}.
\end{equation}

Let $\mathcal{E}_f$ be the $2$-morphism induced by the isomorphism
\eqref{2morph}. We than have the following:
\begin{lemma}\label{lemma: cosimp splitting}
  $(\mathcal{E}^p, \mathcal{E}_f)$ is a morphism of cosimplicial gerbes
$\mathcal{S}_{\mathcal{A}}\to S(\mathcal{A})$.
\end{lemma}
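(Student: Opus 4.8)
The plan is to check directly that the pair $(\mathcal{E}^p,\mathcal{E}_f)$ satisfies the axioms of a $1$-morphism of cosimplicial stacks (as spelled out in the definition of a $1$-morphism of cosimplicial stacks given above) and is compatible with the $\mathcal{O}^\times$-gerbe structures. First I would unwind the two composites that $\mathcal{E}_f$ is meant to connect. As a $1$-morphism of trivial $\mathcal{O}^\times_{X_q}$-gerbes, $(\mathcal{S}_{\mathcal{A}})_f = \mathcal{A}^q_{0f(0)}$ is ``tensoring by the line bundle $\mathcal{A}^q_{0f(0)}$'', so $\mathcal{E}^q\circ(\mathcal{S}_{\mathcal{A}})_f$ is the splitting $\mathcal{E}^q\otimes\mathcal{A}^q_{0f(0)}$ of $\mathcal{A}^q$. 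On the other side, $X(f)^*(\mathcal{E}^p) = f_*\mathcal{E}^p$ regarded as a splitting of $X(f)^{-1}\mathcal{A}^p\cong f^\sharp\mathcal{A}^q$, and $S(\mathcal{A})_f$ applies the Morita equivalence along the bimodule $\mathcal{A}^q_f$, producing the splitting $\mathcal{A}^q_f\otimes_{f^\sharp\mathcal{A}^q}f_*\mathcal{E}^p$ of $\mathcal{A}^q$. The chain \eqref{2morph} is exactly an isomorphism between these two splittings, hence a $2$-isomorphism $\mathcal{E}^q\circ(\mathcal{S}_{\mathcal{A}})_f\to S(\mathcal{A})_f\circ X(f)^*(\mathcal{E}^p)$, namely $\mathcal{E}_f$.

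Next I would dispatch the easy conditions. The unit condition $\mathcal{E}_{\id}=\id$ follows because $(\mathcal{S}_{\mathcal{A}})_{\id}=\mathcal{A}^p_{00}=\mathcal{O}_{X_p}$ and $\mathcal{A}^p_{\id}$ is the identity bimodule, so \eqref{2morph} reduces to the tautological identification. Compatibility with the $\mathcal{O}^\times$-gerbe (equivalently, abelian-sheaf) structure holds since each $\mathcal{E}^p$ is a splitting of the Azumaya algebra $\mathcal{A}^p$, hence a morphism of $\mathcal{O}^\times_{X_p}$-gerbes (the sheaf of automorphisms of a splitting being canonically the sheaf of central units $\mathcal{O}^\times_{X_p}$), and all isomorphisms entering $\mathcal{E}_f$ are $\mathcal{O}$-linear, so the identifications $\lambda$ are respected; these are routine. (The same kind of argument also shows $\mathcal{S}_{\mathcal{A}}$ is genuinely a cosimplicial gerbe, if that has not already been recorded.)

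The substantive step is the cocycle (associativity) identity for a pair of composable arrows $[p]\xrightarrow{f}[q]\xrightarrow{g}[r]$: the equality of two $2$-morphisms built from $\mathcal{E}_f$, $\mathcal{E}_g$, $\mathcal{E}_{g\circ f}$, the structure $2$-morphism $(\mathcal{S}_{\mathcal{A}})_{f,g}$ of the source gerbe, and $S(\mathcal{A})_{f,g}$ of the target gerbe. I would prove this by expanding both sides as isomorphisms of splittings of $\mathcal{A}^r$ and reducing everything to the direct-sum decompositions $\mathcal{A}^r_{i0}\cong\mathcal{A}^r_{ij}\otimes_{\mathcal{A}^r_{jj}}\mathcal{A}^r_{j0}$ and the associativity of the product on $\mathcal{A}^r$. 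Concretely, $(\mathcal{S}_{\mathcal{A}})_{f,g}$ is the isomorphism $X(g)^{-1}\mathcal{A}^q_{0f(0)}\otimes\mathcal{A}^r_{0g(0)}\cong\mathcal{A}^r_{0g(f(0))}$ coming from the multiplication in $\mathcal{A}^r$ together with the structure maps, while $S(\mathcal{A})_{f,g}$ is induced by the cosimplicial associativity axiom $(g\circ f)_*=f^\sharp(g_*)\circ X(g)^{-1}(f_*)$ via the Morita bimodules and the maps $\overrightarrow{\alpha}$, $\overleftarrow{\alpha}$; applying $\mathcal{E}$, the instances of \eqref{2morph} for $f$, for $g$, and for $g\circ f$ intertwine these two, and, organizing the composite bimodule identifications with the Morita-equivalence Lemma above, the desired identity comes down to associativity of multiplication in $\mathcal{A}^r$ and the cosimplicial coherence.

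I expect the main obstacle to be bookkeeping rather than any conceptual difficulty: the identity to be checked is a large $2$-categorical diagram, and one must carefully match the Morita-equivalence $2$-morphisms $\overrightarrow{\alpha}$, $\overleftarrow{\alpha}$ on the $S(\mathcal{A})$ side with the tautological identifications $\mathcal{A}^r_{i0}\otimes\mathcal{A}^r_{0j}\to\mathcal{A}^r_{ij}$ on the $\mathcal{S}_{\mathcal{A}}$ side, all while keeping track of pullbacks along $X(f)$ and $X(g)$. Once the diagram is drawn, every face commutes either by definition or by associativity of the matrix-algebra product, so the verification, though tedious, is mechanical; in writing it up I would display the single key commuting square and leave the remaining routine checks to the reader.
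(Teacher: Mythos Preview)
The paper states this lemma without proof, so there is nothing to compare against directly. Your proposal is the natural direct verification and is correct in outline: you correctly identify what $\mathcal{E}_f$ must be (a $2$-isomorphism between the splitting $\mathcal{E}^q\otimes\mathcal{A}^q_{0f(0)}$ and the splitting $\mathcal{A}^q_f\otimes_{f^\sharp\mathcal{A}^q}f_*\mathcal{E}^p$), you correctly note that the unit and $\mathcal{O}^\times$-compatibility conditions are trivial, and you correctly locate the only real content in the cocycle condition for a composable pair $f,g$, which indeed unwinds to associativity of the product on $\mathcal{A}^r$ together with the cosimplicial coherence $(g\circ f)_* = f^\sharp(g_*)\circ X(g)^{-1}(f_*)$. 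This is exactly the kind of routine-but-tedious diagram chase the authors evidently deemed not worth writing out; your plan would produce a complete proof.
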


\subsubsection{Twisted DGLA of jets} \label{subsubsection: Twisted DGLA of jets}
\begin{definition}\label{def:twist of dgla}
For a DGLA $(\mathfrak{r},d)$ and a Maurer-Cartan element
$\gamma\in\Der(\mathfrak{r})$ we define the \emph{$\gamma$-twist of
$\mathfrak{r}$}, denoted $\mathfrak{r}_\gamma$, to be the DGLA whose
underlying graded Lie algebra coincides with that of $\mathfrak{r}$
and whose differential is equal to $d + \gamma$.
\end{definition}

In \ref{cosimplicial splitting} we associated with $\mathcal A$ a
cosimplicial gerbe $\mathcal{S}_{\mathcal{A}}$ on $X$. The
construction of \ref{subsubsection: char class trivialized gerbe}
associates to $\mathcal{S}_{\mathcal{A}}$ the characteristic class
$[\mathcal{S}_{\mathcal{A}}]\in
H^2(\real{X};\real{\DR(\overline{\mathcal{J}}_X)})$ represented by a
$2$-cocycle $\overline
B\in\Tot(\Gamma(\real{X};\real{\DR(\overline{\mathcal{J}}_X)}))$
dependent on appropriate choices.

Recall that under the standing assumption that $X$ is a Hausdorff
\'etale simplicial manifold, the cosimplicial sheaves
$\mathcal{O}_X$, $\mathcal{J}_X$, $\Omega^\bullet_X$ are special.

We have special cosimplicial sheaves of DGLA $\overline
C^\bullet(\mathcal{O}_X)[1]$ and $\DR(\overline
C^\bullet(\mathcal{J}_X)[1])$. The inclusion of the subsheaf of
horizontal sections is a quasi-isomorphism of DGLA $\overline
C^\bullet(\mathcal{O}_X)[1]\to\DR(\overline
C^\bullet(\mathcal{J}_X))[1]$. Let
\[
\mathfrak{G}_\DR(\mathcal{J}_X) =
\Tot(\Gamma(\real{X};\real{\DR(\overline
C^\bullet(\mathcal{J}_X)[1])}^\prime))
\]
(see \ref{realization for special sheaves} for
$\real{\bullet}^\prime$).

The canonical  isomorphism
$\real{\DR(\overline{\mathcal{J}}_X)}^\prime\cong
\real{\DR(\overline{\mathcal{J}}_X)} $ (see \ref{realization for
special sheaves}) induces the isomorphism of complexes
$\Tot(\Gamma(\real{X};\real{\DR(\overline{\mathcal{J}}_X)})) \cong
\Tot(\Gamma(\real{X};\real{\DR(\overline{\mathcal{J}}_X)}^\prime))$.
Thus, we may (and will) consider $\overline B$ as a $2$-cocycle in
the latter complex.

The adjoint action of the abelian DGLA $\mathcal{J}_X[1]$ on
$\overline{C}^\bullet(\mathcal{J}_X)[1]$ induced an action of
$\overline{\mathcal{J}}_X[1]$ on
$\overline{C}^\bullet(\mathcal{J}_X)[1]$. The latter action gives
rise to an action of the DGLA
$\Tot(\Gamma(\real{X};\real{\DR(\overline{\mathcal{J}}_X[1])}^\prime))$
on the DGLA $\mathfrak{G}_\DR(\mathcal{J}_X)$ by derivations. Since
the cocycle $\overline{B}$ is a Maurer-Cartan element in
$\Der(\mathfrak{G}_\DR(\mathcal{J}_X))$, the DGLA
$\mathfrak{G}_\DR(\mathcal{J}_X)_{\overline B}$ is defined.

\subsection{Construction}
Let $\mathcal A$ be a cosimplicial matrix $\mathcal{O}_X$-Azumaya
algebra on $X$. This section is devoted to the construction and
uniqueness properties of the isomorphism \eqref{cotr ind of choice}.

To simplify the notations we will denote $\mathcal{O}_X$
(respectively, $\mathcal{O}_{X_p}$, $\mathcal{J}_X$,
$\mathcal{J}_{X_p}$) by $\mathcal{O}$ (respectively $\mathcal{O}^p$,
$\mathcal{J}$, $\mathcal{J}^p$).

\subsubsection{Construction: step one}\label{subsubsection: step one}
We have the cosimplicial matrix $\mathcal{J}$-Azumaya algebra
$\mathcal{A}\otimes \mathcal{J}$, hence the cosimplicial graded Lie
algebras $\mathfrak{g}(\mathcal{A}\otimes \mathcal{J})$ and
$\Omega^\bullet_\real{X}\otimes\mathfrak{g}(\mathcal{A} \otimes
\mathcal{J})$. Let $\mathfrak{H}$ denote the graded Lie algebra
$\Tot(\Gamma(\real{X};\Omega^{\bullet}_\real{X} \otimes
\mathfrak{g}(\mathcal{A}\otimes \mathcal{J})))$.

We begin by constructing an isomorphism of  graded Lie algebras.
\begin{equation}\label{The ism}
\Sigma\colon   \mathfrak{H} \to
\mathfrak{G}_\DR(\mathcal{J}(\mathcal{A}))
\end{equation}

For each $p = 0,1,2,\ldots$ we choose
\[
\sigma^p\in\Isom_0(\mathcal{A}^p\otimes \mathcal{J}^p,
\mathcal{J}(\mathcal{A}^p))^{loc} \ .
\]
Consider a simplex $\lambda\colon [n] \to \Delta$.

For $0\leq i\leq n$    the composition
\begin{multline*}
X(\lambda(0n))^{-1}\mathcal{A}^{\lambda(0)}\to \\
X(\lambda(in))^{-1}X(\lambda(0i))^{-1}\mathcal{A}^{\lambda(0)}
\xrightarrow{\lambda(0i)_*} X(\lambda(in))^{-1}
\lambda(0i)^\sharp\mathcal{A}^{\lambda(i)}
\end{multline*}
defines an isomorphism
\begin{equation}\label{tau?}
X(\lambda(0n))^{-1}\mathcal{A}^{\lambda(0)}\to X(\lambda(in))^{-1}
\lambda(0i)^\sharp\mathcal{A}^{\lambda(i)}
\end{equation}
The composition of the map $X(\lambda(in))^* \lambda(0i)^\sharp$
with the isomorphism \eqref{tau?} defines the following maps, all of
which we denote by $\tau_{\lambda, i}^*$:
\begin{multline*}
\tau_{\lambda, i}^*\colon \Isom_0(\mathcal{A}^{\lambda(i)}\otimes
\mathcal{J}^{\lambda(i)},
\mathcal{J}(\mathcal{A}^{\lambda(i)}))^{loc}\to  \\
\Isom_0(X(\lambda(0n))^{-1}\mathcal{A}^{\lambda(0)} \otimes
\mathcal{J}^{\lambda(n)},
\mathcal{J}(X(\lambda(0n))^{-1}\mathcal{A}^{\lambda(0)}))^{loc}
\end{multline*}
\begin{multline*}
\tau_{\lambda, i}^* \colon \Omega^{\bullet}_{X_{\lambda(i)}} \otimes
C^{\bullet}( \mathcal{A}^{\lambda(i)})^{loc}
\otimes \mathcal{J}^{\lambda(i)} \to \\
 \Omega^{\bullet}_{X_{\lambda(n)}}
\otimes
C^{\bullet}((X(\lambda(0n))^{-1}\mathcal{A}^{\lambda(0)})^{loc}
\otimes \mathcal{J}^{\lambda(n)}
\end{multline*}
etc. Define $\sigma^\lambda_i =\tau_{\lambda, i}^*
\sigma^{\lambda(i)}$.

Recall that $\pr_X\colon  X_p\times\Delta^n\to X_p$ denotes the
projection. Let $\sigma^\lambda$ be an element  in $\Isom_0(
\pr_X^{-1}(X(\lambda(0n))^{-1}\mathcal{A}^{\lambda(0)} \otimes
\mathcal{J}^{\lambda(0)}), \ \pr_X^{-1}
\mathcal{J}(X(\lambda(0n))^{-1}\mathcal{A}^{\lambda(0)}))^{loc} $
  defined
as follows. For each morphism $f\colon [p]\to[q]$ in $\Delta$ there
is a unique
\[
\vartheta(f)\in \Gamma(X_q;\Der_{\mathcal{O}^q_X}
(X(f)^{-1}\mathcal{A}^p)^{loc} \otimes \mathcal{J}^q_0)
\]
such that the composition
\begin{equation*}
X(f)^{-1}\mathcal{A}^p \otimes \mathcal{J}^q
\xrightarrow{\exp(\vartheta(f))} X(f)^{-1}\mathcal{A}^p \otimes
\mathcal{J}^q \xrightarrow{X(f)^{-1}\sigma^p}
X(f)^{-1}\mathcal{J}(\mathcal{A}^p)
\end{equation*}
is equal to the composition
\begin{multline*}
X(f)^{-1}\mathcal{A}^p \otimes \mathcal{J}^q \xrightarrow{f_*}
f^\sharp\mathcal{A}^q \otimes \mathcal{J}^q \xrightarrow{f^\sharp(\sigma^q)} \\
f^\sharp\mathcal{J}(\mathcal{A}^q) \xrightarrow{(f_*)^{-1}}
X(f)^{-1}\mathcal{J}(\mathcal{A}^p).
\end{multline*}
Let
\begin{equation}\label{equationsigma}
\sigma^{\lambda}=(\pr_X^*\sigma^{\lambda}_n) \circ \exp
(-\sum_{i=0}^n t_i\cdot\lambda(0i)^{\sharp}\vartheta(\lambda(in))).
\end{equation}

In this formula and below we use the isomorphism \eqref{tau?} to
view $\lambda(0i)^{\sharp}\vartheta(\lambda(in))$ as an element of
$\Gamma(X_{\lambda(n)};\Der_{\mathcal{O}^{\lambda(n)}}(X(\lambda(0n))^{-1}\mathcal{A}^{\lambda(0)})^{loc}
\otimes \mathcal{J}^{\lambda(n)})$.

The isomorphism of algebras $\sigma^{\lambda}$ induces the
isomorphism of graded Lie algebras
\begin{multline*}
 \Omega^\bullet_{X_{\lambda(n)}\times \Delta^n}\otimes
\pr_X^{-1}(\mathfrak{g}_{\lambda}^n(\mathcal{A} \otimes
\mathcal{J})) \xrightarrow{(\sigma^\lambda)^*} \\
\Omega^\bullet_{X_{\lambda(n)}\times \Delta^n}\otimes
\pr_X^{-1}(\mathfrak{g}_{\lambda}^n(\mathcal{J}(\mathcal{A})))
\end{multline*}

\begin{lemma} The map $\sigma^{\lambda}$ induces an isomorphism
\begin{equation*}
 \Omega_n\otimes_{\mathbb{C}}\Omega^\bullet_{X_{\lambda(n)}}\otimes
\mathfrak{g}_{\lambda}^n(\mathcal{A} \otimes \mathcal{J})
\xrightarrow{(\sigma^\lambda)^*}
\Omega_{n}\otimes_{\mathbb{C}}\Omega^\bullet_{X_{\lambda(n)}}\otimes
\mathfrak{g}_{\lambda}^n(\mathcal{J}(\mathcal{A}))
\end{equation*}
\end{lemma}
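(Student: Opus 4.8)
The plan is to show that the isomorphism $(\sigma^\lambda)^*$ just constructed over the full (smooth) de Rham complex $\Omega^\bullet_{X_{\lambda(n)}\times\Delta^n}$ restricts to an isomorphism of the indicated subsheaves; equivalently, that $(\sigma^\lambda)^*$ preserves the property of depending \emph{polynomially} on the simplicial coordinates $t_0,\dots,t_n$. Since $(\sigma^\lambda)^*$ and $((\sigma^\lambda)^{-1})^*$ are already known to be mutually inverse isomorphisms of the larger spaces, it is enough to check that each of them carries $\Omega_n\otimes_{\mathbb{C}}\Omega^\bullet_{X_{\lambda(n)}}\otimes\mathfrak{g}_\lambda^n(-)$ into the corresponding subsheaf, and for that it suffices to know that the algebra isomorphism $\sigma^\lambda$ of \eqref{equationsigma}, together with its inverse, is polynomial in the $t_i$, i.e.\ is ``defined over $\Omega^0_n$''.

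To see this I would read off the $t$-dependence from \eqref{equationsigma}. The factor $\pr_X^*\sigma^\lambda_n$ is pulled back along the projection $\pr_X\colon X_{\lambda(n)}\times\Delta^n\to X_{\lambda(n)}$, so it, and its inverse, involves no $t_i$. In the remaining factor, each $\vartheta(\lambda(in))$ is a section of $\Der_{\mathcal{O}^{\lambda(n)}}(X(\lambda(0n))^{-1}\mathcal{A}^{\lambda(0)})^{loc}\otimes\mathcal{J}^{\lambda(n)}_0$ and in particular involves no $t_i$; moreover, since the sheaf of local derivations of a matrix Azumaya algebra is a sheaf of \emph{abelian} Lie algebras, the operators $D_i:=\lambda(0i)^\sharp\vartheta(\lambda(in))$ pairwise commute, so by \eqref{exponential map} we get $\exp(-\sum_i t_iD_i)=\prod_{i=0}^n\exp(-t_iD_i)$.

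Next I would establish the polynomiality of this exponential. Since each $D_i$ takes values in the pro-nilpotent ideal $\mathcal{J}^{\lambda(n)}_0$, modulo any finite-order part of the jet bundle only finitely many terms of the series $\sum_m\frac{1}{m!}(-\sum_i t_iD_i)^m$ are nonzero, and the $m$-th term is homogeneous of degree $m$ in $(t_0,\dots,t_n)$; hence, in the projective-limit sense used throughout, $\exp(-\sum_i t_iD_i)$ --- and with it $\sigma^\lambda$ as well as $(\sigma^\lambda)^{-1}=\exp(\sum_i t_iD_i)\circ(\pr_X^*\sigma^\lambda_n)^{-1}$ --- is polynomial in $t$.

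It then remains to observe that a local Hochschild cochain $D\in\mathfrak{g}_\lambda^n(\mathcal{A}\otimes\mathcal{J})$ is a sheaf on $X_{\lambda(n)}$ carrying no $t$-dependence, and that $(\sigma^\lambda)^*$ transports it by the usual conjugation formula involving $\sigma^\lambda$ and $(\sigma^\lambda)^{-1}$, which are sheaf maps involving no $dt_i$; hence $(\sigma^\lambda)^*D$ is a Hochschild cochain of $\mathcal{J}(\mathcal{A})$ with coefficients polynomial in $t$, i.e.\ $(\sigma^\lambda)^*D\in\Omega^0_n\otimes_{\mathbb{C}}\mathfrak{g}_\lambda^n(\mathcal{J}(\mathcal{A}))$. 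Extending $\Omega_n\otimes_{\mathbb{C}}\Omega^\bullet_{X_{\lambda(n)}}$-linearly --- the $dt_i$- and $dx$-components being carried along unchanged --- shows that $(\sigma^\lambda)^*$ sends $\Omega_n\otimes_{\mathbb{C}}\Omega^\bullet_{X_{\lambda(n)}}\otimes\mathfrak{g}_\lambda^n(\mathcal{A}\otimes\mathcal{J})$ into $\Omega_n\otimes_{\mathbb{C}}\Omega^\bullet_{X_{\lambda(n)}}\otimes\mathfrak{g}_\lambda^n(\mathcal{J}(\mathcal{A}))$, and the identical argument applied to $(\sigma^\lambda)^{-1}$ produces the inverse map, so the restriction is an isomorphism. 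The only non-formal point is the polynomiality of $\exp(-\sum_i t_iD_i)$, which is precisely where the hypothesis that $\vartheta$ is valued in the pro-nilpotent ideal $\mathcal{J}_0$ rather than in all of $\mathcal{J}$ enters.
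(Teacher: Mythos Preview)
Your argument has a genuine gap at the step where you claim that $\exp(-\sum_i t_iD_i)$ is ``polynomial in $t$ in the projective-limit sense''. What you actually establish is that, modulo $\mathcal{J}_0^{k+1}$, the exponential truncates to a polynomial of degree $\leq k$ in the $t_i$; but this degree grows with $k$, so in the full inverse limit $\mathcal{J}=\varprojlim\mathcal{J}^k$ the automorphism $\exp(-\sum_i t_iD_i)$ of $\mathcal{A}\otimes\mathcal{J}$ is a genuine formal power series in the $t_i$, not a polynomial. (Concretely: $D_i^m$ lands in $\mathcal{A}\otimes\mathcal{J}_0^m$, which is nonzero for every $m$.) Hence the conjugation of a cochain $D$ by $\sigma^\lambda$ is, on your reasoning, only a power series in $t$, and you have not shown it lies in $\Omega_n^0\otimes\mathfrak{g}^n_\lambda$ rather than in a completed tensor product. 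The pro-nilpotency of $\mathcal{J}_0$ makes $\exp$ well-defined; it does not by itself bound the $t$-degree.

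The paper closes this gap by invoking a feature of $D$ that you do not use: a cochain $D\in C^\bullet(\mathcal{A}\otimes\mathcal{J})^{loc}$ in the $\mathtt{JET}$ category is a section of the sheaf of \emph{jets of multidifferential operators}, and $\vartheta$ is \emph{inner} (since $\mathcal{A}$ is Azumaya). These two facts together force $(\exp(t\,\ad\vartheta))^*D$ to be polynomial in $t$ for each fixed $D$, even though the algebra automorphism $\exp(t\vartheta)$ itself is not. So the polynomiality lives at the level of the induced action on cochains, not at the level of the automorphism of the algebra, and it requires the multidifferential-operator structure of $D$ --- precisely the ingredient missing from your approach.
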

\begin{proof}
It is sufficient to check that $\exp (-\sum_{i=0}^n
t_i\cdot\lambda(0i)^{\sharp}\vartheta(\lambda(in)))$ maps
$\mathfrak{g}_{\lambda}^n(\mathcal{A} \otimes \mathcal{J})$ into
$\Omega_n\otimes_\mathbb{C} \mathfrak{g}_{\lambda}^n(\mathcal{A}
\otimes \mathcal{J})$. Since the Lie algebra $\Der_{\mathcal{O}^q}
(X(f)^{-1}\mathcal{A}^p)^{loc}$ is commutative, this is a
consequence of the following general statement: if $\mathcal{A}$ is
an Azumaya algebra on $X$, $D\in C^\bullet(\mathcal{A}\otimes
\mathcal{J})[1]$ and $\vartheta \in \Gamma(X;\Der_{\mathcal{O}_X}
(\mathcal{A})\otimes \mathcal{J}_{X, 0})$, then
$(\exp(t\ad\vartheta))^*D$ is polynomial in $t$. But this is so
because $\vartheta$ is inner and $D$ is section of a sheaf of jets
of \emph{multidifferential} operators.
\end{proof}
It is clear that the collection of maps $(\sigma^\lambda)^*$ is a
morphism of cosimplicial graded Lie algebras; the desired
isomorphism $\Sigma$ \eqref{The ism} is, by definition the induced
isomorphism of graded Lie algebras. We now describe the differential
on $\mathfrak{H}$ induced by the differential on
$\mathfrak{G}_{\DR}(\mathcal{J}(\mathcal{A}))$ via the isomorphism
\eqref{The ism}.

Recall that the differential in
$\mathfrak{G}_{\DR}(\mathcal{J}(\mathcal{A}))$  is given by $\delta+
\widetilde{\nabla}^{can}$. It is easy to see that $\delta$ induces
the Hochschild differential $\delta$ on $\mathfrak{H}$. The
canonical connection $\widetilde{\nabla}^{can}$ induces the
differential whose component corresponding to the  simplex $\lambda$
is the connection $(\sigma^{\lambda})^{-1}\circ X(\lambda(0n))^*
\widetilde{\nabla}^{can} \circ \sigma^{\lambda}$.

To get a more explicit description of this connection choose for
each $p = 0,1,2,\ldots$ a connection
$\nabla^p\in\mathcal{C}(\mathcal{A}^p)^{loc}(X_p)$; it gives rise to
the connection $\nabla^p\otimes\id + \id\otimes\nabla^{can}$ on
$\mathcal{A}^p \otimes \mathcal{J}^p$. Let
\[
\Phi^p = (\sigma^p)^{-1}\circ\nabla^{can}\circ\sigma^p -
(\nabla^p\otimes\id + \id\otimes\nabla^{can} ) \ ,
\]
$\Phi^p\in \Gamma(X_p;\Omega^1_{X_p}\otimes
\Der_{\mathcal{O}^p}(\mathcal{A}^p)^{loc} \otimes \mathcal{J}^p)
\subset \Gamma(X_p;\Omega^1_{X_p}\otimes C^1(\mathcal{A}^p\otimes
\mathcal{J}^p)^{loc})$.

Let
\[
\nabla^\lambda = \sum_{i=0}^n
t_i\tau_{\lambda,i}^*\nabla^{\lambda(i)}\otimes\id +
\id\otimes\widetilde{\nabla}^{can}
\]
be the induced derivation of $\Omega_n
\otimes_{\mathbb{C}}\Gamma(X_{\lambda(n)};
X(\lambda(0n))^{-1}\mathcal{A}^{\lambda(0)})$. It is easy to see
that the collection $\nabla^{\lambda}$ induces a derivation on
$\mathfrak{H}$ which we denote by $\nabla$.

Let
\[
\Phi^\lambda = (\sigma^\lambda)^{-1}\circ X(\lambda(0n))^*
\widetilde{\nabla}^{can} \circ\sigma^\lambda - \nabla^\lambda \ ,
\]
$\Phi^\lambda\in\Omega_n\otimes\Gamma( X_{\lambda(n)};
\Omega^{\bullet}_{X_{\lambda(n)}} \otimes
\Der_{\mathcal{O}^{\lambda(n)}}(X(\lambda(0n))^{-1}\mathcal{A}^{\lambda(0)})^{loc}
\otimes \mathcal{J}^{\lambda(n)})$. Using the formula
\eqref{equationsigma} we obtain:
\begin{equation}\label{phil}
\Phi^\lambda = \sum_{i=0}^n t_i\cdot
\tau_{\lambda,i}^*\Phi^{\lambda(i)} - \sum_{i=0}^n dt_i
\wedge\lambda(0i)^{\sharp}\vartheta(\lambda(in)) \ .
\end{equation}
It is easy to see that the collection $\Phi^{\lambda}$ defines an
element total degree one in $\mathfrak{H}$. The differential induced
on $\mathfrak{H}$ via the isomorphism \eqref{The ism} can therefore
be written as
\begin{equation}\label{differ}
\delta + \nabla + \ad\Phi
\end{equation}

\subsubsection{}
Now we construct an automorphism of the graded Lie algebra
$\mathfrak{H}$ which conjugates the differential given by the
formula \eqref{differ} into a simpler one. This is achieved by
constructing $F\in
\Tot(\Gamma(\real{X};\Omega^\bullet_\real{X}\otimes
(\mathfrak{d}(\mathcal{A}) \otimes \mathcal{J})))
\subset\mathfrak{H}$ with components $F^{\lambda}$ such that
$\Phi^{\lambda}= - \delta F^{\lambda}$.  This construction requires
the following choices:
\begin{enumerate}
\item for each $p$ we chose
\[
F^p\in\Gamma(X_p;\Omega^1_{X_p} \otimes (\mathfrak{d}(\mathcal{A}^p)
\otimes \mathcal{J}^p))
\]
such that $\Phi^p= - \delta F^p$, and

\item for each morphism $f\colon [p]\to[q]$ in $\Delta$ we chose
\[
D(f)\in \Gamma(X_q; X(f)^{-1}\mathfrak{d}(\mathcal{A}^p) \otimes
\mathcal{J}^q_0)
\]
such that $\vartheta(f)= \delta D(f)$.
\end{enumerate}

The unit map
\begin{equation}\label{unit map}
\mathcal{J}^r \to X(g\circ f)^{-1}\mathcal{A}^p \otimes
\mathcal{J}^r
\end{equation}
gives the canonical identification of the former with the center of
the latter.

\begin{lemma}\label{lemma: central section}
For a pair of composable arrows $[p]\xrightarrow{f}[q]
\xrightarrow{g}[r]$ the section $X(g)^*D(f)+ f^{\sharp}D(g)-D(g
\circ f) \in \Gamma(X_r;X(g\circ f)^{-1}\mathcal{A}^p\otimes
\mathcal{J}^r_0)$ is the image of a unique section
\[
\beta(f, g) \in \Gamma(X_r; \mathcal{J}^r_0)
\]
under the map \eqref{unit map}.(Here we regard $f^\sharp D(g)$ as an
element of $\Gamma(X_r;X(g\circ f)^{-1}\mathcal{A}^p\otimes
\mathcal{J}^r)$ using \eqref{tau?}.)
\end{lemma}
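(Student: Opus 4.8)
The plan is to show two things: first, that the section $X(g)^*D(f)+f^\sharp D(g)-D(g\circ f)$ is \emph{central}, i.e. lies in the center of $X(g\circ f)^{-1}\mathcal{A}^p\otimes\mathcal{J}^r$, which is exactly the image of the unit map \eqref{unit map}; and second, that the unit map is injective, so that the preimage $\beta(f,g)$ exists and is unique. Injectivity of \eqref{unit map} is immediate: for a matrix Azumaya algebra $\mathcal{A}^p$, the structure map $\mathcal{J}^r\xrightarrow{1}X(g\circ f)^{-1}\mathcal{A}^p\otimes\mathcal{J}^r$ is a split monomorphism (the matrix algebra contains $K$ as a direct summand via the diagonal idempotents), so the whole question reduces to the centrality claim.

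To prove centrality, first I would apply the Lie algebra map $\delta=\delta_{X(g\circ f)^{-1}\mathcal{A}^p\otimes\mathcal{J}^r}$ (the adjoint/commutator map of \eqref{ses:ad}, whose kernel is precisely the center $\mathcal{J}^r$) to the section in question; it suffices to show $\delta$ annihilates it. By the defining property of $D(f)$, $D(g)$, $D(g\circ f)$ — namely $\delta D(f)=\vartheta(f)$, etc. — applying $\delta$ produces $X(g)^*\vartheta(f)+f^\sharp\vartheta(g)-\vartheta(g\circ f)$ (using the identification \eqref{tau?} to transport everything to $X(g\circ f)^{-1}\mathcal{A}^p$, and using that $\delta$ is compatible with inverse image and with combinatorial restriction $f^\sharp$ since these are algebra maps). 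So the lemma reduces to the \emph{cocycle identity for $\vartheta$}:
\[
X(g)^*\vartheta(f)+f^\sharp\vartheta(g)=\vartheta(g\circ f)\quad\text{in }\Gamma(X_r;\Der_{\mathcal{O}^r}(X(g\circ f)^{-1}\mathcal{A}^p)^{loc}\otimes\mathcal{J}^r_0).
\]

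This identity is the main obstacle, and it should follow by unwinding the definition of $\vartheta$ together with the cocycle (associativity) condition $(g\circ f)_*=f^\sharp(g_*)\circ X(g)^{-1}(f_*)$ for the cosimplicial matrix algebra $\mathcal{A}$, and the analogous associativity for $\mathcal{J}_X(\mathcal{A})$ (which holds because $\mathcal{J}_X(-)$ is functorial). Concretely: $\exp(\vartheta(g\circ f))$ measures the discrepancy between $X(g\circ f)^{-1}\sigma^p$ and the "glued" isomorphism built from $\sigma^r$ by transport along $(g\circ f)_*$; composing the two-step transports (first along $g_*$, then along $f^\sharp$ of the $g$-transport) and using that $\sigma^q$ itself differs from the $\sigma^r$-transport by $\exp(\vartheta(g))$, one gets $\exp(\vartheta(g\circ f))=\exp(f^\sharp\vartheta(g))\cdot\exp(X(g)^*\vartheta(f))$; since $\Der_{\mathcal{O}^r}(X(g\circ f)^{-1}\mathcal{A}^p)^{loc}$ is an \emph{abelian} Lie algebra (Definition \ref{def: mat azum K-alg} and the surrounding discussion), the exponentials multiply additively and the cocycle identity follows. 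The remaining bookkeeping — checking that all the inverse-image and $f^\sharp$ identifications in \eqref{tau?} are compatible — is routine diagram-chasing using the commutativity established in \ref{subsubsection: subdivision} and the functoriality of $\mathfrak{d}(-)$, and I would leave the details to the reader.
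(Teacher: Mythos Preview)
Your proposal is correct and follows exactly the paper's approach: the paper's proof is the single line ``Follows from the identity $X(g)^*\vartheta(f)+ f^{\sharp}\vartheta(g)-\vartheta(g\circ f) = 0$,'' and you have supplied the surrounding explanation (why this identity yields centrality via the exact sequence \eqref{ses:ad}, and why the identity itself follows from the associativity condition on the cosimplicial structure maps together with abelianness of $\Der^{loc}$). Nothing is missing.
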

\begin{proof}
Follows from the identity $X(g)^*\vartheta(f)+
f^{\sharp}\vartheta(g)-\vartheta(g\circ f) = 0$.
\end{proof}

Let
\begin{multline*} F^\lambda =
\sum_{i=0}^n t_i\tau_{\lambda,i}^* F^{\lambda(i)}
- \sum_{i=0}^n dt_i \wedge \lambda(0i)^{\sharp}D(\lambda(in)) + \\
\sum_{0\le i \le j \le n} (t_i dt_j -t_j dt_i) \wedge
\beta(\lambda(ij), \lambda(jn)) \ ,
\end{multline*}
where we regard $\lambda(0i)^{\sharp}D(\lambda(in))$ as an element
of
$\Gamma(X_{\lambda(n)};X(\lambda(0n))^{-1}\mathfrak{d}(\mathcal{A}^{\lambda(0)})
\otimes \mathcal{J}^{\lambda(n)})$, so that
\[
F^\lambda\in \Omega_n\otimes \Gamma(X_{\lambda(n)};
\Omega^\bullet_{X_{\lambda(n)}} \otimes
(X(\lambda(0n))^{-1}\mathfrak{d}(\mathcal{A}^{\lambda(0)}) \otimes
\mathcal{J}^{\lambda(n)})) \ .
\]

\begin{lemma}\label{lemma:Phi is delta F}
{~}
\begin{enumerate}
\item The collection $\{ F^{\lambda}\}$ defines an element $F\in \mathfrak{H}$

\item $\Phi = - \delta F$
\end{enumerate}
\end{lemma}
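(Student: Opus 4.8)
The plan is to verify the two assertions directly from the explicit formula for $F^\lambda$ and the formula \eqref{phil} for $\Phi^\lambda$, using the compatibility identities established in the preceding lemmas. For part (1), I must check that the collection $\{F^\lambda\}$ satisfies the defining compatibility condition for an element of the totalization $\Tot(\Gamma(\real{X};\Omega^\bullet_\real{X}\otimes(\mathfrak{d}(\mathcal{A})\otimes\mathcal{J})))$; that is, for a morphism of simplices $\phi\colon[m]\to[n]$ with $\mu=\lambda\circ\phi$ and $f\colon\mu(m)\to\lambda(n)$ the induced map, one needs $(\id\times X(f))^*F^\mu = (\phi\times\id)^*F^\lambda$. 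This is an exact parallel of Proposition \ref{prop: comb compat}, which proves precisely this kind of identity for $\eth^\lambda$ and $B^\lambda$. The argument there reduces (via Lemma \ref{lemma: restriction of forms on simplex}) to the case $\phi\colon[1]\to[n]$, where the three sums defining $F^\lambda$ — the $t_i$-weighted sum of $\tau_{\lambda,i}^*F^{\lambda(i)}$, the $dt_i$-weighted sum of $\lambda(0i)^\sharp D(\lambda(in))$, and the $(t_idt_j-t_jdt_i)$-weighted sum of $\beta(\lambda(ij),\lambda(jn))$ — are matched term by term using $\phi^*(t_j)=\sum_{\phi(i)=j}t_i$, the identity $t_1dt_0-t_0dt_1=dt_0=-dt_1$ on $\Omega_1$, and the cocycle identity $X(g)^*D(f)+f^\sharp D(g)-D(g\circ f)$ being the image of $\beta(f,g)$ from Lemma \ref{lemma: central section}. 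I would simply invoke this analogy and carry out the one-dimensional bookkeeping.

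For part (2), the claim $\Phi=-\delta F$ is checked simplex by simplex, comparing $\Phi^\lambda$ from \eqref{phil} with $-\delta F^\lambda$. Since $\delta$ is the Hochschild differential and acts trivially on the $\Omega_n$- and $\Omega^\bullet_{X_{\lambda(n)}}$-factors, one computes $\delta F^\lambda$ termwise:
\begin{itemize}
\item $\delta\bigl(\sum_i t_i\tau^*_{\lambda,i}F^{\lambda(i)}\bigr)=\sum_i t_i\tau^*_{\lambda,i}\delta F^{\lambda(i)}=-\sum_i t_i\tau^*_{\lambda,i}\Phi^{\lambda(i)}$ by the defining property $\Phi^p=-\delta F^p$ of the chosen $F^p$;
\item $\delta\bigl(\sum_i dt_i\wedge\lambda(0i)^\sharp D(\lambda(in))\bigr)=\sum_i dt_i\wedge\lambda(0i)^\sharp\delta D(\lambda(in))=\sum_i dt_i\wedge\lambda(0i)^\sharp\vartheta(\lambda(in))$ by the choice $\vartheta(f)=\delta D(f)$;
\item $\delta\bigl(\sum_{i<j}(t_idt_j-t_jdt_i)\wedge\beta(\lambda(ij),\lambda(jn))\bigr)=0$ since $\beta(f,g)$ is a section of $\mathcal{J}^r_0$, i.e. central, hence Hochschild-closed (it lies in $\mathfrak{d}(\mathcal{A})$ and is killed by $\delta$).
\end{itemize}
Adding the three contributions gives $-\delta F^\lambda=-\sum_i t_i\tau^*_{\lambda,i}\Phi^{\lambda(i)}+\sum_i dt_i\wedge\lambda(0i)^\sharp\vartheta(\lambda(in))=\Phi^\lambda$ after flipping the overall sign, which is exactly \eqref{phil}.

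The main obstacle I anticipate is part (1): making the term-matching in the compatibility check fully rigorous, in particular ensuring that the identifications \eqref{tau?} used to view $\lambda(0i)^\sharp D(\lambda(in))$ and $\beta(\lambda(ij),\lambda(jn))$ as sections of the appropriate sheaves on $X_{\lambda(n)}$ are compatible with pullback along $f$ and combinatorial restriction along $\phi$, and that the associativity relation $(g\circ f)_*=f^\sharp(g_*)\circ X(g)^{-1}(f_*)$ for the cosimplicial matrix algebra structure is invoked correctly when composing the $\tau$-maps. Part (2) is a short, essentially mechanical computation once the sign conventions are pinned down. I would therefore spend most of the written proof on (1), modeled closely on the proof of Proposition \ref{prop: comb compat}, and dispatch (2) with the three-line termwise computation above.
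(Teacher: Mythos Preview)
Your proposal is correct and follows exactly the approach indicated by the paper, whose proof reads in its entirety ``Direct calculation using Lemma \ref{lemma: restriction of forms on simplex}.'' You have correctly identified that part (1) is established by the method of Proposition \ref{prop: comb compat} (reduction to $\phi\colon[1]\to[n]$ via Lemma \ref{lemma: restriction of forms on simplex}, then matching terms using Lemma \ref{lemma: central section}), and that part (2) is the termwise Hochschild computation using $\Phi^p=-\delta F^p$, $\vartheta(f)=\delta D(f)$, and the centrality of $\beta(f,g)$; your only slip is a sign in the final displayed line of part (2), which you yourself flag as a convention to be pinned down.
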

\begin{proof}
Direct calculation using Lemma \ref{lemma: restriction of forms on
simplex}.
\end{proof}

As before, $\overline{F}$ denotes the image of $F$ in
$\Tot(\real{X}; \Omega^\bullet_\real{X}\otimes
(\mathfrak{d}(\mathcal{A}) \otimes
\overline{\mathcal{J}}_\real{X}))$.

The unit map $\mathcal{O} \to \mathcal{A}$ (inclusion of the center)
induces the embedding
\begin{multline}\label{iincl}
\Tot(\Gamma(\real{X};\Omega^\bullet_\real{X}\otimes
\overline{\mathcal{J}}_\real{X}))
\hookrightarrow \\
\Tot(\Gamma(\real{X};\Omega^\bullet_\real{X}\otimes
(\mathfrak{d}(\mathcal{A}) \otimes \overline{\mathcal{J}}_\real{X}))
\end{multline}

\begin{lemma}\label{defw} The element
\[
-\nabla\overline{F} \in \\
\Tot(\Gamma(\real{X};\Omega^\bullet_\real{X} \otimes
(\mathfrak{d}(\mathcal{A}) \otimes
\overline{\mathcal{J}}_\real{X})))
\]
is  the image of a unique closed form
\[
\omega \in \Tot(\Gamma(\real{X}; \Omega^\bullet_\real{X}\otimes
\overline{\mathcal{J}}_\real{X}))
\]
under the inclusion \eqref{incl}. The cohomology class of $\omega$
is independent of the choices made in its construction.
\end{lemma}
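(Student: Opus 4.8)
The plan is to verify the claim in two stages: first existence and uniqueness of $\omega$, then independence of the cohomology class. For the first stage, I would show that $-\nabla\overline{F}$ lies in the image of the inclusion \eqref{iincl} and that the inclusion is injective; uniqueness is then automatic and closedness follows from $\nabla^2 = 0$. The key computation is to compute $\delta(-\nabla\overline F)$ in $\mathfrak H$. Since $F$ satisfies $\Phi = -\delta F$ by Lemma \ref{lemma:Phi is delta F}, and $[\delta,\nabla] = 0$ (the Hochschild differential commutes with the connection induced by $\nabla^{can}$, since the latter acts by derivations of the Gerstenhaber bracket), we get $\delta(\nabla \overline F) = \nabla(\delta \overline F) = -\nabla\overline\Phi$. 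Now $\overline{\Phi^\lambda}$, by the formula \eqref{phil} reduced modulo $\mathcal{J}_{X,0}$ (equivalently, passing to $\overline{\mathcal J}$), becomes a $\overline{\mathcal J}_{\real X}$-valued (scalar, i.e.\ central) form — the terms $\tau_{\lambda,i}^*\Phi^{\lambda(i)}$ and $dt_i\wedge\lambda(0i)^\sharp\vartheta(\lambda(in))$ are inner derivations, so their classes in $\Der_{\mathcal O}(\mathcal A)^{loc}\otimes\overline{\mathcal J}$ die. Hence $\overline\Phi$ lies in the central subspace $\Tot(\Gamma(\real X;\Omega^\bullet_\real X\otimes\overline{\mathcal J}_\real X))$, so $\delta\overline\Phi = 0$, and therefore $\delta(\nabla\overline F) = 0$: the form $-\nabla\overline F$ is $\delta$-closed, i.e.\ it lies in the subspace of cochains of Hochschild degree zero, which is exactly the image of \eqref{iincl}. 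Injectivity of \eqref{iincl} is clear since it is induced by the (split) inclusion of the center $\mathcal O\hookrightarrow\mathcal A$, so $\omega$ is the unique preimage and $\nabla\omega = 0$ follows from $\nabla^2 = 0$ applied to $F$ (more precisely, $\nabla(-\nabla\overline F) = 0$ already in $\mathfrak H$, and injectivity pushes this down to $\omega$).

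For the second stage — independence of the class $[\omega] \in H^\bullet(\Tot(\Gamma(\real X;\Omega^\bullet_\real X\otimes\overline{\mathcal J}_\real X)))$ — I would track how $\omega$ changes under the auxiliary choices, which are: the splittings $\sigma^p$, the connections $\nabla^p$, the correction terms $F^p$ with $\Phi^p = -\delta F^p$, and the $D(f)$ with $\vartheta(f) = \delta D(f)$. Changing $F^p$ by a $\delta$-closed (hence central, by the same degree argument) element $c^p$ changes $F$ by a central cocycle, hence changes $\overline F$ by an element of the center, and then $\omega$ changes by $\nabla$ of that central element — a coboundary in the totalization complex. Similarly a change in $D(f)$ by a central section alters $\beta(f,g)$ and hence $F^\lambda$ by central terms, again producing only a coboundary. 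Changing $\nabla^p$ by an element of $\Gamma(X_p;\Omega^1\otimes\Der^{loc})$ shifts $\Phi^p$ and can be absorbed by a compensating change in $F^p$; one checks the net effect on $\omega$ is exact. Finally a change of $\sigma^p$ amounts to composing with an element of $\shAut_0(\mathcal A\otimes\mathcal J)^{loc}$, i.e.\ with $\exp$ of a section of $\Der^{loc}\otimes\mathcal J_{X,0}$ by \eqref{exponential map}; this is an inner automorphism modulo the center, so it conjugates the differential \eqref{differ} by an element of $\exp(\mathfrak H^0)$ and hence does not change the cohomology class after passing to $\overline{\mathcal J}$.

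The main obstacle, and the step requiring genuine care rather than formal manipulation, is the second stage: organizing the several independent auxiliary choices so that each variation is shown to change $\omega$ by an explicit coboundary in $\Tot(\Gamma(\real X;\Omega^\bullet_\real X\otimes\overline{\mathcal J}_\real X))$. In particular the interaction between a change of $\sigma^p$ and the induced changes in $\vartheta(f)$, $\Phi^\lambda$, $D(f)$, and $\beta(f,g)$ (all of which feed into $F^\lambda$) produces many terms, and one must check that after reduction modulo $\mathcal J_{X,0}$ all the noncentral pieces cancel and the central remainder is exact. The cleanest route is probably to treat the space of all choices as a torsor and argue that any two choices are connected by a path, reducing to the infinitesimal statement that the derivative of $\omega$ along the torsor directions is exact; this is where the bulk of the (routine but lengthy) calculation lives, and I would relegate it to the phrase ``a direct calculation using Lemma \ref{lemma: restriction of forms on simplex}'' in the style of Lemma \ref{lemma:Phi is delta F}.
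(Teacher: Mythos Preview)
Your first-stage argument contains a genuine gap. You claim that $\overline{\Phi}$ lies in the ``central subspace'' $\Tot(\Gamma(\real X;\Omega^\bullet_{\real X}\otimes\overline{\mathcal J}_{\real X}))$, reasoning that the terms in $\Phi^\lambda$ are inner derivations and therefore ``die'' in $\Der_{\mathcal O}(\mathcal A)^{loc}\otimes\overline{\mathcal J}$. This fails on several counts. First, $\Phi$ is a Hochschild $1$-cochain (a derivation-valued form), not an element of $C^0$, so ``central'' is a type error. Second, for a matrix Azumaya algebra \emph{every} local derivation is inner, so innerness of $\Phi$ gives no information about $\overline\Phi$; inner derivations do not vanish upon passing to $\overline{\mathcal J}=\mathcal J/\vac(\mathcal O)$. (Incidentally, ``reduced modulo $\mathcal J_{X,0}$'' and ``passing to $\overline{\mathcal J}$'' are \emph{not} equivalent: $\mathcal J/\mathcal J_{X,0}\cong\mathcal O$, whereas $\overline{\mathcal J}=\mathcal J/\mathcal O$.) Third, even if $\delta\overline\Phi=0$ held, what you actually need is $\nabla\overline\Phi=0$; and your assertion ``$\nabla^2=0$'' is false, since $\nabla^\lambda$ is a convex combination of connections and has curvature.

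The paper supplies precisely the missing ingredient. From the fact that \eqref{differ} squares to zero, together with $\Phi=-\delta F$ and $[\Phi,\Phi]=0$ (since $\Der^{loc}$ is abelian), one extracts the identity $\delta(\nabla F)+\nabla^2=0$ in $\mathfrak H$. The curvature $\nabla^2$ is inner, $\nabla^2=\delta\theta$ with $\theta\in\Tot(\Gamma(\real X;\Omega^\bullet_{\real X}\otimes\mathfrak d(\mathcal A)))$; crucially $\theta$ has \emph{no jet component}. Hence $\nabla F+\theta$ is $\delta$-closed in $C^0$, i.e.\ central, and since $\overline\theta=0$ (it enters $\mathfrak d(\mathcal A)\otimes\mathcal J$ via $\vac$, which is killed in $\overline{\mathcal J}$), the element $-\nabla\overline F$ lands in the image of \eqref{iincl}. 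The conceptual point you missed is that centrality of $-\nabla\overline F$ does not come from any property of $\overline\Phi$ alone, but from the curvature identity combined with the observation that the curvature of $\nabla$ is constant in the jet direction.
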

\begin{proof}
Using $\Phi = -\delta F$ (Lemma \ref{lemma:Phi is delta F}) and the
fact that \eqref{differ} is a differential,  one obtains by direct
calculation the identity $\delta(\nabla F) + \nabla^2=0$. Note that
$\nabla^2 = \delta\theta$ for some $\theta \in \Tot
(\Gamma(\real{X}; \Omega^\bullet_\real{X} \otimes
\mathfrak{d}(\mathcal{A})))$. Hence $\nabla F + \theta$ is a central
element of $\Tot(\Gamma(\real{X}; \Omega^\bullet_\real{X} \otimes
(\mathfrak{d}(\mathcal{A}) \otimes \mathcal{J} )))$, and the first
statement follows.
\end{proof}

Following longstanding traditions we denote by $\iota_G$ the adjoint
action of $G\in \Tot(\Gamma(\real{X}; \Omega^\bullet_\real{X}
\otimes (\mathfrak{d}(\mathcal{A}) \otimes \mathcal{J} ))) \subset
\mathfrak{H}$.

\begin{prop}
\begin{equation}\label{conj}
\exp (\iota_{F})\circ (\delta + \nabla + \Phi) \circ\exp
(-\iota_{F}) = \delta + \nabla - \iota_{\nabla F}.
\end{equation}
\end{prop}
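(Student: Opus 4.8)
The plan is to derive \eqref{conj} from the universal formula for conjugating a differential of a pro-nilpotent graded Lie algebra by an inner automorphism, and then to use the special structure of a matrix Azumaya algebra to collapse all the correction terms but one. First I would record the elementary identity: if $(\mathfrak{r},D)$ is a DGLA which is pro-nilpotent in the relevant filtration and $F\in\mathfrak{r}^0$, then $\exp(\iota_F)$ is a well-defined automorphism of $\mathfrak{r}$, and since $D$ is a degree-one derivation one has $[\iota_F,D]=-\iota_{DF}$ as operators; differentiating $s\mapsto\exp(s\iota_F)\circ D\circ\exp(-s\iota_F)$ and integrating from $0$ to $1$ gives
\[
\exp(\iota_F)\circ D\circ\exp(-\iota_F)=D-\iota_{\Psi},\qquad \Psi=\frac{\exp(\ad F)-1}{\ad F}(DF).
\]
In our situation $\mathfrak{r}=\mathfrak{H}$ and $D=\delta+\nabla+\iota_\Phi$ is the differential \eqref{differ} (here $\iota_\Phi=\ad\Phi$); the element $F$ produced in the construction has total degree zero, and $\mathfrak{H}$ is pro-nilpotent with respect to the filtration by powers of $\mathcal{J}_{X,0}$ together with the form degrees, so $\exp(\iota_F)$ makes sense.

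The second step is to compute $DF=\delta F+\nabla F+[\Phi,F]$. By Lemma \ref{lemma:Phi is delta F} we have $\delta F=-\Phi$. The bracket $[\Phi,F]$ vanishes: $F$ is a section of the ``diagonal'' subsheaf $\mathfrak{d}(\mathcal{A})\otimes\mathcal{J}$, whereas $\Phi$ is a section of $\Der_{\mathcal{O}}(\mathcal{A})^{loc}\otimes\mathcal{J}\subset C^1(\mathcal{A}\otimes\mathcal{J})^{loc}$, and for a matrix Azumaya algebra $(\mathcal{A}\otimes\mathcal{J})_{ii}=\mathcal{J}$, so an $\mathcal{O}$-linear derivation preserving the matrix decomposition annihilates the diagonal subalgebra $\mathfrak{d}(\mathcal{A}\otimes\mathcal{J})$; hence $[\Phi,F]=\Phi(F)=0$. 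So $DF=-\Phi+\nabla F$, and the $\Phi$-contributions entering $D$ through $\iota_\Phi$ and $\Psi$ through $-\iota_{DF}$ cancel — this is precisely the point of choosing $F$ with $\Phi=-\delta F$ — leaving only the $\nabla F$ term.

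The third step is to show $\Psi=DF$, i.e. that every iterated bracket $(\ad F)^k(DF)$ with $k\geq 1$ vanishes. Again this uses the matrix Azumaya structure: $\mathfrak{d}(\mathcal{A}\otimes\mathcal{J})$ is a \emph{commutative} algebra (since $\mathcal{A}_{ii}=\mathcal{O}$), hence abelian for the Gerstenhaber bracket, and the connections $\nabla^p\in\mathcal{C}(\mathcal{A}^p)^{loc}(X_p)$ preserve the decomposition, so $\nabla F$ again takes values in $\mathfrak{d}(\mathcal{A})\otimes\mathcal{J}$ (with the form degrees raised by one). Together with $[\Phi,F]=0$ this shows $\ad F$ annihilates each summand of $DF$, so $\Psi=DF$. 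Substituting into the universal formula and using $D=\delta+\nabla+\iota_\Phi$ then yields \eqref{conj}.

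The main obstacle is entirely in the second and third steps, and it is a matter of bookkeeping rather than of any deep idea: one must check carefully that $F$ genuinely lies in the diagonal subDGLA $\mathfrak{d}(\mathcal{A})$ tensored with jets and forms, that $\Phi$, $\nabla F$, and all their brackets take values in the subsheaves on which $\iota_\Phi$ and $\ad F$ act trivially (equivalently, that local derivations of a matrix Azumaya algebra kill the center-valued diagonal part), and that the signs are arranged so the $\Phi$-terms cancel rather than reinforce. Everything else — the conjugation identity itself — is purely formal and uses nothing about the geometry of $X$ or the jet bundle.
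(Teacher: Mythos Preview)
Your approach is the standard one and is exactly what the paper has in mind (the paper's own proof is just a pointer to Lemma~16 of \cite{bgnt2}, which carries out the same computation). The two structural ingredients you isolate are correct: $[\Phi,F]=\Phi(F)=0$ because a $\mathcal{J}$-linear local derivation of a matrix Azumaya algebra annihilates $\mathfrak{d}(\mathcal{A})\otimes\mathcal{J}$, and $[F,\nabla F]=0$ because the Gerstenhaber bracket of any two $0$-cochains vanishes (your appeal to commutativity of $\mathfrak{d}$ is unnecessary here --- there are no slots to compose). Hence $(\ad F)(DF)=0$ and the series collapses to first order, $\Psi=DF$.

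There is, however, a genuine sign problem in your ``cancellation'' step. With the paper's stated conventions, Lemma~\ref{lemma:Phi is delta F} gives $\delta F=-\Phi$, and the differential is $D=\delta+\nabla+\iota_\Phi$; so $DF=\delta F+\nabla F+[\Phi,F]=-\Phi+\nabla F$, and
\[
D-\iota_{DF}=(\delta+\nabla+\iota_\Phi)-\iota_{-\Phi+\nabla F}
=\delta+\nabla+2\iota_\Phi-\iota_{\nabla F}.
\]
The $\iota_\Phi$ coming from $D$ and the $+\iota_\Phi$ coming from $-\iota_{-\Phi}$ \emph{reinforce}, not cancel. For the stated identity to hold one needs $\delta F=+\Phi$ (equivalently, the differential should read $\delta+\nabla-\ad\Phi$); with either sign corrected your argument goes through verbatim. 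This is almost certainly a sign inconsistency internal to the paper rather than a flaw in the method, but you flagged precisely this sign check as ``the main obstacle'' and then asserted the cancellation without verifying it. Carrying the computation through one more line would have exposed the discrepancy.
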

\begin{proof}
Analogous to the proof of Lemma 16 of \cite{bgnt2}. Details are left
to the reader.
\end{proof}

\subsubsection{}\label{subsubsection: step last}
It follows from \eqref{conj} that the map
\begin{equation}
\exp(- \iota_{F}) \colon  \mathfrak{H} \to \mathfrak{H}
\end{equation}
is an isomorphism of DGLA where the  differential in the left hand
side is given by $\delta + \nabla - \iota_{\nabla F}$
 and the differential in the right hand side is given by $\delta + \nabla +
 \Phi$, as in\eqref{differ}.

Consider the map
\begin{equation}\label{cotrace map}
\cotr\colon    \overline{C}^\bullet(\mathcal{J}^p)[1]\to
  C^\bullet(\mathcal{A}^p\otimes \mathcal{J}^p)[1]
\end{equation}
defined as follows:
\begin{equation}\label{define cotrace}
\cotr(D)(a_1\otimes j_1, \dots, a_n \otimes j_n) = a_0\ldots a_n
D(j_1, \ldots, j_n).
\end{equation}

The map \eqref{cotrace map} is a quasiisomorphism of DGLAs (cf.
\cite{Loday}, section 1.5.6; see also \cite{bgnt2} Proposition 14).

The maps \eqref{cotrace map} induce the map of graded Lie algebras
\begin{equation}\label{tot cotr}
\cotr \colon  \mathfrak{G}_\DR(\mathcal{J}) \to \mathfrak{H}
\end{equation}

\begin{lemma}\label{lemma: cotr dgla}
The map \eqref{tot cotr} is a quasiisomorphism of DGLA, where the
source and the target are equipped with the differentials
$\delta+\widetilde{\nabla}^{can} + \iota_\omega$ and $\delta +
\nabla - \iota_{\nabla F}$ respectively, i.e. \eqref{tot cotr} is a
morphism of DGLA
\[
\cotr \colon  \mathfrak{G}_\DR(\mathcal{J})_\omega \to \mathfrak{H}
\]
\end{lemma}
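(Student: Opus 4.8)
The map \eqref{tot cotr} is a morphism of graded Lie algebras by construction, so two things remain to be checked: that $\cotr$ intertwines the differentials $\delta+\widetilde{\nabla}^{can}+\iota_\omega$ on the source and $\delta+\nabla-\iota_{\nabla F}$ on the target, and that it is a quasi-isomorphism. For the second point the plan is to reduce to the local Morita-invariance of the cotrace and then climb up through the de Rham and totalization functors, exactly along the lines of \ref{subsubsection: step one}; for the first point the plan is a direct computation of the same nature as the one behind \eqref{conj}, whose only genuinely new ingredient is the defining property of $\omega$ supplied by Lemma \ref{defw}.

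For the chain-map property one first notes that $\cotr$ commutes with the Hochschild differential $\delta$ (part of the statement that \eqref{cotrace map} is a morphism of DGLA, cf. \cite{Loday}, \cite{bgnt2}), with the simplicial differential coming from $\Tot$, and with the summand $\id\otimes\widetilde{\nabla}^{can}$ of $\nabla$ that acts on the jet variables. The remaining discrepancy comes from the chosen connections $\nabla^p$ on the $\mathcal{A}^p$ — the summand $\sum_i t_i\,\tau^*_{\lambda,i}\nabla^{\lambda(i)}\otimes\id$ of $\nabla^\lambda$ together with the terms $\Phi^\lambda$ and $\vartheta(\lambda(in))$ entering \eqref{phil} — and is an inner derivation of $C^\bullet(\mathcal{A}\otimes\mathcal{J})^{loc}[1]$; a direct (if lengthy) computation, parallel to the proof of Lemma~16 of \cite{bgnt2} and to the derivation of \eqref{conj}, identifies the degree-one cochain implementing it, assembled over the nerve, as $-\nabla F$ modulo a central cochain, and by Lemma \ref{defw} that central cochain is (the image of) $\omega$. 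Since the cotrace of a $0$-cochain is a central $0$-cochain, $\cotr$ carries the action $\iota_\omega$ on the source to the adjoint action of the central image of $\omega$ on the target; hence this last term is exactly absorbed by the $\omega$-twist of the source, and $\cotr\circ(\delta+\widetilde{\nabla}^{can}+\iota_\omega)=(\delta+\nabla-\iota_{\nabla F})\circ\cotr$. This identity is the main obstacle in the proof; the bookkeeping of the dependence on $\lambda$ and of the interaction of $\delta$, the connections, and the simplicial differential is left to the reader.

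For the quasi-isomorphism we argue by a filtration. Filter $\mathfrak{G}_\DR(\mathcal{J})$ and $\mathfrak{H}$ by total form degree (the $\Omega_n$-degree from $\Tot$ plus the de Rham degree along the manifolds $\real{X}_n$); this filtration is exhaustive and, in each total degree, finite, so the associated spectral sequences converge and the comparison theorem applies. On the associated graded only the Hochschild differential $\delta$ survives, so the $E_1$-page is the $\delta$-cohomology of the graded pieces. Locally on $X_p$ the algebra $\mathcal{A}^p\otimes\mathcal{J}^p$ is a matrix algebra over $\mathcal{J}^p$, hence Morita equivalent to $\mathcal{J}^p$, so on each graded piece $\cotr$ is, up to a form factor, the cotrace from the normalized Hochschild complex of $\mathcal{J}^p$ to the local Hochschild complex of $\mathcal{A}^p\otimes\mathcal{J}^p$, which is a quasi-isomorphism (\cite{Loday}, \S1.5.6; \cite{bgnt2}, Proposition~14); combined with the acyclicity results of \ref{diff and jet}, Theorem \ref{theorem: acyclicity} and Lemma \ref{lemma: ker to Tot cxs}, which govern the interaction of $\DR$, $\Gamma(\real{X};-)$ and $\Tot$ with $\delta$-cohomology, this shows that $\cotr$ induces an isomorphism on $E_1$. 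Since $\omega$ and $\nabla F$ have positive form degree, the twist terms $\iota_\omega$ and $-\iota_{\nabla F}$ strictly raise the form degree and therefore do not affect the $E_1$-page; hence $\cotr$ induces an isomorphism on $E_1$ — and so, by the comparison theorem, on the cohomology of the total complexes — for both the untwisted and the twisted differentials. This proves that $\cotr\colon\mathfrak{G}_\DR(\mathcal{J})_\omega\to\mathfrak{H}$, with the differentials $\delta+\widetilde{\nabla}^{can}+\iota_\omega$ and $\delta+\nabla-\iota_{\nabla F}$, is a quasi-isomorphism of DGLA.
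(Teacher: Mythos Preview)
The paper states this lemma without proof, so there is nothing to compare against directly; implicitly it rests on the sentence just before \eqref{tot cotr} (that the local cotrace \eqref{cotrace map} is a quasi-isomorphism of DGLAs, by \cite{Loday} and \cite{bgnt2}). Your spectral-sequence argument for the quasi-isomorphism is the right way to promote that local statement to $\Tot$: filtering by total form degree leaves only $\delta$ on the associated graded, the twist terms $\iota_\omega$ and $-\iota_{\nabla F}$ raise form degree and so disappear on $E_1$, and the induced $E_1$-map is the local cotrace quasi-isomorphism tensored with forms.

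Your chain-map paragraph, however, misidentifies the source of the discrepancy. The terms $\Phi^\lambda$ and $\vartheta(\lambda(in))$ you invoke belong to the differential $\delta+\nabla+\ad\Phi$ of \eqref{differ}, \emph{before} the conjugation \eqref{conj}; they do not appear in the target differential $\delta+\nabla-\iota_{\nabla F}$ and play no role here. The actual check is cleaner than you suggest. First, the $\mathcal{A}$-connection summand $\sum_i t_i\,\tau^*_{\lambda,i}\nabla^{\lambda(i)}\otimes\id$ of $\nabla^\lambda$ annihilates $\cotr(D)$ outright by the Leibniz rule: the terms coming from differentiating the output $a_1\cdots a_n\,D(j_1,\ldots,j_n)$ cancel against those from differentiating each input $a_k\otimes j_k$, so $\nabla\circ\cotr=\cotr\circ\widetilde{\nabla}^{can}$ on the nose. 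Second, since $\cotr$ is a morphism of graded Lie algebras, $\cotr\circ\iota_{\widetilde{\omega}} = \iota_{\cotr(\widetilde{\omega})}\circ\cotr$ for any lift $\widetilde{\omega}\in\mathcal{J}$ of $\omega$; by Lemma \ref{defw} the element $\nabla F + \cotr(\widetilde{\omega})$ lies in $\mathfrak{d}(\mathcal{A})\otimes\vac(\mathcal{O})$, and its Gerstenhaber bracket with $\cotr(D)$ is an insertion whose $\mathcal{J}$-slot has the form $\vac(f)=f\cdot 1$, which vanishes because $D$ is \emph{normalized} and cochains in $\mathtt{JET}$ are $\mathcal{O}$-multilinear. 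Hence $-\iota_{\nabla F}\circ\cotr = \cotr\circ\iota_\omega$. So your conclusion and your appeal to Lemma \ref{defw} are correct, but the mechanism you describe---an inner derivation arising from $\Phi$ and $\vartheta$, ``identified as $-\nabla F$ modulo a central cochain''---is not the one actually at work.
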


\subsubsection{}\label{subsubsection: case of mat}
Recall that in the section \ref{cosimplicial splitting} we
introduced the bundles $\mathcal{E}^p = \bigoplus_{j=0}^p
\mathcal{A}^p_{j0}$ over $X_p$. For $f\colon [p] \to [q]$ there is a
canonical isomorphism $X(f)^{-1}\mathcal{E}^p \cong
f^{\sharp}\mathcal{E}^q \otimes \mathcal{A}^q_{0 f(0)}$ which we use
to identify the former with the latter.

We make the following choices of additional structure:
\begin{enumerate}
\item for each $p = 0,1,2,\ldots$ an isomorphism $\sigma_\mathcal{E}^p \colon  \mathcal{E}^p \otimes \mathcal{J}^p
    \to \mathcal{J}(\mathcal{E}^p)$ such that $\sigma_{\mathcal{E}}^p(\mathcal{A}^p_{j0}\otimes \mathcal{J}^p)
    \subset \mathcal{J}(\mathcal{A}^p_{j0})$

\item for each $p = 0,1,2,\ldots$ a connection $\nabla^p_{\mathcal{E}}$ on $\mathcal{E}^p$

\item for every $f\colon [p] \to [q]$ an isomorphism $\sigma_f \colon  \mathcal{A}^q_{0f(0)} \otimes \mathcal{J}^q
    \to \mathcal{J}(\mathcal{A}^q_{0f(0)})$

\item for every $f\colon [p] \to [q]$ a connection $ \nabla_f$ on $\mathcal{A}^q_{0f(0)}$
\end{enumerate}
Let $\sigma^p\in\Isom_0(\mathcal{A}^p \otimes \mathcal{J}^p,
\mathcal{J}(\mathcal{A}^p))^{loc}$ denote the isomorphism induced by
$\sigma_\mathcal{E}^p$. Let $\nabla^p \in
\mathcal{C}(\mathcal{A}^p)^{loc}(X_p)$ denote the connection induced
by $\nabla^p_{\mathcal{E}}$. Let
\begin{eqnarray*}
F^p & \colon = &
(\sigma^p_{\mathcal{E}})^{-1}\circ\nabla^{can}_{\mathcal{E}^p} \circ
\sigma^p_{\mathcal{E}} - (\nabla^p_{\mathcal{E}}\otimes\id +
\id\otimes\nabla^{can}) \\
F_f & \colon = &
(\sigma_f)^{-1}\circ\nabla^{can}_{\mathcal{A}^q_{0f(0)}}\circ\sigma_f
- (\nabla_f\otimes\id + \id\otimes\nabla^{can})
\end{eqnarray*}
$F^p \in\Gamma(X_p;\Omega^1_{X_p}\otimes (\mathcal{A}^p\otimes
\mathcal{J}^p))$. We define $D(f)\in
\Gamma(X_q;X(f)^{-1}\mathfrak{d}(\mathcal{A}^p)\otimes
\mathcal{J}^q)$ by the equation
\begin{equation*}
X(f)^*\sigma_{\mathcal{E}}^p \circ \exp(D(f)) =f^{\sharp}
\sigma^q_\mathcal{E} \otimes \sigma_f
\end{equation*}

In \ref{cosimplicial splitting} we constructed the cosimplicial
gerbe $\mathcal{S}_\mathcal{A}$ on $X$ such that
$\mathcal{S}_\mathcal{A}^p$ is trivialized. Starting with the
choices of $\sigma_f$, $\nabla_f$ as above we calculate the
representative $\overline{B}$ of the characteristic class of
$\mathcal{S}_\mathcal{A}$ using \ref{subsubsection: char class
trivialized gerbe}. By Lemma \ref{lemma: cosimp splitting} the
collection of bundles $\mathcal{E}^p$ establishes an equivalence
between $\mathcal{S}_\mathcal{A}$ and the cosimplicial gerbe
$S(\mathcal{A})$ (of splittings of $\mathcal{A}$). Hence,
$\overline{B}$ is a representative of the characteristic class of
$\mathcal{S}_\mathcal{A}$.

In the notations of \ref{subsection: gerbes}, for $f \colon  [p]\to
[q]$, we have
\[
\mathcal{G}_f = \shIsom_0(\mathcal{A}^q_{0f(0)} \otimes
\mathcal{J}^q, \mathcal{J}(\mathcal{A}^q_{0f(0)}))
\]
and, under this identification, $\beta_f =
\sigma_f^{-1}\circ\nabla^{can}\circ\sigma_f$.

Then, $\beta(f, g)$ is uniquely determined by the equation
\begin{equation*}
(\sigma_g \otimes X(g)^*\sigma_f) = \sigma_{g\circ f}\circ \exp
\beta(f, g)
\end{equation*}
which implies
\begin{equation}\label{equation on beta}
\nabla^{can}\beta(f,g) =
\sigma_g^{-1}\circ\nabla^{can}_{\mathcal{S}_{\mathcal{A}g}}\circ
\sigma_g +
\sigma_f^{-1}\circ\nabla^{can}_{\mathcal{S}_{\mathcal{A}f}}\circ
\sigma_f - \sigma_{g\circ
f}^{-1}\circ\nabla^{can}_{\mathcal{S}_{\mathcal{A}g\circ f}}\circ
\sigma_{g\circ f}
 \ .
\end{equation}
Equations \eqref{equation on beta} and \eqref{equation on beta in
gerbes} show that $\beta(f,g)$ coincides with $\beta_{f,g}$ defined
by \eqref{definition of beta in gerbes}.

\begin{lemma}
The form $\overline{B}$ of \ref{subsubsection: char class
trivialized gerbe} coincides with the form $\omega$ of Lemma
\ref{defw}
\end{lemma}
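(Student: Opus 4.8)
The plan is to unwind both sides of the claimed equality to their defining formulas and check that the same combinatorial assembly produces the same section of $\overline{\mathcal{J}}_{\real{X}}$ on each simplex $\lambda$. Both $\overline B$ and $\omega$ are elements of $\Tot(\Gamma(\real{X};\Omega^\bullet_\real{X}\otimes\overline{\mathcal{J}}_\real{X}))$ described by their components $\overline B^\lambda$ and $\omega^\lambda$ indexed by simplices $\lambda\colon[n]\to\Delta$, so it suffices to establish $\overline B^\lambda = \omega^\lambda$ for each $\lambda$. The strategy is to trace the right-hand-side quantity $\omega^\lambda$ back through the chain of constructions in the ``Construction'' section --- it is characterized by $-\nabla\overline{F}^\lambda$ being its image under the embedding \eqref{iincl} --- and to trace the left-hand-side quantity $\overline B^\lambda$ through formula \eqref{B for gerbes}, then verify they agree.

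First I would make explicit the identifications already set up in \ref{subsubsection: case of mat}: with the choices of $\sigma^p_{\mathcal{E}}$, $\nabla^p_{\mathcal{E}}$, $\sigma_f$, $\nabla_f$, the gerbe data of $\mathcal{S}_{\mathcal{A}}$ are $\mathcal{S}_f = \mathcal{A}^q_{0f(0)}$, the trivialization-discrepancy $\beta_f$ of \ref{subsubsection: char class trivialized gerbe} equals $\sigma_f^{-1}\circ\nabla^{can}\circ\sigma_f$, and --- crucially --- the paper has already shown via \eqref{equation on beta} and \eqref{equation on beta in gerbes} that $\beta(f,g)$ (from Lemma \ref{lemma: central section}) coincides with $\beta_{f,g}$ (from \eqref{definition of beta in gerbes}). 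Likewise $\overline{F}_f := \overline{\beta_f - \vac(\nabla_f)}$ is exactly the quantity $\overline F_f$ appearing in \eqref{B for gerbes}, and the $D(f)$ defined by $X(f)^*\sigma^p_{\mathcal{E}}\circ\exp(D(f)) = f^\sharp\sigma^q_{\mathcal{E}}\otimes\sigma_f$ is, after passing to $\overline{\mathcal{J}}$, compatible with the $\vartheta(f)$ of \ref{subsubsection: step one}. So the dictionary is: $\overline F^p \leftrightarrow -dt$-free part, $\overline F_{\lambda(in)} \leftrightarrow$ the $D$-terms, $\overline\beta_{\lambda;ij} \leftrightarrow \beta(\lambda(ij),\lambda(jn))$.

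Next I would compute $-\nabla\overline F^\lambda$ from the explicit formula for $F^\lambda$ in \ref{subsubsection: step last},
\[
F^\lambda = \sum_i t_i\tau_{\lambda,i}^*F^{\lambda(i)} - \sum_i dt_i\wedge\lambda(0i)^\sharp D(\lambda(in)) + \sum_{i<j}(t_idt_j - t_jdt_i)\wedge\beta(\lambda(ij),\lambda(jn)),
\]
apply $\nabla = \sum t_i\tau^*_{\lambda,i}\nabla^{\lambda(i)}\otimes\id + \id\otimes\widetilde\nabla^{can}$, project to $\overline{\mathcal{J}}$ (which kills the inner parts and leaves the central pieces), and use $\nabla^{can}\overline\beta_{\lambda;ij}$ expressed via \eqref{equation on beta in gerbes} together with the relation $\widetilde\nabla^{can}(t_idt_j-t_jdt_i) = dt_i\wedge dt_j$ on the simplex. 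The $\overline{F}^{\lambda(i)}$ terms, upon differentiating by $\widetilde\nabla^{can}$ and using $\sum dt_i = 0$, reorganize exactly into $-\sum_i dt_i\wedge\overline F_{\lambda(in)}$, and the $\widetilde\nabla^{can}$ applied to the $\beta$-sum reproduces the second line of \eqref{B for gerbes}. This is a bookkeeping computation entirely parallel to the derivation of \eqref{B-lambda}/\eqref{B for gerbes} and to Proposition \ref{prop: comb compat}; by Lemma \ref{lemma: restriction of forms on simplex} it is enough to check the identity after pulling back along all $\phi\colon[1]\to[n]$, which reduces it to a short verification with two face indices.

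The main obstacle I expect is purely notational rather than conceptual: keeping the three layers of identifications straight --- the combinatorial-restriction maps $\tau^*_{\lambda,i}$ and $\lambda(0i)^\sharp$, the passage $\mathcal{J}\to\overline{\mathcal{J}}$ that collapses inner derivations, and the sign conventions in $t_idt_j-t_jdt_i = dt_i$ on $\Delta^1$ --- so that the terms produced by $-\nabla\overline F^\lambda$ line up termwise with \eqref{B for gerbes}. Once the dictionary $(\overline F^p, \overline F_{\lambda(in)}, \overline\beta_{\lambda;ij}) \leftrightarrow (\text{terms in } F^\lambda)$ is pinned down and the already-established coincidence $\beta(f,g)=\beta_{f,g}$ is invoked, the equality $\omega^\lambda = \overline B^\lambda$ is immediate, and since both sides assemble into the same $\Tot$-cocycle the lemma follows. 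I would close by remarking that the cohomology class is independent of choices by Lemma \ref{defw} and the last sentence of the Proposition in \ref{subsubsection: char class trivialized gerbe}, so the two representatives being literally equal is even slightly stronger than needed.
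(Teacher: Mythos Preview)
Your approach is essentially the paper's: both compute $\nabla^\lambda\overline{F}^\lambda$ term by term from the explicit formula for $F^\lambda$ and match the result against \eqref{B for gerbes}, invoking the already-established identification $\beta(f,g)=\beta_{f,g}$. The paper isolates the three identities $(\nabla^p\otimes\id+\id\otimes\nabla^{can})\overline{F}^p=0$, $\nabla^{can}\overline{F}_f=0$, and $\nabla^{can}\overline{D}(f)+X(f)^*\overline{F}^p=f^\sharp\overline{F}^q+\overline{F}_f$ that make the reorganization you sketch go through, and carries out the full direct computation on $\Delta^n$ rather than reducing to edges via Lemma~\ref{lemma: restriction of forms on simplex}.
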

\begin{proof}
For $f \colon  [p] \to [q]$ in $\Delta$ the following identities
hold:
\begin{align}
(\nabla^p\otimes\id + \id\otimes\nabla^{can}) \overline{F}^p=0\\
\nabla^{can} \overline{F}_f=0\\
\nabla^{can}
\overline{D}(f)+X(f)^*\overline{F}^p=f^{\sharp}\overline{F}^q
+\overline{F}_f
\end{align}
Using these identities we  compute  :
\begin{multline*}
\nabla^{\lambda} \overline{F}^{\lambda}=\sum_{i=0}^n t_i \cdot
\tau_{\lambda,i}^*(\nabla^{\lambda(i)}\otimes \id +\id \otimes
\nabla^{can})\overline{F}^{\lambda(i)}+
\\
 \sum_{i=0}^n dt_i\wedge \tau_{\lambda,i}^*\overline{F}^{\lambda(i)} +\sum_{i=0}^n
dt_i \wedge \lambda(0i)^{\sharp}\nabla^{can} (\overline{D}(\lambda(in))) + \\
\widetilde{\nabla}^{can}(\sum_{0\le i \le j \le n} (t_i dt_j -t_j
dt_i)\wedge\bar{\beta}(\lambda(ij),
\lambda(jn))) = \\
\sum_{i=0}^n dt_i\wedge\lambda(0i)^{\sharp}(\nabla^{can}
(\overline{D}(\lambda(in))+X(\lambda(in))^*\overline{F}^{\lambda(i)}) + \\
\widetilde{\nabla}^{can}(\sum_{0\le i \le j \le n} (t_i dt_j -t_j
dt_i)\wedge\bar{\beta}(\lambda(ij),
\lambda(jn)))= \\
\sum_{i=0}^n
dt_i\wedge\lambda(0i)^{\sharp}(\lambda(in)^{\sharp}\overline{F}^n
+\overline{F}_{\lambda(in)}) + \\
\widetilde{\nabla}^{can}(\sum_{0\le i \le j \le n} (t_i dt_j -t_j
dt_i)\wedge\bar{\beta}(\lambda(ij),
\lambda(jn))) = \\
(\sum_{i=0}^n dt_i)\wedge \lambda(0n)^{\sharp}\overline{F}^n
+\sum_{i=0}^n dt_i \wedge \overline{F}_{\lambda(in)} +
\\
\widetilde{\nabla}^{can}(\sum_{0\le i \le j \le n} (t_i dt_j -t_j
dt_i)\wedge\bar{\beta}(\lambda(ij)))=\\
\sum_{i=0}^n dt_i\wedge\overline{F}_{\lambda(in)}
+\widetilde{\nabla}^{can}(\sum_{0\le i \le j \le n} (t_i dt_j -t_j
dt_i)\wedge\bar{\beta}(\lambda(ij)))
\end{multline*}
The result is identical to the formula \eqref{B for gerbes}.
\end{proof}

\subsubsection{}\label{subsubsection: the map}
In what follows we will denote a choice of auxiliary data as in
\ref{subsubsection: case of mat} by $\varpi$.

Given a choice of auxiliary data $\varpi$ we denote by
$\overline{B}_\varpi$ the corresponding characteristic form, by
$\Sigma_\varpi$ the corresponding map \eqref{The ism}, etc., and by
\[
\Upsilon_\varpi \colon
\mathfrak{G}_\DR(\mathcal{J})_{\overline{B}_\varpi} \to
\mathfrak{G}_\DR(\mathcal{J}(\mathcal{A}))
\]
the quasiisomorphism of DGLA defined as the composition
\[
\mathfrak{G}_\DR(\mathcal{J})_{\overline{B}_\varpi}
\xrightarrow{\cotr} \mathfrak{H} \xrightarrow{\exp\iota_{F_\varpi}}
\mathfrak{H} \xrightarrow{\Sigma_\varpi}
\mathfrak{G}_\DR(\mathcal{J}(\mathcal{A}))
\]
To finish the construction, we will ``integrate" the ``function"
$\varpi\mapsto\Upsilon_\varpi$ over the ``space" of choices of
auxiliary data in order to produce
\begin{itemize}
\item the DGLA $\mathfrak{G}_\DR(\mathcal{J})_{[S(\mathcal{A})]}$

\item for each choice of auxiliary data $\varpi$ a quasiisomorphism
\[
\pr_\varpi \colon  \mathfrak{G}_\DR(\mathcal{J})_{[S(\mathcal{A})]}
\to \mathfrak{G}_\DR(\mathcal{J})_{\overline{B}_\varpi}
\]

\item the morphism in the derived category of DGLA
\[
\overline{\Upsilon} \colon
\mathfrak{G}_\DR(\mathcal{J})_{[S(\mathcal{A})]} \to
\mathfrak{G}_\DR(\mathcal{J}(\mathcal{A}))
\]
\end{itemize}
such that the morphism in the derived category given by the
composition
\[
\mathfrak{G}_\DR(\mathcal{J})_{\overline{B}_\varpi}
\xrightarrow{(\pr_\varpi)^{-1}}
\mathfrak{G}_\DR(\mathcal{J})_{[S(\mathcal{A})]}
\xrightarrow{\overline{\Upsilon}}
\mathfrak{G}_\DR(\mathcal{J}(\mathcal{A}))
\]
coincides with $\Upsilon_\varpi$.

\subsubsection{Integration}
To this end, for a cosimplicial matrix Azumaya algebra $\mathcal{A}$
on $X$ (respectively, a cosimplicial $\mathcal{O}^\times_X$-gerbe
$\mathcal{S}$ as in \ref{subsubsection: cosimp gerbe}) let
$\AuxData(\mathcal{A})$ (respectively, $\AuxData(\mathcal{S})$)
denote the category with objects choices of auxiliary data (1) --
(4) as in \ref{subsubsection: case of mat} (respectively,
(\texttt{i})--(\texttt{i\!i\!i}) as in \ref{dlog-gerbe} with
$\mathcal{G} = \nabla^{can}\log\vac(\mathcal{S})$) and one-element
morphism sets. Thus, $\AuxData(\mathcal{A})$ (respectively,
$\AuxData(\mathcal{S})$) is a groupoid such that every object is
both initial and final.

For a cosimplicial matrix Azumaya algebra $\mathcal{A}$ we have the
functor
\begin{equation}\label{proj aux data}
\pi \colon  \AuxData(\mathcal{A}) \to \AuxData(S(\mathcal{A}))
\end{equation}
which associates to $\varpi \in \AuxData(\mathcal{A})$ (in the
notations of \ref{subsubsection: the map}) the auxiliary data as in
\ref{subsubsection: char class trivialized gerbe}, items (1)--(3).
Here we use the equivalence of Lemma \ref{lemma: cosimp splitting}.

For a category $\mathcal{C}$ we denote by $\Sing(\mathcal{C})$
denote the category, whose objects are ``singular simplices" $\mu
\colon  [m] \to \mathcal{C}$. For $\mu \colon [m] \to \mathcal{C}$,
$\nu \colon  [n] \to \mathcal{C}$, a morphism $f \colon \mu \to \nu$
is an \emph{injective} (on objects) morphism $f \colon [m] \to [n]$
such that $\mu = \nu\circ f$. The functor \eqref{proj aux data}
induces the functor
\[
\pi \colon \Sing(\AuxData(\mathcal{A})) \to
\Sing(\AuxData(S(\mathcal{A})))
\]

For $\mu \colon [m] \to \AuxData(\mathcal{A})$ (respectively, $\mu
\colon  [m] \to \AuxData(\mathcal{S})$) , $0\leq i \leq m$, the
choice of auxiliary data $\mu(i)$ consists of
$\sigma^p_\mathcal{E}(\mu(i))$, $\nabla^p_\mathcal{E}(\mu(i))$,
$\sigma_f(\mu(i))$, $\nabla_f(\mu(i))$ (respectively,
$\eth^p(\mu(i))$, $B^p(\mu(i))$, $\beta_f(\mu(i))$) for all objects
$[p]$ and morphisms $f$ in $\Delta$.

In either case, let $X(\mu) \colon = \Delta^m\times X$. Then,
$X(\mu)$ is an \'etale simplicial manifold. Let $\mathcal{A}(\mu) =
\pr_X^*\mathcal{A}$ (respectively, $\mathcal{S}(\mu) =
\pr_X^*\mathcal{S}$). Then, $\mathcal{A}(\mu)$ is a cosimplicial
matrix $\mathcal{O}_{X(\mu)}$-Azumaya algebra on $X(\mu)$
(respectively, a cosimplicial $\mathcal{O}_{X(\mu)}^\times$-gerbe).

For $\mu \colon [m] \to \AuxData(\mathcal{A})$ let
$\sigma^p_\mathcal{E}(\mu)$ (respectively, $\nabla^p_\mathcal{E}$,
$\sigma_f(\mu)$, $\nabla_f(\mu)$) denote the convex combination of
$\pr_X^*\sigma^p_\mathcal{E}(\mu(i))$ (respectively,
$\pr_X^*\nabla^p_\mathcal{E}(\mu(i))$, $\pr_X^*\sigma_f(\mu(i))$,
$\pr_X^*\nabla_f(\mu(i))$), $i = 0,\ldots,m$. The collection
consisting of $\sigma^p_\mathcal{E}$, $\nabla^p_\mathcal{E}$,
$\sigma_f(\mu)$, $\nabla_f(\mu)$ for all objects $[p]$ and morphisms
$f$ in $\Delta$ constitutes a choice of auxiliary data, denoted
$\widetilde{\mu}$.

Similarly, for $\mu \colon  [m] \to \AuxData(\mathcal{S})$ one
defines $\widetilde{\mu}$ as the collection of auxiliary data
$\eth^p(\mu)$ (respectively, $B^p(\mu)$, $\beta_f(\mu)$) consisting
of convex combinations of $\eth^p(\mu(i))$ (respectively,
$B^p(\mu(i))$, $\beta_f(\mu(i))$), $i = 0,\ldots,m$. The
construction of \ref{subsection: gerbes} apply with $X \colon =
X(\mu)$, $\nabla^{can} \colon = \pr^*_X\nabla^{can}$, $\mathcal{S}
\colon = \mathcal{S}(\mu)$ yielding the cocycle
$\overline{B}_{\widetilde{\mu}} \in \Gamma(\real{X(\mu)};
\DR(\overline{\mathcal{J}}_{\real{X(\mu)}}))$. A morphism $f \colon
\mu \to \nu$ in $\Sing(\AuxData(\mathcal{S}))$ induces a
quasiisomorphism of complexes
\[
f^* \colon  \Gamma(\real{X(\nu)};
\DR(\overline{\mathcal{J}}_{\real{X(\nu)}})) \to
\Gamma(\real{X(\mu)}; \DR(\overline{\mathcal{J}}_{\real{X(\mu)}})).
\]
Moreover, we have $f^*(\overline{B}_{\widetilde{\nu}}) =
\overline{B}_{\widetilde{\mu}}$.

Hence, as explained in \ref{subsubsection: Twisted DGLA of jets},
for $\mu \colon  [m] \to \AuxData(\mathcal{S})$, we have the DGLA
$\mathfrak{G}_\DR(\mathcal{J}_{X(\mu)})_{\overline{B}_{\widetilde{\mu}}}$.
Moreover, a morphism $f \colon \mu \to \nu$ in
$\Sing(\AuxData(\mathcal{S}))$ induces a quasiisomorphism of DGLA
\[
f^* \colon
\mathfrak{G}_\DR(\mathcal{J}_{X(\nu)})_{\overline{B}_{\widetilde{\nu}}}
\to
\mathfrak{G}_\DR(\mathcal{J}_{X(\mu)})_{\overline{B}_{\widetilde{\mu}}}
.
\]
Let
\begin{equation}\label{defgs}
\mathfrak{G}_\DR(\mathcal{J}_X)_{[\mathcal{S}]} \colon = \liminv_\mu
\mathfrak{G}_\DR(\mathcal{J}_{X(\mu)})_{\overline{B}_{\widetilde{\mu}}}
\end{equation}
where the limit is taken over the category
$\Sing(\AuxData(\mathcal{S}))$.

For $\mu \colon  [m] \to \AuxData(\mathcal{A})$ the constructions of
\ref{subsubsection: step one} -- \ref{subsubsection: step last}
apply with $X \colon = X(\mu)$, $\mathcal{J} \colon =
\mathcal{J}_{X(\mu)}$, $\nabla^{can} \colon = \pr^*_X\nabla^{can}$,
$\mathcal{A} \colon = \mathcal{A}(\mu)$ and the choice of auxiliary
data $\widetilde{\mu}$ yielding the quasiisomorphism of DGLA
\[
\Upsilon_{\widetilde{\mu}} \colon
\mathfrak{G}_\DR(\mathcal{J}_{X(\mu)})_{\overline{B}_{\widetilde{\pi(\mu)}}}
\to \mathfrak{G}_\DR(\mathcal{J}_{X(\mu)}(\mathcal{A}(\mu)))
\]


For a morphism $f \colon \mu \to \nu$ in
$\Sing(\AuxData(\mathcal{A}))$, the diagram
\[
\begin{CD}
\mathfrak{G}_\DR(\mathcal{J}_{X(\nu)})_{\overline{B}_{\widetilde{\pi(\nu)}}}
@>{\Upsilon_{\widetilde{\nu}}}>>
\mathfrak{G}_\DR(\mathcal{J}_{X(\nu)}(\mathcal{A}(\nu))) \\
@V{f^*}VV @VV{f^*}V \\
\mathfrak{G}_\DR(\mathcal{J}_{X(\mu)})_{\overline{B}_{\widetilde{\pi(\mu)}}}
@>{\Upsilon_{\widetilde{\mu}}}>>
\mathfrak{G}_\DR(\mathcal{J}_{X(\mu)}(\mathcal{A}(\mu)))
\end{CD}
\]
is commutative. Thus, we have two functors,
$\Sing(\AuxData(\mathcal{A}))^{op} \to \mathrm{DGLA}$, namely,
\[
\mu \mapsto
\mathfrak{G}_\DR(\mathcal{J}_{X(\mu)})_{\overline{B}_{\widetilde{\pi(\mu)}}}
\]
and
\[
\mu \mapsto \mathfrak{G}_\DR(\mathcal{J}_{X(\mu)}(\mathcal{A}(\mu)))
,
\]
and a morphism of such, namely, $\mu \mapsto
\Upsilon_{\widetilde{\mu}}$. Since the first functor factors through
$\Sing(\AuxData(S(\mathcal{A})))$ there is a canonical morphism of
DGLA
\begin{equation}\label{restriction invlim}
\liminv_\mu
\mathfrak{G}_\DR(\mathcal{J}_{X(\mu)})_{\overline{B}_{\widetilde{\pi(\mu)}}}
\to \mathfrak{G}_\DR(\mathcal{J}_X)_{[S(\mathcal{A})]} .
\end{equation}
On the other hand, $\Upsilon$ induces the morphism
\begin{equation}\label{lim ups}
\liminv_\mu \Upsilon_{\widetilde{\mu}}\colon \liminv_\mu
\mathfrak{G}_\DR(\mathcal{J}_{X(\mu)})_{\overline{B}_{\widetilde{\pi(\mu)}}}
\to \liminv_\mu
\mathfrak{G}_\DR(\mathcal{J}_{X(\mu)}(\mathcal{A}(\mu))) .
\end{equation}

\begin{lemma}
The morphisms \eqref{restriction invlim} and \eqref{lim ups} are
quasiisomorphisms.
\end{lemma}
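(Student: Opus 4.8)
The plan is to treat the two morphisms separately, reducing each to a statement about the index category $\Sing(\AuxData(\cdot))$ together with a pointwise quasiisomorphism property, and then invoke a cofinality/contractibility argument to pass to the limit.

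First I would address \eqref{lim ups}. Here the relevant fact is that for each object $\mu \colon [m] \to \AuxData(\mathcal{A})$ the transition map $\Upsilon_{\widetilde{\mu}}$ is itself a quasiisomorphism of DGLA — this is exactly the content of the construction in \ref{subsubsection: step one}--\ref{subsubsection: step last}, assembled from Lemma~\ref{lemma: cotr dgla}, the isomorphism $\exp(-\iota_F)$, and the isomorphism $\Sigma_\varpi$. Thus $\liminv_\mu \Upsilon_{\widetilde{\mu}}$ is a morphism between the limits of two functors on $\Sing(\AuxData(\mathcal{A}))^{op}$ which is objectwise a quasiisomorphism. To conclude that the induced map on (homotopy) limits is a quasiisomorphism, the key point is that the indexing category $\Sing(\AuxData(\mathcal{A}))$ has contractible nerve: since $\AuxData(\mathcal{A})$ is a groupoid in which every object is both initial and final (stated in \ref{subsubsection: the map}), its singular complex is contractible, and moreover the transition maps $f^*$ are all quasiisomorphisms, so the inverse limit computes the homotopy limit. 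I would record this as a general lemma: an objectwise quasiisomorphism between two diagrams of complexes over a category with contractible nerve and quasiisomorphic transition maps induces a quasiisomorphism on limits. Given that lemma, \eqref{lim ups} follows at once.

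Next, for \eqref{restriction invlim}, the map is the canonical comparison
\[
\liminv_\mu \mathfrak{G}_\DR(\mathcal{J}_{X(\mu)})_{\overline{B}_{\widetilde{\pi(\mu)}}} \to \liminv_\nu \mathfrak{G}_\DR(\mathcal{J}_{X(\nu)})_{\overline{B}_{\widetilde{\nu}}}
\]
induced by the functor $\pi \colon \Sing(\AuxData(\mathcal{A})) \to \Sing(\AuxData(S(\mathcal{A})))$ (using that the first functor factors through $\Sing(\AuxData(S(\mathcal{A})))$). The plan is to show that $\pi$ induces an isomorphism on limits, which I would deduce from two observations: $\pi$ is (essentially) a fibration of groupoids with contractible fibers — the fiber over a choice of $\eth^p,B^p,\beta_f$ consists of the remaining data $\sigma^p_\mathcal{E},\nabla^p_\mathcal{E},\nabla_f$ together with a lift of $\sigma_f$, each living in an affine/contractible space — and both source and target have contractible nerve. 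Combined with the fact that $f^*(\overline{B}_{\widetilde{\nu}}) = \overline{B}_{\widetilde{\mu}}$ (so the twisting cocycles are compatible along $\pi$) and that all transition maps are quasiisomorphisms, a cofinality argument shows the limit over the larger category $\Sing(\AuxData(\mathcal{A}))$ agrees with the limit over $\Sing(\AuxData(S(\mathcal{A})))$, which is precisely $\mathfrak{G}_\DR(\mathcal{J}_X)_{[S(\mathcal{A})]}$ by \eqref{defgs}.

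The main obstacle, I expect, is making the homotopy-theoretic bookkeeping precise: one must justify that the strict inverse limits appearing in \eqref{defgs} and \eqref{lim ups} actually compute homotopy limits, which requires either a fibrancy/Reedy-type argument for the diagram indexed by $\Sing(\cdot)$, or an explicit verification that the transition maps are surjective (or at least well-behaved) enough that $\liminv$ is exact on the relevant complexes. Concretely, since every object of $\AuxData$ is initial, for a fixed object $\star$ the inclusion $\{\star\} \hookrightarrow \Sing(\AuxData(\cdot))$ is a homotopy equivalence of categories, and I would leverage this to identify each $\liminv$ with the single DGLA $\mathfrak{G}_\DR(\mathcal{J}_{X})_{\overline{B}_{\widetilde{\star}}}$ up to quasiisomorphism — but verifying that the limit construction respects this equivalence (rather than just the colimit) is the delicate step, and is where the contractibility of $\Sing(\AuxData)$ and the affineness of the spaces of choices must be used in tandem. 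Once that is in hand, both \eqref{restriction invlim} and \eqref{lim ups} reduce to the already-established fact that $\Upsilon_{\widetilde{\mu}}$, $\cotr$, $\exp(-\iota_F)$ and $\Sigma$ are quasiisomorphisms.
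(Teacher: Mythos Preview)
The paper states this lemma without proof, so there is nothing to compare against directly. Your overall strategy is sound and matches what the surrounding text suggests: reduce to the objectwise quasiisomorphism of $\Upsilon_{\widetilde{\mu}}$ together with a contractibility/cofinality argument for $\Sing(\AuxData(\cdot))$, and use that the projection to any $0$-simplex $\nu$ admits the explicit section $\pr_X^*$ (which the paper exploits in the very next lemma).

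That said, your proposal does not actually close the gap you yourself flag as the ``main obstacle.'' You correctly observe that strict inverse limits need not preserve quasiisomorphisms, and you gesture at fibrancy or a Reedy argument, but you stop short of supplying one. In particular, the claim that ``the inclusion $\{\star\} \hookrightarrow \Sing(\AuxData(\cdot))$ is a homotopy equivalence of categories'' is true at the level of nerves, but does not by itself imply that the strict $\liminv$ over $\Sing(\AuxData)$ agrees with the value at $\star$ up to quasiisomorphism; for that you need either that the diagram is fibrant (e.g., transition maps surjective) or an explicit contracting homotopy at the level of the limit. The ingredients for such a homotopy are present in the paper's setup---the convex-combination construction $\mu \mapsto \widetilde{\mu}$ and the map $\pr_X^*$---and the cleanest way to finish is to show directly that for any $0$-simplex $\nu$ the composite $\liminv_\mu F(\mu) \to F(\nu) \xrightarrow{\pr_X^*} \liminv_\mu F(\mu)$ is homotopic to the identity, using that any simplex $\mu$ can be extended by appending $\nu$ (via the unique morphism in $\AuxData$) and that the resulting face maps are compatible with $\pr_X^*$. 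Once you write down that homotopy, both \eqref{restriction invlim} and \eqref{lim ups} follow; without it, the argument remains a sketch.
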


For each $\mu\in\Sing(\AuxData(\mathcal{A}))$ we have the map
\[
\pr_X^*(\mu) \colon  \mathfrak{G}_\DR(\mathcal{J}_{X}(\mathcal{A}))
\to \mathfrak{G}_\DR(\mathcal{J}_{X(\mu)}(\mathcal{A}(\mu))) .
\]
Moreover, for any morphism $f \colon  \mu \to \nu$ in
$\Sing(\AuxData(\mathcal{A}))$, the diagram
\[
\begin{CD}
\mathfrak{G}_\DR(\mathcal{J}_X(\mathcal{A}))
@>{\pr_X^*(\nu)}>> \mathfrak{G}_\DR(\mathcal{J}_{X(\nu)}(\mathcal{A}(\nu))) \\
@V{\id}VV @VV{f^*}V \\
\mathfrak{G}_\DR(\mathcal{J}_X(\mathcal{A})) @>{\pr_X^*(\mu)}>>
\mathfrak{G}_\DR(\mathcal{J}_{X(\mu)}(\mathcal{A}(\mu)))
\end{CD}
\]
is commutative. Therefore, we have the map
\begin{equation}\label{pull-back lim}
\pr_X^* \colon  \mathfrak{G}_\DR(\mathcal{J}_X(\mathcal{A})) \to
\liminv_\mu \mathfrak{G}_\DR(\mathcal{J}_{X(\mu)}(\mathcal{A}(\mu)))
\end{equation}
Note that, for $\nu \colon  [0]\to \AuxData(\mathcal{A})$, we have
$\mathfrak{G}_\DR(\mathcal{J}_{X(\nu)}(\mathcal{A}(\nu))) =
\mathfrak{G}_\DR(\mathcal{J}_X(\mathcal{A}))$, and the composition
\[
\mathfrak{G}_\DR(\mathcal{J}_X(\mathcal{A})) \xrightarrow{\pr_X^*}
\liminv_\mu \mathfrak{G}_\DR(\mathcal{J}_{X(\mu)}(\mathcal{A}(\mu)))
\to \mathfrak{G}_\DR(\mathcal{J}_{X(\nu)}(\mathcal{A}(\nu)))
\]
is equal to the identity.

\begin{lemma}
For each $\nu \colon  [0]\to \AuxData$ the morphism in the derived
category induced by the canonical map
\[
\liminv_\mu \mathfrak{G}_\DR(\mathcal{J}_{X(\mu)}(\mathcal{A}(\mu)))
\to \mathfrak{G}_\DR(\mathcal{J}_{X(\nu)}(\mathcal{A}(\nu)))
\]
is inverse to $\pr_X^*$.
\end{lemma}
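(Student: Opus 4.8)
The plan is to reduce the assertion to showing that the canonical projection
\[
q_\nu \colon \liminv_\mu \mathfrak{G}_\DR(\mathcal{J}_{X(\mu)}(\mathcal{A}(\mu))) \longrightarrow \mathfrak{G}_\DR(\mathcal{J}_{X(\nu)}(\mathcal{A}(\nu)))
\]
is a quasiisomorphism. Since $X(\nu) = \Delta^0\times X = X$ and $\mathcal{A}(\nu) = \mathcal{A}$, the target is $\mathfrak{G}_\DR(\mathcal{J}_X(\mathcal{A}))$, and the paragraph preceding the Lemma already records that $q_\nu\circ\pr_X^* = \id$. Hence, once $q_\nu$ is known to be a quasiisomorphism, so is $\pr_X^*$ by two-out-of-three, and in the derived category $\pr_X^* = q_\nu^{-1}\circ(q_\nu\circ\pr_X^*) = q_\nu^{-1}$, which is precisely the claim. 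So everything comes down to proving that $q_\nu$ (equivalently $\pr_X^*$) is a quasiisomorphism.

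First I would establish that for every $\mu\colon [m]\to\AuxData(\mathcal{A})$ the map $\pr_X^*(\mu)\colon \mathfrak{G}_\DR(\mathcal{J}_X(\mathcal{A}))\to \mathfrak{G}_\DR(\mathcal{J}_{X(\mu)}(\mathcal{A}(\mu)))$ is a quasiisomorphism. Here $X(\mu) = \Delta^m\times X$ and $\mathcal{A}(\mu) = \pr_X^*\mathcal{A}$, so the right-hand side depends on $\mu$ only through $m$, and $\pr_X^*(\mu)$ is pull-back along the projection $\Delta^m\times X\to X$. This is a homotopy-invariance statement, which I would reduce to the Poincaré lemma along the contractible factor $\Delta^m$: for any vertex $f\colon\nu\to\mu$ one has $f^*\circ\pr_X^*(\mu) = \pr_X^*(\nu) = \id$, and $f^*$ (restriction of de Rham--jet forms to a vertex of $\Delta^m$) is a quasiisomorphism because $H^i\DR(C^\bullet(\mathcal{J}(\mathcal{A})))=0$ for $i\neq 0$ and $j^\infty$ identifies $C^\bullet(\mathcal{A})$ with $H^0\DR(C^\bullet(\mathcal{J}(\mathcal{A})))$ (the Lemma of \S\ref{diff and jet}); this is inherited by $\mathfrak{G}_\DR(-) = \Tot(\Gamma(\real{-};\DR(-)))$ thanks to the acyclicity Theorem \ref{theorem: acyclicity}, which applies to the \'etale simplicial manifold $X(\mu)$ as well. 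Consequently $\pr_X^*(\mu)$ is a quasiisomorphism, and by the commutative square stated just before the Lemma these maps are compatible with the transition maps of the inverse system; thus $\pr_X^*$ is the map on limits induced by an objectwise quasiisomorphism from the constant diagram $\underline{\mathfrak{G}_\DR(\mathcal{J}_X(\mathcal{A}))}$ to the diagram $\mu\mapsto\mathfrak{G}_\DR(\mathcal{J}_{X(\mu)}(\mathcal{A}(\mu)))$ over $\Sing(\AuxData(\mathcal{A}))^{op}$.

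It then remains to check that $\liminv_\mu$ converts this objectwise quasiisomorphism into a quasiisomorphism, i.e. that the strict limit over $\Sing(\AuxData(\mathcal{A}))$ computes the homotopy limit in this situation; this is the same mechanism used to prove the preceding Lemma (that \eqref{restriction invlim} and \eqref{lim ups} are quasiisomorphisms), and I would run the argument the same way. Since $\AuxData(\mathcal{A})$ is a nonempty groupoid with a unique morphism between any two objects, its nerve is contractible; fixing the object picked out by $\nu$, the ``prepend-$\nu$'' endofunctor of $\Sing(\AuxData(\mathcal{A}))$, together with the natural transformations relating it to the identity functor and to the constant functor at $\nu$, supplies an extra degeneracy. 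Using this one identifies $\liminv_\mu$ with the totalization of a cosimplicial complex of the shape ``the diagram's common value, tensored with the \v{C}ech cosimplicial set $[m]\mapsto S^{m+1}$ of the object set $S$ of $\AuxData(\mathcal{A})$''; totalization preserves the objectwise quasiisomorphism $\pr_X^*(\mu)$ because the complexes in play are bounded below uniformly, and the extra degeneracy furnishes a contracting homotopy onto the vertex $\nu$. This yields that $\pr_X^*$, hence $q_\nu$, is a quasiisomorphism, completing the proof. The hard part is precisely this last step: showing rigorously that the strict limit is homotopy-invariant in the diagram here — identifying it with the appropriate totalization, verifying (via the column spectral sequence and the uniform lower bound) that totalization takes the objectwise quasiisomorphism to a quasiisomorphism, and contracting the \v{C}ech-type cosimplicial object by the extra degeneracy; all of these steps, however, parallel and can be quoted from the proof of the preceding Lemma.
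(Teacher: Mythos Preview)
The paper states this lemma (and the immediately preceding one about \eqref{restriction invlim} and \eqref{lim ups}) without proof, so there is no argument of the authors to compare against. Your overall strategy is the right one: since $q_\nu\circ\pr_X^*=\id$, it suffices to show that $\pr_X^*$ (equivalently $q_\nu$) is a quasiisomorphism, and you correctly reduce this to (i) each $\pr_X^*(\mu)$ being a quasiisomorphism via the Poincar\'e lemma along the contractible $\Delta^m$ factor, and (ii) the strict limit $\liminv_\mu$ preserving this objectwise quasiisomorphism.

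The weak point is your justification of (ii). You assert that $\liminv_\mu$ ``is identified with the totalization of a cosimplicial complex'', but a strict inverse limit over $\Sing(\AuxData(\mathcal{A}))^{op}$ is not a $\Tot$ in the sense of the paper; it is the subspace of compatible families in $\prod_\mu G(\vert\mu\vert)$, i.e.\ a matching-object construction rather than an end against $\Omega_\bullet$. Appealing to ``the same mechanism'' as the preceding lemma does not help, since that lemma is also stated without proof. A clean way to carry out (ii) is this: $\Sing(\AuxData(\mathcal{A}))^{op}$ is an inverse Reedy category (graded by $m$, all non-identity maps decrease degree), and the diagram $\mu\mapsto\mathfrak{G}_\DR(\mathcal{J}_{X(\mu)}(\mathcal{A}(\mu)))$ is Reedy fibrant because the matching map at $\mu$ is restriction of (polynomial) de Rham forms on $\Delta^m\times X_\bullet$ to $\partial\Delta^m\times X_\bullet$, which is surjective. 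Hence the strict limit computes the homotopy limit. Since each $\pr_X^*(\mu)$ is a quasiisomorphism, the map of diagrams from the constant diagram with value $\mathfrak{G}_\DR(\mathcal{J}_X(\mathcal{A}))$ is an objectwise weak equivalence, so it induces a quasiisomorphism on homotopy limits; finally, the nerve of $\Sing(\AuxData(\mathcal{A}))$ is contractible (it is the category of simplices of the nerve of a codiscrete groupoid), so the homotopy limit of the constant diagram is the value itself. This yields that $\pr_X^*$ is a quasiisomorphism, and with $q_\nu\circ\pr_X^*=\id$ you are done. Your ``extra degeneracy'' idea can be made to work too, but it produces a contraction of the \emph{homotopy} limit, not of the naive $\liminv$; you still need the fibrancy (surjectivity of restrictions) to pass between the two.
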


\begin{cor}
The morphism
\[
\liminv_\mu \mathfrak{G}_\DR(\mathcal{J}_{X(\mu)}(\mathcal{A}(\mu)))
\to \mathfrak{G}_\DR(\mathcal{J}_{X(\nu)}(\mathcal{A}(\nu))) =
\mathfrak{G}_\DR(\mathcal{J}_X(\mathcal{A}))
\]
in derived category does not depend on $\nu \colon  [0]\to
\AuxData$.
\end{cor}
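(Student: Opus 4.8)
The plan is to deduce the corollary formally from the lemma immediately preceding it, together with the elementary fact that a two-sided inverse in any category is unique whenever it exists. To fix notation, write $A \colon = \mathfrak{G}_\DR(\mathcal{J}_X(\mathcal{A}))$ and $L \colon = \liminv_\mu \mathfrak{G}_\DR(\mathcal{J}_{X(\mu)}(\mathcal{A}(\mu)))$, the limit being taken over $\Sing(\AuxData(\mathcal{A}))$. For $\nu \colon [0] \to \AuxData(\mathcal{A})$ one has $X(\nu) = \Delta^0\times X$, which is canonically (and naturally in $\nu$, there being only one $0$-simplex) identified with $X$; under this identification $\mathfrak{G}_\DR(\mathcal{J}_{X(\nu)}(\mathcal{A}(\nu))) = A$, and the morphism whose independence of $\nu$ is asserted is the image in the derived category of the structure map $c_\nu \colon L \to A$ of the inverse limit at the object $\nu$.

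First I would recall that the morphism $\pr_X^* \colon A \to L$ of \eqref{pull-back lim} is built from the compatible family $\{\pr_X^*(\mu)\}_\mu$ and therefore makes no reference to $\nu$ whatsoever, and that the excerpt already records the identity $c_\nu\circ\pr_X^* = \id_A$. Next I would invoke the preceding lemma, which asserts precisely that the class $[c_\nu]$ is inverse to $[\pr_X^*]$ in the derived category of DGLA. Reading ``inverse'' in its usual two-sided sense, $[c_\nu]$ is then a two-sided inverse of the \emph{fixed} morphism $[\pr_X^*]$; since such inverses are unique, $[c_\nu]$ must coincide, for every $\nu$, with the unique inverse of $[\pr_X^*]$, and is in particular independent of $\nu$. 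Concretely, for two $0$-simplices $\nu,\nu'$ the classes $[c_\nu]$ and $[c_{\nu'}]$ are both the unique inverse of $[\pr_X^*]$, hence equal. This is the whole argument.

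I do not expect a genuine obstacle: the substantive content sits already in the preceding lemma, and upstream of it in the lemma asserting that \eqref{restriction invlim} and \eqref{lim ups} are quasiisomorphisms, so what remains is purely formal. The only points warranting a sentence of care are (i) pinning down the canonical identification $X(\nu)\cong X$ for a $0$-simplex $\nu$, natural in $\nu$, so that the targets of the various structure maps $c_\nu$ are literally one and the same object $A$ rather than merely isomorphic; and (ii) noting that the word \emph{inverse} in the preceding lemma indeed supplies the right identity $[\pr_X^*]\circ[c_\nu] = \id_L$ in addition to the left identity $c_\nu\circ\pr_X^* = \id_A$ available on the nose — this being automatic because $c_\nu$ is a quasiisomorphism, hence invertible in the derived category, so any one-sided inverse there is automatically two-sided. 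Granting these two observations, the corollary follows with no further computation.
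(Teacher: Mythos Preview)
Your proposal is correct and is precisely the reasoning the paper has in mind: the corollary is stated without proof because it follows formally from the preceding lemma by uniqueness of inverses, exactly as you argue. Your additional remarks on the identification $X(\nu)\cong X$ and on one-sided versus two-sided inverses are the natural points of care and require nothing beyond what you indicate.
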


Let
\begin{equation}\label{cotr ind of choice}
\overline{\Upsilon} \colon
\mathfrak{G}_\DR(\mathcal{J})_{[S(\mathcal{A})]} \to
\mathfrak{G}_\DR(\mathcal{J}_X(\mathcal{A}))
\end{equation}
denote the morphism in the derived category represented by
\begin{multline*}
\mathfrak{G}_\DR(\mathcal{J})_{[S(\mathcal{A})]}
\xleftarrow{\eqref{restriction invlim}}
\liminv_\mu \mathfrak{G}_\DR(\mathcal{J}_{X(\mu)})_{\overline{B}_{\widetilde{\pi(\mu)}}} \xrightarrow{\eqref{lim ups}} \\
\liminv_\mu \mathfrak{G}_\DR(\mathcal{J}_{X(\mu)}(\mathcal{A}(\mu)))
\to \mathfrak{G}_\DR(\mathcal{J}_{X(\nu)}(\mathcal{A}(\nu))) =
\mathfrak{G}_\DR(\mathcal{J}_X(\mathcal{A}))
\end{multline*}
for any $\nu \colon  [0]\to \AuxData$.

\section{Applications to \'etale groupoids}\label{Applications to etale groupoids}

\subsection{Algebroid stacks}
In this section we review the notions of algebroid stack and twisted
form. We also define the notion of descent datum and relate it with
algebroid stacks.

\subsubsection{Algebroids}\label{subsubsection: algebroids}
For a category $\mathcal{C}$ we denote by $i\mathcal{C}$ the
subcategory of isomorphisms in $\mathcal{C}$; equivalently,
$i\mathcal{C}$ is the maximal subgroupoid in $\mathcal{C}$.

Suppose that $R$ is a commutative $k$-algebra.

\begin{definition}
An \emph{$R$-algebroid} is a \emph{nonempty} $R$-linear category
$\mathcal{C}$ such that the groupoid $i\mathcal{C}$ is connected
\end{definition}

Let $\Algd_R$ denote the $2$-category of $R$-algebroids (full
$2$-subcategory of the $2$-category of $R$-linear categories).

Suppose that $A$ is an $R$-algebra. The $R$-linear category with one
object and morphisms $A$ is an $R$-algebroid denoted $A^+$.

Suppose that $\mathcal{C}$ is an $R$-algebroid and $L$ is an object
of $\mathcal{C}$. Let $A = \End_\mathcal{C}(L)$. The functor
$A^+\to\mathcal{C}$ which sends the unique object of $A^+$ to $L$ is
an equivalence.

Let $\Alg^2_R$ denote the $2$-category of with
\begin{itemize}
\item objects $R$-algebras
\item $1$-morphisms homomorphism of $R$-algebras
\item $2$-morphisms $\phi\to\psi$, where $\phi,\psi \colon A\to B$ are two $1$-morphisms are elements $b\in B$ such
    that $b\cdot\phi(a) = \psi(a)\cdot b$ for all $a\in A$.
\end{itemize}
It is clear that the $1$- and the $2$- morphisms in $\Alg^2_R$ as
defined above induce $1$- and $2$-morphisms of the corresponding
algebroids under the assignment $A \mapsto A^+$. The structure of a
$2$-category on $\Alg^2_R$ (i.e. composition of $1$- and $2$-
morphisms) is determined by the requirement that the assignment
$A\mapsto A^+$ extends to an embedding $(.)^+ \colon \Alg^2_R\to
\Algd_R$.

Suppose that $R\to S$ is a morphism of commutative $k$-algebras. The
assignment $A\to A\otimes_R S$ extends to a functor $(.)\otimes_R S
\colon \Alg^2_R\to \Alg^2_S$.

\subsubsection{Algebroid stacks}
\begin{definition}
A stack in $R$-linear categories $\mathcal{C}$ on a space $Y$ is an
\emph{$R$-algebroid stack} if it is locally nonempty and locally
connected by isomorphisms, i.e. the stack $\widetilde{i\mathcal{C}}$
is a gerbe.
\end{definition}

\begin{example}
Suppose that $\mathcal{A}$ is a sheaf of $R$-algebras on $Y$. The
assignment $X\supseteq U\mapsto\mathcal{A}(U)^+$ extends in an
obvious way to a prestack in $R$-algebroids denoted $\mathcal{A}^+$.
The associated stack $\widetilde{\mathcal{A}^+}$ is canonically
equivalent to the stack of locally free $\mathcal{A}^{op}$-modules
of rank one. The canonical morphism
$\mathcal{A}^+\to\widetilde{\mathcal{A}^+}$ sends the unique object
of $\mathcal{A}^+$ to the free module of rank one.
\end{example}

$1$-morphisms and $2$-morphisms of $R$-algebroid stacks are those of
stacks in $R$-linear categories. We denote the $2$-category of
$R$-algebroid stacks by $\AlgStack_R(Y)$.

Suppose that $G$ is an \'etale category.

\begin{definition}
An $R$-algebroid stack on $G$ is a stack $\underline{\mathcal{C}} =
(\mathcal{C},\mathcal{C}_{01},\mathcal{C}_{012})$ on $G$ such that
$\mathcal{C}\in\AlgStack_R(N_0G)$, $\mathcal{C}_{01}$ is a
$1$-morphism in $\AlgStack_R(N_1G)$ and $\mathcal{C}_{012}$ is a
$2$-morphism in $\AlgStack_R(N_2G)$.
\end{definition}

\begin{definition}
A $1$-morphism $\underline{\phi} = (\phi_0, \phi_{01}) \colon
\underline{\mathcal{C}} \to \underline{\mathcal{D}}$ of
$R$-algebroid stacks on $G$ is a morphism of stacks on $G$ such that
$\phi_0$ is a $1$-morphism in $\AlgStack_R(N_0G)$ and $\phi_{01}$ is
a $2$-morphism in $\AlgStack(N_1G)$.
\end{definition}

\begin{definition}
A $2$-morphism $b \colon \underline{\phi} \to \underline{\psi}$
between $1$-morphisms $\underline{\phi}, \underline{\psi}
\colon\underline{\mathcal{C}} \to \underline{\mathcal{D}}$ is a
$2$-morphism $b \colon \phi_0 \to \psi_0$ in $\AlgStack_R(N_0G)$.
\end{definition}

We denote the $2$-category of $R$-algebroid stacks on $G$ by
$\AlgStack_R(G)$.

\subsection{Base change for algebroid stacks}
For an $R$-linear category $\mathcal{C}$ and homomorphism of
commutative $k$-algebras $R\to S$ we denote by $\mathcal{C}\otimes_R
S$ the category with the same objects as $\mathcal{C}$ and morphisms
defined by $\Hom_{\mathcal{C}\otimes_R S}(A,B) =
\Hom_\mathcal{C}(A,B)\otimes_R S$.

For an $R$-algebra $A$ the categories $(A\otimes_R S)^+$ and
$A^+\otimes_R S$ are canonically isomorphic.

For a prestack $\mathcal{C}$ in $R$-linear categories we denote by
$\mathcal{C}\otimes_R S$ the prestack associated to the fibered
category $U\mapsto\mathcal{C}(U)\otimes_R S$.

For $U\subseteq X$, $A,B\in\mathcal{C}(U)$, there is an isomorphism
of sheaves $\shHom_{\mathcal{C}\otimes_R S}(A,B) =
\shHom_\mathcal{C}(A,B)\otimes_R S$.

The proof of the following lemma can be found in \cite{bgnt3} (Lemma
4.13 of loc. cit.)
\begin{lemma}
Suppose that $\mathcal{A}$ is a sheaf of $R$-algebras on a space $Y$
and $\mathcal{C}\in \AlgStack_R(Y)$ is an $R$-algebroid stack.
\begin{enumerate}
\item $(\widetilde{\mathcal{A}^+}\otimes_R S)\widetilde{~~}$ is an algebroid stack
equivalent to $\widetilde{(\mathcal{A}\otimes_R S)^+}$ .

\item $\widetilde{\mathcal{C}\otimes_R S}$ is an algebroid stack.
\end{enumerate}
\end{lemma}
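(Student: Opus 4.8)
The plan is to prove part (2) by localizing on $Y$ and reducing to part (1), and to prove part (1) by comparing two descriptions of a single stackification. First I would treat part (1). For an $R$-algebra $A$ there is a canonical isomorphism $(A\otimes_R S)^+ \cong A^+\otimes_R S$ of $S$-algebroids (recalled above); this is natural in $A$, hence globalizes to a canonical isomorphism of prestacks $(\mathcal{A}\otimes_R S)^+ \cong \mathcal{A}^+\otimes_R S$, and therefore an equivalence of associated stacks $\widetilde{(\mathcal{A}\otimes_R S)^+} \simeq \widetilde{\mathcal{A}^+\otimes_R S}$. It then remains to identify this with $(\widetilde{\mathcal{A}^+}\otimes_R S)\widetilde{~~}$. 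For that I would apply $(\,\cdot\,)\otimes_R S$ to the canonical morphism $\mathcal{A}^+ \to \widetilde{\mathcal{A}^+}$ and check that the resulting morphism of prestacks $\mathcal{A}^+\otimes_R S \to \widetilde{\mathcal{A}^+}\otimes_R S$ is a local equivalence: it is the identity on objects, hence locally essentially surjective, and on morphism sheaves it is $(\,\cdot\,)\otimes_R S$ applied to the isomorphism $\shHom_{\mathcal{A}^+}(A,B) \xrightarrow{\sim} \shHom_{\widetilde{\mathcal{A}^+}}(A,B)$ (the canonical morphism of a prestack to its associated stack being fully faithful), which is again an isomorphism by the recorded identity $\shHom_{\mathcal{C}\otimes_R S}(A,B) = \shHom_{\mathcal{C}}(A,B)\otimes_R S$. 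Since stackification is insensitive to such local equivalences, we obtain
\[
\bigl(\widetilde{\mathcal{A}^+}\otimes_R S\bigr)\widetilde{~~} \simeq \widetilde{\mathcal{A}^+\otimes_R S} \simeq \widetilde{(\mathcal{A}\otimes_R S)^+}.
\]
Finally, $\widetilde{(\mathcal{A}\otimes_R S)^+}$ is an algebroid stack because $(\mathcal{A}\otimes_R S)^+$ is a prestack in $S$-algebroids whose associated stack is locally the stack of locally free rank-one $(\mathcal{A}\otimes_R S)^{op}$-modules, which is a gerbe; in particular both displayed stacks are algebroid stacks, proving part (1).

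For part (2): being an algebroid stack (locally nonempty, locally connected by isomorphisms) is a local condition on $Y$, so I would fix a point $y\in Y$ and argue in a neighbourhood. Since $\mathcal{C}$ is locally nonempty and locally connected by isomorphisms, choose an open $U\ni y$ and an object $L\in\mathcal{C}(U)$ such that every object of $\mathcal{C}|_U$ is locally isomorphic to $L$; putting $A = \shEnd_{\mathcal{C}}(L)$, a sheaf of $R$-algebras on $U$, the functor $A^+ \to \mathcal{C}|_U$ sending the unique object to $L$ induces an equivalence $\widetilde{A^+}\xrightarrow{\sim}\mathcal{C}|_U$. Stackification is local, restriction to $U$ commutes with $(\,\cdot\,)\otimes_R S$, and — by the same argument as in part (1), applied now to an arbitrary prestack $\mathcal{D}$ in place of $\mathcal{A}^+$ — the morphism $\mathcal{D}\otimes_R S \to \widetilde{\mathcal{D}}\otimes_R S$ is a local equivalence, so that $(\,\cdot\,)\otimes_R S$ commutes with stackification. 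Combining these,
\[
\widetilde{\mathcal{C}\otimes_R S}\big|_U \simeq \widetilde{(\mathcal{C}|_U)\otimes_R S} \simeq \bigl(\widetilde{A^+}\otimes_R S\bigr)\widetilde{~~} \simeq \widetilde{(A\otimes_R S)^+},
\]
the last equivalence being part (1); and the right-hand side is an algebroid stack by part (1). Hence $\widetilde{\mathcal{C}\otimes_R S}$ is locally, and therefore globally, an algebroid stack.

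The step I expect to be the main obstacle is the interchange of stackification with $(\,\cdot\,)\otimes_R S$, which is exactly where the identity $\shHom_{\mathcal{C}\otimes_R S}(A,B) = \shHom_{\mathcal{C}}(A,B)\otimes_R S$ does the essential work; care is also needed because $U\mapsto\mathcal{C}(U)\otimes_R S$ is in general only a fibered category (its $\Hom$-presheaves need not be separated), so $\mathcal{C}\otimes_R S$ must be read as the associated prestack throughout, and one should check that passing to the associated prestack likewise commutes with restriction. The remaining ingredients — globalizing $(A\otimes_R S)^+ \cong A^+\otimes_R S$, reducing part (2) to the local model $\widetilde{A^+}$, and the localization on $Y$ — are routine.
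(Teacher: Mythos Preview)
The paper does not prove this lemma; it simply cites \cite{bgnt3}, Lemma~4.13, for the proof. Your argument is a correct self-contained proof along what are presumably the same lines: reduce (2) to (1) by localizing and using $\mathcal{C}|_U\simeq\widetilde{A^+}$ with $A=\shEnd_\mathcal{C}(L)$, and prove (1) by showing that $\mathcal{A}^+\otimes_R S\to\widetilde{\mathcal{A}^+}\otimes_R S$ is a local equivalence (hence induces an equivalence upon stackification), using the identity $\shHom_{\mathcal{C}\otimes_R S}(A,B)=\shHom_\mathcal{C}(A,B)\otimes_R S$ recorded just before the lemma.

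One small wording issue: the morphism $\mathcal{A}^+\otimes_R S\to\widetilde{\mathcal{A}^+}\otimes_R S$ is not ``the identity on objects'' --- the target has more objects than the source --- but since $(\,\cdot\,)\otimes_R S$ does not alter object-sets, the object part of this morphism agrees with that of the stackification map $\mathcal{A}^+\to\widetilde{\mathcal{A}^+}$, which is locally essentially surjective by the very construction of stackification. That is all you need, and your conclusion stands. Your closing caveats about reading $\mathcal{C}\otimes_R S$ as the associated prestack and checking compatibility with restriction are well placed and match the paper's conventions.
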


The assignment $\mathcal{C} \mapsto \widetilde{\mathcal{C}\otimes_R
S}$ extends to a functor denoted
\[
(.)\widetilde{\otimes}_R S \colon \AlgStack_R(Y) \to \AlgStack_S(Y)
\]
and, hence, for an \'etale category $G$, to a functor
\[
(.)\widetilde{\otimes}_R S \colon \AlgStack_R(G) \to \AlgStack_S(G)
\]

There is a canonical $R$-linear morphism $\mathcal{C} \to
\mathcal{C}\widetilde{\otimes}_R S$ (where the target is considered
as a stack in $R$-linear categories by restriction of scalars) which
is characterized by an evident universal property.

\subsection{The category of trivializations}
Let $\Triv_R(G)$ denote the $2$-category with
\begin{itemize}
\item objects the pairs $(\underline{\mathcal{C}}, L)$, where
$\underline{\mathcal{C}}$ is an $R$-algebroid stack on $G$ such that
$\mathcal{C}(N_0G)\neq\emptyset$, and $L\in\mathcal{C}(N_0G)$

\item $1$-morphism $(\underline{\mathcal{C}}, L) \to (\underline{\mathcal{D}}, M)$ the pairs
    $(\underline{\phi},\phi_\tau)$, where $\underline{\phi}\colon\underline{\mathcal{C}} \to
    \underline{\mathcal{D}}$ is a morphism in $\AlgStack_R(G)$ and $\phi_\tau \colon \phi_0(L) \to M$ is an
    \emph{isomorphism} in $\mathcal{D}(N_0G)$.

\item $2$-morphisms $(\underline{\phi},\phi_\tau) \to
(\underline{\psi},\psi_\tau)$ are the $2$-morphisms
$\underline{\phi} \to \underline{\psi}$.
\end{itemize}

The composition of $1$-morphisms is defined by
$(\underline{\phi},\phi_\tau)\circ (\underline{\psi},\psi_\tau) =
(\underline{\phi}\circ\underline{\psi}, \phi_\tau\circ
\phi_0(\psi_\tau))$.

The assignment $(\underline{\mathcal{C}}, L) \mapsto
\underline{\mathcal{C}}$, $(\underline{\phi},\phi_\tau) \mapsto
\underline{\phi}$, $b \mapsto b$ extends to a functor
\begin{equation}\label{forget triv}
\Triv_R(G) \to \AlgStack_R(G)
\end{equation}

For a homomorphism of algebras $R\to S$ and
$(\underline{\mathcal{C}}, L)\in \Triv_R(G)$ we denote by
$(\underline{\mathcal{C}}, L)\widetilde{\otimes}_R S$ the pair which
consists of $\underline{\mathcal{C}}\widetilde{\otimes}_R S \in
\AlgStack_S(G)$ and the image of $L$, denoted $L\otimes_R S$, in
$\underline{\mathcal{C}}\widetilde{\otimes}_R S$.

It is clear that the forgetful functors \eqref{forget triv} commute
with the base change functors.

\subsubsection{Algebroid stacks from cosimplicial matrix algebras}\label{subsubsection: triv from cma}
Suppose that $G$ is an \'etale category and $\mathcal{A}$ is a
cosimplicial matrix $R$-algebra on $NG$.

Let
\begin{equation}\label{triv:algd}
\mathcal{C} = \widetilde{(\mathcal{A}^{0 op})^+} \ .
\end{equation}
In other words, $\mathcal{C}$ is the stack of locally free
$\mathcal{A}$-modules of rank one. There is a canonical isomorphism
$\mathcal{C}^\s{1}_i = \widetilde{(\mathcal{A}^{1 op}_{ii})^+}$.

Let
\begin{equation}\label{triv:functor}
\mathcal{C}_{01} = \mathcal{A}^1_{01} \otimes_{\mathcal{A}^1_{11}}
(.) \colon \mathcal{C}^\s{1}_1 \to \mathcal{C}^\s{1}_0 .
\end{equation}
The multiplication pairing
$\mathcal{A}^2_{01}\otimes\mathcal{A}^2_{12} \to \mathcal{A}^2_{02}$
and the isomorphisms $\mathcal{A}^2_{ij} =
(\mathcal{A}^1_{01})^\s{2}_{ij}$ determine the morphism
\begin{equation}\label{triv:conv}
\mathcal{C}_{012} \colon
\mathcal{C}^\s{2}_{01}\circ\mathcal{C}^\s{2}_{12} =
\mathcal{A}^2_{01} \otimes_{\mathcal{A}^2_{11}} \mathcal{A}^2_{12}
\otimes_{\mathcal{A}^2_{22}} (.) \to \mathcal{A}^2_{02}
\otimes_{\mathcal{A}^2_{22}} (.) = \mathcal{C}^\s{2}_{02}
\end{equation}
Let $\underline{\mathcal{C}} =
(\mathcal{C},\mathcal{C}_{01},\mathcal{C}_{012})$.

\begin{lemma}
The triple $\underline{\mathcal{C}}$ defined by \eqref{triv:algd},
\eqref{triv:functor}, \eqref{triv:conv} is an algebroid stack on
$G$.
\end{lemma}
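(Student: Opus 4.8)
The plan is to verify, piece by piece, the conditions defining a (an $R$-)algebroid stack on $G$ as set out in \ref{subsubsection: stacks on et cat}: namely that $\mathcal{C}$ is an algebroid stack on $N_0G$, that $\mathcal{C}_{01}$ is an equivalence, that $\mathcal{C}_{012}$ is an isomorphism, and that the cocycle and unit identities hold, after which the $R$-linear refinements are immediate. The only substantive inputs will be the matrix-algebra axioms of Definition \ref{def: matrix alg} together with the standing local-freeness-of-rank-one assumption, and the cosimplicial compatibility $(g\circ f)_* = f^\sharp(g_*)\circ X(g)^{-1}(f_*)$ from the definition of cosimplicial matrix algebra in \ref{section: cosimplicial matrix algebras}. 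For the first point, $\mathcal{C} = \widetilde{(\mathcal{A}^{0\,op})^+}$ is the stack of locally free rank-one $\mathcal{A}^{0\,op}$-modules (as in the Example on algebroid stacks): it is locally nonempty, containing the free module, and locally connected by isomorphisms, so $\widetilde{i\mathcal{C}}$ is a gerbe and $\mathcal{C}\in\AlgStack_R(N_0G)$.

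Next I would record the inverse-image identifications that make \eqref{triv:functor} and \eqref{triv:conv} meaningful. For the face map $\partial_i\colon[0]\to[1]$ the structure isomorphism $(\partial_i)_*$ of $\mathcal{A}$ identifies $X(\partial_i)^{-1}\mathcal{A}^0$ with $\partial_i^\sharp\mathcal{A}^1 = \mathcal{A}^1_{ii}$, whence the canonical identification $\mathcal{C}^\s{1}_i = \widetilde{(\mathcal{A}^{1\,op}_{ii})^+}$; likewise, for $f\colon[1]\to[2]$ with $f(0)=i$, $f(1)=j$, the isomorphism $f_*$ identifies $X(f)^{-1}\mathcal{A}^1_{01}$ with $(f^\sharp\mathcal{A}^2)_{01} = \mathcal{A}^2_{ij}$, i.e. $(\mathcal{A}^1_{01})^\s{2}_{ij} = \mathcal{A}^2_{ij}$, and analogously on $N_3G$. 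That these identifications are compatible with further pullbacks is exactly the associativity of the maps $f_*$, so no coherence is lost.

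The heart of the matter is that the matrix-algebra axioms force each multiplication map $\mathcal{A}^p_{ij}\otimes_{\mathcal{A}^p_{jj}}\mathcal{A}^p_{jk}\to\mathcal{A}^p_{ik}$ to be an isomorphism of bimodules; equivalently, $\mathcal{A}^p_{ij}$ is an invertible $(\mathcal{A}^p_{ii},\mathcal{A}^p_{jj})$-bimodule implementing a Morita equivalence, with inverse $\mathcal{A}^p_{ji}$. This uses precisely the condition that $\mathcal{A}_{ij}$ is locally free of rank one over $\mathcal{A}_{ii}$ and over $\mathcal{A}_{jj}^{op}$, together with the unitality in Definition \ref{def: matrix alg}(2). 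Consequently $\mathcal{C}_{01} = \mathcal{A}^1_{01}\otimes_{\mathcal{A}^1_{11}}(-)$ is an equivalence of stacks, with quasi-inverse $\mathcal{A}^1_{10}\otimes_{\mathcal{A}^1_{00}}(-)$, so condition (2) holds; and $\mathcal{C}_{012}$ of \eqref{triv:conv}, being induced by the multiplication isomorphism $\mathcal{A}^2_{01}\otimes_{\mathcal{A}^2_{11}}\mathcal{A}^2_{12}\to\mathcal{A}^2_{02}$, is an isomorphism of $1$-morphisms, so condition (3) holds. All the functors and transformations involved are manifestly $R$-linear and their sources and targets are $R$-algebroid stacks on the relevant $N_pG$, so the algebroid refinements of the definition of an algebroid stack on $G$ hold automatically.

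It remains to check the two coherence identities. For the cocycle identity $\mathcal{C}^\s{3}_{023}\circ(\mathcal{C}^\s{3}_{012}\otimes\id) = \mathcal{C}^\s{3}_{013}\circ(\id\otimes\mathcal{C}^\s{3}_{123})$ on $N_3G$, after the identifications above both sides become the iterated multiplication $\mathcal{A}^3_{01}\otimes_{\mathcal{A}^3_{11}}\mathcal{A}^3_{12}\otimes_{\mathcal{A}^3_{22}}\mathcal{A}^3_{23}\to\mathcal{A}^3_{03}$ tensored with $(-)$ over $\mathcal{A}^3_{33}$, so the identity is just the associativity of the product $m$ (again combined with compatibility of the $f_*$ with $m$). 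For the unit identity $\mathcal{C}^\s{0}_{00} = \id$, one uses $\mathcal{A}^\s{0}_{00} = \id$ in the cosimplicial matrix algebra and the canonical isomorphism $\mathcal{A}^0_{00}\otimes_{\mathcal{A}^0_{00}}(-)\cong(-)$, the pairing in question being the unital product of $\mathcal{A}^0 = \mathcal{A}^0_{00}$. Assembling these verifications establishes the lemma. \textbf{Main obstacle.} The step I expect to require the most care is the Morita-equivalence claim that upgrades ``$\mathcal{A}_{ij}$ locally free of rank one'' to ``$\mathcal{A}_{ij}$ an invertible bimodule with the multiplication maps isomorphisms'' — this is what makes $\mathcal{C}_{01}$ an equivalence and $\mathcal{C}_{012}$ an isomorphism — together with the purely bookkeeping burden of carrying out the coherence checks as identities of $1$- and $2$-morphisms of stacks, correctly matched across the inverse-image identifications on $N_0G,\dots,N_3G$.
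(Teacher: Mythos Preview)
The paper states this lemma without proof, treating it as a routine verification left to the reader; your proposal supplies exactly that verification and is correct. Your identification of the key point --- that the standing local-freeness-of-rank-one assumption makes each $\mathcal{A}^p_{ij}$ an invertible bimodule, so that $\mathcal{C}_{01}$ is an equivalence and $\mathcal{C}_{012}$ an isomorphism --- is the right one, and the coherence checks reduce to associativity and unitality as you describe.
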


We denote by $\stack(\mathcal{A})$ the algebroid stack
$\underline{\mathcal{C}}$ associated to the cosimplicial matrix
algebra $\mathcal{A}$ by the above construction.

Suppose that $\mathcal{A}$ and $\mathcal{B}$ are cosimplicial matrix
$R$-algebras on $NG$ and $F \colon \mathcal{A} \to \mathcal{B}$ is a
$1$-morphism of such. Let $\underline{\mathcal{C}} =
\stack(\mathcal{A})$, $\underline{\mathcal{D}} =
\stack(\mathcal{B})$. The map $F^0 \colon \mathcal{A}^0 \to
\mathcal{B}^0$ of sheaves of $R$-algebras on $N_0G$ induces the
morphism $\phi_0 \colon \mathcal{C} \to \mathcal{D}$ of
$R$-algebroid stacks on $N_0G$. Namely, we have
\begin{equation}\label{triv:1-mor-0}
\phi_0 = \mathcal{B}^0 \otimes_{\mathcal{A}^0} (.)
\end{equation}

The map $F^1 \colon \mathcal{A}^1 \to \mathcal{B}^1$ restricts to
the map $F^1_{01} \colon \mathcal{A}^1_{01} \to \mathcal{B}^1_{01}$
which induces the map
\[
\mathcal{B}^1_{00} \otimes_{\mathcal{A}^1_{00}} \mathcal{A}^1_{01}
\to \mathcal{B}^1_{01} = \mathcal{B}^1_{01}
\otimes_{\mathcal{B}^1_{11}} \mathcal{B}^1_{11}
\]
of $\mathcal{B}^1_{00}\otimes(\mathcal{A}^1_{11})^{op}$-modules,
hence the $2$-morphism
\begin{equation}\label{triv:1-mor-01}
\phi_{01} \colon \phi^\s{1}_0 \circ \mathcal{C}_{01} \to
\mathcal{D}_{01} \circ \phi^\s{1}_1
\end{equation}
Let $\underline{\phi} = (\phi_0,\phi_{01})$.

\begin{lemma}
The pair $\underline{\phi}$ defined by \eqref{triv:1-mor-0},
\eqref{triv:1-mor-01} is a $1$-morphism in $\AlgStack_R(G)$.
\end{lemma}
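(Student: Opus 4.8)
The plan is to unwind, ingredient by ingredient, the definition of a $1$-morphism of stacks on $G$ from \ref{subsubsection: stacks on et cat} for the pair $\underline{\phi}=(\phi_0,\phi_{01})$ defined by \eqref{triv:1-mor-0} and \eqref{triv:1-mor-01}, checking (a) that $\phi_0$ is a $1$-morphism in $\AlgStack_R(N_0G)$, (b) that $\phi_{01}$ is a genuine $2$-isomorphism in $\AlgStack_R(N_1G)$, and (c) the two compatibility axioms, namely the normalization $\phi^\s{0}_{00}=\id$ and the hexagon over $N_2G$. None of these is conceptually deep; the content is bookkeeping plus one local argument.

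First I would dispose of (a). Since the index set of a size-$[0]$ matrix algebra is a point, $\mathcal{A}^0$ and $\mathcal{B}^0$ are ordinary sheaves of $R$-algebras, and $\phi_0=\mathcal{B}^0\otimes_{\mathcal{A}^0}(-)$ is extension of scalars along the $R$-algebra homomorphism $F^0$; it carries a locally free $\mathcal{A}^{0\,op}$-module of rank one to one of rank one over $\mathcal{B}^{0\,op}$ and is $R$-linear, hence is a morphism in $\AlgStack_R(N_0G)$ (compare the Example on $\widetilde{\mathcal{A}^+}$ and the base-change discussion). Restricting along the two face maps $N_1G\to N_0G$ identifies $\phi^\s{1}_0$ and $\phi^\s{1}_1$ with $\mathcal{B}^1_{00}\otimes_{\mathcal{A}^1_{00}}(-)$ and $\mathcal{B}^1_{11}\otimes_{\mathcal{A}^1_{11}}(-)$, so the source and target of $\phi_{01}$ are $N\mapsto \mathcal{B}^1_{00}\otimes_{\mathcal{A}^1_{00}}\mathcal{A}^1_{01}\otimes_{\mathcal{A}^1_{11}}N$ and $N\mapsto \mathcal{B}^1_{01}\otimes_{\mathcal{A}^1_{11}}N$, and the component of $\phi_{01}$ comes, after $\otimes_{\mathcal{A}^1_{11}}N$, from $b\otimes a\mapsto b\cdot F^1_{01}(a)$. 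Well-definedness and $(\mathcal{B}^1_{00},\mathcal{A}^1_{11})$-bilinearity use only that $F^1$ is an $R$-algebra morphism sending $\mathcal{A}^1_{ij}$ into $\mathcal{B}^1_{ij}$, i.e. $F^1_{01}(sas')=F^1_{00}(s)F^1_{01}(a)F^1_{11}(s')$.

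The hard part will be (b), the invertibility of this component. The argument is local on $N_1G$: trivializing the invertible bimodules $\mathcal{A}^1_{01}$ and $\mathcal{B}^1_{01}$, the map becomes right multiplication by an element of $\mathcal{B}^1_{00}$, and one must see this element is a unit, which follows from the compatibility of $F^1$ with the Morita pairings $\mathcal{A}^1_{01}\otimes_{\mathcal{A}^1_{11}}\mathcal{A}^1_{10}\to\mathcal{A}^1_{00}$ and its $\mathcal{B}$-analogue (a further consequence of $F^1$ being a homomorphism respecting the decomposition). This is exactly the situation handled over a manifold in \cite{bgnt3}, and I would follow that argument verbatim, since only $\mathcal{A}^1,\mathcal{B}^1,F^1$ on $N_1G$ enter and there is no new simplicial input. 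Naturality of $\phi_{01}$ and the fact that it is a $2$-morphism in $\AlgStack_R(N_1G)$ are then formal.

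Finally (c). The normalization $\phi^\s{0}_{00}=\id$ is obtained by restricting $\phi_{01}$ along the degeneracy $[1]\to[0]$: under the cosimplicial matrix algebra structure maps the block $\mathcal{A}^1_{01}$ becomes the unit copy of $\mathcal{A}^0$, the compatibility square of $F$ turns $F^1_{01}$ into $F^0$, and since $F^0(1)=1$ the resulting transformation $\mathcal{B}^0\otimes_{\mathcal{A}^0}\mathcal{A}^0\to\mathcal{B}^0$ is the canonical identification. For the hexagon
\[
(\mathcal{D}_{012}\otimes\id_{\phi^\s{2}_2})\circ(\id_{\mathcal{D}^\s{2}_{01}}\otimes\phi^\s{2}_{12})\circ(\phi^\s{2}_{01}\otimes\id_{\mathcal{C}^\s{2}_{12}})=\phi^\s{2}_{02}\circ(\id_{\phi^\s{2}_0}\otimes\mathcal{C}_{012}),
\]
I would unwind \eqref{triv:conv} together with the identifications $\mathcal{A}^2_{ij}=(\mathcal{A}^1_{01})^\s{2}_{ij}$, which $F$ respects by the cosimplicial compatibility of the $1$-morphism $F$; both sides then reduce to the identity $F^2_{02}(a\cdot a')=F^2_{01}(a)\cdot F^2_{12}(a')$ for $a\in\mathcal{A}^2_{01}$, $a'\in\mathcal{A}^2_{12}$, which holds since $F^2$ is an algebra homomorphism compatible with the matrix decomposition. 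Assembling (a)–(c) proves the lemma.
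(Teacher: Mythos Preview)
Your proposal is correct and is precisely the sort of routine verification the paper intends: in the paper this lemma is stated without proof, so there is no ``paper's own proof'' to compare against, and your step-by-step unwinding of the definition from \ref{subsubsection: stacks on et cat} (the $R$-linearity of $\phi_0$, the local invertibility of $\phi_{01}$ via the Morita pairings, the normalization, and the hexagon reducing to multiplicativity of $F^2$) is exactly what the authors leave to the reader. The only remark is that your ``hard part'' (b) is slightly less delicate than you suggest: the standing assumption that each $\mathcal{A}_{ij}$ is locally free of rank one over $\mathcal{A}_{ii}$ and $\mathcal{A}_{jj}^{op}$, together with $F^1$ being a unital algebra homomorphism respecting the decomposition, immediately gives that a local generator of $\mathcal{A}^1_{01}$ is sent to a local generator of $\mathcal{B}^1_{01}$, so invoking \cite{bgnt3} is not strictly necessary.
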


For a $1$-morphism of cosimplicial matrix $R$-algebras $F \colon
\mathcal{A} \to \mathcal{B}$ on $NG$ we denote by $\stack(F) \colon
\stack(\mathcal{A}) \to \stack(\mathcal{B})$ the $1$-morphism in
$\AlgStack_R(G)$ given by the above construction.

Suppose that $F_1, F_2 \colon \mathcal{A} \to \mathcal{B}$ are
$1$-morphisms of cosimplicial matrix $R$-algebras on $NG$ and $b
\colon F_1 \to F_2$ is a $2$-morphism of such. The $2$-morphism $b^0
\colon F_1^0 \to F_2^0$ induces the $2$-morphism of functors
$\widetilde{(\mathcal{A}^{0 op})^+} \to \widetilde{(\mathcal{B}^{0
op})^+}$.

\begin{lemma}
The $2$-morphism $b^0$ is a $2$-morphism in $\AlgStack_R(G)$.
\end{lemma}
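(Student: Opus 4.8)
The plan is to check that $b^0$ meets both requirements in the definition of a $2$-morphism in $\AlgStack_R(G)$: that it is a $2$-morphism $\phi_0 \to \psi_0$ in $\AlgStack_R(N_0G)$, where $\underline{\phi} = \stack(F_1) = (\phi_0,\phi_{01})$ and $\underline{\psi} = \stack(F_2) = (\psi_0,\psi_{01})$, and that it satisfies the compatibility relation $\psi_{01}\circ(b^\s{1}_0\otimes\id_{\mathcal{C}_{01}}) = (\id_{\mathcal{D}_{01}}\otimes b^\s{1}_1)\circ\phi_{01}$ required of a $2$-morphism of stacks on $G$. The first requirement is exactly the content of the preceding lemma: the section $b^0\in\Gamma(N_0G;\mathfrak{d}(\mathcal{B}^0)) = \Gamma(N_0G;\mathcal{B}^0)$ satisfies $b^0\cdot F_1^0 = F_2^0\cdot b^0$, which is precisely what is needed for (left multiplication by) $b^0$ to define a $2$-morphism between the base-change functors $\widetilde{(\mathcal{A}^{0\,op})^+}\to\widetilde{(\mathcal{B}^{0\,op})^+}$ induced by $F_1^0$ and $F_2^0$. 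So the only thing to do is verify the second requirement.

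For this I would first unwind the ingredients on $N_1G$: by \eqref{triv:functor} the structure morphisms are $\mathcal{C}_{01} = \mathcal{A}^1_{01}\otimes_{\mathcal{A}^1_{11}}(.)$ and $\mathcal{D}_{01} = \mathcal{B}^1_{01}\otimes_{\mathcal{B}^1_{11}}(.)$, by \eqref{triv:1-mor-0} the morphisms $\phi_0,\psi_0$ on $N_1G$ are given by tensoring up along $F_1^0, F_2^0$, and by \eqref{triv:1-mor-01} the $2$-isomorphisms $\phi_{01},\psi_{01}$ are induced by the $(0,1)$-blocks $(F_1^1)_{01}\colon\mathcal{A}^1_{01}\to\mathcal{B}^1_{01}$ and $(F_2^1)_{01}$. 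On a module $N$ both sides of the desired identity are then left-multiplication maps on $\mathcal{B}^1_{01}\otimes_{\mathcal{A}^1_{11}} N$ assembled from these block maps together with the restrictions $b^\s{1}_0$, $b^\s{1}_1$ of $b^0$ to $N_1G$ along the two face maps $[0]\to[1]$. Next I would invoke the defining relation $f_*(X(f)^*b^0) = f^\sharp b^1$ of the $2$-morphism $b$ of cosimplicial matrix algebras for these two face maps: since $f^\sharp b^1 = b^1_{f(0)f(0)}$, this identifies $b^\s{1}_0$ with the diagonal component $b^1_{00}$ and $b^\s{1}_1$ with $b^1_{11}$ of $b^1\in\Gamma(N_1G;\mathfrak{d}(\mathcal{B}^1))$. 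With those substitutions the required equation collapses to $b^1_{00}\cdot(F_1^1)_{01} = (F_2^1)_{01}\cdot b^1_{11}$ as maps $\mathcal{A}^1_{01}\to\mathcal{B}^1_{01}$, and this is nothing but the $(0,1)$-block of the defining relation $b^1\cdot F_1^1 = F_2^1\cdot b^1$ of the $2$-morphism $b$ in degree $1$.

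The genuinely delicate part is the diagram chase in the last step: one must carefully match the bimodule tensor-product descriptions of $\mathcal{C}_{01}\circ(.)$ and $\mathcal{D}_{01}\circ(.)$, the horizontal composition $\otimes$ of $2$-morphisms, and the restriction notation $(.)^\s{1}_i$ with the block decomposition of $\mathcal{A}^1$ and $\mathcal{B}^1$; once these identifications are set up the statement is immediate, and the remaining verification is routine and left to the reader.
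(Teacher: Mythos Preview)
Your proposal is correct and is exactly the natural direct verification; the paper itself gives no proof for this lemma, so you are supplying the argument it leaves implicit. One minor quibble: what you call ``the preceding lemma'' is in fact just the sentence preceding the lemma (asserting that $b^0$ induces a $2$-morphism of functors on $N_0G$), not a separately stated lemma, but the content you invoke is right.
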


For a $2$-morphism $b \colon F_1 \to F_2$ we denote by $\stack(b)
\colon \stack(F_1) \to \stack(F_2)$ the corresponding $2$-morphism
in $\AlgStack_R(G)$ given by the above construction.

We denote the canonical trivialization of the algebroid stack
$\stack(\mathcal{A})$ by $\one_\mathcal{A}$. Let
$\triv(\mathcal{A})$ denote the object of $\Triv_R(G)$ given by the
pair $(\stack(\mathcal{A}), \one_\mathcal{A})$.

If $F \colon \mathcal{A} \to \mathcal{B}$ is a $1$-morphism in
$\CosMatAlg_R(NG)$, then $\stack(F)(\one_\mathcal{A}) =
\one_\mathcal{B}$. Let $\stack(F)_\tau$ denote this identification;
let $\triv(F) = (\stack(F),\stack(F)_\tau)$.

For a $2$-morphism $b$ in $\CosMatAlg_R(G)$ let $\triv(b) =
\stack(b)$.

\begin{prop}\label{prop: cosmatalg to algstack}
{~} \begin{enumerate}
\item The assignments $\mathcal{A} \mapsto \stack(\mathcal{A})$, $F
\mapsto \stack(F)$, $b \mapsto \stack(b)$ define a functor
\[
\stack \colon \CosMatAlg_R(NG) \to \AlgStack_R(G) \ .
\]

\item The assignments $\mathcal{A} \mapsto
\triv(\mathcal{A})$, $F \mapsto \triv(F)$, $b \mapsto \triv(b)$
define a functor
\[
\triv \colon \CosMatAlg_R(NG) \to \Triv_R(G) \ .
\]
which lifts the functor $\stack$.
\end{enumerate}
\end{prop}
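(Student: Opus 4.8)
The plan is to verify, in order, that the three assignments $\stack$, $\triv$ on objects, $1$-morphisms and $2$-morphisms are compatible with identities and composition, so that each of $\stack$ and $\triv$ is genuinely a functor (of $2$-categories), and that the forgetful functor $\Triv_R(G)\to\AlgStack_R(G)$ of \eqref{forget triv} carries $\triv$ to $\stack$. The individual Lemmas preceding the Proposition already supply that $\stack(\mathcal{A})$ is an algebroid stack on $G$, that $\stack(F)$ is a $1$-morphism in $\AlgStack_R(G)$, and that $\stack(b)$ is a $2$-morphism; likewise $\triv(\mathcal{A})$ is a well-defined object of $\Triv_R(G)$ because $\one_\mathcal{A}\in\mathcal{C}(N_0G)$ is (by construction) the free rank-one $\mathcal{A}^{0}$-module, and $\triv(F)$ is a $1$-morphism in $\Triv_R(G)$ because, as noted just before the statement, $\stack(F)(\one_\mathcal{A})=\one_\mathcal{B}$ canonically so that $\stack(F)_\tau$ is the requisite isomorphism. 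Hence what actually remains is functoriality.

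First I would check functoriality of $\stack$ on the underlying stacks over $N_0G$. For $\mathcal{A}\xrightarrow{F}\mathcal{B}\xrightarrow{G}\mathcal{C}$ the formula \eqref{triv:1-mor-0} gives $\phi_0^{F}=\mathcal{B}^0\otimes_{\mathcal{A}^0}(.)$ and $\phi_0^{G}=\mathcal{C}^0\otimes_{\mathcal{B}^0}(.)$, whose composite is $\mathcal{C}^0\otimes_{\mathcal{B}^0}\mathcal{B}^0\otimes_{\mathcal{A}^0}(.)\cong\mathcal{C}^0\otimes_{\mathcal{A}^0}(.)=\phi_0^{GF}$, using that $(GF)^0=G^0\circ F^0$ as maps of sheaves of $R$-algebras on $N_0G$; the identity $1$-morphism goes to the identity because $\id_{\mathcal{A}}^0=\id$. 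One then repeats this for the $2$-isomorphisms $\phi_{01}$ of \eqref{triv:1-mor-01}, tracking the tensor-product identifications $\mathcal{A}^{2}_{ij}=(\mathcal{A}^1_{01})^{\s{2}}_{ij}$ through the coherence datum $\mathcal{C}_{012}$ and checking the cocycle-type compatibility spelled out in the definition of a $1$-morphism of stacks on $G$; this is where one uses that a $1$-morphism $F$ of cosimplicial matrix algebras intertwines the structure maps $f_*$, and in particular that $F^1$ restricts to $F^1_{01}$ compatibly with the multiplication pairing $\mathcal{A}^2_{01}\otimes\mathcal{A}^2_{12}\to\mathcal{A}^2_{02}$. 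For $2$-morphisms, $\stack(b)$ is induced by $b^0$ and the condition $f_*(X(f)^*b^p)=f^\sharp b^q$ guarantees it is compatible with $\mathcal{C}_{01}$ as required; vertical and horizontal composition of $2$-morphisms in $\CosMatAlg_R(NG)$ is componentwise, and the corresponding composition in $\AlgStack_R(G)$ is then matched componentwise as well. This proves part (1).

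For part (2), the object-, $1$-morphism- and $2$-morphism-level data of $\triv$ add to those of $\stack$ only the trivialization component: an object $L=\one_\mathcal{A}$, and for a $1$-morphism the isomorphism $\phi_\tau=\stack(F)_\tau\colon \phi_0(\one_\mathcal{A})\xrightarrow{\sim}\one_\mathcal{B}$. Functoriality of $\triv$ therefore reduces to the statement that these $\phi_\tau$ compose according to the composition law $(\underline{\phi},\phi_\tau)\circ(\underline{\psi},\psi_\tau)=(\underline{\phi}\circ\underline{\psi},\phi_\tau\circ\phi_0(\psi_\tau))$ in $\Triv_R(G)$; but under \eqref{triv:1-mor-0} the image of the free module $\one_\mathcal{A}=\mathcal{A}^0$ under $\phi_0^{F}$ is $\mathcal{B}^0\otimes_{\mathcal{A}^0}\mathcal{A}^0\cong\mathcal{B}^0=\one_\mathcal{B}$ via the canonical map, and iterating two such canonical isomorphisms for $F$ then $G$ yields exactly the canonical isomorphism for $GF$, which is the assertion. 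Similarly $\triv$ sends identities to identities. That $\triv$ lifts $\stack$ is immediate from the definitions of $\triv(\mathcal{A})$, $\triv(F)$, $\triv(b)$ and of \eqref{forget triv}.

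The only genuinely nonroutine point is the coherence bookkeeping in part (1): verifying that the $2$-isomorphisms $\phi_{01}$ assemble correctly, i.e.\ that the pentagon/hexagon-type identity required of a $1$-morphism of stacks on $G$ (the displayed equation in the definition of $1$-morphisms of stacks on $G$) is preserved under composition of $F$'s. I expect this to be the main obstacle, and I would handle it by fixing the explicit tensor-product models \eqref{triv:algd}--\eqref{triv:conv} and chasing the canonical associativity and unit isomorphisms of $\otimes$ together with the algebra-map conditions on $F^0,F^1,F^2$; everything else is a direct unwinding of definitions and is left to the reader.
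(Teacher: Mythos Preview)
Your proposal is correct; the paper itself gives no proof of this proposition, treating it as a routine consequence of the preceding lemmas, and your detailed verification of functoriality (compatibility with composition and identities, plus the coherence check for $\phi_{01}$) is exactly the natural argument one would supply. There is nothing to compare: you have simply filled in what the paper left to the reader.
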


\subsubsection{Base change for $\stack$ and $\triv$}
A morphism $f \colon R \to S$ of commutative $k$-algebras induces
the $R$-linear morphism $\id\otimes f \colon \mathcal{A} \to
\mathcal{A}\otimes_R S$ in $\CosMatAlg_R(NG)$, hence, the morphism
$\stack(\id\otimes f) \colon \stack(\mathcal{A}) \to
\stack(\mathcal{A}\otimes_R S)$ in $\AlgStack_R(G)$. By the
universal property of the base change morphism, the latter factors
canonically through a unique morphism
$\stack(\mathcal{A})\widetilde{\otimes}_R S \to
\stack(\mathcal{A}\otimes_R S)$ such that $\one_\mathcal{A}\otimes_R
S \mapsto \one_{\mathcal{A}\otimes_R S}$ and the induced map
$\mathcal{A}\otimes_R S =
\shEnd_{\stack(\mathcal{A}\widetilde{\otimes}_R
S}(\one_\mathcal{A}\otimes_R S) \to
\shEnd_{\stack(\mathcal{A}\otimes_R S)}(\one_{\mathcal{A}\otimes_R
S}) = \mathcal{A}\otimes_R S$ is the identity map. In particular, it
is an equivalence. The inverse equivalence
$\stack(\mathcal{A}\otimes_R S) \to
\stack(\mathcal{A})\widetilde{\otimes}_R S$ is induced by the
canonical morphism $(\mathcal{A}\otimes_R S)^+ =
\shEnd_{\stack(\mathcal{A}\otimes_R S)}(\one_{\mathcal{A}\otimes_R
S})^+ \to \stack(\mathcal{A})\widetilde{\otimes}_R S$. Thus, we have
the canonical mutually inverse equivalences $\triv(\mathcal{A})
\widetilde{\otimes}_R S \leftrightarrows \triv(\mathcal{A} \otimes_R
S)$.

A morphism $F \colon \mathcal{A} \to \mathcal{B}$ in
$\CosMatAlg_R(G)$ gives rise to the diagram
\[
\begin{CD}
\triv(\mathcal{A}) \widetilde{\otimes}_R S @>{\triv{F} \widetilde{\otimes}_R S}>> \triv(\mathcal{A}) \widetilde{\otimes}_R S \\
@VVV @VVV \\
\triv(\mathcal{A} \otimes_R S) @>{\triv(F \otimes_R S)}>>
\triv(\mathcal{A} \otimes_R S)
\end{CD}
\]
which is commutative up to a unique $2$-morphism.

\subsubsection{Cosimplicial matrix algebras from algebroid stacks}\label{Cosimplicial matrix algebras from algebroid stacks}
We fix $(\underline{\mathcal{C}},L)\in\Triv_R(G)$ and set, for the
time being, $\mathcal{A}^0 = \shEnd_\mathcal{C}(L)^{op}$. The
assignment $L' \to \shHom_\mathcal{C}(L, L')$ defines an equivalence
$\mathcal{C} \to \widetilde{(\mathcal{A}^{0 op})^+}$. The inverse
equivalence is determines by the assignment $\one_\mathcal{A}
\mapsto L$.

For $n\geq 1$, $0\leq i,j\leq n$ let
\begin{equation}\label{mat from triv entry}
\mathcal{A}^n_{ij} = \left\{ \begin{array}{lcl}
\shHom_{\mathcal{C}^\s{n}_i}(L^\s{n}_i,
\mathcal{C}^\s{n}_{ij}(L^\s{n}_j))
& \textmd{if} & i\leq j \\
\shHom_{\mathcal{C}^\s{n}_j}(\mathcal{C}^\s{n}_{ji}(L^\s{n}_i),
L^\s{n}_j) & \textmd{if} & j\leq i
\end{array} \right.
\end{equation}
where $\mathcal{C}^\s{n}_{ii} = \id_{\mathcal{C}^\s{n}_i}$. Then,
the sheaf $\mathcal{A}^n_{ij}$ has a canonical structure of a
$(\mathcal{A}^0)^\s{n}_{i}\otimes (\mathcal{A}^0)^{\s{n}
op}_{j}$-module. Let
\[
\mathcal{A}^n = \bigoplus_{i,j = 0}^n \mathcal{A}^n_{ij}
\]

The definition of the multiplication of matrix entries comprises
several cases. Suppose that $i\leq j\leq k$. We have the map
\begin{multline}\label{mat mult prep ijk}
\mathcal{A}^n_{jk} = \shHom_{\mathcal{C}^\s{n}_j}(L^\s{n}_j, \mathcal{C}^\s{n}_{jk}(L^\s{n}_k)) \xrightarrow{\mathcal{C}^\s{n}_{ij}} \\
\shHom_{\mathcal{C}^\s{n}_i}(\mathcal{C}^\s{n}_{ij}(L^\s{n}_j),
(\mathcal{C}^\s{n}_{ij}\circ \mathcal{C}^\s{n}_{jk})(L^\s{n}_k))
\xrightarrow{\mathcal{C}^\s{n}_{ijk}} \\
\shHom_{\mathcal{C}^\s{n}_i}(\mathcal{C}^\s{n}_{ij}(L^\s{n}_j),
\mathcal{C}^\s{n}_{ik}(L^\s{n}_k))
\end{multline}
The multiplication of matrix entries is given by
\begin{multline}\label{mat mult ijk}
\mathcal{A}^n_{ij}\otimes \mathcal{A}^n_{jk}
\xrightarrow{\id \otimes \eqref{mat mult prep ijk}} \\
\shHom_{\mathcal{C}^\s{n}_i}(L^\s{n}_i,
\mathcal{C}^\s{n}_{ij}(L^\s{n}_j)) \otimes
\shHom_{\mathcal{C}^\s{n}_i}(\mathcal{C}^\s{n}_{ij}(L^\s{n}_j),
\mathcal{C}^\s{n}_{ik}(L^\s{n}_k))
\\
\xrightarrow{\circ} \shHom_{\mathcal{C}^\s{n}_i}(L^\s{n}_i,
\mathcal{C}^\s{n}_{ik}(L^\s{n}_k)) = \mathcal{A}^n_{ik}
\end{multline}

Suppose that $j\leq i\leq k$. We have the map
\begin{multline}\label{mat mult prep jik}
\mathcal{A}^n_{ik} = \shHom_{\mathcal{C}^\s{n}_i}(L^\s{n}_i,
\mathcal{C}^\s{n}_{ik}(L^\s{n}_k))
\xrightarrow{\mathcal{C}^\s{n}_{ji}} \\
\shHom_{\mathcal{C}^\s{n}_j}(\mathcal{C}^\s{n}_{ji}(L^\s{n}_i),
(\mathcal{C}^\s{n}_{ji}\circ\mathcal{C}^\s{n}_{ik})(L^\s{n}_k))
\xrightarrow{\mathcal{C}^\s{n}_{jik}} \\
\shHom_{\mathcal{C}^\s{n}_j}(\mathcal{C}^\s{n}_{ji}(L^\s{n}_i),
\mathcal{C}^\s{n}_{jk}(L^\s{n}_k))
\end{multline}
The multiplication of matrix entries is given by
\begin{multline}
\mathcal{A}^n_{ij}\otimes \mathcal{A}^n_{jk}  = \\
\shHom_{\mathcal{C}^\s{n}_j}(\mathcal{C}^\s{n}_{ji}(L^\s{n}_i),
L^\s{n}_j) \otimes \shHom_{\mathcal{C}^\s{n}_j}(L^\s{n}_j,
\mathcal{C}^\s{n}_{jk}(L^\s{n}_k))
\xrightarrow{\circ} \\
\shHom_{\mathcal{C}^\s{n}_j}(\mathcal{C}^\s{n}_{ji}(L^\s{n}_i),
\mathcal{C}^\s{n}_{ik}(L^\s{n}_i)) \xrightarrow{\eqref{mat mult prep
jik}^{-1}} \mathcal{A}^n_{ik}
\end{multline}
We leave the remaining cases and the verification of the
associativity of multiplication on $\mathcal{A}^n$ to the reader.

\begin{lemma}
The collection of matrix algebras $\mathcal{A}^n$, $n = 0, 1, 2,
\ldots$, has a canonical structure of a cosimplicial matrix
$R$-algebra.
\end{lemma}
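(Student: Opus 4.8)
The plan is to equip the collection $\{\mathcal{A}^n\}$ with the structure maps demanded by the definition in \ref{section: cosimplicial matrix algebras}, namely, for each $f\colon[p]\to[q]$ in $\Delta$, an isomorphism of matrix $R$-algebras $f_*\colon NG(f)^{-1}\mathcal{A}^p\to f^\sharp\mathcal{A}^q$ with $(g\circ f)_* = f^\sharp(g_*)\circ NG(g)^{-1}(f_*)$ for composable $[p]\xrightarrow{f}[q]\xrightarrow{g}[r]$, and then to observe that no choices enter. The first thing I would do is record the effect of inverse image along $NG(f)\colon N_qG\to N_pG$ on the ingredients of the construction: by the notational scheme of \ref{notational scheme} and pseudofunctoriality of pullback one has canonical identifications $NG(f)^{-1}\mathcal{C}^\s{p}_i = \mathcal{C}^\s{q}_{f(i)}$, $NG(f)^{-1}L^\s{p}_i = L^\s{q}_{f(i)}$ and $NG(f)^{-1}\mathcal{C}^\s{p}_{ij} = \mathcal{C}^\s{q}_{f(i)f(j)}$ (valid since $f$, being a morphism in $\Delta$, is order-preserving, so $i\le j$ implies $f(i)\le f(j)$), and $NG(f)^{-1}$ carries the coherence $2$-morphism $\mathcal{C}^\s{p}_{ijk}$ to $\mathcal{C}^\s{q}_{f(i)f(j)f(k)}$. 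When $f$ identifies some of the indices, the target is a degenerate simplex and the relevant identity axiom of the stack $\underline{\mathcal{C}}$ (i.e. $\mathcal{C}^\s{0}_{00}=\id$ and its consequences) supplies the identification with the appropriate unit.

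Next I would define $f_*$ entrywise. For $i\le j$ the entry is $\mathcal{A}^p_{ij}=\shHom_{\mathcal{C}^\s{p}_i}(L^\s{p}_i,\mathcal{C}^\s{p}_{ij}(L^\s{p}_j))$; since inverse image of stacks commutes with $\shHom$ and with the application of a $1$-morphism, there is a canonical isomorphism of $NG(f)^{-1}\mathcal{A}^p_{ij}$ with $\shHom_{\mathcal{C}^\s{q}_{f(i)}}(L^\s{q}_{f(i)},\mathcal{C}^\s{q}_{f(i)f(j)}(L^\s{q}_{f(j)}))=\mathcal{A}^q_{f(i)f(j)}=(f^\sharp\mathcal{A}^q)_{ij}$, using the identifications just described. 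The case $j\le i$ is handled symmetrically (with the $\shHom$-arguments interchanged), the two prescriptions agreeing when $i=j$; summing over $i,j$ yields the $R$-module isomorphism $f_*$, automatically compatible with the $\mathfrak{d}$-bimodule structures.

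The remaining points — that $f_*$ is a homomorphism for the matrix multiplication and that the cocycle identity holds — I would treat as formal consequences of pseudofunctoriality rather than grind through. In each of the order cases enumerated in the definition of the product on $\mathcal{A}^n$, the map $\mathcal{A}^n_{ij}\otimes\mathcal{A}^n_{jk}\to\mathcal{A}^n_{ik}$ is a composite of operations — composition of morphisms, whiskering by a $1$-morphism, and the $2$-morphisms $\mathcal{C}^\s{n}_{ijk}$ — each preserved by the pseudofunctor $NG(f)^{-1}$; combined with $NG(f)^{-1}\mathcal{C}^\s{p}_{ijk}=\mathcal{C}^\s{q}_{f(i)f(j)f(k)}$ this makes the square relating $f_*\otimes f_*$ followed by the product on $\mathcal{A}^q$ to the product on $\mathcal{A}^p$ followed by $f_*$ commute. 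The cocycle identity follows the same way from the coherence isomorphism $NG(g)^{-1}NG(f)^{-1}\cong NG(g\circ f)^{-1}$ and its compatibility with the canonical $\shHom$- and whiskering isomorphisms used to build $f_*$; specializing $f=g=\id$ and using that $f_*$ is an isomorphism then forces $\id_*=\id$. Canonicity is immediate since each $\mathcal{A}^n$ and each $f_*$ is produced from $(\underline{\mathcal{C}},L)$ without auxiliary data.

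I expect the only real friction to be bookkeeping: the matrix multiplication is defined by a case analysis (several of whose cases are themselves ``left to the reader''), and one must be careful about degenerate simplices when $f$ is not injective, where the identifications rely on the unit axioms of $\underline{\mathcal{C}}$ rather than on a literal equality of pullbacks. There is no conceptual obstacle — this lemma is exactly the strictification, relative to the trivialization $L$, of the equivalence between $R$-algebroid stacks on $G$ and cosimplicial matrix $R$-algebras on $NG$, i.e. the inverse direction of the functors $\stack$ and $\triv$ of \ref{subsubsection: triv from cma}.
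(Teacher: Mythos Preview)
Your proposal is correct and is precisely the routine verification the paper has in mind; the paper states the lemma without proof and proceeds directly to naming the resulting functor $\cma$. Your entrywise definition of $f_*$ via the canonical identifications $NG(f)^{-1}\mathcal{C}^\s{p}_i\cong\mathcal{C}^\s{q}_{f(i)}$, $NG(f)^{-1}\mathcal{C}^\s{p}_{ij}\cong\mathcal{C}^\s{q}_{f(i)f(j)}$, together with the observation that multiplicativity and the cocycle condition are forced by pseudofunctoriality of inverse image and the coherence data $\mathcal{C}_{012}$, is exactly what is intended.
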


We denote by $\cma(\underline{\mathcal{C}},L)$ the cosimplicial
matrix algebra associated to $(\underline{\mathcal{C}},L)$ by the
above construction.

Suppose that $(\underline{\phi},\phi_\tau) \colon
(\underline{\mathcal{C}},L) \to (\underline{\mathcal{D}},M)$ is a
$1$-morphism in $\Triv_R(G)$. Let $\mathcal{A} =
\cma(\underline{\mathcal{C}},L)$, $\mathcal{B} =
\cma(\underline{\mathcal{D}},M)$.

Let $F^0 \colon \mathcal{A}^0 \to \mathcal{B}^0$ denote the
composition
\[
\mathcal{A}^0 = \shEnd_{\mathcal{C}}(L)^{op} \xrightarrow{\phi_0}
\shEnd_{\mathcal{D}}(\phi_0(L))^{op} \xrightarrow{\phi_\tau}
\shEnd_{\mathcal{D}}(M)^{op} = \mathcal{B}^0
\]
For $n\geq 1$, $0\leq i\leq j\leq n$ let $F^n_{ij} \colon
\mathcal{A}^n_{ij} \to \mathcal{B}^n_{ij}$ (see \eqref{mat from triv
entry}) denote the composition
\begin{multline*}
\shHom_{\mathcal{C}^\s{n}_i}(L^\s{n}_i,
\mathcal{C}^\s{n}_{ij}(L^\s{n}_j)) \xrightarrow{\phi^\s{n}_i} \\
\shHom_{\mathcal{D}^\s{n}_i}(\phi^\s{n}_i(L^\s{n}_i),
(\phi^\s{n}_i\circ \mathcal{C}^\s{n}_{ij})(L^\s{n}_j))
\xrightarrow{\phi^\s{n}_{ij}} \\
\shHom_{\mathcal{D}^\s{n}_i}(\phi^\s{n}_i(L^\s{n}_i),
(\mathcal{D}^\s{n}_{ij}\circ \phi^\s{n}_j)(L^\s{n}_j))
\xrightarrow{\phi^\s{n}_\tau} \\
\shHom_{\mathcal{D}^\s{n}_i}(M^\s{n}_i,
\mathcal{D}^\s{n}_{ij}(M^\s{n}_j)) .
\end{multline*}
The construction of $F^n_{ij}$ in the case $j\leq i$ is similar and
is left to the reader. Let $F^n = \oplus F^n \colon \mathcal{A}^n
\to \mathcal{B}^n$.

\begin{lemma}
The collection of maps $F^n \colon \mathcal{A}^n \to \mathcal{B}^n$
is a $1$-morphism of cosimplicial matrix $R$-algebras.
\end{lemma}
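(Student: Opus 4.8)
The plan is to verify directly the two requirements in the definition of a $1$-morphism of cosimplicial matrix $R$-algebras from Section~\ref{section: cosimplicial matrix algebras}: first, that for each $n$ the map $F^n\colon\mathcal{A}^n\to\mathcal{B}^n$ is a morphism of matrix $R$-algebras — i.e. $R$-linear, unital, multiplicative, and carrying $\mathcal{A}^n_{ij}$ into $\mathcal{B}^n_{ij}$; and second, that the collection $\{F^n\}$ is compatible with the cosimplicial structure maps, so that $f^\sharp F^q\circ f_*=f_*\circ X(f)^*F^p$ for every $f\colon[p]\to[q]$ in $\Delta$. The guiding observation is that $F^n$ is built out of three elementary operations — applying the stack $1$-morphism $\phi^\s{n}_i$ to morphism sheaves, composing with the $2$-isomorphisms $\phi^\s{n}_{ij}$, and applying $\phi_\tau$ — each of which is $R$-linear and natural, and every structural ingredient of the construction $\cma$ commutes with these operations by the coherence already imposed on $\underline{\phi}$.

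The routine parts of the first requirement dispose of themselves. By construction $F^n=\bigoplus_{i,j}F^n_{ij}$ with $F^n_{ij}\colon\mathcal{A}^n_{ij}\to\mathcal{B}^n_{ij}$ as in \eqref{mat from triv entry}, so $F^n$ respects the decomposition; $R$-linearity is immediate from the $R$-linearity of $\phi^\s{n}_i$, of composition with $\phi^\s{n}_{ij}$, and of $\phi_\tau$; and unitality amounts to the statement that $F^0$ sends $1\in\mathcal{A}^0=\shEnd_\mathcal{C}(L)^{op}$ to $1\in\mathcal{B}^0$, which holds because the functor $\phi_0$ preserves identity morphisms and $\phi_\tau$ is an isomorphism.

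The substantive point is multiplicativity, which I would check case by case according to the relative order of the indices, following the construction of the product on $\cma(\underline{\mathcal{C}},L)$ in \eqref{mat mult prep ijk}--\eqref{mat mult prep jik}. Take $i\le j\le k$: the product $\mathcal{A}^n_{ij}\otimes\mathcal{A}^n_{jk}\to\mathcal{A}^n_{ik}$ is \eqref{mat mult ijk}, which applies $\mathcal{C}^\s{n}_{ij}$ to a morphism, composes with the associativity $2$-isomorphism $\mathcal{C}^\s{n}_{ijk}$, and then composes morphisms in $\mathcal{C}^\s{n}_i$. That $F^n$ intertwines the last step is functoriality of $\phi^\s{n}_i$; that it intertwines the first two steps, after conjugation by $\phi^\s{n}_{ij}$, follows from naturality of the $2$-isomorphism $\phi^\s{n}_{ij}$ together with the coherence identity satisfied by $\underline{\phi}$ (the compatibility of $\phi_{01}$ with $\mathcal{C}_{012}$ and $\mathcal{D}_{012}$ in the definition of a $1$-morphism of stacks on $G$), read off at the $n$-th level of the cosimplicial stacks $\underline{\mathcal{C}}_\Delta$, $\underline{\mathcal{D}}_\Delta$. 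The mixed cases, e.g. $j\le i\le k$ using \eqref{mat mult prep jik}, are handled in the same way, invoking in addition that the auxiliary maps there are isomorphisms, so that $F^n$, which commutes with each of their constituents, commutes with their inverses as well. Combining the cases shows $F^n$ is multiplicative.

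For the second requirement I would use that the cosimplicial structure maps of $\cma(\underline{\mathcal{C}},L)$ are themselves induced from the structure $1$-morphisms of the cosimplicial stack $\underline{\mathcal{C}}_\Delta$ — the $\mathcal{C}^\s{q}_{0f(0)}$ — acting on the distinguished objects $L^\s{q}_i$. Since $\underline{\phi}$ is a $1$-morphism of stacks on $G$, it induces a $1$-morphism of the associated cosimplicial stacks which is compatible, via $\phi_\tau$, with the chosen objects; the required commutative square then follows from naturality of the assignment $L'\mapsto\shHom_{\mathcal{C}^\s{n}_i}(L^\s{n}_i,-)$ and the $2$-isomorphisms $\phi^\s{n}_{ij}$ and $\phi^\s{n}_\tau$. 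I expect the index bookkeeping in the multiplicativity step to be the only real obstacle, and it is one of patience rather than of ideas: every identity needed there is an instance of a coherence axiom already in force for $1$-morphisms of stacks on $G$, so no new input is required.
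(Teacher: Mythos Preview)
The paper states this lemma without proof, leaving the verification to the reader; your proposal carries out precisely the natural direct check one would expect, and the outline is correct. The two requirements you identify are exactly those in the definition of a $1$-morphism in $\CosMatAlg_R(NG)$, and the coherence axioms for a $1$-morphism of stacks on $G$ that you invoke are what make the multiplicativity and cosimplicial-compatibility squares commute.
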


We denote by $\cma(\underline{\phi},\phi_\tau) \colon
\cma(\underline{\mathcal{C}},L) \to \cma(\underline{\mathcal{D}},M)$
the $1$-morphism of cosimplicial matrix algebras associated to the
$1$-morphism $(\underline{\phi},\phi_\tau)$ in $\Triv(G)$ by the
above construction.

Suppose that $(\underline{\phi},\phi_\tau),
(\underline{\psi},\psi_\tau) \colon (\underline{\mathcal{C}},L) \to
(\underline{\mathcal{D}},M)$ are $1$-morphisms in $\Triv(G)$ and $b
\colon (\underline{\phi},\phi_\tau) \to
(\underline{\psi},\psi_\tau)$ is a $2$-morphism.

Let $\cma(b)^0 \colon \cma(\underline{\phi},\phi_\tau)^0 \to
\cma(\underline{\psi},\psi_\tau)^0$ denote the composition
\[
M \xrightarrow{(\phi_\tau)^{-1}} \phi(L) \xrightarrow{b(B)} \psi(L)
\xrightarrow{\psi_\tau} M
\]
(i.e. $\cma(b)^0\in \Gamma(N_0G;\shEnd_\mathcal{D}(M))$.)

\begin{lemma}
The section $\cma(b)^0\in \Gamma(N_0G;\shEnd_\mathcal{D}(M))$ is a
$2$-morphism $\cma(\underline{\phi},\phi_\tau)^0 \to
\cma(\underline{\psi},\psi_\tau)^0$.
\end{lemma}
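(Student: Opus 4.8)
The plan is to verify the single algebraic identity that expresses ``$\cma(b)^0$ is a $2$-morphism of the degree-zero parts of cosimplicial matrix $R$-algebras'', deriving it from the explicit descriptions of $\cma$ on $1$- and $2$-morphisms together with the one structural input available, namely the naturality property built into the hypothesis that $b$ is a $2$-morphism $\underline{\phi}\to\underline{\psi}$.

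First I would spell out the relevant data. Write $A=\shEnd_\mathcal{C}(L)$ and $B=\shEnd_\mathcal{D}(M)$, so that $\mathcal{A}^0=A^{op}$ and $\mathcal{B}^0=B^{op}$ in the notation of \ref{Cosimplicial matrix algebras from algebroid stacks}. Unwinding the construction of $\cma$ on $1$-morphisms in degree zero, $\cma(\underline{\phi},\phi_\tau)^0$ sends a local section $a$ of $A$ to $\phi_\tau\circ\phi_0(a)\circ\phi_\tau^{-1}$, a local section of $B$, and similarly $\cma(\underline{\psi},\psi_\tau)^0(a)=\psi_\tau\circ\psi_0(a)\circ\psi_\tau^{-1}$; here $\phi_0$ (resp.\ $\psi_0$) acts on endomorphisms and satisfies $\phi_0(a\circ a')=\phi_0(a)\circ\phi_0(a')$, which is exactly what makes it a map of opposite algebras, while $\phi_\tau,\psi_\tau$ enter by conjugation. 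By definition $\cma(b)^0=\psi_\tau\circ b_L\circ\phi_\tau^{-1}$, where $b_L\in\Gamma(N_0G;\shHom_\mathcal{D}(\phi_0(L),\psi_0(L)))$ is the component of $b$ at the global object $L$; this is a well-defined global section of $\shEnd_\mathcal{D}(M)$ since $b_L$, $\phi_\tau$ and $\psi_\tau$ are all defined over $N_0G$ and $\phi_\tau,\psi_\tau$ are invertible.

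Next I would reduce the assertion to the naturality of $b$. The statement to be proved is the intertwining identity
\[
\cma(b)^0\circ \cma(\underline{\phi},\phi_\tau)^0(a)\;=\;\cma(\underline{\psi},\psi_\tau)^0(a)\circ \cma(b)^0
\]
of sections of $\shEnd_\mathcal{D}(M)$, for every local section $a$ of $A$ --- this is the defining condition of a $2$-morphism $\cma(\underline{\phi},\phi_\tau)^0\to\cma(\underline{\psi},\psi_\tau)^0$, read with the opposite-algebra conventions in force. Substituting the formulas from the previous paragraph into the left-hand side, the adjacent factors $\phi_\tau^{-1}\circ\phi_\tau$ cancel, leaving $\psi_\tau\circ b_L\circ\phi_0(a)\circ\phi_\tau^{-1}$; applying the naturality identity $b_L\circ\phi_0(a)=\psi_0(a)\circ b_L$ --- which is precisely the assertion that $b$ is a $2$-morphism of stacks on $N_0G$, evaluated at $L$ and tested against the endomorphism $a$ --- turns this into $\psi_\tau\circ\psi_0(a)\circ b_L\circ\phi_\tau^{-1}$. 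On the right-hand side the factors $\psi_\tau^{-1}\circ\psi_\tau$ cancel, giving the same expression, and the two sides agree.

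The computation is thus a three-line cancellation, so there is no substantial difficulty; unlike the lemmas describing $\cma$ on $1$-morphisms, only the degree-zero components over $N_0G$ occur here, so no cosimplicial structure maps or higher components $\mathcal{C}^{(n)}_{ij}$ are involved. The one step genuinely requiring care --- and the only place a sign could go astray --- is keeping the three occurrences of $(-)^{op}$ (in $\mathcal{A}^0$, in $\mathcal{B}^0$, and in the definition of a $2$-morphism of matrix algebras) mutually consistent, so that the identity above is read as producing a $2$-morphism in the stated direction $\cma(\underline{\phi},\phi_\tau)^0\to\cma(\underline{\psi},\psi_\tau)^0$ rather than its opposite.
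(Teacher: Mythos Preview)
Your proof is correct and follows essentially the same approach as the paper's own argument: both unwind the degree-zero formulas $\cma(\underline{\phi},\phi_\tau)^0(a)=\phi_\tau\circ\phi_0(a)\circ\phi_\tau^{-1}$ and $\cma(b)^0=\psi_\tau\circ b_L\circ\phi_\tau^{-1}$, cancel the adjacent $\phi_\tau^{-1}\circ\phi_\tau$ and $\psi_\tau^{-1}\circ\psi_\tau$, and invoke the naturality identity $b_L\circ\phi_0(a)=\psi_0(a)\circ b_L$ to conclude. Your added remark about tracking the three $(-)^{op}$ conventions is a sensible caveat but does not reflect any divergence in method.
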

\begin{proof}
For $f\in \shEnd_\mathcal{C}(L)$ we have
\begin{eqnarray*}
\cma(b)^0 \circ \cma(\underline{\phi},\phi_\tau)^0(f) & = &
\psi_\tau \circ b(L) \circ (\phi_\tau)^{-1} \circ \phi_\tau \circ
\phi(f) \circ (\phi_\tau)^{-1} \\
 & = & \psi_\tau \circ b(L) \circ \phi(f) \circ (\phi_\tau)^{-1} \\
 & = & \psi_\tau \circ \psi(f) \circ b(L) \circ (\phi_\tau)^{-1} \\
 & = & \psi_\tau \circ \psi(f) \circ (\psi_\tau)^{-1} \circ \psi_\tau
 \circ b(L) \circ (\phi_\tau)^{-1} \\
 & = & \cma(\underline{\psi},\psi_\tau)^0(f) \circ \cma(b)^0
\end{eqnarray*}
\end{proof}

For $n\geq 1$ let $\cma(b)^n = (\cma(b)^n_{ij}) \in
\Gamma(N_nG;\cma(\underline{\mathcal{D}},M)^n)$ denote the
``diagonal'' matrix with $\cma(b)^n_{ii} = (\cma(b)^0)^\s{n}_i$.

\begin{lemma}
The collection $\cma(b)$ of sections $\cma(b)^n$, $n = 0,1,2,\ldots$
is a $2$-morphism $\cma(\underline{\phi},\phi_\tau) \to
\cma(\underline{\psi},\psi_\tau)$.
\end{lemma}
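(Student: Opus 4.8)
The statement to establish has two components: that for each $n$ the section $\cma(b)^n$ is a $2$-morphism of matrix $R$-algebras $\cma(\underline{\phi},\phi_\tau)^n\to\cma(\underline{\psi},\psi_\tau)^n$, and that the collection $\{\cma(b)^n\}_{n\geq 0}$ is compatible with the cosimplicial structure, i.e.\ $f_*(NG(f)^*\cma(b)^p)=f^\sharp\cma(b)^q$ for every $f\colon[p]\to[q]$ in $\Delta$. Writing $\mathcal{A}=\cma(\underline{\mathcal{C}},L)$, $\mathcal{B}=\cma(\underline{\mathcal{D}},M)$, $F_1=\cma(\underline{\phi},\phi_\tau)$ and $F_2=\cma(\underline{\psi},\psi_\tau)$, the plan is to dispatch the second, essentially formal, point at the end and to concentrate on the first.

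For the first point, note that $\cma(b)^n$ is by construction the diagonal matrix whose $(i,i)$-entry is $(\cma(b)^0)^\s{n}_i\in\Gamma(N_nG;\mathcal{B}^n_{ii})$, so it automatically lies in $\Gamma(N_nG;\mathfrak{d}(\mathcal{B}^n))$; only the intertwining relation $\cma(b)^n\cdot F_1^n=F_2^n\cdot\cma(b)^n$, tested entry by entry, needs proof. On a diagonal entry this relation is exactly the one established in the preceding Lemma (that $\cma(b)^0$ is a $2$-morphism $F_1^0\to F_2^0$), pulled back along the $i$-th vertex projection $N_nG\to N_0G$, since $(F_1)^n_{ii}$, $(F_2)^n_{ii}$ and $(\cma(b)^n)_{ii}$ are the corresponding data on $N_0G$ pulled back along that projection. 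For an off-diagonal entry $\mathcal{A}^n_{ij}$ with $i<j$, I would first observe that every ingredient appearing in \eqref{mat from triv entry} and in the definition of $F^n_{ij}$ --- the sheaves $\mathcal{C}^\s{n}_{ij}$, $L^\s{n}_i$, $L^\s{n}_j$, the functors $\phi^\s{n}_i,\phi^\s{n}_{ij}$ and their $\psi$-analogues, the isomorphisms $\phi^\s{n}_\tau,\psi^\s{n}_\tau$, as well as the entries $(\cma(b)^n)_{ii}$ and $(\cma(b)^n)_{jj}$ --- is the pullback, along the map $NG(f)\colon N_nG\to N_1G$ with $f\colon[1]\to[n]$, $f(0)=i$, $f(1)=j$ (notation as in \ref{notational scheme}), of the corresponding datum on $N_1G$; indeed the vertex-$i$, vertex-$j$ and edge-$(i,j)$ maps of $N_nG$ all factor through $NG(f)$. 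Hence the entry-$(i,j)$ relation on $N_nG$ is the $NG(f)$-pullback of the entry-$(0,1)$ relation on $N_1G$, and it suffices to prove that single relation there. Unwinding $F^1_{01}$ as the composite of $\phi^\s{1}_0$, $\phi_{01}$ and $\phi^\s{1}_\tau$ (and similarly for $\psi$), and $\cma(b)^0$ as $\psi_\tau\circ b(L)\circ\phi_\tau^{-1}$, this relation follows --- in the same manner as the proof of the preceding Lemma, but with one additional transport across $\mathcal{C}_{01}$ and $\mathcal{D}_{01}$ --- from the naturality of the $2$-morphism $b\colon\phi_0\to\psi_0$ applied to an arbitrary local section of $\mathcal{A}^1_{01}$, together with the coherence identity $\psi_{01}\circ(b^\s{1}_0\otimes\id_{\mathcal{C}_{01}})=(\id_{\mathcal{D}_{01}}\otimes b^\s{1}_1)\circ\phi_{01}$ expressing that $b$ is a $2$-morphism $\underline{\phi}\to\underline{\psi}$ in $\AlgStack_R(G)$ (it supplies precisely the comparison between $b^\s{1}_0$ evaluated at an object of the form $\mathcal{C}_{01}(-)$ and $\mathcal{D}_{01}$ applied to $b^\s{1}_1$), the $\phi_\tau,\psi_\tau$ factors cancelling in pairs. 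The case $j<i$ is analogous. This two-categorical diagram chase on $N_1G$ is the only computation in the argument that is not pure pullback bookkeeping, and it is where I expect whatever difficulty there is to reside.

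For the cosimplicial compatibility, fix $f\colon[p]\to[q]$. Both $f_*(NG(f)^*\cma(b)^p)$ and $f^\sharp\cma(b)^q$ are diagonal sections of $\mathfrak{d}(f^\sharp\mathcal{B}^q)$, and the $(i,i)$-entry of the latter is $(\cma(b)^q)_{f(i)f(i)}=(\cma(b)^0)^\s{q}_{f(i)}$. On its diagonal subalgebra the structure map $f_*$ of $\mathcal{B}=\cma(\underline{\mathcal{D}},M)$ is, by the construction of that cosimplicial structure, the canonical identification of $NG(f)^{-1}(\mathcal{B}^0)^\s{p}_i$ with $(\mathcal{B}^0)^\s{q}_{f(i)}$ induced by the simplicial identity that the $i$-th vertex projection of $N_pG$ composed with $NG(f)$ equals the $f(i)$-th vertex projection of $N_qG$. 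Since $\cma(b)^0$ is a global section of the sheaf $\shEnd_\mathcal{D}(M)$ on $N_0G$, its pullbacks along these two equal composites coincide, so the $(i,i)$-entry of $f_*(NG(f)^*\cma(b)^p)$ is also $(\cma(b)^0)^\s{q}_{f(i)}$. This gives $f_*(NG(f)^*\cma(b)^p)=f^\sharp\cma(b)^q$ and would complete the argument.
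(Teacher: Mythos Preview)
The paper gives no proof of this lemma; it is stated and immediately followed by the next proposition, so there is nothing to compare against directly. Your proposal supplies what the paper leaves implicit: a careful unwinding of the two conditions in the definition of a $2$-morphism of cosimplicial matrix algebras. The reduction of the off-diagonal intertwining identity to the case $n=1$, $i=0$, $j=1$ via pullback along the edge map $[1]\to[n]$ is clean and correct, and the identification of the $2$-morphism coherence $\psi_{01}\circ(b^\s{1}_0\otimes\id_{\mathcal{C}_{01}})=(\id_{\mathcal{D}_{01}}\otimes b^\s{1}_1)\circ\phi_{01}$ as precisely the ingredient needed to pass $b$ across $\mathcal{C}_{01}$ and $\mathcal{D}_{01}$ is exactly right. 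The cosimplicial compatibility check is likewise a straightforward consequence of the fact that $\cma(b)^n$ is built entirely from vertex pullbacks of $\cma(b)^0$, together with the simplicial identities. One small point worth making explicit in a write-up: the algebra $\mathcal{B}^0$ carries an ${}^{op}$, so when you translate the abstract intertwining relation $b\cdot F_1=F_2\cdot b$ into compositions in $\mathcal{D}$ you should track that the order of composition flips; your outline is consistent with this but does not say it.
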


\begin{prop}
{~}\begin{enumerate}
\item The assignments $(\underline{\mathcal{C}},L) \mapsto
\cma(\underline{\mathcal{C}},L)$, $(\underline{\phi},\phi_\tau)
\mapsto \cma(\underline{\phi},\phi_\tau)$, $b \mapsto \cma(b)$
define a functor
\[
\cma \colon \Triv_R(G) \to \CosMatAlg_R(NG)
\]

\item The functors $\cma$ commute with the base change functors.
\end{enumerate}
\end{prop}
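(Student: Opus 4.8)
The plan is to verify that the assignments set up in the preceding lemmas are compatible with the $2$-categorical structures on $\Triv_R(G)$ and $\CosMatAlg_R(NG)$, and then to produce the base-change comparison; neither part contains a genuine difficulty, only bookkeeping.

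\emph{Part (1): functoriality.} By the lemmas above, $\cma$ already takes objects, $1$-morphisms and $2$-morphisms of $\Triv_R(G)$ to data of the correct type in $\CosMatAlg_R(NG)$, so what remains is preservation of identities and of all three compositions. Preservation of identities is immediate from the explicit formulas: $\cma(\id_{(\underline{\mathcal{C}},L)})^n_{ij}=\id$, and $\cma(\id_{\underline\phi})$ is the identity $2$-morphism (the diagonal matrix with entries $1$). For composition of $1$-morphisms I would unwind the definition of the entries $F^n_{ij}$ of $\cma(\underline\phi,\phi_\tau)$ --- each a composite of the maps induced by $\phi^\s{n}_i$, $\phi^\s{n}_{ij}$ and $\phi^\s{n}_\tau$, in the notation of \eqref{mat from triv entry} --- and invoke the compatibility axioms for a $1$-morphism of stacks on $G$, in particular the relations among the $\phi^\s{n}_{ij}$ and the associativity $2$-isomorphisms $\mathcal{C}^\s{n}_{ijk}$, $\mathcal{D}^\s{n}_{ijk}$, to conclude $\cma((\underline\phi,\phi_\tau)\circ(\underline\psi,\psi_\tau))^n=\cma(\underline\phi,\phi_\tau)^n\circ\cma(\underline\psi,\psi_\tau)^n$, the $\tau$-datum on the right being $\phi_\tau\circ\phi_0(\psi_\tau)$ exactly as in the composition law of $\Triv_R(G)$. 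For vertical composition of $2$-morphisms I would use the diagonal-matrix description $\cma(b)^n_{ii}=(\cma(b)^0)^\s{n}_i$ to reduce to the degree-zero case, where for composable $b\colon(\underline\phi,\phi_\tau)\to(\underline\psi,\psi_\tau)$ and $b'\colon(\underline\psi,\psi_\tau)\to(\underline\chi,\chi_\tau)$ one has $\chi_\tau\circ b'(L)\circ b(L)\circ\phi_\tau^{-1}=\cma(b')^0\circ\cma(b)^0$, the same computation appearing in the proof of the last lemma above. Horizontal composition of $2$-morphisms is treated identically, using the explicit formula for horizontal composition in $\AlgStack_R(G)$ together with the naturality of the maps in \eqref{mat from triv entry}; the interchange law then follows formally. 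These verifications run parallel to those behind Proposition \ref{prop: cosmatalg to algstack} for $\stack$ and $\triv$.

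\emph{Part (2): base change.} Fix a homomorphism $R\to S$ of commutative $k$-algebras and $(\underline{\mathcal{C}},L)\in\Triv_R(G)$. The key input, recorded in the base-change section, is the canonical isomorphism $\shHom_{\mathcal{C}\otimes_R S}(A,B)\cong\shHom_{\mathcal{C}}(A,B)\otimes_R S$, together with the fact that stackification leaves the local $\shHom$ sheaves unchanged. First I would pass from $(\underline{\mathcal{C}},L)\widetilde{\otimes}_R S$ to the underlying prestack $\mathcal{C}\otimes_R S$ with object $L\otimes_R S$, where the $\shHom$ sheaves are visibly the base changes of those of $\mathcal{C}$. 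Since each matrix entry of $\cma(\underline{\mathcal{C}},L)$ is, by \eqref{mat from triv entry}, such a $\shHom$ sheaf with arguments built from $L$ and the $\mathcal{C}^\s{n}_{ij}$, this yields for each $n$, $i$, $j$ a canonical identification $\cma((\underline{\mathcal{C}},L)\widetilde{\otimes}_R S)^n_{ij}\cong\cma(\underline{\mathcal{C}},L)^n_{ij}\otimes_R S$, hence, on assembling over $i,j$, a canonical isomorphism of sheaves $\cma((\underline{\mathcal{C}},L)\widetilde{\otimes}_R S)^n\cong\cma(\underline{\mathcal{C}},L)^n\otimes_R S$. I would then check that this isomorphism respects the matrix $S$-algebra multiplication and the cosimplicial structure maps; this is immediate, as the maps \eqref{mat mult prep ijk}--\eqref{mat mult prep jik} defining the multiplication, and the cosimplicial structure maps, are all built from composition and from the $\mathcal{C}^\s{n}_{ij}$, $\mathcal{C}^\s{n}_{ijk}$, which commute with base change. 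Finally one checks that these isomorphisms are natural in $(\underline{\mathcal{C}},L)$ and in $1$- and $2$-morphisms, producing a natural isomorphism between the two composite functors $\Triv_R(G)\to\CosMatAlg_S(NG)$ that is itself compatible, up to a unique coherent $2$-morphism, with iterated base change --- which is the precise content of the statement.

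\emph{Main obstacle.} There is no conceptual difficulty; the entire argument is bookkeeping. The step I expect to be most error-prone is tracking the associativity $2$-isomorphisms $\mathcal{C}^\s{n}_{ijk}$ (and, in Part (2), the stackification) through the composition computations in Part (1). Everything then reduces to a careful expansion of the explicit formulas \eqref{mat from triv entry}--\eqref{mat mult prep jik} against the coherence axioms for stacks and their $1$- and $2$-morphisms on $G$; as with the dual construction of Proposition \ref{prop: cosmatalg to algstack}, much of this may reasonably be left to the reader.
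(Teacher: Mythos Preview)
Your proposal is correct and, in fact, more detailed than what the paper does: the paper states this proposition without proof, treating it as an immediate consequence of the preceding lemmas that established $\cma$ on objects, $1$-morphisms, and $2$-morphisms separately. Your outline of the functoriality and base-change verifications is exactly the bookkeeping one would carry out, and your identification of the associativity $2$-isomorphisms as the only delicate point is apt; the paper simply leaves all of this to the reader, in parallel with its treatment of Proposition~\ref{prop: cosmatalg to algstack}.
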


For $\mathcal{A}\in \CosMatAlg(NG)$ we have
\[
\cma(\triv(\mathcal{A}))^0 = \shEnd_{\widetilde{(\mathcal{A}^{0
op})^+}}(\one_\mathcal{A})^{op} = \mathcal{A}^0
\]
For $0\leq i \leq j\leq n$ there is a canonical isomorphism
\[
\shHom_{\widetilde{(\mathcal{A}^{n op}_{ii})^+}}(\one^\s{n}_i,
\triv(\mathcal{A})^\s{n}_{ij}(\one^\s{n}_j)) \cong
\mathcal{A}^n_{ij} \ .
\]
For $0\leq j\leq i\leq n$ we have
\[
\shHom_{\widetilde{(\mathcal{A}^{n
op}_{jj})^+}}(\triv(\mathcal{A})^\s{n}_{ji}(\one^\s{n}_i),
\one^\s{n}_j) \cong \shHom_{\mathcal{A}^n_{jj}}(\mathcal{A}^n_{ji},
\mathcal{A}^n_{jj}) \cong \mathcal{A}^n_{ij} \ ,
\]
where the last isomorphism comes from the (multiplication) pairing
$\mathcal{A}^n_{ji} \otimes \mathcal{A}^n_{ij} \to
\mathcal{A}^n_{jj}$. These isomorphisms give rise to the canonical
isomorphism of CMA
\[
\cma(\triv(\mathcal{A})) \cong \mathcal{A}
\]

On the other hand, given $(\underline{\mathcal{C}},L)\in\Triv(G)$,
we have the canonical equivalence
$\widetilde{(\shEnd_\mathcal{C}(L)^{op})^+} \to \mathcal{C}$
determined by $\one_{\widetilde{(\shEnd_\mathcal{C}(L)^{op})^+}}
\mapsto L$.

To summarize, we have the following proposition.
\begin{prop}\label{inverse}
The functors $\triv$ and $\cma$ are mutually quasi-inverse
equivalences of categories.
\end{prop}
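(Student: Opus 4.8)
Below is an outline of the argument; it combines the canonical identifications assembled just above the statement into two natural equivalences and checks the remaining compatibilities.

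The plan is to produce natural isomorphisms $\cma\circ\triv\cong\id_{\CosMatAlg_R(NG)}$ and $\triv\circ\cma\cong\id_{\Triv_R(G)}$, after which $\triv$ and $\cma$ are mutually quasi-inverse and the proposition follows. Both transformations are, on objects, exactly the identifications recorded just before the statement of the proposition, so the content of the proof is the verification of naturality and of compatibility with the gluing data on $G$.

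For the first transformation, recall that for $\mathcal{A}\in\CosMatAlg_R(NG)$ one has $\cma(\triv(\mathcal{A}))^0=\shEnd_{\widetilde{(\mathcal{A}^{0op})^+}}(\one_\mathcal{A})^{op}=\mathcal{A}^0$ and, for each $n$ and each pair $i,j$, a canonical isomorphism $\cma(\triv(\mathcal{A}))^n_{ij}\cong\mathcal{A}^n_{ij}$ obtained from \eqref{mat from triv entry} together with the multiplication pairings on $\mathcal{A}$; these assemble into an isomorphism of cosimplicial matrix algebras $c_\mathcal{A}\colon\cma(\triv(\mathcal{A}))\xrightarrow{\sim}\mathcal{A}$. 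I would check that $c$ is natural: for a $1$-morphism $F\colon\mathcal{A}\to\mathcal{B}$ one unwinds the definition of $\stack(F)$ in \eqref{triv:1-mor-0}, \eqref{triv:1-mor-01} and of $\cma$ on $1$-morphisms from \ref{Cosimplicial matrix algebras from algebroid stacks}, and observes that under $c_\mathcal{A}$, $c_\mathcal{B}$ the morphism $\cma(\triv(F))$ becomes $F$; the analogous and shorter check handles $2$-morphisms, using that a $2$-morphism of cosimplicial matrix algebras is ``diagonal'' and is determined by its $(0,0)$-entry, which is visibly preserved. This yields the natural isomorphism $\cma\circ\triv\cong\id$.

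For the second transformation, fix $(\underline{\mathcal{C}},L)\in\Triv_R(G)$ and set $\mathcal{A}=\cma(\underline{\mathcal{C}},L)$, so that $\mathcal{A}^0=\shEnd_\mathcal{C}(L)^{op}$ and, by \eqref{mat from triv entry}, each $\mathcal{A}^n_{ij}$ is a $\shHom$-sheaf in $\mathcal{C}^\s{n}_i$ (or $\mathcal{C}^\s{n}_j$) built from $\mathcal{C}^\s{n}_{ij}(L^\s{n}_j)$. By \eqref{triv:algd} the underlying stack on $N_0G$ of $\stack(\mathcal{A})$ is $\widetilde{(\mathcal{A}^{0op})^+}=\widetilde{(\shEnd_\mathcal{C}(L)^{op})^+}$, and the canonical equivalence $e_0\colon\widetilde{(\mathcal{A}^{0op})^+}\to\mathcal{C}$ recorded above sends $\one_\mathcal{A}$ to $L$. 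It remains to promote $e_0$ to a $1$-morphism $\underline{e}=(e_0,e_{01})$ of stacks on $G$: one needs a $2$-isomorphism $e_{01}$ intertwining $\stack(\mathcal{A})_{01}$ with $\mathcal{C}_{01}$ after transport along $e_0$. Here one uses that $\stack(\mathcal{A})_{01}=\mathcal{A}^1_{01}\otimes_{\mathcal{A}^1_{11}}(-)$ by \eqref{triv:functor}, while by definition $\mathcal{A}^1_{01}=\shHom_{\mathcal{C}^\s{1}_0}(L^\s{1}_0,\mathcal{C}_{01}(L^\s{1}_1))$, which produces a tautological comparison of the two functors $\mathcal{C}^\s{1}_1\to\mathcal{C}^\s{1}_0$; the compatibility of $e_{01}$ with the associativity data is then precisely the matching of \eqref{triv:conv} with the multiplication \eqref{mat mult ijk} used to build $\cma(\underline{\mathcal{C}},L)$. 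Since $\underline{e}$ carries $\one_\mathcal{A}$ to $L$, the pair $(\underline{e},\id)$ is an equivalence in $\Triv_R(G)$ from $\triv(\cma(\underline{\mathcal{C}},L))$ to $(\underline{\mathcal{C}},L)$, and one checks its naturality in $(\underline{\mathcal{C}},L)$ by the same kind of diagram chase.

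The genuinely routine parts are the naturality chases; the one delicate point is the construction of $e_{01}$ and the verification that it satisfies the cocycle identity relating $\stack(\mathcal{A})_{012}$ to $\mathcal{C}_{012}$ — this is where one must keep careful track of the two index cases in \eqref{mat from triv entry} ($i\le j$ versus $j\le i$) and of the precise multiplication on $\mathcal{A}^n$ defined in \ref{Cosimplicial matrix algebras from algebroid stacks}, since the relevant coherences for an algebroid stack on $G$ involve the faces of $N_2G$ and $N_3G$. With both natural isomorphisms established, $\triv$ and $\cma$ are mutually quasi-inverse equivalences, which is the assertion of the proposition; the same bookkeeping shows the equivalences are compatible with $2$-morphisms, so they are in fact equivalences of $2$-categories.
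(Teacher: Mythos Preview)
Your proposal is correct and follows essentially the same approach as the paper: the paper's proof is precisely the pair of canonical identifications $\cma(\triv(\mathcal{A}))\cong\mathcal{A}$ and $\widetilde{(\shEnd_\mathcal{C}(L)^{op})^+}\to\mathcal{C}$ assembled in the paragraphs immediately preceding the proposition, with the proposition stated as a summary. You have simply spelled out in greater detail the naturality checks and the promotion of $e_0$ to a $1$-morphism of stacks on $G$ (the construction of $e_{01}$ and its cocycle compatibility), all of which the paper leaves implicit.
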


\subsubsection{$\Psi$-algebroid stacks}
Suppose that $X$ is a space and $\Psi$ is a pseudo-tensor
subcategory of $\Sh_k(X)$ as in \ref{subsubsection:pseudo-tensor
cats}.

An algebroid stack $\mathcal{C}\in\AlgStack_k(X)$ is called a
\emph{$\Psi$-algebroid stack} if for any open subset $U\subseteq X$
and any three objects $L_1,L_2,L_3 \in \mathcal{C}(U)$ the
composition map
\[
\shHom_\mathcal{C}(L_1,L_2)\otimes_k\shHom_\mathcal{C}(L_2,L_3) \to
\shHom_\mathcal{C}(L_1,L_2)
\]
is in $\Psi$.

Suppose that $\mathcal{C}$ and $\mathcal{D}$ are $\Psi$-algebroid
stacks. A $1$-morphism $\phi \colon \mathcal{C} \to \mathcal{D}$ is
called a \emph{$\Psi$-$1$-morphism} if for any open subset
$U\subseteq X$ and any two objects $L_1,L_2 \in \mathcal{C}(U)$ the
map
\[
\phi \colon \shHom_\mathcal{C}(L_1,L_2) \to
\shHom_\mathcal{D}(\phi(L_1),\phi(L_2))
\]
is in $\Psi$.

Suppose that $\phi,\psi \colon  \mathcal{C} \to \mathcal{D}$ are
$\Psi$-$1$-morphisms. A $2$-morphism $b \colon \phi \to \psi$ is
called a \emph{$\Psi$-$2$-morphism} if for any open subset
$U\subseteq X$ and any two objects $L_1,L_2\in\mathcal{C}(U)$ the
map
\[
b \colon \shHom_\mathcal{D}(\phi(L_1),\phi(L_2)) \to
\shHom_\mathcal{D}(\psi(L_1),\psi(L_2))
\]
is in $\Psi$.

We denote by $\AlgStack^\Psi_k(X)$ the subcategory of
$\AlgStack_k(X)$ whose objects, $1$-morphisms and $2$-morphisms are,
respectively, the $\Psi$-algebroid stacks, the $\Psi$-$1$- and the
$\Psi$-$2$-morphisms.

Suppose that $G$ is an \'etale category and let $\Psi\colon p
\mapsto \Psi^p$ be a cosimplicial pseudo-tensor subcategory of
$\Sh_k(NG)$ as in \ref{section: cosimplicial matrix algebras}. An
algebroid stack $\underline{\mathcal{C}} =
(\mathcal{C},\mathcal{C}_{01},\mathcal{C}_{012})\in\AlgStack_k(G)$
is called a \emph{$\Psi$-algebroid stack} if $\mathcal{C}$ is a
$\Psi$-algebroid stack on $N_0G$, $\mathcal{C}_{01}$ is a
$\Psi$-1-morphism and $\mathcal{C}_{012}$ is a $\Psi$-2-morphism.
Similarly, one defines $\Psi$-1-morphism and $\Psi$-2-morphism of
$\Psi$-algebroid stacks on $G$; the details are left to the reader.
We denote the resulting subcategory of $\AlgStack_k(G)$ by
$\AlgStack_k^\Psi(G)$.

The subcategory $\Triv^\Psi_k(G)$ of $\Triv_k(G)$ has objects pairs
$(\underline{\mathcal{C}}, L)$ with
$\underline{\mathcal{C}}\in\AlgStack_k^\Psi(G)$, and $1$-morphisms
and $2$-morphisms restricted accordingly.

\subsubsection{Deformations of $\Psi$-algebroid stacks}
Recall (\ref{subsubsection:examples of pseudo-tensor cats}, Example
\texttt{DEF}) that, for $R\in\ArtAlg_k$ we have the pseudo-tensor
category $\Psi(R)$.

Suppose that $G$ is an \'etale category and
$\underline{\mathcal{C}}$ is a $\Psi$-algebroid stack on $G$.

We denote by $\widetilde{\Def}(\underline{\mathcal{C}})(R)$ the
following category:
\begin{itemize}
\item an object of $\widetilde{\Def}(\underline{\mathcal{C}})(R)$ is a pair $(\underline{\mathcal{D}},
    \underline{\pi})$ where $\underline{\mathcal{D}}$ is a $\widetilde{\Psi(R)}$-algebroid stack on $G$ and
    $\underline{\pi} \colon \underline{\mathcal{D}}\widetilde{\otimes}_R k \to \underline{\mathcal{C}}$ is an
    equivalence of $\Psi$-algebroid stacks,

\item a $1$-morphism $(\underline{\mathcal{D}}, \underline{\pi}) \to (\underline{\mathcal{D}}', \underline{\pi}')$
    of $\widetilde{\Psi(R)}$-deformations of $\underline{\mathcal{C}}$ is a pair $(\underline{\phi}, \beta)$, where
    $\underline{\phi} \colon \underline{\mathcal{D}} \to \underline{\mathcal{D}}'$ is a $1$-morphism of
    $\widetilde{\Psi(R)}$-algebroid stacks and $\beta \colon \underline{\pi} \to
    \underline{\pi}'\circ(\underline{\phi}\widetilde{\otimes}_R k)$ is $2$-isomorphism,

\item a $2$-morphism $b \colon (\underline{\phi}_1, \beta_1) \to (\underline{\phi}_2, \beta_2)$ is a $2$-morphism
    $b \colon \underline{\phi}_1 \to \underline{\phi}_2$ such that $\beta_2 = (\id_{\pi'}\otimes b) \circ \beta_1$
\end{itemize}

Suppose that $(\underline{\phi}, \beta) \colon
(\underline{\mathcal{D}}, \underline{\pi}) \to
(\underline{\mathcal{D}}', \underline{\pi}')$ and
$(\underline{\phi}', \beta') \colon (\underline{\mathcal{D}}',
\underline{\pi}') \to (\underline{\mathcal{D}}'',
\underline{\pi}'')$ are $1$-morphisms of $R$-deformations of
$\underline{\mathcal{C}}$. We have the composition
\begin{equation}\label{compmordef pi}
\pi \xrightarrow{\beta}
\pi'\circ(\underline{\phi}\widetilde{\otimes}_R k)
\xrightarrow{\beta'\otimes\id_{\underline{\phi}\widetilde{\otimes}_R
k}} \pi''\circ(\underline{\phi}'\widetilde{\otimes}_R k)\circ
(\underline{\phi}\widetilde{\otimes}_R k)
\end{equation}
Then, the pair $(\underline{\phi}'\circ\underline{\phi},
\eqref{compmordef pi})$ is a $1$-morphism of $R$-deformations
$(\underline{\mathcal{D}}, \underline{\pi}) \to
(\underline{\mathcal{D}}'', \underline{\pi}'')$ defined to be the
composition $(\underline{\phi}', \beta') \circ (\underline{\phi},
\beta)$.

Vertical and horizontal compositions of $2$-morphisms of deformation
are those of $2$-morphisms of underlying algebroid stacks.

\begin{lemma}
The $2$-category $\widetilde{\Def}(\underline{\mathcal{C}})(R)$ is a
$2$-groupoid.
\end{lemma}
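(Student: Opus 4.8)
The plan is to verify the three defining properties of a $2$-groupoid directly: that every $2$-morphism is invertible, that every $1$-morphism is invertible up to $2$-isomorphism, and that every object admits such a structure in the obvious trivial way. Since $\widetilde{\Def}(\underline{\mathcal{C}})(R)$ is built out of $\widetilde{\Psi(R)}$-algebroid stacks together with compatibility data with the reduction-mod-$\mathfrak{m}_R$ equivalences $\underline{\pi}$, and since $\AlgStack_R(G)$ (hence its $\widetilde{\Psi(R)}$-subcategory) is itself a $2$-category in which all $2$-morphisms between $1$-morphisms of algebroid stacks are invertible — because algebroid stacks are \emph{stacks in groupoids up to the linear structure}, and the relevant $2$-morphisms $b$ are natural transformations of functors which one checks are pointwise isomorphisms — the invertibility of $2$-morphisms in the deformation $2$-category reduces to the same statement for the underlying stacks, noting that the condition $\beta_2 = (\id_{\pi'}\otimes b)\circ\beta_1$ is preserved under $b\mapsto b^{-1}$ (rewrite it as $\beta_1 = (\id_{\pi'}\otimes b^{-1})\circ\beta_2$).

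Next I would address invertibility of $1$-morphisms. Given $(\underline{\phi},\beta) \colon (\underline{\mathcal{D}},\underline{\pi}) \to (\underline{\mathcal{D}}',\underline{\pi}')$, the key point is that $\underline{\phi}$ is a $1$-morphism of $\widetilde{\Psi(R)}$-algebroid stacks which reduces modulo $\mathfrak{m}_R$ — via $\beta$ and the equivalences $\underline{\pi}$, $\underline{\pi}'$ — to (something $2$-isomorphic to) an equivalence, namely $\underline{\pi}'{}^{-1}\circ\underline{\pi}$. A morphism of deformations that is an equivalence after the (nilpotent) base change $\widetilde{\otimes}_R k$ is itself an equivalence: this is the standard nilpotent-lifting argument. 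Concretely, one uses that $\underline{\phi}$ induces, on $\shHom$-sheaves, $R$-linear maps of objects of $\widetilde{\Psi(R)}$ which are isomorphisms modulo $\mathfrak{m}_R$; since $\mathfrak{m}_R$ is nilpotent and the modules in question are locally of the form $L\otimes_k R$, such maps are automatically isomorphisms (a successive-approximation / geometric-series argument on the nilpotent ideal). Therefore $\underline{\phi}$ is full, faithful, and locally essentially surjective, i.e. an equivalence; choosing a quasi-inverse $\underline{\psi}$ and transporting $\beta$ one builds the inverse $1$-morphism $(\underline{\psi},\beta')$, with the composites $2$-isomorphic to the identities. Finally, the existence of the structure is clear: the trivial deformation $(\underline{\mathcal{C}}\widetilde{\otimes}_R R, \id) = (\underline{\mathcal{C}}_R, \text{can})$ — equivalently, for $\underline{\mathcal{C}} = \stack(\mathcal{A})$, the deformation $\stack(\mathcal{A}\otimes_k R)$ — is an object, so the category is nonempty, and the identity $1$-morphisms and $2$-morphisms supply the unit structure; associativity and unit axioms for composition are inherited from $\AlgStack_R(G)$.

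The main obstacle I anticipate is the nilpotent-lifting step establishing that a $1$-morphism of deformations which is an equivalence mod $\mathfrak{m}_R$ is an equivalence: one must be careful that ``equivalence of stacks'' is a local statement (local fullness/faithfulness on $\shHom$-sheaves plus local essential surjectivity), and that each of these lifts along the nilpotent thickening $R \to k$. Faithfulness and fullness reduce to the injectivity/surjectivity of maps of $R$-modules in $\widetilde{\Psi(R)}$ that are iso mod $\mathfrak{m}_R$, which follows by filtering by powers of $\mathfrak{m}_R$ and using that the associated graded maps are iso (being direct sums of copies of the mod-$\mathfrak{m}_R$ map tensored with $\mathfrak{m}_R^i/\mathfrak{m}_R^{i+1}$), together with the local $L\otimes_k R$ description from Example \texttt{DEF}. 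Essential surjectivity is handled by the same deformation-theoretic lifting of objects. One small additional check is that all the auxiliary $2$-morphisms (the $\beta$'s) can be transported and inverted coherently; this is formal given invertibility of $2$-morphisms established in the first step. I would organize the write-up so that the $2$-category axioms and the invertibility of $2$-morphisms are dispatched quickly by reference to $\AlgStack_R(G)$, and the bulk of the argument is the lifting lemma for equivalences, which can be cited from or modeled on the analogous statement in \cite{bgnt3}.
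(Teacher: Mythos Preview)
The paper states this lemma without proof, so there is no argument to compare against; your overall strategy --- verify directly that $2$-morphisms are invertible and that $1$-morphisms are equivalences --- is the natural one and your treatment of $1$-morphisms via nilpotent lifting is correct.

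There is, however, a genuine gap in your handling of $2$-morphisms. You assert that ``$\AlgStack_R(G)$ \ldots is itself a $2$-category in which all $2$-morphisms between $1$-morphisms of algebroid stacks are invertible'', justifying this by ``algebroid stacks are stacks in groupoids up to the linear structure''. This is false: a $2$-morphism of $R$-algebroid stacks is simply an $R$-linear natural transformation, and there is no reason for it to be pointwise invertible (e.g.\ the zero transformation). The groupoid $i\mathcal{C}$ being a gerbe constrains the \emph{objects}, not the morphisms. What actually forces invertibility of a $2$-morphism $b$ in $\widetilde{\Def}(\underline{\mathcal{C}})(R)$ is the compatibility condition $\beta_2 = (\id_{\pi'}\otimes (b\widetilde{\otimes}_R k))\circ\beta_1$: since $\beta_1,\beta_2$ are $2$-\emph{isomorphisms} and $\underline{\pi}'$ is an equivalence, this equation shows that $b\widetilde{\otimes}_R k$ is an isomorphism; then the same nilpotent-lifting argument you use for $1$-morphisms (iso mod $\mathfrak{m}_R$ plus $\mathfrak{m}_R$ nilpotent implies iso) yields that $b$ is invertible. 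Once you fix this, the rest of your outline goes through. The remark about nonemptiness is harmless but unnecessary: a $2$-groupoid may be empty.
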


We denote by $\Def(\underline{\mathcal{C}})(R)$ the full subcategory
of $\Psi(R)$-algebroid stacks.

For $(\underline{\mathcal{C}}, L) \in \Triv^\Psi(G)$ we define the
$2$-category $\Def(\underline{\mathcal{C}}, L)(R)$ as follows:
\begin{itemize}
\item the objects are quadruples $(\underline{\mathcal{D}}, \underline{\pi}, \pi_\tau, M)$ such that
\begin{list}{}{}
\item $(\underline{\mathcal{D}}, \underline{\pi})\in \Def(\underline{\mathcal{C}})(R)$,
\item $(\underline{\mathcal{D}}, M)\in \Triv^{\Psi(R)}(G)$,
\item $(\underline{\pi}, \pi_\tau) \colon (\underline{\mathcal{D}}, M)\otimes_R k \to (\underline{\mathcal{C}}, L)$
    is a $1$-morphism in $\Triv^\Psi(G)$;
\end{list}
\item a $1$-morphism $(\underline{\mathcal{D}}, \underline{\pi}, \pi_\tau, M) \to (\underline{\mathcal{D}}', \underline{\pi}', \pi'_\tau, M')$ is a triple $(\underline{\phi}, \phi_\tau, \beta)$ where
\begin{list}{}{}
\item $(\underline{\phi},\beta) \colon (\underline{\mathcal{D}}, \underline{\pi}) \to (\underline{\mathcal{D}}',
    \underline{\pi}')$  is a $1$-morphisms in $\Def(\underline{\mathcal{C}})(R)$,
\item $(\underline{\phi}, \phi_\tau) \colon (\underline{\mathcal{D}}, M) \to (\underline{\mathcal{D}}', M')$ is a
    $1$-morphism in $\Triv^{\Psi(R)}(G)$,
\end{list}
\item a $2$-morphism $(\underline{\phi}, \phi_\tau, \beta) \to (\underline{\phi}', \phi'_\tau, \beta')$ is a $2$-morphisms $\underline{\phi} \to \underline{\phi}'$.
\end{itemize}

\subsubsection{$\triv$ for deformations}\label{triv for deformations}
Let $\mathcal{A}\in\CosMatAlg^\Psi_k(NG)$, $R\in\ArtAlg_k$. The
functors $\stack$ and $\triv$ were defined in \ref{subsubsection:
triv from cma} (see Proposition \ref{prop: cosmatalg to algstack}).
\begin{lemma}
{~}
\begin{enumerate}
\item The functor $\stack$ restricts to the functor $\stack \colon \Def(\mathcal{A})(R) \to
    \AlgStack^{\Psi(R)}(G)$.

\item The functor $\triv$ restricts to the functor $\triv \colon \Def(\mathcal{A})(R) \to \Triv^{\Psi(R)}(G)$.
\end{enumerate}
\end{lemma}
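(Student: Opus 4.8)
The plan is to unwind the construction $\mathcal{A}\mapsto\stack(\mathcal{A})$, $F\mapsto\stack(F)$, $b\mapsto\stack(b)$ of \ref{subsubsection: triv from cma} applied to an $R$-deformation $\mathbf{B}$ of $\mathcal{A}$ and to verify the $\Psi(R)$-conditions step by step. Fix $R\in\ArtAlg_k$ and $\mathbf{B}\in\Def(\mathcal{A})(R)$. By Definition \ref{definition:DefCosMatAlg} the matrix entries of $\mathbf{B}$ are $\mathbf{B}^n_{ij}=\mathcal{A}^n_{ij}\otimes_k R$ (objects of $\Psi^n(R)$), the structure isomorphisms $f^{\mathbf{B}}_*$ are the base changes $f^{\mathcal{A}}_*\otimes\id_R$ of those of $\mathcal{A}$, and the multiplication pairings $\mathbf{B}^n_{ij}\otimes_R\mathbf{B}^n_{jk}\to\mathbf{B}^n_{ik}$ are the restrictions to matrix entries of an $R$-star product on $\mathcal{A}^n\otimes_k R$, hence $R$-bilinear operations in $\Psi^n(R)$. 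Throughout, I use that ``being an object of $\Psi(R)$'' and ``being an operation in $\Psi$'' are local conditions on the spaces $X_n$, so it is enough to check them after restricting to opens over which the locally free rank-one modules occurring in the construction are trivialized.

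For the object part of (1), consider the algebroid stack $\underline{\mathcal{C}}=\stack(\mathbf{B})$, the stack of locally free rank-one $\mathbf{B}^0$-modules. Over a small open $V\subseteq N_0G$ any $L\in\mathcal{C}(V)$ is isomorphic to the free module $\mathbf{B}^0|_V$, and under such isomorphisms $\shHom_{\mathcal{C}}(L_1,L_2)|_V\cong\mathbf{B}^0|_V$ --- a sheaf locally of the form $\mathcal{A}^0\otimes_k R$, hence an object of $\Psi^0(R)$ --- while the composition $\shHom_{\mathcal{C}}(L_1,L_2)\otimes_k\shHom_{\mathcal{C}}(L_2,L_3)\to\shHom_{\mathcal{C}}(L_1,L_3)$ becomes the multiplication of $\mathbf{B}^0$, which is an operation in $\Psi^0(R)$. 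Since these identifications are isomorphisms in $\Psi$, the composition map itself is an operation in $\Psi(R)$, so $\mathcal{C}$ is a $\Psi(R)$-algebroid stack on $N_0G$. The structure $1$-morphism $\mathcal{C}_{01}=\mathbf{B}^1_{01}\otimes_{\mathbf{B}^1_{11}}(\,.\,)$ acts on $\shHom$-sheaves, locally, through the module structures on $\mathbf{B}^1_{01}$ --- that is, through multiplication pairings of $\mathbf{B}^1$, which are operations in $\Psi^1(R)$ --- so $\mathcal{C}_{01}$ is a $\Psi(R)$-$1$-morphism; and $\mathcal{C}_{012}$, being induced by the pairing $\mathbf{B}^2_{01}\otimes\mathbf{B}^2_{12}\to\mathbf{B}^2_{02}$, is a $\Psi(R)$-$2$-morphism. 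Hence $\stack(\mathbf{B})\in\AlgStack^{\Psi(R)}(G)$.

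The treatment of $1$- and $2$-morphisms is analogous. For a $1$-morphism $F\colon\mathbf{B}\to\mathbf{B}'$ in $\Def(\mathcal{A})(R)$ the component $F^0$ is an $R$-linear unary operation in $\Psi^0(R)$, and $\stack(F)=(\phi_0,\phi_{01})$ acts on $\shHom$-sheaves, locally, through $F^0$ (and through $F^1_{01}$ on $1$-simplices), so it is a $\Psi(R)$-$1$-morphism. For a $2$-morphism $b\colon F_1\to F_2$ the section $\stack(b)=b^0$, a global section of the diagonal part of $(\mathbf{B}')^0$, induces the $2$-morphism acting on $\shHom$-sheaves by left and right multiplication by $b^0$; these are unary operations obtained from the multiplication of $(\mathbf{B}')^0$ by fixing a slot, hence operations in $\Psi^0(R)$, so $\stack(b)$ is a $\Psi(R)$-$2$-morphism. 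This proves (1). Part (2) is then immediate: $\triv(\mathbf{B})=(\stack(\mathbf{B}),\one_{\mathbf{B}})$, $\triv(F)=(\stack(F),\stack(F)_\tau)$, $\triv(b)=\stack(b)$, and since $\Triv^{\Psi(R)}(G)$ imposes $\Psi(R)$-conditions only on the underlying algebroid stacks and their $1$- and $2$-morphisms, the statements for $\triv$ follow from those for $\stack$.

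The step requiring care is the bookkeeping in the middle two paragraphs: one fixes local trivializations of all the rank-one $\mathbf{B}^0$- and $\mathbf{B}^n_{ii}$-modules entering $\stack(\mathbf{B})$, checks that under the resulting identifications $\shHom_{\mathbf{B}^0}(\mathbf{B}^0,\mathbf{B}^0)\cong\mathbf{B}^0$ composition of module homomorphisms corresponds to algebra multiplication, and observes that every structure map of $\stack(\mathbf{B})$ (the composition, $\mathcal{C}_{01}$, $\mathcal{C}_{012}$, and the data $\phi_0,\phi_{01},b^0$ attached to morphisms) thereby reduces to a specialization or iterate of one of the defining $\Psi(R)$-operations of $\mathbf{B}$. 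Once this is arranged, locality of the $\Psi$-condition concludes the argument; no input beyond the definitions of $\Def(\mathcal{A})(R)$, $\Psi(R)$, $\stack$ and $\triv$ is required.
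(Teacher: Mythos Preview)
The paper states this lemma without proof, treating it as a routine verification from the definitions of $\Def(\mathcal{A})(R)$, $\Psi(R)$, $\stack$ and $\triv$. Your proposal carries out exactly that verification --- unwinding the construction of $\stack(\mathbf{B})$ and checking locally that all $\shHom$-sheaves, composition maps, and structure morphisms reduce to the $\Psi(R)$-operations supplied by the deformation $\mathbf{B}$ --- which is precisely the argument the paper leaves to the reader.
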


For $\mathbf{B}\in\Def(\mathcal{A})(R)$ the identification
$\mathbf{B}\otimes_R k = \mathcal{A}$ induces the equivalence
$\underline{\pi_\mathbf{B}} \colon
\triv(\mathbf{B})\widetilde{\otimes}_R k \cong
\triv(\mathbf{B}\otimes_R k) = \triv(\mathcal{A})$. Let
\[
\widetilde{\triv}(\mathbf{B}) \colon= (\stack(\mathbf{B}),
\underline{\pi_\mathbf{B}}, \id, \one_\mathbf{B}) \ .
\]

A morphism $F \colon \mathbf{B} \to \mathbf{B}'$ in
$\Def(\mathcal{A})(R)$ induces the morphism $\triv(F) \colon
\triv(\mathbf{B}) \to \triv(\mathbf{B}')$. The diagram
\[
\begin{CD}
\triv(\mathbf{B})\widetilde{\otimes}_R k @>{\triv(F)\widetilde{\otimes}_R k}>> \triv(\mathbf{B}')\widetilde{\otimes}_R k \\
@V{\underline{\pi_\mathbf{B}}}VV @VV{\underline{\pi_\mathbf{B}'}}V \\
\triv(\mathcal{A}) @= \triv(\mathcal{A})
\end{CD}
\]
commutes up to a unique $2$-morphism, hence, $\triv(F)$ gives rise
in a canonical way to a morphism in $\Def(\triv(\mathcal{A}))(R)$.

Let $F_i \colon \mathbf{B} \to \mathbf{B}'$, $i=1,2$ be two
$1$-morphisms in $\Def(\mathcal{A})(R)$. A $2$-morphism $b \colon
F_1 \to F_2$ in $\Def(\mathcal{A})(R)$ induces the $2$-morphism
$\triv(b) \colon \triv(F_1) \to \triv(F_2)$ in $\Triv_R(G)$ which is
easily seen to be a $2$-morphism in $\Def(\triv(\mathcal{A}))(R)$.

\begin{lemma}
The assignment $\mathbf{B} \mapsto \widetilde{\triv}(\mathbf{B})$
extends to a functor
\begin{equation}\label{triv for def}
\widetilde{\triv} \colon \Def(\mathcal{A})(R) \to
\Def(\triv(\mathcal{A}))(R)
\end{equation}
naturally in $R\in\ArtAlg_k$.
\end{lemma}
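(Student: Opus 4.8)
The plan is to build $\widetilde{\triv}$ out of the constructions already carried out in \ref{subsubsection: triv from cma} and in the present subsection, and then to check the axioms of a $2$-functor, all of which reduce to functoriality of $\stack$ and $\triv$ (Proposition \ref{prop: cosmatalg to algstack}) together with the uniqueness of the comparison $2$-morphisms that appear above. First, for $\mathbf{B}\in\Def(\mathcal{A})(R)$ one records that the quadruple
\[
\widetilde{\triv}(\mathbf{B}) = (\stack(\mathbf{B}),\, \underline{\pi_\mathbf{B}},\, \id,\, \one_\mathbf{B})
\]
does lie in $\Def(\triv(\mathcal{A}))(R)$: its first slot is a $\Psi(R)$-algebroid stack by the restriction of $\stack$ proved just above; $\underline{\pi_\mathbf{B}}$ is the equivalence $\stack(\mathbf{B})\widetilde{\otimes}_R k \xrightarrow{\sim} \stack(\mathcal{A})$ underlying the base-change identification $\triv(\mathbf{B})\widetilde{\otimes}_R k \cong \triv(\mathbf{B}\otimes_R k) = \triv(\mathcal{A})$ of \ref{subsubsection: triv from cma}; $(\stack(\mathbf{B}),\one_\mathbf{B})\in\Triv^{\Psi(R)}(G)$ by the restriction of $\triv$; and $(\underline{\pi_\mathbf{B}},\id)$ is a $1$-morphism in $\Triv^\Psi(G)$ because $\underline{\pi_\mathbf{B}}$ carries $\one_\mathbf{B}\otimes_R k$ to $\one_\mathcal{A}$ by construction.

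On $1$-morphisms one sets $\widetilde{\triv}(F) = (\stack(F),\stack(F)_\tau,\beta_F)$ for $F\colon\mathbf{B}\to\mathbf{B}'$, where $\beta_F$ is the unique $2$-morphism filling the commutative square above relating $\underline{\pi_\mathbf{B}}$, $\underline{\pi_{\mathbf{B}'}}$ and $\triv(F)\widetilde{\otimes}_R k$; that this triple is a $1$-morphism in $\Def(\triv(\mathcal{A}))(R)$, and that $\widetilde{\triv}(b) \colon= \triv(b)$ is a $2$-morphism there, are precisely the two observations immediately preceding the statement. It then remains to verify compatibility with composition of $1$-morphisms, with vertical and horizontal composition of $2$-morphisms, and with identities. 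For the underlying components this is Proposition \ref{prop: cosmatalg to algstack}; for the comparison data $\beta_F$ one invokes uniqueness: pasting the two squares attached to $\mathbf{B}\xrightarrow{F}\mathbf{B}'\xrightarrow{F'}\mathbf{B}''$ produces a $2$-morphism filling the square attached to $F'\circ F$, which must therefore equal $\beta_{F'\circ F}$, and similarly $\beta_{\id} = \id$.

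Finally, for naturality in $R$, a morphism $R\to S$ in $\ArtAlg_k$ induces base-change $2$-functors on both sides, and one must exhibit a coherent $2$-isomorphism between the two composites around the resulting square. This is assembled from the base-change compatibility of $\stack$ and $\triv$ of \ref{subsubsection: triv from cma} and the canonical equivalences $\triv(\mathcal{A})\widetilde{\otimes}_R S \leftrightarrows \triv(\mathcal{A}\otimes_R S)$. The only real difficulty is bookkeeping: the objects of $\Def(\triv(\mathcal{A}))(R)$ are quadruples and its $1$-morphisms triples, so there are several coherence cells to match up; but each of them is forced by a uniqueness statement already established, so no genuinely new computation is required. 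I expect this organizational step — arranging the data so that every coherence cell is visibly determined — to be the main, if modest, obstacle.
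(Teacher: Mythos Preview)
Your proposal is correct and is exactly the kind of routine verification the paper has in mind; in fact the paper states this lemma without proof, leaving the functoriality check implicit in the preceding constructions. Your organization --- reducing everything to Proposition \ref{prop: cosmatalg to algstack} and the uniqueness of the base-change $2$-cells --- is the natural way to fill in the details.
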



\begin{thm}\label{triv for def equiv}
The functor \eqref{triv for def} is an equivalence.
\end{thm}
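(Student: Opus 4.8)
The strategy is to promote the equivalence $\triv\colon\CosMatAlg_R(NG)\rightleftarrows\Triv_R(G)\colon\cma$ of Proposition \ref{inverse} to deformations and thereby to produce an explicit quasi-inverse of $\widetilde{\triv}$ out of $\cma$. The inputs are Proposition \ref{inverse} (used both over $R$ and over $k$); the compatibility of $\triv$, $\cma$ and of the canonical equivalences $\cma\circ\triv\cong\id$, $\triv\circ\cma\cong\id$ with base change (with the square at the end of \ref{triv for deformations} commuting up to a unique $2$-morphism); the restrictions of $\triv$ and $\stack$ to the $\Psi(R)$-versions from \ref{triv for deformations}; and the pro-unipotence of the relevant sheaves of automorphisms, as in \eqref{exponential map}.

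First I would introduce the $2$-groupoid $\Def^w(\mathcal{A})(R)$ of \emph{weak} $R$-deformations of $\mathcal A$: an object is a pair $(\mathbf B,e)$ with $\mathbf B\in\CosMatAlg^\Psi_R(NG)$ flat over $R$ and $e\colon\mathbf B\otimes_R k\xrightarrow{\sim}\mathcal A$ an equivalence in $\CosMatAlg^\Psi_k(NG)$, with $1$- and $2$-morphisms those of $\CosMatAlg$ compatible with the data $e$ (reducing appropriately modulo $\mathfrak m_R$). There is an evident forgetful $2$-functor $\Def(\mathcal A)(R)\to\Def^w(\mathcal A)(R)$, and the \textbf{rigidification lemma} asserts that it is an equivalence. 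I would prove this exactly as in the manifold case of \cite{bgnt3}: since $\mathbf B$ is a $\Psi$-object flat over the Artin algebra $R$, each matrix entry $\mathbf B^n_{ij}$ is globally trivializable, so a choice of such trivializations (compatible with $e$ modulo $\mathfrak m_R$) identifies the underlying cosimplicial graded module of $\mathbf B$ with $\mathcal A\otimes_k R$; the cosimplicial structure maps $f^{\mathbf B}_*$ then become automorphisms of $f^\sharp(\mathcal A^q\otimes_k R)$ that are the identity modulo $\mathfrak m_R$, and since these form a sheaf of pro-unipotent groups (cf. \eqref{exponential map}) and $\mathfrak m_R$ is nilpotent, the cocycle they constitute over $\Delta$ is a coboundary; gauging it away produces an object of $\Def(\mathcal A)(R)$. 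The indeterminacy in the choices is measured precisely by the $1$- and $2$-morphisms of $\Def(\mathcal A)(R)$ that reduce to identities modulo $\mathfrak m_R$, whence the forgetful $2$-functor is fully faithful, hence an equivalence.

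Next, $\cma$ and $\triv$ induce mutually quasi-inverse $2$-functors $\Def(\triv(\mathcal A))(R)\rightleftarrows\Def^w(\mathcal A)(R)$. Given $(\underline{\mathcal D},\underline\pi,\pi_\tau,M)\in\Def(\triv(\mathcal A))(R)$, set $\mathbf B=\cma(\underline{\mathcal D},M)$; it lies in $\CosMatAlg^\Psi_R(NG)$ and is flat since its matrix entries are the (locally free of rank one, hence flat) $\shHom$-sheaves in the $\Psi(R)$-algebroid stack $\underline{\mathcal D}$, and the $\Triv^\Psi$-morphism $(\underline\pi,\pi_\tau)\colon(\underline{\mathcal D},M)\otimes_R k\to\triv(\mathcal A)$ yields, via $\cma$, the base-change compatibility of $\cma$, and the canonical isomorphism $\cma(\triv(\mathcal A))\cong\mathcal A$, the equivalence $e\colon\mathbf B\otimes_R k\xrightarrow{\sim}\mathcal A$. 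The same prescription on $1$- and $2$-morphisms defines the $2$-functor $\Def(\triv(\mathcal A))(R)\to\Def^w(\mathcal A)(R)$; $\triv$ gives one in the opposite direction, and Proposition \ref{inverse} together with the naturality of $\cma\circ\triv\cong\id$, $\triv\circ\cma\cong\id$ and their base-change compatibility shows these are quasi-inverse. Composing with the rigidification equivalence yields a $2$-functor $\Def(\triv(\mathcal A))(R)\to\Def(\mathcal A)(R)$. Finally, unwinding the definition of $\widetilde{\triv}$ in \ref{triv for deformations} and using the canonical isomorphism $\cma(\triv(\mathbf B))\cong\mathbf B$ shows that $\widetilde{\triv}$ is $2$-isomorphic to the composite $\Def(\mathcal A)(R)\to\Def^w(\mathcal A)(R)\xrightarrow{\triv}\Def(\triv(\mathcal A))(R)$; since both factors are equivalences, so is $\widetilde{\triv}$, and naturality in $R$ is manifest from the base-change compatibilities.

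The step I expect to be the main obstacle is the rigidification lemma: reconciling the \emph{rigid} model $\Def(\mathcal A)(R)$ of Definition \ref{definition:DefCosMatAlg}—in which the underlying module is $\mathcal A\otimes_k R$ on the nose and the cosimplicial structure maps are $f^{\mathcal A}_*\otimes\id$ on the nose—with the flexible $2$-groupoid $\Def^w(\mathcal A)(R)$ (and hence with $\Def(\triv(\mathcal A))(R)$). Concretely, one must check that the required trivializations and gauge transformations exist \emph{simplicially compatibly over all of} $\Delta$ and that the space of such choices is contractible in the appropriate $2$-categorical sense; everything else is formal manipulation of the equivalences of Proposition \ref{inverse} and their interaction with base change. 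In the case when $G$ is a manifold this is carried out in \cite{bgnt3}, and the argument there extends to the present setting with only notational changes.
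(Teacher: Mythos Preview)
Your approach is correct and morally the same as the paper's, but the paper organizes the argument more directly and in particular dissolves the step you flag as the main obstacle.

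The paper does not introduce an intermediate $2$-groupoid $\Def^w(\mathcal A)(R)$ and a separate rigidification lemma. Instead it argues in two strokes. For full faithfulness on $\Hom$-groupoids it uses $\cma$ to invert: given a $1$-morphism $(\underline\phi,\phi_\tau,\beta)$ between $\widetilde\triv(\mathbf B)$ and $\widetilde\triv(\mathbf B')$, the composition $\mathbf B\cong\cma(\triv(\mathbf B))\xrightarrow{\cma(\underline\phi)}\cma(\triv(\mathbf B'))\cong\mathbf B'$ is checked (via $\beta$) to reduce to $\id_{\mathcal A}$ modulo $\mathfrak m_R$, and similarly for $2$-morphisms. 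For essential surjectivity, given $(\underline{\mathcal D},\underline\pi,\pi_\tau,M)$ it sets $\mathcal B=\cma(\underline{\mathcal D},M)$ and then chooses $\Psi(R)$-trivializations \emph{only in degrees $0$ and $1$}, namely of $\mathcal B^0$, $\mathcal B^1_{01}$, $\mathcal B^1_{10}$, compatible with $\cma(\underline\pi,\pi_\tau)$. These propagate canonically along the cosimplicial structure isomorphisms $f_*^{\mathcal B}$ to trivializations of all $\mathcal B^n_{ij}$; because the propagation uses the very maps $f_*^{\mathcal B}$ and these already satisfy the cosimplicial associativity, the transferred structure maps on $\mathcal A\otimes_kR$ are $f_*^{\mathcal A}\otimes\id$ on the nose. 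So the ``cocycle over $\Delta$'' you anticipate never appears, and no gauging or pro-unipotence argument is needed for this step.

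What each approach buys: the paper's route is shorter and pinpoints why rigidification is free here (everything in a cosimplicial matrix algebra is generated from degree $\le 1$ via the isomorphisms $f_*$). Your factorization through $\Def^w$ is more modular and would be the right thing if the structure maps were only equivalences, or in a setting where low-degree data did not determine the rest; but in the present context it creates work that the paper's choice of trivializations avoids.
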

\begin{proof}
First, we show that, for $\mathbf{B}, \mathbf{B}' \in
\Def(\mathcal{A})(R)$ the functor
\begin{equation}\label{triv for def on hom}
\Hom_{\Def(\mathcal{A})(R)}(\mathbf{B},\mathbf{B}') \to
\Hom_{\Def(\triv(\mathcal{A}))(R)}(\widetilde{\triv}(\mathbf{B}),\widetilde{\triv}(\mathbf{B}'))
\end{equation}
induced by the functor $\widetilde{\triv}$ is an equivalence.

For $1$-morphism $(\underline{\phi}, \phi_\tau,\beta) \in
\Hom_{\Def(\triv(\mathcal{A}))(R)}(\widetilde{\triv}(\mathbf{B}),\widetilde{\triv}(\mathbf{B}'))$
consider the composition
\begin{equation}\label{cma triv comp}
\mathbf{B} \cong \cma(\triv(\mathbf{B}))
\xrightarrow{\cma(\underline{\phi})} \cma(\triv(\mathbf{B}')) \cong
\mathbf{B}'
\end{equation}
It is easy to see that $\beta$ induces an isomorphism $\eqref{cma
triv comp}\otimes_R k \cong \id_\mathcal{A}$. Thus, the functor
\eqref{triv for def on hom} is essentially surjective.

Let $F_i \colon \mathbf{B} \to \mathbf{B}'$, $i=1,2$ be two
$1$-morphisms in $\Def(\mathcal{A})(R)$. It is easy to see that the
isomorphism
\begin{multline*}
\Hom(\triv(F_1), \triv(F_2)) \to \\
\Hom(\cma(\triv(F_1)),\cma(\triv(F_2))) \cong \Hom(F_1,F_2)
\end{multline*}
induced by the functor $\cma$ restricts to an isomorphism of
respective space of morphisms in
$\Hom_{\Def(\triv(\mathcal{A}))(R)}(\widetilde{\triv}(\mathbf{B}),\widetilde{\triv}(\mathbf{B}'))$
and $\Hom_{\Def(\mathcal{A})(R)}(\mathbf{B},\mathbf{B}')$.

It remains to show that the functor \eqref{triv for def} is
essentially surjective. Consider $(\underline{\mathcal{D}},
\underline{\pi}, \pi_\tau, M) \in \Def(\triv(\mathcal{A}))(R)$. Let
$\mathcal{B} = \cma(\underline{\mathcal{D}}, M)$. The morphism
$(\underline{\pi}, \pi_\tau)$ induces the isomorphism
$\cma(\underline{\pi}, \pi_\tau) \colon \mathcal{B}\otimes_R k \cong
\mathcal{A}$.

We choose isomorphisms in $\Psi(R)$ $\mathcal{B}^0 \cong
\mathcal{A}^0\otimes_k R$, $\mathcal{B}^1_{01} \cong
\mathcal{A}^1_{01}\otimes_k R$ and $\mathcal{B}^1_{10} \cong
\mathcal{A}^1_{10}\otimes_k R$ which induce respective restrictions
of the isomorphism $\cma(\underline{\pi}, \pi_\tau)$. The above
choices give rise in a canonical way to isomorphisms
$\mathcal{B}^n_{ij} \cong \mathcal{A}^n_{ij}\otimes_k R$ for $n = 0,
1, 2, \ldots$, $0 \leq i,j \leq n$. Let $\mathbf{B}$ denote the
cosimplicial matrix algebra structure on $\mathcal{A}\otimes_k R$
induce by that on $\mathcal{B}$ via the above isomorphisms. It is
easy to see that $\mathbf{B}\in\Def(\mathcal{A})(R)$. The
isomorphism of cosimplicial matrix algebras
$\mathcal{B}\cong\mathbf{B}$ induces the equivalence
$(\underline{\mathcal{D}}, \underline{\pi}, \pi_\tau, M) \cong
\widetilde{\triv}(\mathbf{B})$.
\end{proof}

\begin{thm}\label{mt1}
 Let $(\underline{\mathcal{C}}, L) \in \Triv^\Psi(G)$. Then we have a canonical equivalence of $2$-groupoids
\[
\Def(\underline{\mathcal{C}}, L) \cong
\MC^2(\mathfrak{G}(\cma(\mathcal{C}, L))\otimes_k
 \mathfrak{m}_R)
\]
\end{thm}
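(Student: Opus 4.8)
The plan is to deduce the statement by concatenating three equivalences already established in the paper: the mutually quasi-inverse equivalences $\cma$ and $\triv$ between $\Triv_R(G)$ and $\CosMatAlg_R(NG)$ of Proposition \ref{inverse}, together with their $\Psi$-refinements; the equivalence of deformation $2$-groupoids of Theorem \ref{triv for def equiv}; and the Maurer--Cartan description of $\Def$ of a cosimplicial matrix algebra of Theorem \ref{thm:mapgroup equiv}.

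First I would set $\mathcal{A} \colon = \cma(\underline{\mathcal{C}}, L)$, which by the construction of \ref{Cosimplicial matrix algebras from algebroid stacks} (and its $\Psi$-version) lies in $\CosMatAlg^\Psi_k(NG)$; here the ambient simplicial space is $X = NG$. By Proposition \ref{inverse} there is a canonical equivalence $\triv(\mathcal{A}) = \triv(\cma(\underline{\mathcal{C}}, L)) \cong (\underline{\mathcal{C}}, L)$ in $\Triv^\Psi_k(G)$. Since the assignment $(\underline{\mathcal{C}}', L') \mapsto \Def(\underline{\mathcal{C}}', L')(R)$ is $2$-functorial on $\Triv^\Psi_k(G)$ — a deformation datum for an equivalent pair pulls back, with $1$- and $2$-morphisms transported accordingly — this equivalence induces a canonical equivalence of $2$-groupoids $\Def(\triv(\mathcal{A}))(R) \cong \Def(\underline{\mathcal{C}}, L)(R)$, natural in $R$. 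Next, Theorem \ref{triv for def equiv} supplies the canonical equivalence $\widetilde{\triv} \colon \Def(\mathcal{A})(R) \to \Def(\triv(\mathcal{A}))(R)$, also natural in $R \in \ArtAlg_k$; composing gives $\Def(\mathcal{A})(R) \cong \Def(\underline{\mathcal{C}}, L)(R)$. Finally, Theorem \ref{thm:mapgroup equiv} applied to $\mathcal{A}$ yields $\Def(\mathcal{A})(R) \cong \MC^2(\mathfrak{G}(\mathcal{A}) \otimes_k \mathfrak{m}_R)$, with $\mathfrak{G}(\mathcal{A}) = \Tot(\Gamma(\real{X}; \mathfrak{g}(\mathcal{A})))$ as in \eqref{def complex cosimp matrix alg}. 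Stringing the three equivalences together and substituting $\mathcal{A} = \cma(\underline{\mathcal{C}}, L)$ produces
\[
\Def(\underline{\mathcal{C}}, L)(R) \cong \MC^2(\mathfrak{G}(\cma(\underline{\mathcal{C}}, L)) \otimes_k \mathfrak{m}_R) ,
\]
and since each constituent is natural in $R$, so is the composite; in particular it is canonical.

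The main obstacle I expect is organizational rather than conceptual: one must check that every functor in the chain ($\cma$, $\triv$, $\widetilde{\triv}$, and the functor of Theorem \ref{thm:mapgroup equiv}) simultaneously respects the $\Psi(R)$-structures \emph{and} the trivialization data, and that the composite equivalence does not depend on the intermediate choices (e.g.\ the quasi-inverse of $\cma$). The first point is handled by the $\Psi$-refinements already in place ($\Triv^\Psi_k(G)$, $\AlgStack^\Psi_k(G)$, and $\Def(\mathcal{A})(R)$ with its local-cochain constraints) together with the observation of \ref{triv for deformations} that $\triv$ and $\stack$ restrict to the $\widetilde{\Psi(R)}$- and $\Psi(R)$-subcategories; the second point follows because any two quasi-inverses of $\cma$ are canonically $2$-isomorphic, so the induced equivalence on $\Def(-)(R)$ is determined up to canonical $2$-isomorphism. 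A point worth making explicit is that the choice of the object $L$ (equivalently, the canonical trivialization $\one_{\mathcal{A}}$) is exactly what rigidifies $\cma(\underline{\mathcal{C}}, L)$ into an honest cosimplicial matrix algebra, so that the right-hand side $\mathfrak{G}(\cma(\underline{\mathcal{C}}, L))$ is a genuine DGLA; without a trivialization one would obtain only the corresponding statement in the homotopy category of DGLA.
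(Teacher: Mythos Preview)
Your proposal is correct and follows exactly the same approach as the paper: the paper's proof is the single sentence ``This is a direct consequence of Proposition \ref{inverse}, Theorem \ref{triv for def equiv} and Theorem \ref{thm:mapgroup equiv},'' and your argument is precisely the concatenation of those three equivalences via $\mathcal{A} = \cma(\underline{\mathcal{C}},L)$. Your additional remarks on $\Psi$-compatibility and canonicity are a useful elaboration, but they do not change the route.
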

\begin{proof}
This is a direct consequence of the Proposition \ref{inverse},
Theorem \ref{triv for def equiv} and Theorem \ref{thm:mapgroup
equiv}.
\end{proof}

\subsubsection{Deformation theory of twisted forms of $\mathcal{O}$}
Let $G$ be an \'etale groupoid. We now apply the results of the
preceding sections with the $\Psi = \mathtt{DIFF}$ (see
\ref{subsubsection:examples of pseudo-tensor cats}) and omit it from
notations.

Suppose that $\underline{\mathcal{S}} =
(\mathcal{S},\mathcal{S}_{01},\mathcal{S}_{012})$ is a twisted form
of $\mathcal{O}_G$, i.e. and an algebroid stack
$\underline{\mathcal{S}}$ on $G$ such that $\mathcal{S}$ is locally
equivalent to $\widetilde{\mathcal{O}_{N_0G}^+}$.

Let $\mathbb{B}$ be a basis of the topology on $N_0G$. Let
$\mathcal{E} \colon= \mathcal{E}_\mathbb{B}(G)$ denote the
corresponding \'etale category of embeddings, $\lambda \colon
\mathcal{E} \to G$ the canonical map (see \ref{cat of emb}).

The functors $\lambda^{-1}$ and $\lambda_!$ (see \ref{subsubsection:
stacks on et cat}, \ref{subsubsection: inv im stack equiv}) restrict
to mutually quasi-inverse equivalences of $2$-categories
\[
\lambda_R^{-1}\colon \AlgStack_R(G) \rightleftarrows
\AlgStack_R(\mathcal{E}) \colon  \lambda^R_!
\]
natural in $R$. The explicit construction of \ref{subsubsection:
adjoint top space}, \ref{subsubsection: general case} shows that the
equivalence of respective categories of sheaves on $G$ and
$\mathcal{E}$ induces an equivalence of respective
$\widetilde{\mathtt{DIFF}(R)}$ categories (which, however, do not
preserve the respective $\mathtt{DIFF}(R)$ subcategories).

In particular, for $(\underline{\mathcal{D}},\underline{\pi}) \in
\widetilde{\Def}(\underline{\mathcal{S}})(R)$,
$\lambda^{-1}\underline{\mathcal{D}}$ is a
$\widetilde{\mathtt{DIFF}(R)}$-algebroid stack on $\mathcal{E}$,
there is a natural equivalence
$\lambda^{-1}\underline{\mathcal{D}}\widetilde{\otimes}_R k \cong
\lambda^{-1}(\underline{\mathcal{D}}\widetilde{\otimes}_R
\mathbb{C})$, hence $\underline{\pi}$ induces the equivalence
$\lambda^*(\underline{\pi}) \colon
\lambda^{-1}\underline{\mathcal{D}}\widetilde{\otimes}_R \mathbb{C}
\to \lambda^{-1}\underline{\mathcal{S}}$. The assignment
\[
(\underline{\mathcal{D}},\underline{\pi}) \mapsto
\lambda_R^{-1}(\underline{\mathcal{D}},\underline{\pi}) \colon=
(\lambda^{-1}\underline{\mathcal{D}}, \lambda^*(\underline{\pi}))
\]
extends to a functor
\begin{equation}\label{G to E defs}
\lambda^{-1} \colon \widetilde{\Def}(\underline{\mathcal{S}})
\xrightarrow{\cong}
\widetilde{\Def}(\lambda^{-1}\underline{\mathcal{S}}) .
\end{equation}

\begin{lemma}\label{equi}
The functor \eqref{G to E defs} is an equivalence of $2$-groupoids.
\end{lemma}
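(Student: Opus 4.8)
The plan is to deduce that \eqref{G to E defs} is an equivalence of $2$-groupoids from the fact, recalled just above the statement, that the inverse image functors $\lambda^{-1}_R \colon \AlgStack_R(G) \rightleftarrows \AlgStack_R(\mathcal{E}) \colon \lambda^R_!$ are mutually quasi-inverse equivalences of $2$-categories, natural in $R$, \emph{together with} the observation that although $\lambda^{-1}$ and $\lambda_!$ do not preserve the $\mathtt{DIFF}(R)$-subcategories, they \emph{do} induce an equivalence of the respective $\widetilde{\mathtt{DIFF}(R)}$-categories. Since $\widetilde{\Def}(\underline{\mathcal S})(R)$ is by definition built out of $\widetilde{\mathtt{DIFF}(R)}$-algebroid stacks (not the smaller $\mathtt{DIFF}(R)$ ones), this last fact is exactly what is needed: the functor $\lambda^{-1}$ on deformations is, objectwise, the restriction of an equivalence of $2$-categories.

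First I would check that the assignment $(\underline{\mathcal D},\underline\pi)\mapsto (\lambda^{-1}\underline{\mathcal D},\lambda^*(\underline\pi))$ is well defined, i.e. that $\lambda^{-1}\underline{\mathcal D}$ is again a $\widetilde{\mathtt{DIFF}(R)}$-algebroid stack on $\mathcal E$ and that $\lambda^*(\underline\pi)$ really is an equivalence $\lambda^{-1}\underline{\mathcal D}\,\widetilde\otimes_R\mathbb{C}\to\lambda^{-1}\underline{\mathcal S}$; here one uses the natural equivalence $\lambda^{-1}\underline{\mathcal D}\,\widetilde\otimes_R k\cong \lambda^{-1}(\underline{\mathcal D}\,\widetilde\otimes_R\mathbb{C})$ coming from naturality of $\lambda^{-1}$ in $R$ (base change commutes with $\lambda^{-1}$, which follows from the explicit descriptions in \ref{subsubsection: adjoint top space} and \ref{subsubsection: general case}). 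Next I would verify functoriality on $1$- and $2$-morphisms: a $1$-morphism $(\underline\phi,\beta)$ goes to $(\lambda^{-1}\underline\phi,\lambda^{-1}\beta)$ after composing $\lambda^{-1}\beta$ with the canonical coherence isomorphism relating $\lambda^{-1}(\underline\phi\,\widetilde\otimes_R k)$ and $(\lambda^{-1}\underline\phi)\widetilde\otimes_R k$, and a $2$-morphism is simply transported by $\lambda^{-1}$; the cocycle/coherence conditions are preserved because $\lambda^{-1}$ is a $2$-functor.

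To prove it is an equivalence of $2$-groupoids I would exhibit a quasi-inverse, namely the analogous functor induced by $\lambda^R_!$: to $(\underline{\mathcal C}',\underline{\pi'})\in\widetilde{\Def}(\lambda^{-1}\underline{\mathcal S})(R)$ one assigns $(\lambda_!\underline{\mathcal C}',\lambda_!(\underline{\pi'}))$, using the canonical equivalence $\lambda_!\underline{\mathcal C}'\,\widetilde\otimes_R k\cong \lambda_!(\underline{\mathcal C}'\,\widetilde\otimes_R\mathbb C)$ and the equivalence $\lambda_!\lambda^{-1}\underline{\mathcal S}\cong\underline{\mathcal S}$. The natural equivalences $\lambda^{-1}\lambda_!\cong\id$ and $\lambda_!\lambda^{-1}\cong\id$ of $2$-categories of algebroid stacks, restricted to the $\widetilde{\mathtt{DIFF}(R)}$ setting, then give natural equivalences of the composite functors on $\widetilde{\Def}$ with the identities, and one checks these are compatible with the structure maps $\underline\pi$ (again via base-change naturality). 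Hence $\lambda^{-1}$ induces an equivalence on objects, on $\Hom$-groupoids, and on $2$-cells — i.e. an equivalence of $2$-groupoids. The main obstacle I anticipate is purely bookkeeping rather than conceptual: tracking the coherence $2$-isomorphisms (the interchange between $\lambda^{-1}$ and $\widetilde\otimes_R k$, and the associativity data $\mathcal C_{012}$, $\phi^\s{2}_{02}$, etc.) carefully enough to see that the putative quasi-inverse genuinely lands in $\widetilde{\Def}(\underline{\mathcal S})(R)$ and that all diagrams of $2$-morphisms commute; everything reduces to the already-established equivalence of the ambient $2$-categories of $\widetilde{\mathtt{DIFF}(R)}$-algebroid stacks, so no new geometric input is required.
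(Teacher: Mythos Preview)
Your proposal is correct and follows essentially the same approach as the paper, which simply states that the claim follows from the properties of $\lambda_!$. Your argument is a faithful (and considerably more detailed) unpacking of this: the equivalence of $2$-groupoids is inherited from the mutually quasi-inverse equivalences $\lambda^{-1}\dashv\lambda_!$ on the ambient $2$-categories of $\widetilde{\mathtt{DIFF}(R)}$-algebroid stacks, together with their naturality in $R$.
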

\begin{proof}
Follows from the properties of $\lambda_!$.
\end{proof}

\begin{thm}\label{thm: passage to E}
Suppose that $\mathbb{B}$ is a basis of $N_0G$ which consists of
Hausdorff contractible open sets. Let $R\in\ArtAlg_\mathbb{C}$.
\begin{enumerate}
\item $\lambda^{-1}\mathcal{S}(N_0\mathcal{E}_\mathbb{B})$ is nonempty and connected by isomorphisms.

\item Let $L\in \lambda^{-1}\mathcal{S}(N_0\mathcal{E}_\mathbb{B})$ be a
trivialization. The functor
\begin{equation}\label{fforget triv}
\Xi\colon \Def(\lambda^{-1}\underline{\mathcal{S}}, L)(R) \to
\Def(\lambda^{-1}\underline{\mathcal{S}})(R)
\end{equation}
defined by $\Xi(\underline{\mathcal{D}},\underline{\pi},\pi_\tau,M)
= (\underline{\mathcal{D}},\underline{\pi})$,
$\Xi(\underline{\phi},\phi_\tau,\beta) = (\underline{\phi},\beta)$,
$\Xi(b) = b$ is an equivalence.

\item The functor $\Def(\lambda^{-1}\underline{\mathcal{S}})(R) \to \widetilde{\Def}(\lambda^{-1}\underline{\mathcal{S}})(R)$ is an equivalence.
\end{enumerate}
\end{thm}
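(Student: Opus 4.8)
The plan is to establish the three assertions in turn, exploiting throughout that $N_0\mathcal{E}_\mathbb{B} = \coprod_{U\in\mathbb{B}} U$ is a disjoint union of Hausdorff contractible manifolds, that $\mathcal{O}$ is soft (so $H^i(U;\mathcal{O}_U) = 0$ for $i>0$), and that the exponential sequence $0\to 2\pi i\mathbb{Z}\to\mathcal{O}_U\to\mathcal{O}_U^\times\to 0$ together with contractibility gives $H^i(U;\mathcal{O}_U^\times) = 0$ for $i\geq 1$, whence also $H^1(U;(\mathcal{O}_U\otimes_k R)^\times) = 0$ via the $\mathfrak{m}_R$-adic filtration of $(\mathcal{O}_U\otimes_k R)^\times$. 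For (1), since $\underline{\mathcal{S}}$ is a twisted form of $\mathcal{O}_G$ its pullback $\lambda^{-1}\underline{\mathcal{S}}$ is a twisted form of $\mathcal{O}_{\mathcal{E}}$, and because $N_0\lambda$ restricts over each $U\in\mathbb{B}$ to the inclusion $U\hookrightarrow N_0G$, the underlying stack $\lambda^{-1}\mathcal{S}$ restricts over $U$ to the $\mathcal{O}_U^\times$-gerbe $\mathcal{S}|_U$. Equivalence classes of $\mathcal{O}_U^\times$-gerbes are classified by $H^2(U;\mathcal{O}_U^\times) = 0$, so $\mathcal{S}|_U\cong\widetilde{\mathcal{O}_U^+}$; hence $\mathcal{S}(U)$ is nonempty, and any two of its objects are isomorphic because the obstruction lies in $H^1(U;\mathcal{O}_U^\times) = 0$. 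Since $\lambda^{-1}\mathcal{S}(N_0\mathcal{E}_\mathbb{B}) = \prod_{U\in\mathbb{B}}\mathcal{S}(U)$ is a product of nonempty connected groupoids, (1) follows.

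For (2), the functor $\Xi$ merely forgets the data $(\pi_\tau,M)$ and $\phi_\tau$ and is a bijection on $2$-morphisms, so it is an equivalence once two existence statements are proved. First, every $(\underline{\mathcal{D}},\underline{\pi})\in\Def(\lambda^{-1}\underline{\mathcal{S}})(R)$ extends to a quadruple $(\underline{\mathcal{D}},\underline{\pi},\pi_\tau,M)$, that is, the object $L_0 = \pi_0^{-1}(L)$ of $\mathcal{D}\widetilde{\otimes}_R k$ lifts to an object $M\in\mathcal{D}(N_0\mathcal{E}_\mathbb{B})$. Second, every $1$-morphism $(\underline{\phi},\beta)$ between such objects admits a compatible isomorphism $\phi_\tau\colon\phi_0(M)\to M'$. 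Both are proved by induction along a chain of small extensions $k = R_0\leftarrow\cdots\leftarrow R_n = R$ with kernels $I_j$ (finite-dimensional, $\mathfrak{m}_{R_{j+1}}I_j = 0$): writing $\mathcal{D}_j = \mathcal{D}\widetilde{\otimes}_R R_j$ and noting $\shEnd_{\mathcal{D}_0}(L_0)\cong\mathcal{O}$ over each $U$, the obstruction to lifting an object reducing to $L_0$ from level $j$ to level $j+1$ lies, over each $U\in\mathbb{B}$, in $H^2(U;\mathcal{O}_U\otimes_k I_j) = 0$, while the obstruction to lifting an isomorphism lies in $H^1(U;\mathcal{O}_U\otimes_k I_j) = 0$. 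Because $N_0\mathcal{E}_\mathbb{B}$ is the disjoint union of the $U$, these componentwise vanishings assemble to global lifts step by step. The mechanism is that of the manifold case of \cite{bgnt1,bgnt3}; the only new ingredient is the decomposition of $N_0\mathcal{E}_\mathbb{B}$ into contractibles.

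For (3), the functor is the inclusion of the full subcategory $\Def(\lambda^{-1}\underline{\mathcal{S}})(R)$ of $\mathtt{DIFF}(R)$-algebroid stacks into $\widetilde{\Def}(\lambda^{-1}\underline{\mathcal{S}})(R)$, so it is automatically fully faithful and only essential surjectivity is at issue. Let $\underline{\mathcal{D}}$ be a $\widetilde{\mathtt{DIFF}(R)}$-deformation of $\lambda^{-1}\underline{\mathcal{S}}$. Each sheaf $\shHom_{\mathcal{D}}(L_1,L_2)$ reduces modulo $\mathfrak{m}_R$ to a locally free rank-one $\mathcal{O}$-module, hence is an $\mathfrak{m}_R$-adic deformation of one and so locally isomorphic to $\mathcal{O}\otimes_k R$; over each $U\in\mathbb{B}$ it is in fact globally isomorphic to $\mathcal{O}_U\otimes_k R$, since isomorphism classes of locally free rank-one $(\mathcal{O}_U\otimes_k R)$-modules are classified by $H^1(U;(\mathcal{O}_U\otimes_k R)^\times) = 0$. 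As $N_0\mathcal{E}_\mathbb{B} = \coprod_U U$, every $\shHom$-sheaf of $\mathcal{D}$ is globally of the form $L\otimes_k R$, so $\underline{\mathcal{D}}$ is already a $\mathtt{DIFF}(R)$-algebroid stack and lies in the essential image, the equivalence $\underline{\pi}$ being unchanged; this gives essential surjectivity.

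The step I expect to demand the most care is (2): the obstruction-theoretic induction must be carried out coherently for objects, $1$-morphisms and $2$-morphisms of the two $2$-groupoids, and one must keep track of which ambiguities in the lifts are absorbed by which morphisms in $\Def(\lambda^{-1}\underline{\mathcal{S}},L)(R)$. Each individual cohomological input ($H^{\geq 1}$ of a soft sheaf, or of $\mathcal{O}_U^\times$ on a contractible $U$) is immediate; the difficulty is purely organizational.
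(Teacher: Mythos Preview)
Your arguments for (1) and (2) are correct. For (1) you reproduce the paper's one-line proof. For (2) you take a genuinely different route: the paper establishes essential surjectivity of $\Xi$ on objects by reducing to the statement that on a Hausdorff manifold $X$ every $\mathtt{DIFF}(R)$-deformation of $\widetilde{\mathcal{O}_X^+}$ is a star-product, and proves this by showing that $\Gamma(X;\mathfrak{g}(\mathcal{O}_X))\hookrightarrow\mathfrak{G}(\mathcal{A}_\mathcal{U})$ is a quasi-isomorphism of DGLA (using the cotrace maps and sheaf-cohomological acyclicity). Your step-by-step obstruction argument along small extensions, with the band $\mathcal{O}_U\otimes I_j$ and $H^2(U;\mathcal{O}_U\otimes I_j)=0$, is more elementary and avoids the DGLA machinery entirely; it is valid because over each contractible $U$ the stack of lifts of $M_j$ is a gerbe under $\ker\bigl(\shAut_{\mathcal{D}_{j+1}}\to\shAut_{\mathcal{D}_j}\bigr)\cong\mathcal{O}_U\otimes I_j$. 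The paper's approach, on the other hand, ties the result into the DGLA framework that drives the rest of the article.

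Your argument for (3), however, has a genuine gap. You treat $\mathcal{F}=\shHom_{\mathcal{D}}(L_1,L_2)$ as a locally free rank-one $(\mathcal{O}_U\otimes_k R)$-module and invoke $H^1(U;(\mathcal{O}_U\otimes_k R)^\times)=0$. But $\mathcal{F}$ carries no $(\mathcal{O}\otimes R)$-module structure: it is only an $R$-module in $\mathtt{DIFF}$, locally isomorphic to $\mathcal{F}_0\otimes_k R$ by $R$-linear \emph{differential} isomorphisms reducing to the identity. The relevant torsor $\shIsom_0(\mathcal{F}_0\otimes_k R,\mathcal{F})$ is therefore under $\shAut_0(\mathcal{F}_0\otimes_k R)$, the group of $R$-linear differential automorphisms congruent to $\id$ modulo $\mathfrak{m}_R$, not under $(\mathcal{O}_U\otimes R)^\times$. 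Your decomposition $N_0\mathcal{E}_\mathbb{B}=\coprod_U U$ does not rescue this: objects $L_1,L_2$ live over an arbitrary open $V$, the pieces $V\cap U$ need not be contractible, and $(\mathcal{O}\otimes R)^\times$ is not soft (its $H^1$ detects $H^2(\,\cdot\,;\mathbb{Z})$). The correct argument, which the paper gives, is that $\shAut_0(\mathcal{F}_0\otimes R)$ \emph{is} soft, via the exponential isomorphism from the nilpotent sheaf $\Diff(\mathcal{F}_0,\mathcal{F}_0)\otimes_\mathbb{C}\mathfrak{m}_R$; hence the torsor is trivial over any open set, contractible or not, and $\mathcal{F}\cong\mathcal{F}_0\otimes_\mathbb{C} R$ globally.
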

\begin{proof}
Since
$H^l(N_0\mathcal{E}_\mathbb{B};\mathcal{O}^\times_{N_0\mathcal{E}_\mathbb{B}})$
is trivial for $l\neq 0$ the first statement follows.

Consider $(\underline{\mathcal{D}},\underline{\pi},\pi_\tau,M),
(\underline{\mathcal{D}}',\underline{\pi}',\pi'_\tau,M') \in
\Def(\lambda^{-1}\underline{\mathcal{S}},L)(R)$, and $1$-morphisms
$(\underline{\phi},\phi_\tau,\beta), (\underline{\psi}, \psi_\tau,
\gamma) \colon (\underline{\mathcal{D}},\underline{\pi},\pi_\tau,M)
\to (\underline{\mathcal{D}}',\underline{\pi}',\pi'_\tau,M')$.

It is clear from the definition of $\Xi$ that the induced map
\[
\Xi \colon \Hom((\underline{\phi},\phi_\tau,\beta),
(\underline{\psi}, \psi_\tau, \gamma)) \to
\Hom((\underline{\phi},\beta),(\underline{\psi},\gamma))
\]
is an isomorphism, i.e. the functor
\begin{equation}\label{forget triv on hom}
\Xi \colon \Hom_{\Def(\lambda^{-1}\mathcal{S},
L)(R)}((\underline{\phi},\phi_\tau,\beta), (\underline{\psi},
\psi_\tau, \gamma)) \to
\Hom_{\Def(\lambda^{-1}\mathcal{S})(R)}((\underline{\phi},\beta),
(\underline{\psi}, \gamma))
\end{equation}
is fully faithful.

Consider a $1$-morphism
\[
(\underline{\phi},\beta) \colon
\Xi(\underline{\mathcal{D}},\underline{\pi},\pi_\tau,M)
=(\underline{\mathcal{D}},\underline{\pi}) \to
(\underline{\mathcal{D}}',\underline{\pi}') =
\Xi(\underline{\mathcal{D}}',\underline{\pi}',\pi'_\tau,M')
\]
in $\Def(\lambda^{-1}\mathcal{S})$. We have the isomorphism
\begin{multline*}
\underline{\pi}'\colon
\Hom_{\underline{\mathcal{D}}'\widetilde{\otimes}_R
\mathbb{C}}((\underline{\phi}\otimes_R \mathbb{C})(M\otimes_R
\mathbb{C}), M'\otimes_R \mathbb{C}) \to \\
\Hom_{\lambda^{-1}\underline{\mathcal{S}}}(\underline{\pi}'(\underline{\phi}\otimes_R
\mathbb{C})(M\otimes_R \mathbb{C}),\underline{\pi}'(M'\otimes_R
\mathbb{C}))
\end{multline*}
Let $\varpi\colon (\underline{\phi}\otimes_R \mathbb{C})(M\otimes_R
\mathbb{C}) \to M'\otimes_R \mathbb{C}$ denote the map such that
$\underline{\pi}'(\varpi) = (\pi'_\tau)^{-1}\circ \pi_\tau\circ
\beta^{-1}$. Note that $\varpi$ is an isomorphism. Since the map
\[
\Hom_{\underline{\mathcal{D}}'}(\phi(M),M') \to
\Hom_{\underline{\mathcal{D}}'\widetilde{\otimes}_R
\mathbb{C}}((\underline{\phi}\otimes_R \mathbb{C})(M\otimes_R
\mathbb{C}), M'\otimes_R \mathbb{C})
\]
is surjective there exists $\widetilde{\varpi} \colon \phi(M) \to
M'$ which lifts $\varpi$. Since the latter is an isomorphism and
$\mathfrak{m}_R$ is nilpotent, the map $\widetilde{\varpi}$ is an
isomorphism. The triple
$(\underline{\phi},\widetilde{\varpi},\beta)$ is a $1$-morphism
$(\underline{\mathcal{D}},\underline{\pi},\pi_\tau,M) \to
(\underline{\mathcal{D}}',\underline{\pi}',\pi'_\tau,M')$ such that
$\Xi(\underline{\phi},\widetilde{\varpi},\beta) =
(\underline{\phi},\beta)$.This shows that the functor \eqref{forget
triv on hom} is essentially surjective, hence an equivalence.

It remains to show that \eqref{fforget triv} is essentially
surjective. It suffices to show that, for any deformation
$(\mathcal{D},\pi) \in \Def((N_0\lambda)^{-1}\mathcal{S}, L)(R)$,
there exists an object $M\in \mathcal{D}(N_0\mathcal{E}_\mathbb{B})$
and an isomorphism $M\otimes_R \mathbb{C} \cong L$. This is implied
by the following fact: if $X$ is a \emph{Hausdorff} manifold, any
deformation of $\widetilde{\mathcal{O}_X^+}$ is a star-product. In
other words, for any open covering $\mathcal{U}$ of $X$, denoting
the corresponding \'etale groupoid by $\mathcal{U}$ and by
$\epsilon\colon  \mathcal{U} \to X$ the canonical map, the functor
\begin{equation}\label{lift of def}
\MC^2(\Gamma(X;\mathfrak{g}(\mathcal{O}_X))\otimes_\mathbb{C}
\mathfrak{m}_R) \to \Def(\widetilde{\mathcal{O}_\mathcal{U}^+},
\one)(R)
\end{equation}
is an equivalence. Let $\mathcal{A}_X \colon=
\cma(\widetilde{\mathcal{O}^+})$, $\mathcal{A}_\mathcal{U} \colon=
\cma(\widetilde{\mathcal{O}_\mathcal{U}^+}, \one)$. We have the
commutative diagram
\[
\begin{CD}
\Gamma(X; \mathfrak{g}(\mathcal{O}_X)) @>>> \mathfrak{G}(\mathcal{O}_X) @>{\epsilon^*}>> \mathfrak{G}(\mathcal{O}_\mathcal{U}) \\
& & @V{\cotr}VV @VV{\cotr}V \\
& & \mathfrak{G}(\mathcal{A}_X) @>{\epsilon^*}>>
\mathfrak{G}(\mathcal{A}_\mathcal{U}) .
\end{CD}
\]
After the identifications
$\Def(\widetilde{\mathcal{O}_\mathcal{U}^+}, \one)(R) \cong
\Def(\mathcal{A})(R) \cong
\MC^2(\mathfrak{G}(\mathcal{A})\otimes_\mathbb{C} \mathfrak{m}_R)$
the functor \eqref{lift of def} is induced by the composition
$\Gamma(X; \mathfrak{g}(\mathcal{O}_X)) \to
\mathfrak{G}(\mathcal{A}_\mathcal{U})$ (of morphisms in the above
diagram), hence it is sufficient to show that the latter is a
quasi-isomorphism. The cotrace (vertical) maps are
quasi-isomorphisms by \cite{Loday} ; the top horizontal composition
is the canonical map $\Gamma(X;\mathcal{F}) \to
\Gamma(\real{N\mathcal{U}}; \real{\epsilon^*\mathcal{F}})$ (with
$\mathcal{F} = \mathfrak{g}(\mathcal{O}_X)$) which is a
quasi-isomorphism for any bounded below complex of sheaves
$\mathcal{F}$ which satisfies $H^i(X;\mathcal{F}^j) = 0$ for all
$i\neq 0$ and all $j$. This finishes the proof of the second claim.

Suppose given $(\underline{\mathcal{D}}, \underline{\pi}) \in
\widetilde{\Def}(\lambda^{-1}\mathcal{S})(R)$, i.e., in particular,
$\mathcal{D}$ is a $\widetilde{\mathtt{DIFF}}(R)$-stack. In order to
establish the last claim we need to show that, in fact,
$\mathcal{D}$ is a $\mathtt{DIFF}(R)$-stack. Suppose that $L_1$ and
$L_2$ are two (locally defined) objects in $\mathcal{D}$; let
$\mathcal{F} \colon= \shHom_\mathcal{D}(L_1,L_2)$, $\mathcal{F}_0
\colon= \mathcal{F}\otimes_R \mathbb{C}$. By assumption,
$\mathcal{F}$ is \emph{locally} isomorphic to
$\mathcal{F}_0\otimes_\mathbb{C} R$ in $\mathtt{DIFF}$ by a map
which induces the identification $\mathcal{F}_0 =
(\mathcal{F}_0\otimes_\mathbb{C} R)\otimes_R \mathbb{C}$. We need to
establish the existence of a global such an isomorphism $\mathcal{F}
\cong \mathcal{F}_0\otimes_\mathbb{C} R$. Let
$\shIsom_0(\mathcal{F}_0\otimes_\mathbb{C} R, \mathcal{F})$ denote
the sheaf of locally defined $R$-linear morphisms
$\mathcal{F}_0\otimes_\mathbb{C} R \to \mathcal{F}$ in
$\widetilde{\mathtt{DIFF}}$ which reduce to the identity modulo
$\mathfrak{m}_R$. Let $\shAut_0(\mathcal{F}_0\otimes_\mathbb{C} R)$
denote the similarly defined sheaf of groups of locally defined
automorphisms of $ \mathcal{F}_0\otimes_\mathbb{C} R$. Then,
$\shIsom_0(\mathcal{F}_0\otimes_\mathbb{C} R, \mathcal{F})$ is a
torsor under $\shAut_0(\mathcal{F}_0\otimes_\mathbb{C} R)$.

Arguing as in Lemma 6 and Corollary 7 of \cite{bgnt2} using the
exponential map
$\Diff(\mathcal{F}_0,\mathcal{F}_0)\otimes_\mathbb{C} \mathfrak{m}_R
\to \shAut_0(\mathcal{F}_0\otimes_\mathbb{C} R)$ one shows that the
sheaf of groups $\shAut_0(\mathcal{F}_0\otimes_\mathbb{C} R)$ is
soft, hence the torsor $\shIsom_0(\mathcal{F}_0\otimes_\mathbb{C} R,
\mathcal{F})$ is trivial, i.e. admits a global section.
\end{proof}

\subsubsection{} Here we obtain the main results of this paper -- classification of the
deformation groupoid of twisted form of $\mathcal{O}_G$ in terms of
the twisted DGLA of jets (cf. \eqref{defgs}).

\begin{thm}\label{equiv of 2-gpds} Let $G$ be an \'etale groupoid and $\underline{\mathcal{S}}$ - a twisted form of $\mathcal{O}_G$.
Suppose that $\mathbb{B}$ is a basis of $N_0G$ which consists of
Hausdorff contractible open sets, and let
$\mathcal{E}=\mathcal{E}_{\mathbb{B}}(G)$ be the corresponding
embedding category. Let  $R\in\ArtAlg_\mathbb{C}$. Then there exists
an equivalence of $2$-groupoids
\[
\widetilde{\Def}(\underline{\mathcal{S}})(R)\cong
\MC^2(\mathfrak{G}_\DR(\mathcal{J}_{N\mathcal{E}})_{[\lambda^{-1}\underline{\mathcal{S}}]}\otimes
\mathfrak{m}_R).
\]
\end{thm}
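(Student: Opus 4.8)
The plan is to concatenate equivalences already established in the paper; the only genuinely new ingredient is the identification, for the cosimplicial matrix algebra attached to $(\lambda^{-1}\underline{\mathcal{S}},L)$, of the characteristic class of its gerbe of splittings with $[\lambda^{-1}\underline{\mathcal{S}}]$. Throughout write $\mathcal{E}=\mathcal{E}_\mathbb{B}(G)$, $\lambda=\lambda_\mathbb{B}(G)$.

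\emph{Step 1: reduction to a cosimplicial matrix Azumaya algebra.} By Lemma \ref{equi} the functor \eqref{G to E defs} is an equivalence $\widetilde{\Def}(\underline{\mathcal{S}})(R)\cong\widetilde{\Def}(\lambda^{-1}\underline{\mathcal{S}})(R)$. By Theorem \ref{thm: passage to E}(3) the inclusion $\Def(\lambda^{-1}\underline{\mathcal{S}})(R)\to\widetilde{\Def}(\lambda^{-1}\underline{\mathcal{S}})(R)$ is an equivalence; by part (1) of that theorem (here the hypothesis that $\mathbb{B}$ consists of Hausdorff contractible sets enters) $\lambda^{-1}\mathcal{S}(N_0\mathcal{E})$ is nonempty and connected by isomorphisms, so we fix a trivialization $L$ in it, and by part (2) the functor $\Xi$ is an equivalence $\Def(\lambda^{-1}\underline{\mathcal{S}},L)(R)\cong\Def(\lambda^{-1}\underline{\mathcal{S}})(R)$. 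Since $\underline{\mathcal{S}}$ is a twisted form of $\mathcal{O}_G$ it is a $\mathtt{DIFF}$-algebroid stack and $(\lambda^{-1}\underline{\mathcal{S}},L)\in\Triv^{\mathtt{DIFF}}_\mathbb{C}(\mathcal{E})$; hence Theorem \ref{mt1} yields $\Def(\lambda^{-1}\underline{\mathcal{S}},L)(R)\cong\MC^2(\mathfrak{G}(\mathcal{A})\otimes_\mathbb{C}\mathfrak{m}_R)$, where $\mathcal{A}:=\cma(\lambda^{-1}\underline{\mathcal{S}},L)\in\CosMatAlg^{\mathtt{DIFF}}_\mathbb{C}(N\mathcal{E})$.

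\emph{Step 2: passage to the twisted DGLA of jets.} As $\mathbb{B}$ consists of Hausdorff sets, each $N_n\mathcal{E}$ is a disjoint union of Hausdorff manifolds, so $N\mathcal{E}$ is a Hausdorff \'etale simplicial manifold and the constructions of Section \ref{Deformations of cosimplicial matrix Azumaya algebras} apply. Since the sheaves $\shHom$ in a twisted form of $\mathcal{O}$ are $\mathcal{O}^\times$-torsors we have $\mathcal{A}^n_{ii}=\mathcal{O}_{N_n\mathcal{E}}$ and each $\mathcal{A}^n_{ij}$ is a line bundle, so $\mathcal{A}$ is a cosimplicial matrix Azumaya $\mathcal{O}_{N\mathcal{E}}$-algebra. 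We then have the zigzag of DGLA
\[
\mathfrak{G}(\mathcal{A})\longrightarrow\mathfrak{G}_\DR(\mathcal{J}_{N\mathcal{E}}(\mathcal{A}))\longleftarrow\mathfrak{G}_\DR(\mathcal{J}_{N\mathcal{E}})_{[S(\mathcal{A})]},
\]
the left arrow being the quasi-isomorphism \eqref{incl} and the right one the morphism $\overline{\Upsilon}$ of \eqref{cotr ind of choice}, which is an isomorphism in the derived category of DGLA. Applying $\MC^2(\,\cdot\,\otimes_\mathbb{C}\mathfrak{m}_R)$ and invoking Theorem \ref{thm: quism invariance of mc} termwise along the zigzag gives $\MC^2(\mathfrak{G}(\mathcal{A})\otimes_\mathbb{C}\mathfrak{m}_R)\cong\MC^2(\mathfrak{G}_\DR(\mathcal{J}_{N\mathcal{E}})_{[S(\mathcal{A})]}\otimes_\mathbb{C}\mathfrak{m}_R)$.

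\emph{Step 3: identifying the characteristic class.} It remains to see $[S(\mathcal{A})]=[\lambda^{-1}\underline{\mathcal{S}}]$. Write $\lambda^{-1}\underline{\mathcal{S}}=(\mathcal{C},\mathcal{C}_{01},\mathcal{C}_{012})$. By Lemma \ref{lemma: cosimp splitting} the bundles $\mathcal{E}^p=\bigoplus_j\mathcal{A}^p_{j0}$ give an equivalence of cosimplicial gerbes $\mathcal{S}_\mathcal{A}\to S(\mathcal{A})$, where $\mathcal{S}_\mathcal{A}$ has trivial terms, structure line bundles $(\mathcal{S}_\mathcal{A})_f=\mathcal{A}^q_{0f(0)}$, and gluing induced by multiplication in $\mathcal{A}$. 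On the other hand the trivializations $L^{(n)}_0$ present the cosimplicial $\mathcal{O}^\times$-gerbe $(\lambda^{-1}\underline{\mathcal{S}})_\Delta$ with trivial terms, structure line bundles $\shHom_{\mathcal{C}^{(q)}_0}(L^{(q)}_0,\mathcal{C}^{(q)}_{0f(0)}(L^{(q)}_{f(0)}))=\mathcal{A}^q_{0f(0)}$, and gluing induced by $\mathcal{C}_{012}$ — which by the definition of $\cma$ is exactly the multiplication of matrix entries of $\mathcal{A}$. Hence $\mathcal{S}_\mathcal{A}$ and $(\lambda^{-1}\underline{\mathcal{S}})_\Delta$ are canonically equivalent cosimplicial $\mathcal{O}^\times$-gerbes, so by the construction of \ref{subsubsection: char class trivialized gerbe} they have the same characteristic form and therefore $\mathfrak{G}_\DR(\mathcal{J}_{N\mathcal{E}})_{[S(\mathcal{A})]}=\mathfrak{G}_\DR(\mathcal{J}_{N\mathcal{E}})_{[\lambda^{-1}\underline{\mathcal{S}}]}$. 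Composing the equivalences of Steps 1--3 (each natural in $R$) gives the asserted equivalence. The main obstacle is precisely this last identification: one must trace the cocycle data defining $[S(\mathcal{A})]$ through the construction of $\cma$ and through the characteristic-class recipe of \ref{subsubsection: char class trivialized gerbe}; everything else is a formal assembly of results already proved.
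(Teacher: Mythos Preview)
Your proof is correct and follows essentially the same route as the paper's own proof: the same chain of equivalences via Lemma~\ref{equi}, Theorem~\ref{thm: passage to E}, Theorem~\ref{mt1}, the quasi-isomorphism \eqref{incl}, and the derived-category isomorphism \eqref{cotr ind of choice}, followed by the identification of $\mathcal{S}_\mathcal{A}$ with $(\lambda^{-1}\underline{\mathcal{S}})_\Delta$ via the trivializations $L^{(n)}_0$ and the line bundles $\mathcal{A}^q_{0f(0)}$. Your Step~3 spells out a bit more explicitly why the gluing data agree (tracing it back to the definition of $\cma$), whereas the paper simply exhibits the morphism $(L_\Delta^n, L_{\Delta f})$ and declares the verification easy; but this is the same argument.
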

\begin{proof}
Note that we have the following equivalences:
 \[
 \widetilde{\Def}(\underline{\mathcal{S}})(R)\cong
    \widetilde{\Def}(\lambda^{-1}\underline{\mathcal{S}})(R)
    \cong\Def(\lambda^{-1}\underline{\mathcal{S}})(R)
\]

Here the first equivalence  is the result of Lemma \eqref{equi} and
the second  is a part of the Theorem \ref{thm: passage to E}. By the
Theorem  \ref{thm: passage to E}
$\lambda^{-1}\underline{\mathcal{S}}(N_0\mathcal{E}_\mathbb{B})$ is
nonempty. Let $L\in
\lambda^{-1}\mathcal{S}(N_0\mathcal{E}_\mathbb{B})$ be a
trivialization. Then the functor
\begin{equation}
\Xi\colon \Def(\lambda^{-1}\underline{\mathcal{S}}, L)(R) \to
\Def(\lambda^{-1}\underline{\mathcal{S}})(R)
\end{equation}
is an equivalence. By the Theorem \ref{mt1}
\[
\Def(\lambda^{-1}\underline{\mathcal{S}}, L)(R)  \cong
\MC^2(\mathfrak{G}(\mathcal{A}) \otimes
    \mathfrak{m}_R)
\]
where $\mathcal{A}=\cma(\lambda^{-1}\underline{\mathcal{S}}, L)$.
Finally, we have equivalences
\[
\MC^2(\mathfrak{G}_\DR(\mathcal{A}) \otimes \mathfrak{m}_R)\cong
\MC^2(\mathfrak{G}_\DR(\mathcal{J}_{N\mathcal{E}}(\mathcal{A}))
\otimes \mathfrak{m}_R) \cong
\MC^2(\mathfrak{G}_\DR(\mathcal{J}_{N\mathcal{E}})_{[S(\mathcal{A})]}\otimes
\mathfrak{m}_R)
\]
induced by the quasiisomorphisms \eqref{iincl} and \eqref{cotr ind
of choice} respectively (with $X= N\mathcal{E}$). Recall the
morphism of cosimplicial $\mathcal{O}^\times$-gerbes $
\mathcal{S}_\mathcal{A} \to S(\mathcal{A})$ defined in
\ref{cosimplicial splitting}. The proof the theorem will be finished
if we construct a morphism of cosimplicial gerbes
$\mathcal{S}_\mathcal{A} \to
(\lambda^{-1}\underline{\mathcal{S}})_\Delta$.

For each $n = 0, 1, 2, \ldots$ we have $L^\s{n}_0 \in
(\lambda^{-1}\mathcal{S})^\s{n}_0 =
(\lambda^{-1}\underline{\mathcal{S}})_\Delta^n$, i.e. a morphism
$L_\Delta^n \colon \mathcal{O}^\times_{N_nG}[1] \to
(\lambda^{-1}\underline{\mathcal{S}})_\Delta^n$. For $f \colon [p]
\to [q]$ in $\Delta$, we have canonical isomorphisms $L_{\Delta f}
\colon (\lambda^{-1}\underline{\mathcal{S}})_{\Delta f} =
\mathcal{A}^\s{q}_{0 f(0)}$. It is easy to see that $(L_\Delta^n,
L_{\Delta f})$ defines a morphism $\mathcal{S}_\mathcal{A} \to
(\lambda^{-1}\underline{\mathcal{S}})_\Delta$.

\end{proof}

In certain cases we can describe a solution to the deformation
problem in terms of the nerve of the groupoid without passing to the
embedding category.
\begin{thm}\label{mt3} Let $G$ be an \'etale \emph{Hausdorff} groupoid and $\underline{\mathcal{S}}$ - a twisted form of $\mathcal{O}_G$.
  Then we have a canonical equivalence of $2$-groupoids
\[
\widetilde{\Def}(\underline{\mathcal{S}}) \cong
\MC^2(\mathfrak{G}_\DR(\mathcal{J}_{NG})_{[\underline{\mathcal{S}_{\Delta}}]}\otimes
\mathfrak{m}_R).
\]
\end{thm}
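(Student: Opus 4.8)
The plan is to reduce the statement to Theorem~\ref{equiv of 2-gpds}. Since $G$ is Hausdorff, $NG$ is a Hausdorff \'etale simplicial manifold, so the constructions of Section~\ref{Deformations of cosimplicial matrix Azumaya algebras} --- in particular the DGLA $\mathfrak{G}_\DR(\mathcal{J}_X)_{[\mathcal{S}]}$ of~\eqref{defgs} --- apply with $X=NG$ and the cosimplicial $\mathcal{O}^\times$-gerbe $\underline{\mathcal{S}}_\Delta$ attached to $\underline{\mathcal{S}}$; this is exactly the right hand side $\mathfrak{G}_\DR(\mathcal{J}_{NG})_{[\underline{\mathcal{S}_\Delta}]}$. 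Choose a basis $\mathbb{B}$ of the topology on $N_0G$ consisting of contractible (hence Hausdorff) open sets and form the embedding category $\mathcal{E}=\mathcal{E}_\mathbb{B}(G)$ with its canonical functor $\lambda\colon\mathcal{E}\to G$ (see~\ref{cat of emb}); $\mathcal{E}$ is again Hausdorff and \'etale. Theorem~\ref{equiv of 2-gpds} gives an equivalence
\[
\widetilde{\Def}(\underline{\mathcal{S}})(R)\cong\MC^2\bigl(\mathfrak{G}_\DR(\mathcal{J}_{N\mathcal{E}})_{[(\lambda^{-1}\underline{\mathcal{S}})_\Delta]}\otimes\mathfrak{m}_R\bigr),
\]
and $(\lambda^{-1}\underline{\mathcal{S}})_\Delta=N\lambda^{-1}(\underline{\mathcal{S}}_\Delta)$ directly from the definitions of inverse image of a stack on an \'etale category and of the associated cosimplicial gerbe. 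In view of Theorem~\ref{thm: quism invariance of mc} it therefore suffices to produce a quasiisomorphism, in the derived category of DGLA, $N\lambda^*\colon\mathfrak{G}_\DR(\mathcal{J}_{NG})_{[\underline{\mathcal{S}}_\Delta]}\to\mathfrak{G}_\DR(\mathcal{J}_{N\mathcal{E}})_{[N\lambda^{-1}(\underline{\mathcal{S}}_\Delta)]}$.

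To construct this map I would mimic the integration procedure of~\ref{subsubsection: the map} and~\eqref{defgs}. The maps $N_p\lambda\colon N_p\mathcal{E}\to N_pG$ are \'etale surjections, so inverse image along $N\lambda$ sends a choice of auxiliary data for $\underline{\mathcal{S}}_\Delta$ (the trivializations $\eth^p$, $B^p$, $\beta_f$ of~\ref{dlog-gerbe}, for the gerbe $\nabla^{can}\log\vac(\underline{\mathcal{S}}_\Delta)$) to one for $N\lambda^{-1}(\underline{\mathcal{S}}_\Delta)$, and hence induces a functor $\Sing(\AuxData(\underline{\mathcal{S}}_\Delta))\to\Sing(\AuxData(N\lambda^{-1}(\underline{\mathcal{S}}_\Delta)))$, $\mu\mapsto\mu'$. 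For each singular simplex $\mu\colon[m]\to\AuxData(\underline{\mathcal{S}}_\Delta)$ the map $\id_{\Delta^m}\times N\lambda\colon\Delta^m\times N\mathcal{E}\to\Delta^m\times NG$ is a morphism of \'etale simplicial manifolds under which all the sheaves $\mathcal{O}$, $\mathcal{J}$, $\overline{\mathcal{J}}$, $\Omega^\bullet$ and all the structures of~\ref{subsection: gerbes} are compatible; naturality of the construction of the characteristic cocycle under such pullbacks gives $N\lambda^*(\overline{B}_{\widetilde{\mu}})=\overline{B}_{\widetilde{\mu'}}$, so inverse image induces compatible morphisms of the twisted pieces $\mathfrak{G}_\DR(\mathcal{J}_{\Delta^m\times NG})_{\overline{B}_{\widetilde{\mu}}}\to\mathfrak{G}_\DR(\mathcal{J}_{\Delta^m\times N\mathcal{E}})_{\overline{B}_{\widetilde{\mu'}}}$. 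Passing to the limits~\eqref{defgs} (each of which is quasiisomorphic to its common value, every object of the indexing category being initial and final) yields $N\lambda^*$.

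The main obstacle is to show that $N\lambda^*$ is a quasiisomorphism. A filtration argument reduces this to the untwisted statement that $N\lambda^*\colon\mathfrak{G}_\DR(\mathcal{J}_{NG})\to\mathfrak{G}_\DR(\mathcal{J}_{N\mathcal{E}})$ is a quasiisomorphism: filtering both sides by the sum of the de Rham and simplicial degrees, the operator $[\overline{B},\,\cdot\,]$ strictly raises the filtration while $\delta$, $\widetilde{\nabla}^{can}$ and $d_\Delta$ preserve it, the filtration is exhaustive and finite in each total degree, and the associated graded of $N\lambda^*$ is the untwisted map; the same applies componentwise in~\eqref{defgs} with the extra factor $\Delta^m$. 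For the untwisted statement one identifies $\mathfrak{G}_\DR(\mathcal{J}_X)$ with the complex $C(\Gamma(X;\overline{C}^\bullet(\mathcal{O}_X)[1]))$ computing the cohomology of the cosimplicial complex of sheaves $\overline{C}^\bullet(\mathcal{O}_X)[1]$ on $X$, using the horizontal-sections quasiisomorphism $\overline{C}^\bullet(\mathcal{O}_X)[1]\to\DR(\overline{C}^\bullet(\mathcal{J}_X))[1]$, the acyclicity of the de Rham complex of jets (the Lemma following~\ref{diff and jet}), the subdivision comparison (Lemma~\ref{lemma: subdiv quism} and its sheaf analogue), and the quasiisomorphism $\Tot\simeq C$ of cosimplicial complexes of vector spaces. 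Since the sheaves $\overline{C}^n(\mathcal{O})$ are soft, hence acyclic on the sets of $\mathbb{B}$ and on each $N_pG$, the equivalence~\ref{subsubsection: equiv for ab sh}, applied degreewise and combined with the dictionary between sheaves on an \'etale category and cosimplicial sheaves on its nerve, shows that $N\lambda$ induces an isomorphism on this cohomology. Assembling these facts and invoking Theorem~\ref{thm: quism invariance of mc} yields the asserted equivalence; the delicate point throughout is keeping the naturality of the characteristic-class construction under \'etale pullbacks precise enough that the maps on the pieces of the limits~\eqref{defgs} are strictly compatible with the structure maps.
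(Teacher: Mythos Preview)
Your proposal is correct and follows essentially the same route as the paper: reduce to Theorem~\ref{equiv of 2-gpds}, construct a pullback map of twisted DGLAs along $N\lambda$, and prove it is a quasiisomorphism by a filtration/spectral-sequence argument that ultimately rests on the softness of the Hochschild sheaves and the comparison of~\ref{subsubsection: equiv for ab sh}.

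There are two minor differences worth noting. First, the paper is content to work with a single choice of cocycle $\overline{B}$ on $\real{NG}$ and its pullback $\Tot(\real{\lambda}^*)\overline{B}$, obtaining an honest morphism of DGLA $\mathfrak{G}_\DR(\mathcal{J}_{NG})_{\overline{B}}\to\mathfrak{G}_\DR(\mathcal{J}_{N\mathcal{E}})_{\Tot(\real{\lambda}^*)\overline{B}}$; your route through the limit~\eqref{defgs} and the functor on $\Sing(\AuxData(\cdot))$ is more fastidious about canonicity but amounts to the same thing, since each projection from the limit is a quasiisomorphism. Second, the filtrations differ: you filter so that only the twist $[\overline{B},\,\cdot\,]$ drops out on the associated graded, reducing in one step to the untwisted comparison; the paper filters by total form degree, so that on $E_0$ only the Hochschild differential $\delta$ survives, and then compares the $E_1$-pages $\Tot(\Gamma(\real{\,\cdot\,};\real{\DR(HH^\bullet(\mathcal{J})[1])}^\prime))$. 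Your filtration is cleaner; the paper's makes the role of Hochschild cohomology more visible. Either way the endgame is identical: $\DR(\cdot)$ resolves the soft sheaves $HH^\bullet(\mathcal{O})$ (Hausdorffness is used here), Lemma~\ref{lemma: subdiv quism} handles subdivision, and~\ref{subsubsection: equiv for ab sh} gives the comparison along $\lambda$.
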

\begin{proof}
Suppose that $\mathbb{B}$ is a basis of $N_0G$ which consists of
  contractible open sets, and let
$\mathcal{E}=\mathcal{E}_{\mathbb{B}}(G)$ be the corresponding
embedding category. The map of simplicial spaces $\lambda \colon
N\mathcal{E} \to NG$ induces the map of subdivisions $\real{\lambda}
\colon \real{N\mathcal{E}} \to \real{NG}$. It induces a map $
\Tot(\real{\lambda}^*) \colon
\Tot(\Gamma(\real{X};\DR(\overline{\mathcal{J}}_{\real{NG}})) \to
\Tot(\Gamma(\real{X};\DR(\overline{\mathcal{J}}_{\real{N\mathcal{E}}}))$
Let $\overline{B} \in
\Tot(\Gamma(\real{NG};\DR(\overline{\mathcal{J}}_{\real{NG}}))$ be a
cycle defined in Proposition \ref{prop:class `1-forms-gerbe} and
representing the lift of the class of $\underline{\mathcal{S}}$ in
$H^2(\Tot(\Gamma(\real{NG};\DR(\overline{\mathcal{J}}_{\real{NG}})))$.
This form depends on choices of several pieces of data described in
\ref{dlog-gerbe}. Then $\Tot(\real{\lambda}^*)\overline{B} \in
\Tot(\Gamma(\real{NG};\DR(\overline{\mathcal{J}}_{\real{N\mathcal{E}}}))$
would be the form representing the class of
$\lambda^{-1}\underline{\mathcal{S}}$ constructed using pullbacks of
the data used to construct $\overline{B}$. We therefore obtain a
morphism of DGLA $\real{\lambda}^* \colon
\mathfrak{G}_\DR(\mathcal{J}_{NG})_{\overline{B}
}\to\mathfrak{G}_\DR(\mathcal{J}_{N\mathcal{E}})_{\Tot(\real{\lambda}^*)\overline{B}
}$. It is enough to show that this morphism is a quasiisomorphism.
To see this  filter both complexes by the (total) degree of
differential forms. The map $\real{\lambda}$ respects this
filtration and therefore induces a morphism of the corresponding
spectral sequences. The $E_1$ terms of these spectral sequences are
$\Tot(\Gamma(\real{NG};\real{\DR(
{HH}^\bullet(\mathcal{J}_{NG})[1])}^\prime))$ and
$\Tot(\Gamma(\real{N\mathcal{E}};\real{\DR({
HH}^\bullet(\mathcal{J}_{N\mathcal{E}})[1])}^\prime))$ respectively,
and the second differential in these spectral sequences is given by
$\widetilde{\nabla}$. Here ${ HH}^\bullet(.)$ is the cohomology of
the complex $\overline{ C}^\bullet (.)$  Since both $NG$ and
$N\mathcal{E}$ are Hausdorff, the complexes $\DR({
HH}^\bullet(\mathcal{J}_{N\mathcal{E}}))$ $ \DR({
HH}^\bullet(\mathcal{J}_{NG}))$ are resolutions of the soft sheaves
${HH}^\bullet(\mathcal{O}_{N\mathcal{E}})$,
${HH}^\bullet(\mathcal{O}_{NG})$ respectively.

By the results of  \ref{subsubsection: equiv for ab sh}
\[
\lambda^*\colon C(\Gamma(NG;
 {HH}^\bullet(\mathcal{O}_{NG})[1])) \to
C(\Gamma(N\mathcal{E}; {HH}^\bullet(\mathcal{O}_{N\mathcal{E}})[1]))
\]
is a quasiisomorphism. Hence
\[
\real{\lambda}^*\colon C(\Gamma(\real{NG};
\real{{HH}^\bullet(\mathcal{O}_{NG})[1]}^\prime)) \to
C(\Gamma(\real{N\mathcal{E}};
\real{{HH}^\bullet(\mathcal{O}_{N\mathcal{E}})[1]}^\prime))
\]
 is a quasiisomorphism, by the Lemma \ref{lemma: subdiv quism}.
From this one concludes that
\[
\Tot(\real{\lambda}^*)\colon \Tot(\Gamma(\real{NG};
\real{{HH}^\bullet(\mathcal{O}_{NG})[1]}^\prime)) \to
\Tot(\Gamma(\real{N\mathcal{E}};
\real{{HH}^\bullet(\mathcal{O}_{N\mathcal{E}})[1]}^\prime))
\]
is a quasiisomorphism. Hence
\[
\Tot(\real{\lambda}^*) \colon \Tot(\Gamma(\real{NG};\real{\DR(
{HH}^\bullet(\mathcal{J}_{NG})[1])}^\prime)) \to
\Tot(\Gamma(\real{N\mathcal{E}};\real{\DR({HH}^\bullet(\mathcal{J}_{N\mathcal{E}})[1])}^\prime))
\]
is a quasiisomorphism. Hence $\Tot(\real{\lambda}^*)$ induces a
quasiisomorphism of the $E_1$ terms of the spectral sequence, and
therefore is a quasiisomorphism itself.
\end{proof}

\subsection{Deformations of convolution algebras}
Assume that $G$ is an \'etale groupoid with $N_0G$ Hausdorff. In the following we will treat $N_0G$ as a subset of
$N_1G$.

With an object $(\underline{\mathcal{C}},L)\in\Triv_R(G)$ one can
canonically associate the nonunital  algebra
$\conv(\underline{\mathcal{C}},L)$, called the convolution algebra,
cf. \cite{tuxu}. Let $\mathcal{A}=\cma(\underline{\mathcal{C}},L)$,
see \ref{Cosimplicial matrix algebras from algebroid stacks}.
 The underlying vector space of this algebra is  $\Gamma_c(N_1G;
\mathcal{A}_{01}^1)$. Here we use the definition of the compactly
supported sections as in \cite{crainic}. The product is defined by
the composition
\begin{multline}
\Gamma_c(N_1G; \mathcal{A}_{01}^1) \otimes \Gamma_c(N_1G;
\mathcal{A}_{01}^1) \to \Gamma_c(N_2G; \mathcal{A}_{01}^2 \otimes
\mathcal{A}_{12}^2) \to \\ \Gamma_c(N_2G; \mathcal{A}_{02}^2)  \cong
\Gamma_c(N_1G, (d_1^1)_! \mathcal{A}_{02}^2) \to \Gamma_c(N_1G,
\mathcal{A}_{01}^1).
\end{multline}

Here the first arrow maps $f \otimes g$ to $f^2_{01} \otimes
g^2_{12}$, the second is induced by the map \eqref{mat mult ijk}.
Finally the last arrow is induced by the ``summation along the
fibers'' morphism $(d_1^1)_! \mathcal{A}_{02}^2 \to {A}_{01}^1$.

  Recall that a multiplier for a nonunital $R$-algebra $A$ is a pair
$(l, r)$ of $R$-linear maps $A\to A$ satisfying
\begin{equation*}
l(ab)=l(a)b, \ r(ab)=ar(b), \ r(a)b=al(b) \text{ for } a, b, c \in A
\end{equation*}

Multipliers of a given  algebra $A$ form an algebra denoted $M(A)$
with the operations given by $\alpha \cdot(l, r)+\alpha'\cdot(l',
r')=(\alpha l +\alpha' l', \alpha r+\alpha'r')$, $\alpha$, $\alpha'
\in R$ and $(l, r)\cdot(l', r')=(l' \circ l, r \circ r')$. The
identity is given by $(\id, \id)$. For $x=(l, r) \in M(A)$ we denote
$l(a)$, $r(a)$ by $xa$, $ax$ respectively.

Similarly to the $2$-category $\Alg^2_R$  (see \ref{subsubsection:
algebroids}) we introduce the $2$-category $(\Alg^2_R)'$  with
\begin{itemize}
\item objects -- nonunital $R$-algebras
\item $1$-morphisms -- homomorphism of $R$-algebras
\item $2$-morphisms $\phi\to\psi$, where $\phi,\psi \colon  A\to B$ are two $1$-morphisms -- elements $b\in M(B)$
    such that $b\cdot\phi(a) = \psi(a)\cdot b$ for all $a\in A$.
\end{itemize}

Suppose that $(\underline{\phi},\phi_\tau)\colon
(\underline{\mathcal{C}},L) \to (\underline{\mathcal{D}},M)$ is a
$1$-morphism in $\Triv_R(G)$. Let $F= \cma
(\underline{\phi},\phi_\tau)$, see \ref{Cosimplicial matrix algebras
from algebroid stacks}. Let $\conv(\underline{\phi},\phi_\tau)
\colon \conv (\underline{\mathcal{C}},L) \to
\conv(\underline{\mathcal{D}},M)$ be the morphism induced  by
$F^1_{01}$.
  Suppose that  $b \colon (\underline{\phi},\phi_\tau) \to
(\underline{\psi},\psi_\tau)$ is a $2-morphism$, where
$(\underline{\phi},\phi_\tau),(\underline{\psi},\psi_\tau)  \colon
(\underline{\mathcal{C}},L) \to (\underline{\mathcal{D}},M)$. Then
  $\cma(b)^0$ defines a $2$-morphism in $(\Alg^2_R)'$ between $\conv(\underline{\phi},\phi_\tau)$ and
$\conv(\underline{\psi},\psi_\tau)$. We denote this $2$-morphism by
$\conv(b)$.

\begin{lemma}
{~}\begin{enumerate}
\item The assignments $(\underline{\mathcal{C}},L) \mapsto \conv(\underline{\mathcal{C}},L)$,
    $(\underline{\phi},\phi_\tau) \mapsto \conv(\underline{\phi},\phi_\tau)$, $b \mapsto \conv(b)$ define a functor
\[
\conv \colon \Triv_R(G) \to (\Alg^2_R)'
\]

\item The functors $\conv$ commute with the base change functors.
\end{enumerate}
\end{lemma}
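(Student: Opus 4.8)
The plan is to deduce both statements from the functoriality of $\cma$ and its compatibility with base change, both established above (cf. Proposition \ref{inverse} and the preceding constructions), together with the standard formalism of convolution algebras on \'etale groupoids (\cite{tuxu}, \cite{crainic}). The only genuinely new point is the interaction of the ``summation along the fibres'' maps $(d_1^1)_!$ with the cosimplicial matrix structure on $\cma(\underline{\mathcal{C}},L)$ and with the relevant module structures; everything else is bookkeeping.

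\emph{Well-definedness and associativity.} Writing $\mathcal{A}=\cma(\underline{\mathcal{C}},L)$, I would first check that the displayed formula makes $\conv(\underline{\mathcal{C}},L)=\Gamma_c(N_1G;\mathcal{A}^1_{01})$ into a nonunital associative $R$-algebra. The three maps entering the product --- pullback of sections along the two faces $[1]\to[2]$, the matrix multiplication $\mathcal{A}^2_{01}\otimes_{\mathcal{A}^2_{11}}\mathcal{A}^2_{12}\to\mathcal{A}^2_{02}$ of \eqref{mat mult ijk}, and the ``summation along the fibres'' morphism $(d_1^1)_!\mathcal{A}^2_{02}\to\mathcal{A}^1_{01}$ (which, after identifying $\mathcal{A}^2_{02}\cong NG(d_1^1)^{-1}\mathcal{A}^1_{01}$ via the cosimplicial matrix structure, is the counit $(d_1^1)_!(d_1^1)^{-1}\mathcal{A}^1_{01}\to\mathcal{A}^1_{01}$ of the adjunction for the \'etale map $d_1^1$) --- are all $R$-linear, so the product is $R$-bilinear. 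Associativity is the coincidence of two maps $\Gamma_c(N_1G;\mathcal{A}^1_{01})^{\otimes 3}\to\Gamma_c(N_1G;\mathcal{A}^1_{01})$; both are computed on $N_3G$, where they agree by the associativity of matrix multiplication in $\mathcal{A}^3$, the cosimplicial identities relating the faces $N_3G\to N_2G\to N_1G$, and the functoriality of the $(d_i^j)_!$ together with the projection formula for these \'etale maps.

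\emph{Functoriality on $1$- and $2$-morphisms.} If $(\underline{\phi},\phi_\tau)$ is a $1$-morphism and $F=\cma(\underline{\phi},\phi_\tau)\colon\mathcal{A}\to\mathcal{B}$, then $\conv(\underline{\phi},\phi_\tau)=\Gamma_c(N_1G;F^1_{01})$ is an $R$-algebra homomorphism: $F$ is a morphism of cosimplicial matrix algebras, so its components preserve the block decompositions, intertwine matrix multiplication, and commute with the cosimplicial structure maps, while any morphism of sheaves commutes with $(d_i^j)^{-1}$ and with $(d_i^j)_!$; feeding this into the definition of the product gives multiplicativity. Preservation of composition and of identities is immediate from the functoriality of $\cma$. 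For a $2$-morphism $b$ one takes $\conv(b)$ to be the multiplier $(l,r)$ of $\conv(\underline{\mathcal{D}},M)$ given by $l(f)=(\cma(b)^0)^\s{1}_0\cdot f$ and $r(f)=f\cdot(\cma(b)^0)^\s{1}_1$, the actions being the left $\mathcal{B}^1_{00}$- and right $\mathcal{B}^1_{11}$-module structures on $\mathcal{B}^1_{01}$ (recall that $\cma(b)^1$ is the diagonal matrix with entries $(\cma(b)^0)^\s{1}_i$). The three multiplier identities $l(fg)=l(f)g$, $r(fg)=fr(g)$, $r(f)g=fl(g)$ follow, after pulling everything back to $N_2G$, from the associativity of matrix multiplication in $\mathcal{B}^2$, the compatibility of pullback along the faces with the module structures, and the projection formula (here one uses that the relevant pullbacks to $N_2G$ of $(\cma(b)^0)^\s{1}_0$ and of $(\cma(b)^0)^\s{1}_1$ agree, both being $(\cma(b)^0)^\s{2}_1$). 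Finally, the defining relation $\conv(b)\cdot\conv(\underline{\phi})(a)=\conv(\underline{\psi})(a)\cdot\conv(b)$ in $M(\conv(\underline{\mathcal{D}},M))$ is exactly the $(0,1)$-entry of the identity $\cma(b)^\bullet\cdot\cma(\underline{\phi})^\bullet=\cma(\underline{\psi})^\bullet\cdot\cma(b)^\bullet$ which defines $\cma(b)$ as a $2$-morphism of cosimplicial matrix algebras, evaluated on compactly supported sections. This proves (1).

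\emph{Base change.} For a homomorphism $R\to S$ of commutative $k$-algebras, the established compatibility of $\cma$ with base change gives a canonical isomorphism $\cma((\underline{\mathcal{C}},L)\widetilde{\otimes}_R S)\cong\cma(\underline{\mathcal{C}},L)\otimes_R S$ of cosimplicial matrix $S$-algebras, hence $\cma((\underline{\mathcal{C}},L)\widetilde{\otimes}_R S)^1_{01}\cong\mathcal{A}^1_{01}\otimes_R S$. Since $\Gamma_c$ commutes with $(-)\otimes_R S$ ($S$ being finite over $R$) and since every structure map occurring in the product is $R$-linear with $S$-linear base change equal to the corresponding structure map of $\mathcal{A}\otimes_R S$, one obtains a natural isomorphism $\conv(\underline{\mathcal{C}},L)\otimes_R S\cong\conv((\underline{\mathcal{C}},L)\widetilde{\otimes}_R S)$ of $S$-algebras, compatible with $1$- and $2$-morphisms, which is (2). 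The main obstacle in the whole argument is the verification of associativity (and, along identical lines, of the multiplier identities): one must match the counit maps $(d_i^j)_!(d_i^j)^{-1}\to\id$ on the various faces of $N_3G$ with the cosimplicial matrix structure so that the two iterated products coincide on the nose. Once the projection formula for the \'etale face maps is in hand this is a mechanical, if somewhat lengthy, computation, entirely parallel to the unital (algebroid stack) case treated above.
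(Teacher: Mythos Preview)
The paper states this lemma without proof, treating it as a routine verification in the spirit of the analogous results for $\stack$, $\triv$, and $\cma$. Your sketch supplies exactly the kind of argument the authors have in mind: reduce everything to the functoriality and base-change compatibility of $\cma$, and then check that the convolution product, the induced $1$-morphisms, and the multiplier built from $\cma(b)^0$ interact correctly with pullback along the face maps and with the summation-along-fibres morphism. Your verification of the multiplier identity $r(f)g=fl(g)$ via the coincidence $(\cma(b)^0)^\s{2}_1$ of the two pullbacks to $N_2G$ is precisely the point, and your handling of associativity on $N_3G$ is the standard argument for twisted convolution algebras (cf.\ \cite{tuxu}).

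One small caveat: in your base-change step you invoke ``$S$ finite over $R$'' to justify $\Gamma_c(-)\otimes_R S\cong\Gamma_c(-\otimes_R S)$, but the lemma is stated for arbitrary morphisms $R\to S$ of commutative $k$-algebras, not only for Artin algebras. This is harmless in the paper's intended applications (where $R,S\in\ArtAlg_k$), and in fact the isomorphism holds more generally because $\mathcal{A}^1_{01}$ is locally free of rank one over the pullbacks of $\mathcal{A}^0$ and compactly supported sections commute with arbitrary flat (indeed, any) base change in this setting; but you should either note the generality or restrict to the Artin case explicitly.
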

Assume  that $(\underline{\mathcal{S}},L)\in \Triv_{\mathbb{C}}(G)$
where $\mathcal{S}$ is a twisted form of $\mathcal{O}_G$. Let  $R$
be a Artin $\mathbb{C}$-algebra. An $R$-deformation of
$\conv(\underline{\mathcal{S}},L)$ is an associative $R$-algebra
structure $\star$ on the $R$-module
$B=\conv(\underline{\mathcal{S}},L)\otimes_{\mathbb{C}}R$ with the
following properties:
\begin{enumerate}
\item The product induced on $B\otimes_R \mathbb{C} \cong \conv(\underline{\mathcal{S}},L)$ is the convolution
    product defined above.
\item $ \supp (f*g) \subset\ d^1_1((d^1_0)^{-1}\supp(f)\cap (d^1_2)^{-1}\supp(g))$.
\end{enumerate}

A $1$-morphism between two such deformations $B_1$ and $B_2$ is an
$R$-algebra homomorphism $F\colon B_1 \to B_2$ such that
\begin{enumerate}
\item The  morphism $ F\otimes_R \mathbb{C} \colon \conv(\underline{\mathcal{S}},L) \to
    \conv(\underline{\mathcal{S}},L)$ is equal to $\id$.
\item $ \supp (F(f)) \subset \supp(f)$ for any $f \in B_1$.
\end{enumerate}

Given two deformations $B_1$ and $B_2$ and two $1$-morphisms $F_1$,
$F_2 \colon B_1 \to B_2$ a $2$ morphism between them is given by a
$2$-morphism $b=(l, r)$ in $(\Alg^2_R)'$ such that
\begin{enumerate}
\item The  $2$-morphism $b\otimes_R \mathbb{C} \colon  F_1\otimes_R \mathbb{C} \to \colon  F_1\otimes_R \mathbb{C}
    $ is equal to $\id$.
\item $ \supp (l(f)) \subset \supp(f)$, $ \supp (r(f)) \subset \supp(f)$.
\end{enumerate}

Thus, given $(\underline{\mathcal{S}},L)\in \Triv_{\mathbb{C}}(G)$,
we obtain a two-subgroupoid $\Def
(\conv(\underline{\mathcal{S}},L))(R) \subset (\Alg^2_R)'$ of
deformations of $\conv(\underline{\mathcal{S}},L)$.

Let $\mathcal{A}=\cma(\underline{\mathcal{S}},L)$ and let
$\mathbf{B} \in \Def(\mathcal{A})(R)$. Notice that for any $B \in
\Def (\conv(\underline{\mathcal{S}},L))(R)$ we have a canonical
isomorphism of vector spaces
\begin{equation}\label{idntt}
i\colon B \to \Gamma_c(N_1G; \mathbf{B}_{01}^1)
\end{equation}

\begin{lemma}\label{lemma: def conv cma}
Suppose that $B \in \Def (\conv(\underline{\mathcal{S}},L))(R)$.
There exists a unique up to unique isomorphism $\mathbf{B} \in
\Def(\mathcal{A})(R)$ such that the isomorphism \eqref{idntt} is an
isomorphism of algebras.
\end{lemma}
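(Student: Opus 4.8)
The plan is to reconstruct the cosimplicial matrix $R$-algebra $\mathbf{B}$ from the deformation $B$ of the convolution algebra by reading off the matrix entries from the support behaviour of the deformed product, then verify that the resulting structure is an object of $\Def(\mathcal{A})(R)$, and finally check that \eqref{idntt} intertwines the two products. First I would recall that $\mathcal{A} = \cma(\underline{\mathcal{S}},L)$ comes equipped with the canonical trivialization $\one_\mathcal{A}$ and, in particular, the sheaves $\mathcal{A}^n_{ij}$ on $N_nG$, with $\mathcal{A}^1_{01}$ on $N_1G$ the one whose compactly supported sections form the underlying module of $\conv(\underline{\mathcal{S}},L)$. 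The support condition (2) in the definition of an $R$-deformation of the convolution algebra says precisely that $\star$ respects the partition of $N_1G \times_{N_0G} N_1G = N_2G$ governing the matrix multiplication \eqref{mat mult ijk}: if $\supp(f) \subseteq U$ and $\supp(g) \subseteq V$ with $U, V$ mapping into appropriate ``strata'' of the groupoid, then $\supp(f\star g)$ is contained in the image under $d^1_1$ of the corresponding fibre product. Localizing, this lets me define, for each $n$ and each $0 \le i \le j \le n$, a deformed multiplication $\mathbf{B}^n_{ij} \otimes_R \mathbf{B}^n_{jk} \to \mathbf{B}^n_{ik}$ as an $R$-bilinear operation on $\mathcal{A}^n_{ij} \otimes_k R$ lifting \eqref{mat mult ijk}.

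The key steps, in order, would be: (a) show that the support estimate forces $\star$ to be ``local'' in the $\mathtt{DIFF}$ sense, i.e.\ given by a bidifferential operator, so that it defines a section of $C^2(\mathcal{A}^1_{01})[1] \otimes_k \mathfrak{m}_R$ rather than merely an abstract $R$-bilinear map; (b) use the fact that $\mathbf{B}^1_{01}$ together with the groupoid structure determines all the $\mathbf{B}^n_{ij}$ via pullback along the face maps of $NG$ (since the cosimplicial structure maps $f_*$ of $\mathcal{A}$ are required by Definition \ref{definition:DefCosMatAlg}(2) to be $f^\mathcal{A}_* \otimes \id_R$), so that reconstructing $\mathbf{B}^1$ suffices; (c) deduce associativity of the matrix multiplication on each $\mathbf{B}^n$ from associativity of $\star$, using that \eqref{idntt} is an isomorphism of vector spaces compatible with the summation-along-fibres map and that sections can be separated by their supports; (d) check conditions (1) and (3) of Definition \ref{definition:DefCosMatAlg} — the reduction mod $\mathfrak{m}_R$ gives back $\mathcal{A}$ by deformation condition (1) for $B$, and compatibility with the cosimplicial structure is automatic from the way $\mathbf{B}^n$ is built from $\mathbf{B}^1$. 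Finally, (e) check that $i$ in \eqref{idntt} is an algebra isomorphism: this is essentially the definition of the matrix product unwound through \eqref{mat mult ijk} and the convolution product, together with the observation that the deformed convolution product $\star$ was used to define $\mathbf{B}^1_{01}$ in the first place.

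For uniqueness, I would argue that any $\mathbf{B}' \in \Def(\mathcal{A})(R)$ for which \eqref{idntt} (with $\mathbf{B}'$ in place of $\mathbf{B}$) is an algebra isomorphism must have $\mathbf{B}'^1_{01}$ carrying the same deformed product as the one extracted from $B$ — because the convolution product on $\Gamma_c(N_1G; \mathbf{B}'^1_{01})$ is determined by $\mathbf{B}'^1_{01}$ and $\mathbf{B}'^2_{02}$, the latter being $\mathbf{B}'^1_{01}$-bimodule data fixed up to the remaining entries — and then the diagonal entries $\mathbf{B}'^n_{ii} = K^n$ and the off-diagonal $\mathbf{B}'^n_{ij}$ for $|i-j|\ge 2$ are forced by the matrix algebra axioms (rank-one bimodule condition) and the cosimplicial compatibility; a choice of the identifications $\mathbf{B}'^1_{01} \cong \mathcal{A}^1_{01} \otimes_k R$ (and $\mathbf{B}'^1_{10}$) lifting the reduction modulo $\mathfrak{m}_R$ then produces the unique isomorphism $\mathbf{B}' \cong \mathbf{B}$. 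The main obstacle I anticipate is step (a) together with the bookkeeping in (b)–(c): one must be careful that the support condition really does yield multidifferentiality (as opposed to some weaker locality), and that the reconstruction of the higher $\mathbf{B}^n_{ij}$ from $\mathbf{B}^1$ is forced rather than merely possible — this is where the rigidity built into Definition \ref{definition:DefCosMatAlg}(2), which pins the structure maps $f_*$, does the crucial work, and I would spell out that point carefully rather than treating it as routine.
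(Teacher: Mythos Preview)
Your plan is correct and follows essentially the same route as the paper's proof: localize on Hausdorff charts of $N_2G$ where the face maps are diffeomorphisms, extract from $\star$ a support-preserving bilinear map on sections of $\mathcal{A}^2_{01}\otimes R$ and $\mathcal{A}^2_{12}\otimes R$, promote it to a bidifferential operator, glue, and then rebuild the full cosimplicial matrix algebra from this datum. The one concrete tool you have not named in step~(a) is Peetre's theorem, which is precisely what converts the support estimate $\supp(m_U(f,g))\subset\supp(f)\cap\supp(g)$ into bidifferentiality; the paper invokes it explicitly, and you should too. One further point worth making explicit: the entries $\mathbf{B}^n_{ij}$ with $i>j$ are not directly read off from $\star$ but are \emph{defined} as $\shHom_{\mathbf{B}^n_{ii}}(\mathbf{B}^n_{ji},\mathbf{B}^n_{ii})$ and then identified with $\mathcal{A}^n_{ij}\otimes_{\mathbb{C}}R$ via a choice of $R$-linear differential isomorphism $\mathbf{B}^1_{10}\cong\mathcal{A}^1_{10}\otimes_{\mathbb{C}}R$; this choice is what makes the object of $\Def(\mathcal{A})(R)$ unique only up to (unique) isomorphism rather than on the nose.
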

\begin{proof}
Let $U \subset N_2G$ be a Hausdorff   open subset such that
$d^1_i|_U$, $i=0$, $1$, $2$ is a diffeomorphism. Define an
$R$-bilinear map
\[
m_U\colon   \Gamma_c(U; \mathcal{A}^2_{01}\otimes R) \otimes_R
\Gamma_c(U; \mathcal{A}^2_{12}\otimes R) \to \Gamma_c(U;
\mathcal{A}^2_{02}\otimes R)
\]
by
\[
m_U(f, g) =
(d^1_1|_U)^*(((d^1_0|_U)^{-1})^*f\star((d^1_2|_U)^{-1})^*g).
\]
Here, we view $((d^1_0|_U)^{-1})^*f$, $((d^1_2|_U)^{-1})^*g$ as
elements of $B$. It follows from the locality property of the
product $\star$ that $\supp(m_U(f, g))\subset \supp(f)\cap
\supp(g)$. Peetre's theorem \cite{peetre} implies that $m_U$ is a
bidifferential operator. If $V \subset N_2G$ is another Hausdorff
open subset, then clearly $(m_V)|_{U \cap V}= (m_U)|_{U \cap V}$.
Therefore, there exists a unique element $m \in
\shHom_R((\mathcal{A}^2_{01}\otimes R)\otimes_R
(\mathcal{A}^2_{12}\otimes R), (\mathcal{A}^2_{02}\otimes R)) $
given by a bidifferential operator, such that $m|_U=m_U$ for every
Hausdorff open subset of $U \subset N_2G$. Note that $m
\otimes_{R}\mathbb{C}$ is the map $
\mathcal{A}^2_{01}\otimes_{\mathbb{C}} \mathcal{A}^2_{12}\to
\mathcal{A}^2_{02} $.

Now define  for $i \le j$ $\mathbf{B}^n_{ij} =\mathcal{A}^n_{ij}
\otimes_{\mathbb{C}}R$.   For $i \le j \le k $ $\mathbf{B}^n_{ij}
\otimes _R \mathbf{B}^n_{jk} \to \mathbf{B}^n_{ik}$ is given by
$m^\s{n}_{ijk}$. In particular this endows $\mathbf{B}^n_{ij}$, $i
\le j$, with the structure of $\mathbf{B}^n_{ii} -\mathbf{B}^n_{jj}$
bimodule. The map $m^\s{n}_{ijk}$ induces an isomorphism of
$\mathbf{B}^n_{kk}$-bimodules $\mathbf{B}^n_{ij}
\otimes_{\mathbf{B}^n_{jj}} \mathbf{B}^n_{jk} \xrightarrow{\cong}
\mathbf{B}^n_{ik}$. For $i>j$ set $\mathbf{B}^n_{ij}=
\shHom_{\mathbf{B}^n_{ii}}(\mathbf{B}^n_{ji}, \mathbf{B}^n_{ii})$.
We then have a canonical isomorphism $ \mathbf{B}^n_{ij}
\otimes_{\mathbf{B}^n_{jj}}\mathbf{B}^n_{ji} \xrightarrow{\cong}
\mathbf{B}^n_{ii}$. Therefore we have a canonical isomorphism
\[
\mathbf{B}^n_{ij}= \mathbf{B}^n_{ij}
\otimes_{\mathbf{B}^n_{jj}}\mathbf{B}^n_{ji}
\otimes_{\mathbf{B}^n_{ii}}
\shHom_{\mathbf{B}^n_{jj}}(\mathbf{B}^n_{ji},
\mathbf{B}^m_{jj})=\shHom_{\mathbf{B}^n_{jj}}(\mathbf{B}^n_{ji},
\mathbf{B}^m_{jj}).
\]

With this definition we extend the pairing $\mathbf{B}^n_{ij}
\otimes _R \mathbf{B}^n_{jk} \to \mathbf{B}^n_{ik}$ to all values of
$i$,$j$, $k$. For example for $i\ge j \le k$ this pairing is defined
as the inverse of the isomorphism
\[
\mathbf{B}^n_{ik}=\mathbf{B}^n_{ij}
\otimes_{\mathbf{B}^n_{jj}}\mathbf{B}^n_{ji}
\otimes_{\mathbf{B}^n_{ii}}\mathbf{B}^n_{ik} \xrightarrow{\cong}
\mathbf{B}^n_{ij} \otimes_{\mathbf{B}^n_{jj}}\mathbf{B}^n_{jk}.
\]
We leave the definition of this pairing in the remaining cases to
the reader. Choice of an $R$-linear differential isomorphism
$\mathbf{B}^1_{10}\cong \mathcal{A}^1_{10}\otimes_{\mathbb{C}}R$
induces isomorphisms $\mathbf{B}^n_{ij}\cong
\mathcal{A}^n_{ij}\otimes_{\mathbb{C}}R$ for all $n$ and $i >j$. We
thus obtain an object $\mathbf{B} \in \Def(\mathcal{A})(R)$. It is
clear from the construction that the map \eqref{idntt} is an
isomorphism of algebras. We leave the proof of the uniqueness to the
reader.

\end{proof}
 We denote the cosimplicial matrix
algebra $\mathbf{B}$ constructed in Lemma \ref{lemma: def conv cma}
by $\mat(B)$. By similar arguments using Peetre's theorem one
obtains the following two lemmas.

\begin{lemma}\label{lemma: def conv cma 1-mor}
Let $B_k\in\Def(\conv(\underline{\mathcal{S}},L))(R) $, $k=1$, $2$,
and let $i_k$ be the corresponding isomorphisms defined in
\eqref{idntt}. Let $F \colon B_1 \to B_2$ be a $1$-morphism. Then,
there exists a unique $1$-morphism $\phi \colon \mat(B_1) \to
\mat(B_2)$ such that $\phi^1\circ i_1=i_2 \circ F$.
\end{lemma}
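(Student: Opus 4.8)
The plan is to run the argument of Lemma \ref{lemma: def conv cma} one categorical level up: first extract from the algebra homomorphism $F$ the single component $\phi^1_{01} \colon \mat(B_1)^1_{01} \to \mat(B_2)^1_{01}$ of the sought $1$-morphism by a locality argument, and then show that this datum assembles, without further choices, into a $1$-morphism $\phi$ of the whole cosimplicial matrix $R$-algebras. Throughout write $\mathbf{B}_k = \mat(B_k)$.

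First I would localize on $N_1G$. Since $N_1G$ is only locally Hausdorff, choose a Hausdorff open $U \subseteq N_1G$. The hypothesis $\supp(F(f)) \subseteq \supp(f)$ implies that $F$ carries sections supported in $U$ to sections supported in $U$, hence restricts to a local $R$-linear map $\Gamma_c(U; (\mathbf{B}_1)^1_{01}) \to \Gamma_c(U; (\mathbf{B}_2)^1_{01})$; by Peetre's theorem \cite{peetre} this map is a differential operator. For another Hausdorff open $V$ the two operators agree on $U\cap V$ (both are computed from $F$), so they patch to a global $R$-linear differential operator $\phi^1_{01} \colon (\mathbf{B}_1)^1_{01} \to (\mathbf{B}_2)^1_{01}$ (a morphism in $\mathtt{DIFF}(R)$), characterized by $i_2 \circ F = \phi^1_{01} \circ i_1$ on compactly supported sections; the condition $F \otimes_R \mathbb{C} = \id$ gives $\phi^1_{01} \otimes_R \mathbb{C} = \id$.

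Second I would upgrade $\phi^1_{01}$ to a $1$-morphism $\phi \colon \mathbf{B}_1 \to \mathbf{B}_2$ in $\Def(\mathcal{A})(R)$, where $\mathcal{A} = \cma(\underline{\mathcal{S}},L)$. The key point is that $\mathbf{B}_k$ is, by its very construction in Lemma \ref{lemma: def conv cma}, generated as a cosimplicial matrix $R$-algebra by the sheaf $(\,\cdot\,)^1_{01}$ together with the undeformed diagonal and the chosen identification $(\mathbf{B}_k)^1_{10} \cong \mathcal{A}^1_{10}\otimes_\mathbb{C} R$. Transporting through $i_1,i_2$ the statement that $F$ respects the convolution products and restricting over the Hausdorff opens $U \subseteq N_2G$ on which $d^1_0, d^1_1, d^1_2$ are diffeomorphisms — exactly the opens used in Lemma \ref{lemma: def conv cma} — one finds that $\phi^1_{01}$ is compatible with the local multiplications $m^\s{2}_{012}$ of $\mathbf{B}_1, \mathbf{B}_2$; together with $K$-linearity over the (undeformed) diagonal, which forces the diagonal components of $\phi$ to be the identity and is automatic for the module map $\phi^1_{01}$ since $(\mathbf{B}_k)^1_{01}$ is locally free of rank one, this determines $\phi^0$ and all of $\phi^1$. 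The higher components $\phi^n_{ij}$ are then forced: for $i\le j$ they are recovered from $\phi^1_{01}$ through the simplicial structure maps $f^{\mathcal{A}}_*\otimes\id_R$ of $\mathbf{B}_k$ exactly as in the construction of $\cma$, and for $i>j$ through the canonical isomorphism $(\mathbf{B}_k)^n_{ij} \cong \shHom_{(\mathbf{B}_k)^n_{jj}}((\mathbf{B}_k)^n_{ji}, (\mathbf{B}_k)^n_{jj})$. Verifying that the resulting collection $\{\phi^n_{ij}\}$ is an algebra homomorphism in each degree, commutes with all structure maps, and reduces to $\id_{\mathcal{A}}$ modulo $\mathfrak{m}_R$ is a routine check using associativity of the $m^\s{n}_{ijk}$ and the cocycle identities of $\mathbf{B}_k$. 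Uniqueness is immediate: $\phi^1_{01}$ is pinned down by $\phi^1\circ i_1 = i_2\circ F$ (a differential operator is determined by its action on compactly supported sections), and every remaining component of $\phi$ is forced by the structural identities just used.

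The main obstacle I anticipate is precisely the bookkeeping in this second step: showing that compatibility of the single map $\phi^1_{01}$ with $m^\s{2}_{012}$ propagates automatically to compatibility of the assembled $\phi$ with all products $m^\s{n}_{ijk}$ — including the cases $i>j$ or $j>k$, where the products are defined by inverting isomorphisms — and with all simplicial face maps. This is the higher analogue of the gluing of the bidifferential operators $m_U$ in Lemma \ref{lemma: def conv cma}, and, as there, it rests on Peetre's theorem together with the fact that two differential operators agreeing on a generating family of sections coincide.
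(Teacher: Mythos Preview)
Your proposal is correct and follows precisely the approach the paper indicates: the paper does not give a detailed proof of this lemma but simply states that it is obtained ``by similar arguments using Peetre's theorem'' as in Lemma \ref{lemma: def conv cma}. Your write-up is a careful elaboration of exactly that strategy --- localize on Hausdorff opens, invoke Peetre to recognize $\phi^1_{01}$ as a differential operator, patch, and then propagate to the full cosimplicial matrix algebra structure using the fact that everything is generated from the $(\,\cdot\,)^1_{01}$ component --- so there is nothing to add.
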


We will denote the $1$-morphism $\phi$ constructed in Lemma
\ref{lemma: def conv cma 1-mor} by $\mat(F)$.

\begin{lemma}\label{lemma: def conv cma 2-mor}
Let $B_k\in\Def(\conv(\underline{\mathcal{S}},L))(R) $, $k=1$, $2$,
and let $i_k$ be the corresponding isomorphisms defined in
\eqref{idntt}. Let $F_1$, $F_2 \colon B_1 \to B_2$ be two
$1$-morphisms. Let $b\colon  F_1 \to F_2$ be a $2$-morphism. Then,
there exists a unique $2$-morphism  $\beta \colon \mat(F_1) \to
\mat(F_2)$ such that $b \cdot a =\beta^1_{00} \cdot i_2(a)$, $a
\cdot b =i_2(a)\cdot \beta^1_{11}$ for every $a \in B$.
\end{lemma}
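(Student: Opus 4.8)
The plan is to obtain $\beta$ from $b$ by extracting a single global section over the unit space, in direct parallel with the construction of the $2$-morphism $\cma(b)$ in \ref{Cosimplicial matrix algebras from algebroid stacks}, and to establish its properties by the same local analysis via Peetre's theorem \cite{peetre} that proved Lemma \ref{lemma: def conv cma} and Lemma \ref{lemma: def conv cma 1-mor}. Write $\mathbf{B}_k = \mat(B_k)$ and $\phi_k = \mat(F_k)\colon \mathbf{B}_1 \to \mathbf{B}_2$ (Lemma \ref{lemma: def conv cma 1-mor}), with $\mathcal{A} = \cma(\underline{\mathcal{S}},L)$. Recall that a $2$-morphism $\phi_1 \to \phi_2$ in $\Def(\mathcal{A})(R)$ amounts to a section $\beta^0 \in \Gamma(N_0G;\mathbf{B}_2^0)$ congruent to $1$ modulo $\mathfrak{m}_R$ and intertwining $\phi_1^0$ with $\phi_2^0$: the associated cosimplicial $2$-morphism is the ``diagonal'' collection $\beta^n = \bigoplus_{i=0}^n \beta^n_{ii}$ with $\beta^n_{ii} = (\beta^0)^{\s{n}}_i$, whose cosimplicial compatibility $f_*(X(f)^*\beta^p) = f^\sharp\beta^q$ is automatic because $\beta^n_{ii}$ is induced from $\beta^0$ through the (special) cosimplicial sheaf underlying $\mathbf{B}_2^0$, and whose intertwining property on the off-diagonal entries reduces to the $(0,1)$-entry through the module isomorphisms $\mathbf{B}^n_{2,ij}\otimes_{\mathbf{B}^n_{2,jj}}\mathbf{B}^n_{2,jk} \xrightarrow{\cong} \mathbf{B}^n_{2,ik}$ used in Lemma \ref{lemma: def conv cma}, exactly as $\mat(F)$ was recovered from its $(0,1)$-component in Lemma \ref{lemma: def conv cma 1-mor}.

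First I would construct $\beta^0$. Since $N_0G$ is Hausdorff it suffices to work locally: over a Hausdorff open $W\subseteq N_0G$, let $B_{2,W}\subseteq B_2$ be the subalgebra of sections supported on morphisms with source and target in $W$. The multiplier axioms for $b = (l,r)$ together with the hypothesis $\supp(l(f))\subseteq\supp(f)$, $\supp(r(f))\subseteq\supp(f)$ force $l$ and $r$ to preserve $B_{2,W}$ and, by Peetre's theorem, to be given there by differential operators; the identities $l(ab) = l(a)b$, $r(ab) = ar(b)$, $r(a)b = al(b)$ then pin these operators down to left, respectively right, multiplication by a \emph{unique} section $\beta^0_W\in\Gamma(W;\mathbf{B}_2^0|_W)$, where the unit inclusion $W\subseteq N_1G$ identifies $\mathbf{B}_{2,01}^1|_W$ with $\mathbf{B}_2^0|_W$. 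Uniqueness makes the $\beta^0_W$ agree on overlaps, so they glue to $\beta^0\in\Gamma(N_0G;\mathbf{B}_2^0)$, and $\beta^0\equiv 1\bmod\mathfrak{m}_R$ because $b$ reduces to the identity.

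It remains to check that the diagonal collection $\beta = (\beta^n)$ built from $\beta^0$ is the required $2$-morphism and is unique. The relation $\beta^n\cdot\phi_1^n = \phi_2^n\cdot\beta^n$ need only be verified entry by entry; on the $(0,1)$-entry over $N_1G$ it is precisely $b\cdot F_1(a) = F_2(a)\cdot b$ transported through the isomorphism \eqref{idntt}, which holds by hypothesis once one notes that a section of $\mathbf{B}_{2,01}^1$ is determined by its restrictions to Hausdorff opens, on which the compactly supported sections are determining, and the remaining entries follow by the reduction described above. Since $\beta^1_{00} = (\beta^0)^{\s{1}}_0$ acts on $i_2(a)$ by left multiplication and $\beta^1_{11} = (\beta^0)^{\s{1}}_1$ by right multiplication, the two displayed equations hold by the construction of $\beta^0$; conversely these equations recover $l$ and $r$, hence $b$, from $\beta$, and they recover $\beta^1_{00}$, $\beta^1_{11}$, hence $\beta^0$ (pullback along the surjective \'etale maps $N_1G\to N_0G$ being injective on sections), hence all of $\beta$, giving uniqueness. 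The main obstacle is the middle step: making precise, via Peetre's theorem and the multiplier axioms, the identification of the support-decreasing multipliers of the twisted convolution algebra with sections over the unit space, carried out coherently over the possibly non-Hausdorff manifold $N_1G$.
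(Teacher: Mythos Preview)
Your approach is correct and coincides with the paper's: the paper does not give a separate proof of this lemma but states that it (together with Lemma \ref{lemma: def conv cma 1-mor}) follows ``by similar arguments using Peetre's theorem'' from the proof of Lemma \ref{lemma: def conv cma}. Your sketch---extracting $\beta^0$ from the multiplier $b$ by a local Peetre-type analysis and then assembling the diagonal cosimplicial $2$-morphism, with uniqueness forced by the displayed intertwining equations---is exactly the intended elaboration of that remark.
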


We will denote the $2$-morphism $\beta$ constructed in Lemma
\ref{lemma: def conv cma 2-mor} by $\mat(b)$.

\begin{lemma}
{~}\begin{enumerate}
\item The assignments $B \mapsto \mat(B)$, $F \mapsto \mat(F)$, $b \mapsto \mat(b)$ define a functor
\begin{equation}\label{the functor mat}
\mat\colon \Def(\conv(\underline{\mathcal{S}},L))(R) \to
\Def(\cma(\underline{\mathcal{S}},L))(R)
\end{equation}

\item The functors $\mat$ commute with the base change functors.
\end{enumerate}
\end{lemma}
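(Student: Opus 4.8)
The plan is to observe that all of the substantive work has already been done in Lemmas \ref{lemma: def conv cma}, \ref{lemma: def conv cma 1-mor} and \ref{lemma: def conv cma 2-mor}, which construct $\mat(B)$, $\mat(F)$ and $\mat(b)$ respectively, each \emph{uniquely} subject to an explicit characterizing property (the existence of the requisite bidifferential operators being supplied by Peetre's theorem). What is left is a purely formal verification that these assignments respect the $2$-categorical structure and are natural in $R$, and each such verification will follow by the standard ``uniqueness implies functoriality'' mechanism: if two $1$- (resp.\ $2$-)morphisms both satisfy the characterizing equation of $\mat(F)$ (resp.\ $\mat(b)$), they must be equal. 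To make $\mat$ a strict functor I would first fix, for every $B\in\Def(\conv(\underline{\mathcal{S}},L))(R)$, the auxiliary $R$-linear differential isomorphism $\mat(B)^1_{10}\cong\mathcal{A}^1_{10}\otimes_\mathbb{C}R$ appearing in the proof of Lemma \ref{lemma: def conv cma} (a different system of choices produces a canonically isomorphic functor).

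For part (1), I would check in turn: (a) $\mat(\id_B)=\id_{\mat(B)}$, because $\id^1_{\mat(B)}\circ i_B=i_B\circ\id_B$ and Lemma \ref{lemma: def conv cma 1-mor} has a uniqueness clause; (b) $\mat(F_2\circ F_1)=\mat(F_2)\circ\mat(F_1)$, because the degree-one component of either composite intertwines $i_{B_1}$ with $i_{B_3}$ via $F_2\circ F_1$; (c) the analogous statements for $2$-morphisms, namely preservation of identity $2$-morphisms and compatibility with vertical composition, directly from Lemma \ref{lemma: def conv cma 2-mor}; and (d) compatibility with horizontal composition, hence the interchange law, again because the whiskered composites on either side satisfy the relations characterizing $\mat(b)$ in Lemma \ref{lemma: def conv cma 2-mor}. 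None of these steps requires a computation beyond unwinding the defining equations $\phi^1\circ i_1=i_2\circ F$ and $b\cdot a=\beta^1_{00}\cdot i_2(a)$, $a\cdot b=i_2(a)\cdot\beta^1_{11}$.

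For part (2), given a homomorphism $R\to S$ of Artin $k$-algebras, I would use that $\conv$ and $\cma$ already commute with base change, together with the observation that the canonical isomorphism \eqref{idntt} is compatible with $(\,\cdot\,)\otimes_R S$. Consequently $\mat(B)\otimes_R S$, equipped with the base-changed version of \eqref{idntt}, satisfies exactly the characterizing property of $\mat(B\otimes_R S)$, so the uniqueness in Lemma \ref{lemma: def conv cma} yields a canonical identification $\mat(B)\otimes_R S\cong\mat(B\otimes_R S)$; concretely this rests on the fact that the bidifferential operator $m$ of Lemma \ref{lemma: def conv cma} is pinned down by local equalities $m|_U=m_U$ which are stable under $\otimes_R S$. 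The corresponding compatibilities for $1$- and $2$-morphisms follow the same way from the uniqueness in Lemmas \ref{lemma: def conv cma 1-mor} and \ref{lemma: def conv cma 2-mor}.

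The only genuinely delicate point will be organizational rather than mathematical: one must track carefully the web of canonical isomorphisms that arise because $\mat(B)$ is determined only up to unique isomorphism and because the target $\Def(\cma(\underline{\mathcal{S}},L))(R)$ and the source $(\Alg^2_R)'$ carry honest $2$-morphisms. The strong uniqueness assertions already in hand make all these identifications coherent, so I expect the main obstacle to amount to no more than this bookkeeping.
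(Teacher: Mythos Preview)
Your proposal is correct and is precisely the intended argument: the paper gives no proof of this lemma, treating it as an immediate consequence of the uniqueness clauses in Lemmas \ref{lemma: def conv cma}, \ref{lemma: def conv cma 1-mor} and \ref{lemma: def conv cma 2-mor}. Your outline simply makes explicit the routine ``uniqueness implies functoriality'' verification that the paper leaves to the reader.
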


\begin{prop}\label{mateq}
The functor \eqref{the functor mat} induces an equivalence \[\Def
(\conv(\underline{\mathcal{S}},L))(R) \cong
 \Def(\cma(\underline{\mathcal{S}},L))(R).\]
\end{prop}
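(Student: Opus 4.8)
The plan is to prove Proposition \ref{mateq} by exhibiting an explicit quasi-inverse to the functor \eqref{the functor mat} and checking that the two composites are naturally equivalent to the identity $2$-functors; the serious analytic input --- Peetre's theorem and the fact that every operation in $\mathtt{DIFF}$ is multidifferential, hence local --- is already packaged in Lemmas \ref{lemma: def conv cma}, \ref{lemma: def conv cma 1-mor} and \ref{lemma: def conv cma 2-mor}, so the remaining argument is essentially formal. First I would observe that the convolution construction applies verbatim to a cosimplicial matrix $R$-algebra: to $\mathbf{B}\in\Def(\cma(\underline{\mathcal{S}},L))(R)=\Def(\mathcal{A})(R)$ one associates the $R$-algebra $\conv(\mathbf{B})$ with underlying module $\Gamma_c(N_1G;\mathbf{B}^1_{01})$ and product given by the same three-step composition as in the definition of the convolution algebra, now using the multiplication \eqref{mat mult ijk} of entries of $\mathbf{B}^2$ in the middle. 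By Definition \ref{definition:DefCosMatAlg}(2) the cosimplicial structure maps of $\mathbf{B}$ agree with those of $\mathcal{A}$ after $\otimes_{\mathbb{C}}R$, so the ``summation along the fibres'' step is undeformed and associativity of the product on $\conv(\mathbf{B})$ reduces to the associativity of $\mathbf{B}$ as a cosimplicial matrix algebra.

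Next I would check that $\mathbf{B}\mapsto\conv(\mathbf{B})$ takes values in $\Def(\conv(\underline{\mathcal{S}},L))(R)$ and is inverse to $\mat$ on objects. Since \eqref{mat mult ijk} is bidifferential, for $f,g\in\conv(\mathbf{B})$ the section $f\star g$ is supported, before fibrewise summation, inside $(d^1_0)^{-1}\supp f\cap(d^1_2)^{-1}\supp g$, and hence inside $d^1_1\bigl((d^1_0)^{-1}\supp f\cap(d^1_2)^{-1}\supp g\bigr)$ after it; reducing modulo $\mathfrak{m}_R$ recovers the convolution product on $\conv(\underline{\mathcal{S}},L)$ because $\mathbf{B}\otimes_R\mathbb{C}=\mathcal{A}$ compatibly with all structure. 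Thus $\conv(\mathbf{B})\in\Def(\conv(\underline{\mathcal{S}},L))(R)$, and for this deformation the identification \eqref{idntt} is tautologically an algebra isomorphism, so the uniqueness clause of Lemma \ref{lemma: def conv cma} yields a canonical isomorphism $\mat(\conv(\mathbf{B}))\cong\mathbf{B}$; conversely $\conv(\mat(B))\cong B$ via \eqref{idntt}. This gives essential surjectivity of $\mat$ on objects.

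It then remains to show that for $B_1,B_2\in\Def(\conv(\underline{\mathcal{S}},L))(R)$ the induced functor $\Hom(B_1,B_2)\to\Hom(\mat B_1,\mat B_2)$ is an equivalence of groupoids. Faithfulness on $1$- and $2$-morphisms is immediate from the uniqueness assertions of Lemmas \ref{lemma: def conv cma 1-mor} and \ref{lemma: def conv cma 2-mor}. For fullness on $1$-morphisms, a $1$-morphism $\phi\colon\mat B_1\to\mat B_2$ reducing to $\id_{\mathcal{A}}$ has a component $\phi^1_{01}$ which is a differential operator on $N_1G$, hence defines a support-non-increasing $R$-linear map $F$ of the underlying modules; compatibility of $\phi$ with \eqref{mat mult ijk} and with the (undeformed) cosimplicial structure maps makes $F$ an algebra homomorphism with $F\otimes_R\mathbb{C}=\id$, so $F\in\Hom(B_1,B_2)$, and since $\phi^1\circ i_1=i_2\circ F$ by construction, $\mat(F)=\phi$ by Lemma \ref{lemma: def conv cma 1-mor}. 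For fullness on $2$-morphisms, a $2$-morphism $\beta\colon\mat F_1\to\mat F_2$ is the diagonal matrix with entries $(\beta^0)^\s{n}_i$ for some $\beta^0\in\Gamma(N_0G;(\mathbf{B}_2)^0)$; left and right multiplication by the pullbacks of $\beta^0$ along source and target give a multiplier $b$ of $\conv(\mathbf{B}_2)$ which is support-non-increasing and reduces to $\id$, and the intertwining identities for $\beta$ yield the $2$-morphism relations for $b$, whence $b\in\Hom(F_1,F_2)$ with $\mat(b)=\beta$ by Lemma \ref{lemma: def conv cma 2-mor}. Hence the hom-groupoid functors are equivalences and \eqref{the functor mat} is an equivalence of $2$-groupoids.

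The main obstacle, and the only genuinely non-formal point, is the compatibility bookkeeping underlying the first two steps: one must verify that the convolution product assembled from a cosimplicial matrix $R$-algebra is associative and compatible with the cosimplicial identities, which means tracking the combinatorial restrictions $f^\sharp$ along the face maps $d^1_i$ together with the associativity constraint \eqref{mat mult ijk} through the fibrewise-summation map, and one must check that every map produced above ($\conv(\mathbf{B})$ and the lifts of $\phi$ and $\beta$) obeys the prescribed support conditions. Both are routine consequences of working in $\mathtt{DIFF}$ --- that is, of locality and Peetre's theorem --- handled exactly as in the proofs of Lemmas \ref{lemma: def conv cma}--\ref{lemma: def conv cma 2-mor}; everything else is an application of the uniqueness statements already established there.
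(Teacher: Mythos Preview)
Your approach is correct, and in fact you have supplied a proof where the paper gives none: Proposition~\ref{mateq} is stated in the paper without argument, the authors evidently regarding it as an immediate consequence of Lemmas~\ref{lemma: def conv cma}--\ref{lemma: def conv cma 2-mor}. Your strategy of exhibiting the explicit quasi-inverse $\mathbf{B}\mapsto\conv(\mathbf{B})$ and then using the uniqueness clauses of those lemmas to establish both essential surjectivity and full faithfulness is exactly the intended one, and the bookkeeping you outline is the right bookkeeping.

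One small point worth tightening: in the fullness argument for $2$-morphisms you assert that left and right multiplication by the pullbacks of $\beta^0$ along source and target give a multiplier of $\conv(\mathbf{B}_2)$. Checking the multiplier identities $l(f\star g)=l(f)\star g$, $r(f\star g)=f\star r(g)$, $r(f)\star g=f\star l(g)$ requires tracing $\beta$ through the summation-along-fibres step, which uses not merely that $\beta$ is diagonal but that its components are compatible under the cosimplicial structure maps of $\mathbf{B}$ (i.e.\ that $\beta$ is a $2$-morphism of cosimplicial matrix algebras, so $f_*(X(f)^*\beta^p)=f^\sharp\beta^q$). This is true by definition of $2$-morphisms in $\CosMatAlg_R$, but it is the one place where compatibility with the undeformed cosimplicial maps of Definition~\ref{definition:DefCosMatAlg}(2) enters for $2$-morphisms, and it is worth saying so explicitly.
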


\begin{thm}
Assume that $G$ is a Hausdorff \'etale groupoid. Then, there exists a canonical equivalence of categories
\[
\Def (\conv(\underline{\mathcal{S}},L))(R) \cong
\MC^2(\mathfrak{G}_\DR(\mathcal{J}_{NG})_{[\underline{\mathcal{S}_{\Delta}}]}
\otimes \mathfrak{m}_R)
\]
\end{thm}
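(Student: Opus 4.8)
The plan is to reduce the statement to the cosimplicial matrix Azumaya algebra $\mathcal{A} \colon= \cma(\underline{\mathcal{S}},L)$ on $NG$ and then concatenate the equivalences established in the preceding sections, following the proof of Theorem \ref{equiv of 2-gpds} but \emph{without} passing to an embedding category. Since $\underline{\mathcal{S}}$ is a twisted form of $\mathcal{O}_G$, the sheaves $\shEnd(L^\s{n}_i)$ are canonically $\mathcal{O}_{N_nG}$, so $\mathcal{A}^n_{ii} = \mathcal{O}_{N_nG}$ and $\mathcal{A}$ is a cosimplicial matrix $\mathcal{O}_{NG}$-Azumaya algebra, which we regard as an object of $\mathtt{DIFF}$; and because $G$ is Hausdorff, $NG$ is a Hausdorff \'etale simplicial manifold, so all of the constructions of Section \ref{Deformations of cosimplicial matrix Azumaya algebras} apply to $\mathcal{A}$ with $X = NG$.

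First I would invoke Proposition \ref{mateq} to get $\Def(\conv(\underline{\mathcal{S}},L))(R) \cong \Def(\mathcal{A})(R)$, and then Theorem \ref{thm:mapgroup equiv} (with $X = NG$, $\Psi = \mathtt{DIFF}$) to get $\Def(\mathcal{A})(R) \cong \MC^2(\mathfrak{G}(\mathcal{A})\otimes \mathfrak{m}_R)$. Applying Theorem \ref{thm: quism invariance of mc} to the quasiisomorphism of DGLA \eqref{incl}, $\mathfrak{G}(\mathcal{A}) \to \mathfrak{G}_\DR(\mathcal{J}_{NG}(\mathcal{A}))$, and then to the derived quasiisomorphism \eqref{cotr ind of choice}, $\overline{\Upsilon}\colon \mathfrak{G}_\DR(\mathcal{J}_{NG})_{[S(\mathcal{A})]} \to \mathfrak{G}_\DR(\mathcal{J}_{NG}(\mathcal{A}))$, one obtains
\[
\Def(\conv(\underline{\mathcal{S}},L))(R) \;\cong\; \MC^2\bigl(\mathfrak{G}_\DR(\mathcal{J}_{NG}(\mathcal{A}))\otimes \mathfrak{m}_R\bigr) \;\cong\; \MC^2\bigl(\mathfrak{G}_\DR(\mathcal{J}_{NG})_{[S(\mathcal{A})]}\otimes \mathfrak{m}_R\bigr).
\]

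It remains to identify $[S(\mathcal{A})]$ with $[\underline{\mathcal{S}_\Delta}]$, i.e.\ to produce a canonical quasiisomorphism $\mathfrak{G}_\DR(\mathcal{J}_{NG})_{[S(\mathcal{A})]} \cong \mathfrak{G}_\DR(\mathcal{J}_{NG})_{[\underline{\mathcal{S}_\Delta}]}$, and this is the step I expect to require the most care. The argument is the one from the last paragraph of the proof of Theorem \ref{equiv of 2-gpds}, run directly on $NG$: Lemma \ref{lemma: cosimp splitting} supplies an equivalence of cosimplicial $\mathcal{O}^\times_{NG}$-gerbes $\mathcal{S}_\mathcal{A} \to S(\mathcal{A})$, and the objects $L^\s{n}_0 \in \mathcal{S}^\s{n}_0 = \underline{\mathcal{S}_\Delta}^n$ together with the canonical identifications $(\underline{\mathcal{S}_\Delta})_f \cong \mathcal{A}^\s{q}_{0f(0)} = (\mathcal{S}_\mathcal{A})_f$ (legitimate since $0$ is the smallest vertex of every simplex) assemble into an equivalence of cosimplicial gerbes $\mathcal{S}_\mathcal{A} \to \underline{\mathcal{S}_\Delta}$. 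The point to check is that both equivalences are compatible with the choices of auxiliary data parametrizing \eqref{defgs}, so that they induce quasiisomorphisms of the pro-objects indexed by $\Sing(\AuxData(-))$ defining the twisted jet DGLAs (and not merely isomorphisms on cohomology); granting this functoriality of the construction of \ref{subsubsection: char class trivialized gerbe} and \eqref{defgs} --- which is the same bookkeeping as in Theorem \ref{equiv of 2-gpds} --- one gets $\mathfrak{G}_\DR(\mathcal{J}_{NG})_{[S(\mathcal{A})]} \cong \mathfrak{G}_\DR(\mathcal{J}_{NG})_{[\mathcal{S}_\mathcal{A}]} \cong \mathfrak{G}_\DR(\mathcal{J}_{NG})_{[\underline{\mathcal{S}_\Delta}]}$, and passing to $\MC^2$ finishes the proof. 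Every other step above is a direct citation of a result proved earlier.
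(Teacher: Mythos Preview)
Your argument is correct and follows essentially the same route as the paper's own proof: reduce to $\mathcal{A}=\cma(\underline{\mathcal{S}},L)$ via Proposition \ref{mateq}, pass to $\MC^2(\mathfrak{G}(\mathcal{A})\otimes\mathfrak{m}_R)$ (the paper cites Theorem \ref{mt1}, whose proof is precisely Theorem \ref{thm:mapgroup equiv}), then apply the quasiisomorphisms \eqref{incl} and \eqref{cotr ind of choice} with $X=NG$ and identify $[S(\mathcal{A})]$ with $[\underline{\mathcal{S}_\Delta}]$ exactly as in the last paragraph of the proof of Theorem \ref{equiv of 2-gpds}. Your only caveat---that \eqref{cotr ind of choice} is a morphism in the derived category, so one really applies Theorem \ref{thm: quism invariance of mc} leg by leg along the zigzag that represents $\overline{\Upsilon}$---is a legitimate point the paper leaves implicit, but it causes no difficulty.
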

\begin{proof}
Let $\mathcal{A}=\cma(\underline{\mathcal{S}},L)$. Then, $\Def
(\conv(\underline{\mathcal{S}},L))(R) \cong
 \MC^2(\mathfrak{G}(\mathcal{A}) \otimes \mathfrak{m}_R)$ by Proposition \ref{mateq} and Theorem \ref{mt1}. Then, as in the proof of Theorem \ref{equiv of 2-gpds}, we have the equivalences
\[
 \MC^2(\mathfrak{G}(\mathcal{A}) \otimes \mathfrak{m}_R)\cong \MC^2(\mathfrak{G}(\mathcal{J}_{N\mathcal{E}}(\mathcal{A})) \otimes \mathfrak{m}_R)
\cong
\MC^2(\mathfrak{G}_\DR(\mathcal{J}_{N\mathcal{E}})_{[\underline{\mathcal{S}_{\Delta}}]}\otimes
\mathfrak{m}_R)
\]
induced by the quasiisomorphisms \eqref{iincl} and \eqref{cotr ind
of choice} respectively.
\end{proof}

\end{document}